\title[Galois action on knots I]
{Galois action on knots I: \\
Action of the absolute Galois group}
\author{Hidekazu Furusho}
\address{Graduate School of Mathematics, Nagoya University, 
Furo-cho, Chikusa-ku, Nagoya, 464-8602,  Japan}
\email{furusho@math.nagoya-u.ac.jp}
\date{June 27, 2015.}
\subjclass[2000]{Primary 14G32 ; Secondary 11R, 57M25}
\newtheorem{thm}{Theorem}[section]
\newtheorem{lem}[thm]{Lemma}
\newtheorem{prop}[thm]{Proposition}  
\theoremstyle{remark}
\theoremstyle{definition}
\newtheorem{defn}[thm]{Definition}
\newtheorem{rem}[thm]{Remark}
\newtheorem{nota}[thm]{Notation}     
\newtheorem{note}[thm]{Note}
\newtheorem{eg}[thm]{Example}       
\newtheorem{conj}[thm]{Conjecture}    
\newtheorem{prob}[thm]{Problem}
\newtheorem{project}[thm]{Project}   
\numberwithin{equation}{section}
\numberwithin{figure}{section}
\newcommand{\creation}{\rotatebox[origin=c]{180}{$\curvearrowleft$}}
\newcommand{\opcreation}{\rotatebox[origin=c]{180}{$\curvearrowright$}}
\newcommand{\annihilation}{\curvearrowright}
\newcommand{\opannihilation}{\curvearrowleft}
\def\orientedcircle{\unitlength.2ex
 \begin{minipage}{8\unitlength}
   \begin{tikzpicture}
     \draw[->] (0,0.125) arc (90:450:0.125);
   \end{tikzpicture}
 \end{minipage}
}
\newcommand{\opT}{\rotatebox[origin=c]{180}{$T$}}
\def\diaCrossP{\unitlength.2ex
  \begin{minipage}{15\unitlength}
    \begin{picture}(15,15)
      \put(0,0){\vector(1,1){15}}
      \qbezier(15,0)(15,0)(10,5)
      \qbezier(5,10)(0,15)(0,15)
      \put(0,15){\vector(-1,1){0}}
    \end{picture}
  \end{minipage}
}
\def\diaCrossN{\unitlength.2ex
  \begin{minipage}{15\unitlength}
    \begin{picture}(15,15)
      \put(15,0){\vector(-1,1){15}}
      \qbezier(0,0)(0,0)(5,5)
      \qbezier(10,10)(15,15)(15,15)
      \put(15,15){\vector(1,1){0}}
    \end{picture}
  \end{minipage}
}
\def\diaProjected{\unitlength.1em
  \begin{minipage}{14\unitlength}
    \begin{picture}(14,14)
      \put(0,0){\vector(1,1){14}}
      \put(14,0){\vector(-1,1){14}}
    \end{picture}
  \end{minipage}
}
\begin{document}
\bibliographystyle{amsalpha+}
\maketitle

\begin{abstract}
Our aim of this and subsequent papers is to 
enlighten  (a part of,  presumably) arithmetic structures of knots.
This paper introduces a notion of profinite knots which extends topological knots
and shows its various basic properties.
Particularly an action of the  absolute Galois group
of the rational number field
on profinite knots
is rigorously established,
which is attained by our extending 
the action of Drinfeld's Grothendieck-Teichm\"{u}ller group on profinite braid groups into on profinite knots.
\end{abstract}

\tableofcontents

\setcounter{section}{-1}
\section{Introduction}\label{introduction}
It is known that there are analogies between algebraic number theory and
3-dimensional topology.
It is said that  Mazur and Manin among others spotted them in 1960's
and, after a long silence, in 1990's,
Kapranov and Reznikov
(\cite{Kap, R} and their lecture in MPI)
who took up their ideas and
explored them jointly
and Morishita \cite{Mo}
whose works started independently in a more sophisticated  aspect,
settled the new area of mathematics, arithmetic topology.
Lots of analogies 
between algebraic number theory and 3-dimensional topology
are suggested in  arithmetic topology,
however, as far as we know,  no direct 
 relationship seems to be known.
Our attempt of
this and subsequent \cite{F13}
papers is to  give a direct one particularly between  Galois groups and knots.

{\it A profinite tangle diagram}, a profinite analogue of an oriented tangle diagram, 
is introduced in Definition \ref{definition of profinite tangle}
as a consistent finite sequence of 
fundamental profinite tangle diagrams; symbols of three types $A$, $\widehat{B}$ and $C$
(Definition \ref{definition of fundamental profinite tangle}).
{\it A profinite knot diagram}, a profinite analogue of an oriented knot diagram,
 is defined by a profinite tangle diagram without endpoints and 
with a single connected component in Definition \ref{definition of profinite knot}.
A profinite version of Turaev moves is given in our Definition \ref{profinite Turaev moves}
(T1)-(T6),
which determines an equivalence, called isotopy, for profinite tangle diagrams.
The set $\widehat{\mathcal T}$ of profinite tangles means the quotient of
the set of profinite tangle diagrams by the equivalence
and the set $\widehat{\mathcal K}$ of profinite knots
is the subspace of $\widehat{\mathcal T}$ consisting of isotopy classes of profinite knot diagrams  
(Definition \ref{definition of isotopy}).
Our first theorem is

{\bf Theorem A  (Theorem \ref{profinite realization theorem} and \ref{topological monoid}).}

{\it
(1). The space $\widehat{\mathcal K}$ carries a structure of a topological commutative monoid
whose product is given by the connected sum \eqref{connected sum}.

(2). Let $\mathcal K$ denote the set of isotopy classes of  (topological) oriented knots.
Then there is a natural map
$$
h: {\mathcal K}\to \widehat{\mathcal K}.
$$
The map is with dense image and
is a monoid homomorphism with respect to the connected sum.
}\\
The map $h$  is naturally conjectured to be injective (Conjecture \ref{injectivity on T}).
It is because, if it is not injective,
the Kontsevich invariant fail to be a perfect knot invariant
(cf. Remark \ref{perfect remark} and \ref{Kontsevich factorization}).

The topological group $\mathrm{Frac}\widehat{\mathcal K}$ of profinite knots is introduced as
the group of fraction of the topological monoid  $\widehat{\mathcal K}$
in Definition \ref{definition of GK}.
A continuous action of the profinite Grothendieck-Teichm\"{u}ller group $\widehat{GT}$ \cite{Dr}
on 
$\mathrm{Frac}\widehat{\mathcal K}$
is rigorously established in Definition \ref{GT-action on profinite knots} - Theorem \ref{GT-action theorem}.
By using the inclusion from the absolute Galois group $G_{\mathbb Q}$
of the rational number field $\mathbb Q$ into $\widehat{GT}$,
our second theorem is obtained.

{\bf Theorem B  (Theorem \ref{Galois representation theorem}).}

{\it
Fix an embedding from the algebraic closure $\overline{\mathbb Q}$ of 
the rational number field $\mathbb Q$
into the complex number field $\mathbb C$.
Then the group $\mathrm{Frac}\widehat{\mathcal K} $ of profinite knots admits
a non-trivial topological $G_{\mathbb Q}$-module structure.
Namely, 
there is a non-trivial continuous Galois representation on profinite knots
$$
\rho_0:G_{\mathbb Q}\to \mathrm{Aut}\ \mathrm{Frac}\widehat{\mathcal K} .
$$
}\\
It is explained that particularly the complex conjugation sends each knot to its mirror image
(Example \ref{mirror image}).
The validity of a knot analogue of the so-called Bely\u \i's theorem, i.e.
the injectivity of $\rho_0$, is posed in Problem \ref{Belyi-type problem}.
Several projects  and problems on the Galois action are posted in the end of this paper.

Our construction of the Galois action could be said as a lift
of the action of Kassel-Turaev \cite{KT98}  given in a proalgebraic setting
into a profinite setting.
Our Galois action on knots might also be related to the \lq Galois relations'
suggested in  Gannon-Walton \cite{GW}.
Our discussion on profinite knots in this paper may be linked to 
Mazur's discussion on profinite equivalence of knots in \cite{Maz}

The contents of the paper is as follows.
\S \ref{Galois action on profinite braids} is devoted to a review of
Drinfeld's work on $\widehat{GT}$ and $G_{\mathbb Q}$-actions on profinite braids.
Main results are presented in \S \ref{Galois action on profinite knots}:
The ABC-construction of profinite knots and their basic properties 
are introduced in \S \ref{definition and properties}.
We also introduce and discuss
the notion of  pro-$l$ knots in \S \ref{pro-l knots}.
It serves for our arguments in \S \ref{absolute Galois action} where
an action of the  absolute Galois group on profinite knots is established.
In Appendix \ref{Two-bridge profinite knots},
a two-bridge profinite knot, a profinite knot with a specific type,
is introduced and its Galois behavior is investigated.
A profinite analogue of a framed knot is introduced and the above two theorems are  extended into
those for profinite framed knots
in Appendix \ref{profinite framed knots}.

\section{Profinite braids}\label{Galois action on profinite braids}
This section is a review mainly  on Drinfeld's work \cite{Dr} of
his profinite Grothendieck-Teichm\"{u}ller group $\widehat{GT}$
and its action on profinite braids.
Definitions of
the profinite braid group $\widehat{B}_n$ and the absolute Galois group $G_{\mathbb Q}$ are recalled
in Example \ref{examples of profinite groups}.
The definition of  $\widehat{GT}$ is presented in Definition \ref{definition of GT}.
In Theorem \ref{GQ to GT}, it is explained that
$\widehat{GT}$ contains $G_{\mathbb Q}$.
The action of $\widehat{GT}$ on $\widehat{B}_n$ is explained in
Theorem \ref{GT-action theorem on braids}.
Specific properties of the action which will be required to next section 
are shown in Proposition \ref{action-change-basepoint-braids}
and Proposition \ref{action-evaluation}.

\begin{defn}
The {\it Artin braid group $B_n$ with $n$-strings}  ($n\geqslant 2$)
is the group generated by $\sigma_i$ 
($1\leqslant i \leqslant n-1$) with two relations
\begin{align*}
&\sigma_i\sigma_{i+1}\sigma_i=\sigma_{i+1}\sigma_i\sigma_{i+1}, \\
&\sigma_i\sigma_j=\sigma_j\sigma_i
\quad \text{if} \quad |i-j|>1.
\end{align*}
The unit of $B_n$ is denoted by $e_n$.
For $n=1$, put $B_1=\{e_1\}$: the trivial group.
The {\it pure braid group $P_n$ with $n$-strings}
is the kernel of the natural projection from $B_n$ to 
the symmetric group ${\frak S}_n$ of degree $n$.
\end{defn}

When $n=2$, there is an identification $B_2\simeq {\mathbb Z}$
and the subgroup $P_2$ corresponds to $2{\mathbb Z}$ under the identification.
When $n=3$, $P_3$ contains a free group $F_2$ generated by $\sigma_1^2$ and $\sigma_2^2$.

\begin{nota}\label{several braid figures}
The generator $\sigma_i$ in $B_n$ is 
depicted as in Figure \ref{sigma}. 
And for $b$ and $b'\in B_n$, 
we draw the product $b\cdot b'\in B_n$ as in 
Figure \ref{product picture} 
(the order of product $b\cdot b'$ is chosen to combine the bottom endpoints of $b$
with the top endpoints of $b'$).
\begin{figure}[h]
\begin{tabular}{c}
  \begin{minipage}{0.5\hsize}
      \begin{center}
          \begin{tikzpicture}
                  \draw[-] (-0.2,0) --(-0.2, 0.5) ;
                    \draw[-] (0,0) --(0, 0.5) ;
                    \draw[dotted] (0.1,0.3) --(0.6, 0.3) ;
                    \draw[-] (0.7,0) --(0.7, 0.5) ;
                     \draw[decorate,decoration={brace,mirror}] (-0.3,0) -- (0.8,0) node[midway,below]{$i-1$};
                   \draw [-] (1.3,0)--(0.9,0.5);
               \draw[color=white, line width=5pt]  (0.9,0)--(1.3,0.5);
                   \draw [-] (0.9,0)--(1.3,0.5);

                   \draw[-] (1.5,0)--(1.5,0.5) ;
                    \draw[-] (1.7,0) --(1.7, .5)  ;

                    \draw[-] (2.6,0) --(2.6, .5)  ;
                   \draw[dotted] (1.8,0.3) --(2.5, 0.3)  ;
                    \draw[decorate,decoration={brace,mirror}] (1.4,0) -- (2.7,0) node[midway,below]{$n-i-1$};
        \end{tikzpicture}
            \caption{$\sigma_i$}
            \label{sigma}
     \end{center}
 \end{minipage}

 \begin{minipage}{0.5\hsize}
      \begin{center}
        \begin{tikzpicture}
                    \draw[-] (4.0,0.2) --(4.0, 0.5) (4.0,1.0) --(4.0, 1.5) (4.0,2.0) --(4.0, 2.3);
                    \draw[-] (4.1,0.2) --(4.1, 0.5) (4.1,1.0) --(4.1, 1.5) (4.1,2.0) --(4.1, 2.3);
                    \draw[dotted] (4.2,0.3) --(4.9, 0.3) (4.2,1.1)--(4.9,1.1)  (4.2,1.35)--(4.9,1.35) (4.2,2.2) --(4.9, 2.2); 
                    \draw[-] (5.0,0.2) --(5.0, 0.5) (5.0,1.0) --(5.0, 1.5) (5.0,2.0) --(5.0, 2.3) ;
                    \draw (3.9,0.5) rectangle (5.1,1);
                    \draw (4.5,0.7) node{$b'$};
                    \draw (3.9,1.5) rectangle (5.1,2.0);
                    \draw (4.5,1.7) node{$b$};
                    \draw[decorate,decoration={brace,mirror}] (3.9,0.2) -- (5.1,0.2) node[midway,below]{$n$};
                    \draw[decorate,decoration={brace}]  (3.9,2.3) -- (5.1,2.3) node[midway,above]{$n$};
                   \draw[color=white, line width=3pt] (3.8,1.25)--(5.2,1.25);

           \end{tikzpicture}
               \caption{$b\cdot b'$}
               \label{product picture}
      \end{center}
  \end{minipage}
\end{tabular}
\end{figure}

For $1\leqslant i<j\leqslant n$,
special elements
$$x_{i,j}=x_{j,i}=(\sigma_{j-1}\cdots\sigma_{i+1})\sigma_i^2(\sigma_{j-1}\cdots\sigma_{i+1})^{-1}$$
generate $P_n$.
For $1\leqslant a\leqslant a+\alpha<b\leqslant b+\beta\leqslant n$,
we define
\begin{align*}
x_{a\cdots a+\alpha,b\cdots b+\beta}:=
&(x_{a,b}x_{a,b+1}\cdots x_{a,b+\beta})\cdot
(x_{a+1,b}x_{a+1,b+1}\cdots x_{a+1,b+\beta}) \\
&\cdots(x_{a+\alpha,b}x_{a+\alpha,b+1}\cdots x_{a+\alpha,b+\beta})
\in P_n.
\end{align*}
They are drawn in Figure \ref{pure generator 1} and \ref{pure generator 2}.
\begin{figure}[h]
\begin{tabular}{c}
  \begin{minipage}{0.5\hsize}
      \begin{center}
           \begin{tikzpicture}
                 \draw[-] (-0.5,0)--(-0.5,1) (-0.2,0)--(-0.2,1) (0.2,0)--(0.2,1) (0.3,0)--(0.3,1) (0.6,0)--(0.6,1) (1.2,0)--(1.2,1) (1.5,0)--(1.5,1);
                 \draw[dotted] (-0.5,0.5)--(-0.2,0.5) (0.3,0.5)--(0.6,0.5) (1.2,0.5)--(1.5,0.5);
                 \draw[-] (0.8,0)--(0.8,0.5);
                 \draw[color=white, line width=7pt](0,0) ..controls(0.1,0.5) and (0.9,0)        ..(1,0.5);
                 \draw[-] (0,0) ..controls(0.1,0.5) and (0.9,0)        ..(1,0.5);
                 \draw[color=white, line width=7pt](1,0.5) ..controls(0.9,1.0) and (0.1,0.5)       ..(0,1);
                 \draw[-] (1,0.5) ..controls(0.9,1.0) and (0.1,0.5)       ..(0,1);
                 \draw[color=white, line width=5pt] (0.8,0.5)--(0.8,1);
                 \draw[-] (0.8,0.5)--(0.8,1);
\draw[decorate,decoration={brace,mirror}] (-0.6,-0.1) -- (-0.1,-0.1) node[midway,below]{$i-1$};
\draw[decorate,decoration={brace}] (-0.6,1.1) -- (0.6,1.1) node[midway,above]{$j-1$};
            \end{tikzpicture}
               \caption{$x_{ij}$}
               \label{pure generator 1}     \end{center}
 \end{minipage}

 \begin{minipage}{0.5\hsize}
     \begin{center}
          \begin{tikzpicture}
                 \draw[-] (-0.5,-0.1)--(-0.5,1.1) (-0.2,-0.1)--(-0.2,1.1)  (0.6,-0.1)--(0.6,1.1)  (0.7,-0.1)--(0.7,1.1) (0.8,-0.1)--(0.8,1.1) (1.9,-0.1)--(1.9,1.1) (2.3,-0.1)--(2.3,1.1);
                \draw[dotted] (-0.5,0.5)--(-0.2,0.5) (0.6,0.95)--(0.8,0.95) (1.9,0.5)--(2.3,0.5);
                 \draw[-] (1.0,0)--(1.0,0.5) (1.1,0)--(1.1,0.5) (1.3,0)--(1.3,0.5) ;
                 \draw[color=white, line width=10pt](0.15,0) ..controls(0.25,0.15) and (1.55,0.25)        ..(1.65,0.5);
                 \draw[-] (0,0) ..controls(0.1, 0.5) and (1.4, 0.25)        ..(1.5,0.5);
                 \draw[-] (0.3,0) ..controls(0.4,0.25) and (1.7,0.25)        ..(1.8,0.5);
                 \draw[color=white, line width=7pt] (1,0.5) ..controls(0.9,1.0) and (0.1,0.5)       ..(0,1);
                 \draw[-] (1.5,0.5) ..controls(1.4,0.75) and (0.1,0.5)       ..(0,1);
                 \draw[-] (1.8,0.5) ..controls(1.7,0.75) and (0.4,0.75)       ..(0.3,1);
                 \draw[color=white, line width=12pt] (1.15,0.5)--(1.15,1);
                 \draw[-] (1,0.5)--(1,1.1) (1.1,0.5)--(1.1,1.1) (1.3,0.5)--(1.3,1.1) ;
                \draw[dotted] (0.1,0.1)--(0.3,0.1) (0.1,0.9)--(0.3,0.9) (1,0.8)--(1.3,0.8) ;
\draw[decorate,decoration={brace,mirror}] (0,-0.1) -- (0.3,-0.1) node[midway,below]{\tiny{$\alpha+1$}};
\draw[decorate,decoration={brace,mirror}] (1,-0.1) -- (1.3,-0.1) node[midway,below]{\tiny{$\beta+1$}};
\draw[decorate,decoration={brace}] (-0.5,1.15) -- (-0.2,1.15) node[midway,above]{\tiny{$a-1$}};
\draw[decorate,decoration={brace}] (0.55,1.3) -- (0.8,1.3) node[midway,above]{\tiny{$b-a-\alpha-1$}};
         \end{tikzpicture}
               \caption{$x_{a\cdots a+\alpha,b\cdots b+\beta}$}
               \label{pure generator 2}
      \end{center}
  \end{minipage}
\end{tabular}
\end{figure}
\end{nota}

Next we briefly review the definition and a few  examples of profinite groups.

\begin{defn}
A topological group $G$ is called a {\it profinite group}
if it is a projective limit $\varprojlim G_i$ of a projective system of finite groups $\{G_i\}_{i\in I}$.
For a discrete group $\Gamma$,
its {\it profinite completion} $\widehat{\Gamma}$ is the profinite group
defined by the projective limit
$$
\widehat{\Gamma}=\varprojlim \Gamma/N
$$
where $N$ runs  over all normal subgroups of $\Gamma$ with finite indices.
\end{defn}

For profinite groups, consult \cite{RZ} for example.
We note there is a natural homomorphism $\Gamma\to\widehat{\Gamma}$.
In the paper, we employ the same symbol 
when we express the image of elements of $\Gamma$ by the map
if there is no confusion.

\begin{eg}\label{examples of profinite groups}
(1).
The set $\widehat{\mathbb Z}$ of {\it profinite integers} is the profinite completion of $\mathbb Z$.
There is an identification
$\widehat{\mathbb Z}\simeq \prod_p{\mathbb Z_p}.$
Here $p$ runs over all primes and $\mathbb Z_p$ stands for the ring of $p$-adic integers.

(2).
The {\it absolute Galois group}  $G_{\mathbb Q}$ of the rational number field $\mathbb Q$ is the profinite group
$$
G_{\mathbb Q}=Gal(\overline{\mathbb Q}/{\mathbb Q}):=\varprojlim Gal(F/{\mathbb Q}).
$$
Here the limit runs over all finite Galois extension $F$ of $\mathbb Q$
and  $Gal(F/{\mathbb Q})$ means its Galois group.

(3).
The {\it profinite braid group} $\widehat{B}_n$ means the profinite completion of $B_n$.
It contains
the {\it profinite pure braid group} $\widehat{P}_n$
(the profinite completion of $P_n$),
which is equal to the kernel of the natural projection $\widehat{B}_n\to {\frak S}_n$.
It  is known that both $B_n$ and $P_n$ are {\it residually finite}, i.e,
their natural maps  are both injective;
\begin{equation}\label{residually finiteness}
B_n\hookrightarrow \widehat{B}_n \quad \text{and} \quad P_n\hookrightarrow \widehat{P}_n.
\end{equation}
When $n=2$, we have $\widehat{B}_2\simeq \widehat{\mathbb Z}$.
For profinite braid groups, we employ the same figures as in Notation \ref{several braid figures}
to express such elements.
Further we also depict $\sigma_i^c\in \widehat{B}_n$ ($c\in\widehat{\mathbb Z}$)
as Figure \ref{sigma-i-c}.
\end{eg}
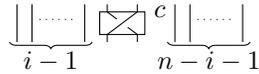
\begin{figure}[h]
          \begin{tikzpicture}
                  \draw[-] (-0.2,0) --(-0.2, 0.5) ;
                    \draw[-] (0,0) --(0, 0.5) ;
                    \draw[dotted] (0.1,0.3) --(0.6, 0.3) ;
                    \draw[-] (0.7,0) --(0.7, 0.5) ;
                     \draw[decorate,decoration={brace,mirror}] (-0.3,0) -- (0.8,0) node[midway,below]{$i-1$};

                   \draw [-] (1.4,0.1)--(1.0,0.4);
               \draw[color=white, line width=5pt]  (1.0,0.1)--(1.4,0.4);
                   \draw [-] (1.0,0.1)--(1.4,0.4);
           \draw  (0.9,0.1) rectangle (1.5,0.4)node[right]{$c$};
                    \draw[-] (1.0,0) --(1.0,0.1) (1.0,0.4)--(1.0,0.5)  (1.4,0) --(1.4,0.1) (1.4,0.4)--(1.4,0.5) ;

                   \draw[-] (1.9,0)--(1.9,0.5) ;
                    \draw[-] (2.1,0) --(2.1, .5)  ;

                    \draw[-] (2.8,0) --(2.8, .5)  ;
                   \draw[dotted] (2.2,0.3) --(2.7, 0.3)  ;
                    \draw[decorate,decoration={brace,mirror}] (1.8,0) -- (2.9,0) node[midway,below]{$n-i-1$};
        \end{tikzpicture}
\caption{$\sigma_i^c\in \widehat{B}_n$ ($c\in\widehat{\mathbb Z}$)}
\label{sigma-i-c}
\end{figure}

To state the definition of $\widehat{GT}$,
we need fix several notations.

\begin{nota}
Let ${F}_2$ be the free  group of rank $2$
with two variables $x$ and $y$ and $\widehat{F}_2$ be its profinite completion.
For any $f\in\widehat{F}_2$ and any group homomorphism 
$\tau:\widehat{F}_2\to G$ sending $x\mapsto \alpha$ and $y\mapsto \beta$,
the symbol $f(\alpha,\beta)$ stands for the image $\tau(f)$.
Particularly for the (actually injective) group homomorphism $\widehat{F}_2\to \widehat{P}_n$
sending $x\mapsto x_{a\cdots a+\alpha,b\cdots b+\beta}$ and 
$y\mapsto x_{b\cdots b+\beta,c\cdots c+\gamma}$
($1\leqslant a\leqslant a+\alpha<b\leqslant b+\beta<c\leqslant c+\gamma
\leqslant n$),
the image of $f\in\widehat{F}_2$ is denoted by 
$f_{a\cdots a+\alpha,b\cdots b+\beta,c\cdots c+\gamma}$.
\end{nota}

The profinite Grothendieck-Teichm\"{u}ller group $\widehat{GT}$
which is a main character of our paper
is defined by  Drinfeld \cite{Dr} to be a profinite subgroup of the topological automorphism
group of ${\widehat F}_2$.

\begin{defn}[\cite{Dr}]\label{definition of GT}
The {\it profinite Grothendieck-Teichm\"{u}ller group}
\footnote{
It is named after Grothendieck's project of {\it un jeu de Teichm\"{u}ller-Lego}
posted 
in {\it Esquisse d'un programme} \cite{G}.
A construction of such group was suggested there
though Drinfeld came to the group independently in his subsequent works \cite{Dr90, Dr} on
deformation of specific type of quantum groups.
}
$\widehat{GT}$ is
the profinite subgroup of  $\mathrm{Aut}\widehat{F}_2$
defined by
\begin{equation*}
\widehat{GT}:=\Bigl\{
\sigma\in \mathrm{Aut}\widehat{F}_2\Bigm |
{\begin{array}{l}
\sigma(x)=x^\lambda, \sigma(y)=f^{-1}y^\lambda f
\text{ for some } (\lambda,f)\in \widehat{\mathbb Z}^\times\times \widehat{F}_2\\
\text{satisfying the three relations below. }
\end{array}}
\Bigr\}
\end{equation*}
\end{defn}
\begin{equation}\label{2-cycle}
f(x,y)f(y,x)=1
\quad\text{ in } \widehat{F}_2,
\end{equation}
\begin{equation}\label{hexagon equation}
f(z,x)z^m f(y,z)y^m f(x,y)x^m=1
\quad\text{ in } \widehat{F}_2
\text{ with } z=(xy)^{-1} \text{ and } m=\frac{\lambda-1}{2},
\end{equation}
\begin{equation}\label{pentagon equation}
f_{1,2,34}f_{12,3,4}=f_{2,3,4}f_{1,23,4}f_{1,2,3}
\quad\text{ in } \widehat{P}_4.
\end{equation}

\begin{rem}
(1).
In some literatures, \eqref{2-cycle}, \eqref{hexagon equation} and \eqref{pentagon equation}
are called {\it 2-cycle, 3-cycle and 5-cycle relation} respectively.
The author often calls  \eqref{2-cycle} and \eqref{hexagon equation} 
by {\it two hexagon equations}
and \eqref{pentagon equation} by  {\it one pentagon equation}
because they reflect the three axioms, two hexagon and one pentagon axioms,
of braided monoidal (tensor) categories \cite{JS}.
We remind that \eqref{pentagon equation} represents
$$
f(x_{12},x_{23}x_{24})f(x_{13}x_{23},x_{34})=f(x_{23},x_{34})f(x_{12}x_{13},x_{24}x_{34})f(x_{12},x_{23})
\quad\text{ in } \widehat{P}_4.
$$
In  several literatures such as \cite{Ih} and \cite{S}, the equation
\eqref{pentagon equation} is replaced by a different (more symmetric) formulation.

(2).
For the {\it proalgebraic} Grothendieck-Teichm\"{u}ller group $GT({\bf k})$ in \cite{Dr}
($\bf k$: an algebraically closed field of characteristic $0$),
it is shown that its one pentagon equation  implies its two hexagon equations in \cite{F10}.
But as for our profinite group $\widehat{GT}$, it is not known
if the pentagon equation \eqref{pentagon equation} implies
two hexagon equations \eqref{2-cycle} and \eqref{hexagon equation}.

(3).
We remark that each $\sigma\in \widehat{GT}$ determines a pair $(\lambda, f)$ uniquely
because the pentagon equation \eqref{pentagon equation} implies that
$f$ belongs to the topological commutator subgroup of $\widehat{F}_2$.
By abuse of notation,
we occasionally express the pair $(\lambda,f)$ 
to represent $\sigma$ and denote as
$\sigma=(\lambda,f)\in \widehat{GT}$. 
The above set-theoretically defined  $\widehat{GT}$ forms indeed a profinite group
whose product is induced from that of $\mathrm{Aut}\widehat{F}_2$
and is given by
\footnote{
For our purpose, we change the order of the product in \cite{Dr}.
}
\begin{equation}\label{product of GT}
(\lambda_2,f_2)\circ (\lambda_1,f_1)=
\Bigl(\lambda_2\lambda_1,f_2\cdot f_1(x^{\lambda_2},f_2^{-1}y^{\lambda_2}f_2)\Bigr).
\end{equation}
\end{rem}

The next lemma will be used later.

\begin{lem}\label{projection of f}
Let $p_i:\widehat{P}_3\to \widehat{P}_2(=\widehat{\mathbb Z})$ ($i=1,2,3$) denote the map
of omission of $i$-th strand in $\widehat{P}_3$.
Let $(\lambda, f)\in\widehat{GT}$.
Then $p_i(f_{1,2,3})=0$  ($i=1,2,3$).
\end{lem}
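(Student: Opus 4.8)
The plan is to compute each $p_i(f_{1,2,3})$ by pushing it into the abelianization of $\widehat F_2$, where everything becomes a bookkeeping of exponent sums. Recall that $f_{1,2,3}$ is by definition the image of $f(x,y)$ under the homomorphism $\widehat F_2\to\widehat P_3$ given by $x\mapsto x_{1,2}$, $y\mapsto x_{2,3}$. So the first step is to record how each strand-omission map acts on these two generators. Omitting the $i$-th strand sends every generator $x_{k,l}$ with $i\in\{k,l\}$ to the unit and carries the one surviving generator to the canonical generator of $\widehat P_2\cong\widehat{\mathbb Z}$; identifying that generator with $1$, this gives $p_1(x_{1,2})=0,\ p_1(x_{2,3})=1$; $p_2(x_{1,2})=p_2(x_{2,3})=0$; and $p_3(x_{1,2})=1,\ p_3(x_{2,3})=0$.

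The case $i=2$ then needs nothing: both generators appearing in $f_{1,2,3}$ lie in the kernel of $p_2$, so $p_2\circ(-)_{1,2,3}$ is the trivial homomorphism and $p_2(f_{1,2,3})=0$. For $i=1$ and $i=3$ the target $\widehat{\mathbb Z}$ is abelian, so each composite $\widehat F_2\xrightarrow{(-)_{1,2,3}}\widehat P_3\xrightarrow{p_i}\widehat{\mathbb Z}$ factors through the abelianization $\widehat F_2^{\mathrm{ab}}\cong\widehat{\mathbb Z}\oplus\widehat{\mathbb Z}$. By the table above $p_1\circ(-)_{1,2,3}$ is the map $x\mapsto 0,\ y\mapsto 1$ and $p_3\circ(-)_{1,2,3}$ is the map $x\mapsto 1,\ y\mapsto 0$; hence $p_1(f_{1,2,3})$ and $p_3(f_{1,2,3})$ are precisely the $y$- and $x$-exponent sums of $f$.

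It thus remains to show that both exponent sums of $f$ vanish, i.e.\ that $f$ maps to $0$ in $\widehat F_2^{\mathrm{ab}}$. This is the content of the remark following Definition \ref{definition of GT}, part (3): the pentagon equation \eqref{pentagon equation} forces $f$ into the closure of the commutator subgroup of $\widehat F_2$, and every continuous homomorphism from there to an abelian profinite group is trivial. Invoking it closes all three cases at once.

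The only substantive point --- and the step I expect to be the crux --- is this commutator membership. I would emphasize that it genuinely requires the pentagon equation: abelianizing the $2$-cycle relation \eqref{2-cycle} only yields that the two exponent sums are negatives of each other, and abelianizing the hexagon \eqref{hexagon equation} contributes the tautology $0=0$, so neither pins down the individual sums. If I did not wish to quote the remark, I would instead abelianize \eqref{pentagon equation} directly in $\widehat P_4^{\mathrm{ab}}$ using the substitution rules listed in that remark; comparing the coefficients of $x_{1,2}$ and $x_{3,4}$ on the two sides immediately forces each exponent sum of $f$ to equal twice itself, hence to be $0$. That coefficient comparison is the one calculation I would actually carry out.
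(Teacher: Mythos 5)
Your proof is correct and takes essentially the same route as the paper: the paper's one-line proof invokes exactly the fact you cite (the remark following Definition \ref{definition of GT}, that the pentagon equation \eqref{pentagon equation} forces $f$ into the topological commutator subgroup of $\widehat{F}_2$), so that $f$ dies under any continuous homomorphism to the abelian group $\widehat{\mathbb Z}$, which is your abelianization argument with the correct table $p_1(x_{1,2})=0$, $p_1(x_{2,3})=1$, $p_3(x_{1,2})=1$, $p_3(x_{2,3})=0$, $p_2\equiv 0$. Your supplementary coefficient comparison in $\widehat{P}_4^{\mathrm{ab}}$ (the $x_{1,2}$- and $x_{3,4}$-coefficients give $a=2a$ and $b=2b$, hence $a=b=0$) is a valid self-contained justification of that cited fact, not a different approach.
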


\begin{proof}
It is easy because $f$ belongs to the commutator of $\widehat{F}_2$ as mentioned above.
\end{proof}

One of the important property of the profinite Grothendieck-Teichm\"{u}ller group
$\widehat{GT}$
is that it contains the absolute Galois group $G_{\mathbb Q}$.

\begin{thm}[\cite{Dr, Ih94}]\label{GQ to GT}
Fix an embedding from $\overline{\mathbb Q}$ into $\mathbb C$.
Then there is an embedding
\begin{equation}\label{GQ into GT}
G_{\mathbb Q} \hookrightarrow \widehat{GT}.
\end{equation}
\end{thm}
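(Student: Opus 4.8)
The plan is to realize $\widehat{F}_2$ geometrically as the \'etale fundamental group of the thrice-punctured projective line and to exhibit the desired embedding as the outer Galois representation attached to it. First I would fix a $\mathbb{Q}$-rational tangential base point $\vec{01}$ on $X=\mathbb{P}^1\setminus\{0,1,\infty\}$ in the sense of Deligne, and identify $\pi_1(X_{\overline{\mathbb{Q}}},\vec{01})$ with $\widehat{F}_2$ so that $x$ and $y$ are the classes of small loops around $0$ and $1$ and $z=(xy)^{-1}$ is a loop around $\infty$. The homotopy exact sequence
\[
1\to \pi_1(X_{\overline{\mathbb{Q}}},\vec{01})\to \pi_1(X_{\mathbb{Q}},\vec{01})\to G_{\mathbb{Q}}\to 1
\]
is split by the chosen rational base point, so conjugation by a section produces a continuous homomorphism $\rho:G_{\mathbb{Q}}\to \mathrm{Aut}\,\widehat{F}_2$, which is the object I must show lands in $\widehat{GT}$ and is injective.

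Next I would read off the pair $(\lambda,f)$ for each $\sigma\in G_{\mathbb{Q}}$. Because the Galois action permutes the inertia subgroups at the three cusps and acts on each inertia generator through the cyclotomic character $\chi$, one gets $\rho(\sigma)(x)=w^{-1}x^{\chi(\sigma)}w$ and similarly for $y$. Choosing the base point along the tangent direction at $0$ normalizes the conjugator at $x$ to be trivial, so that $\rho(\sigma)(x)=x^{\lambda}$ with $\lambda=\chi(\sigma)\in\widehat{\mathbb{Z}}^\times$, and $\rho(\sigma)(y)=f_\sigma^{-1}y^{\lambda}f_\sigma$ for a well-defined $f_\sigma\in\widehat{F}_2$. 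This is exactly the shape required of an element of $\widehat{GT}$.

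The heart of the argument is verifying the three defining relations \eqref{2-cycle}, \eqref{hexagon equation} and \eqref{pentagon equation} for $(\lambda,f_\sigma)$. The two hexagon relations I would obtain from the $\mathfrak{S}_3$-symmetry of $X$ permuting $\{0,1,\infty\}$, together with the compatibility of the Galois action with the tangential base points at the three cusps, tracking how $\sigma$ transports paths between adjacent tangential base points. The pentagon relation is the main obstacle: it does not live on $X$ itself but encodes the $G_{\mathbb{Q}}$-equivariance of the boundary degenerations of the moduli space $\mathcal{M}_{0,5}$ of five marked points on $\mathbb{P}^1$. I would deduce it by comparing the Galois action on $\pi_1(\mathcal{M}_{0,5,\overline{\mathbb{Q}}})$ pulled back along the five forgetful maps to $\mathcal{M}_{0,4}\cong X$ and matching the resulting identities in $\widehat{P}_4$; making the tangential base points and the forgetful maps genuinely compatible at the level of fundamental groupoids is where all the care is required.

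Finally, injectivity is Bely\u\i's theorem: every smooth projective curve over $\overline{\mathbb{Q}}$ admits a finite map to $\mathbb{P}^1$ ramified only over $\{0,1,\infty\}$, which forces the outer action of $G_{\mathbb{Q}}$ on $\pi_1(X_{\overline{\mathbb{Q}}})$ to be faithful. Since the normalization above pins down each automorphism on the nose, the composite $G_{\mathbb{Q}}\to\mathrm{Aut}\,\widehat{F}_2\to\mathrm{Out}\,\widehat{F}_2$ being injective forces $\rho$ itself to be injective, giving the claimed embedding $G_{\mathbb{Q}}\hookrightarrow\widehat{GT}$.
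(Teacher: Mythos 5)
Your proposal is correct and follows essentially the same route as the paper's proof: the paper constructs the action from the profinite fundamental group of ${\bf P}^1_{\overline{\mathbb Q}}\backslash\{0,1,\infty\}$ with Deligne's tangential base point $\overrightarrow{01}$ (citing Ihara), obtains injectivity from Bely\u \i's theorem, and cites Drinfeld and Ihara--Matsumoto for the verification of the relations \eqref{2-cycle}--\eqref{pentagon equation}. Your write-up simply unpacks those citations — the cyclotomic normalization of $(\lambda,f_\sigma)$, the hexagons via the ${\frak S}_3$-symmetry, and the pentagon via $\mathcal{M}_{0,5}$ — which is exactly the content of the referenced works.
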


We briefly review its proof below.

\begin{proof}
As is explained in \cite{Ih94}, an action of the absolute Galois group on $\widehat{F}_2$, i.e.
\begin{equation}\label{G_Q action on F_2}
G_{\mathbb Q} \to \mathrm{Aut}\ \widehat{F}_2
\end{equation}
is derived from the action on the profinite (scheme-theoretical, cf. \cite{SGA1})
fundamental group
$
\widehat{\pi}_1({\bf P}^1_{\overline{\mathbb Q}}\backslash\{0,1,\infty\},\overrightarrow{01})
$
of the algebraic curve ${\bf P}^1_{\overline{\mathbb Q}}\backslash\{0,1,\infty\}$
with Deligne's \cite{De} tangential base point $\overrightarrow{01}$ 
(this is achieved in the method explained in Remark \ref{algebraic geometry interpretation} below).
The so-called {\it Bely\u \i 's theorem} \cite{Be} claims that
the map \eqref{G_Q action on F_2} is injective:
\begin{equation}\label{Belyi theorem}
G_{\mathbb Q} \hookrightarrow \mathrm{Aut}\ \widehat{F}_2.
\end{equation}
The equations  \eqref{2-cycle}-\eqref{pentagon equation} are checked
for $\sigma\in G_{\mathbb Q}$ in  \cite{Dr, IM},
which means that $G_{\mathbb Q}$ is contained in 
$\widehat{GT}\subset \mathrm{Aut} \widehat{F}_2$.
\end{proof}

\begin{eg}\label{complex conjugation example}
Fix an embedding from $\overline{\mathbb Q}$ into $\mathbb C$.
Then the complex conjugation sending $z\in\mathbb C$ to $\bar z\in\mathbb C$
determines an element $\varsigma_0\in G_{\mathbb Q}$.
It is mapped to the pair $(-1,1)\in \widehat{GT}$
by \eqref{GQ into GT}.
\end{eg}

Asking its surjectivity on the  injection \eqref{GQ into GT} is open for many years:

\begin{prob}
Is $G_{\mathbb Q}$ equal to $\widehat{GT}$ ?
\end{prob}

The following Drinfeld's \cite{Dr}  $\widehat{GT}$-action on $\widehat{B}_n$ 
(a detailed description is also given in \cite{IM})
plays a fundamental role of our results, $\widehat{GT}$-action on knots.

\begin{thm}[\cite{Dr, IM}]\label{GT-action theorem on braids}
Let $n\geqslant 2$.
There is a continuous $\widehat{GT}$-action on $\widehat{B}_n$
\begin{equation*}\label{GT to Aut B_n}
\rho_n:\widehat{GT}\to \mathrm{Aut}\ \widehat{B}_n
\end{equation*}
given by
\begin{equation*}
\sigma=(\lambda,f):
\begin{cases}
\sigma_1 \quad \mapsto \quad&\sigma_1^\lambda, \\
\sigma_i  \quad \mapsto  \quad f_{1\cdots i-1,i,i+1}^{-1}&\sigma_i^\lambda f_{1\cdots i-1,i,i+1}
\qquad (2\leqslant i\leqslant n-1). \\
\end{cases}
\end{equation*}
\end{thm}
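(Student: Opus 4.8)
The plan is to fix $\sigma=(\lambda,f)\in\widehat{GT}$, to build from the displayed formula a continuous endomorphism $\rho_n(\sigma)$ of $\widehat{B}_n$, to check that $\sigma\mapsto\rho_n(\sigma)$ respects the composition law \eqref{product of GT}, and to deduce invertibility and continuity from this. For the construction step I would exploit the universal property of the profinite completion: since $\widehat{B}_n$ is profinite, a continuous endomorphism of $\widehat{B}_n$ is the same datum as a group homomorphism $B_n\to\widehat{B}_n$. Hence it suffices to send the Artin generators to the elements
\[
\sigma_1\longmapsto\sigma_1^\lambda,\qquad
\sigma_i\longmapsto f_{1\cdots i-1,i,i+1}^{-1}\,\sigma_i^\lambda\,f_{1\cdots i-1,i,i+1}\quad(2\leqslant i\leqslant n-1),
\]
all of which lie in $\widehat{B}_n$, and to verify that these images satisfy the two families of Artin relations inside $\widehat{B}_n$; granting this, the homomorphism $B_n\to\widehat{B}_n$ extends uniquely and continuously to an endomorphism $\rho_n(\sigma)$ of $\widehat{B}_n$.

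The heart of the argument, and the step I expect to be the main obstacle, is precisely this relation check, since it is where the three defining relations of $\widehat{GT}$ must be consumed. The far-commutation relations $\sigma_i\sigma_j=\sigma_j\sigma_i$ for $|i-j|>1$ I would treat by a support analysis: the image of $\sigma_i$ lives in the subgroup on strands $1,\dots,i+1$, and its only possible interaction with the image of $\sigma_j$ (for $j\geqslant i+2$) is through the block generator $x_{1\cdots j-1,j}$ appearing inside $f_{1\cdots j-1,j,j+1}$; but that block generator is invariant under braiding internal to the block $\{1,\dots,j-1\}$, so the two images commute for elementary reasons. The braid relations $\sigma_i\sigma_{i+1}\sigma_i=\sigma_{i+1}\sigma_i\sigma_{i+1}$ carry the real content. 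In the three-strand core (the case $i=1$, where the left factor is empty) the relation reduces, after writing $x_{12}=\sigma_1^2$, $x_{23}=\sigma_2^2$ and substituting $m=(\lambda-1)/2$, to exactly the two hexagon equations \eqref{2-cycle} and \eqref{hexagon equation}, with the vanishing in Lemma \ref{projection of f} handling the terms in which a strand is omitted. For $i\geqslant2$ a spectator block $\{1,\dots,i-1\}$ sits to the left, and the two conjugators $f_{1\cdots i-1,i,i+1}$ and $f_{1\cdots i,i+1,i+2}$ carry different nested groupings; reconciling them is a reassociation of four blocks, which is governed precisely by the pentagon equation \eqref{pentagon equation}. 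I would carry this out by propagating the three-strand identity along the cabling homomorphisms that collapse $\{1,\dots,i-1\}$ to a single strand.

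Granting well-definedness, the remaining points are comparatively formal. To see that $\rho_n$ is a homomorphism I would evaluate $\rho_n\big((\lambda_2,f_2)\circ(\lambda_1,f_1)\big)$ and $\rho_n(\lambda_2,f_2)\circ\rho_n(\lambda_1,f_1)$ on each generator; on $\sigma_1$ this is the immediate $\sigma_1^{\lambda_2\lambda_1}=(\sigma_1^{\lambda_2})^{\lambda_1}$, and on $\sigma_i$ with $i\geqslant2$ the composite conjugator unwinds, by the very definition of $f(\alpha,\beta)$, into $f_2\cdot f_1(x^{\lambda_2},f_2^{-1}y^{\lambda_2}f_2)$ evaluated at the appropriate generators, reproducing the twisted product \eqref{product of GT}. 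Since $\widehat{GT}$ is a group and $\rho_n$ is thereby a homomorphism into the monoid of continuous endomorphisms of $\widehat{B}_n$, applying it to $\sigma$ and $\sigma^{-1}$ yields $\rho_n(\sigma)\circ\rho_n(\sigma^{-1})=\rho_n(\mathrm{id})=\mathrm{id}$ together with its mirror, so every $\rho_n(\sigma)$ is invertible and $\rho_n$ lands in $\mathrm{Aut}\ \widehat{B}_n$. Finally $\rho_n$ is continuous because it is determined by the images of the finitely many topological generators $\sigma_i$, and both $\lambda\mapsto\sigma_1^\lambda$ and $f\mapsto f_{1\cdots i-1,i,i+1}$ are continuous in $(\lambda,f)\in\widehat{\mathbb Z}^\times\times\widehat{F}_2$.
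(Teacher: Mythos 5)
You should note first that the paper contains no proof of this theorem: Theorem \ref{GT-action theorem on braids} is quoted from Drinfeld and Ihara--Matsumoto, and the paper only develops further properties of the action (Propositions \ref{action-change-basepoint-braids} and \ref{action-evaluation}). Your outline is essentially a reconstruction of the standard argument of those cited sources, and its architecture is sound: the universal property of profinite completion reduces everything to a homomorphism $B_n\to\widehat{B}_n$ specified on Artin generators; the far commutations hold because $x_{1\cdots j-1,j}$ and $x_{j,j+1}$ centralize the sub-braid group on strands $1,\dots,j-1$; the three-strand braid relation is equivalent to the hexagons \eqref{2-cycle} and \eqref{hexagon equation} (together with the zero-exponent-sum property of $f$ recorded in Lemma \ref{projection of f}, which is what lets one trade $(xy)^{-1}$ for the third pure-braid generator up to the center of $\widehat{B}_3$); and the case $i\geqslant 2$ is handled by the pentagon \eqref{pentagon equation} pushed along cabling maps plus spectator-block commutations --- precisely the technique the paper itself uses to prove Proposition \ref{action-change-basepoint-braids}.

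Two steps of your sketch conceal genuine computations and should not be presented as formal. First, in the homomorphism property, the claim that the composite conjugator ``unwinds by the very definition of $f(\alpha,\beta)$'' silently uses the equivariance of the embedding $\widehat{F}_2\hookrightarrow\widehat{P}_n$, $x\mapsto x_{1\cdots i-1,i}$, $y\mapsto x_{i,i+1}$: for $\tau=(\lambda_2,f_2)$ the identity $\rho_n(\tau)(x_{i,i+1})=f_2(x_{1\cdots i-1,i},x_{i,i+1})^{-1}x_{i,i+1}^{\lambda_2}f_2(x_{1\cdots i-1,i},x_{i,i+1})$ is immediate from $x_{i,i+1}=\sigma_i^2$, but the companion identity $\rho_n(\tau)(x_{1\cdots i-1,i})=x_{1\cdots i-1,i}^{\lambda_2}$ is not definitional; it is a pentagon-plus-block-commutation computation of the same nature as \eqref{GT-action on eta}. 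Second, your phrase ``cabling homomorphisms that collapse $\{1,\dots,i-1\}$ to a single strand'' has the maps pointing the wrong way: the relevant homomorphisms thicken one strand into a block (from fewer strands to more), and by Notation \ref{ev-nota} they are homomorphisms only on pure braid groups --- which suffices here precisely because $f$ lies in $\widehat{F}_2\subset\widehat{P}_3$, but this needs to be said. Neither point invalidates the route; they are the places where the outline must be filled in to become a proof.
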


We denote $\rho_n(\sigma)(b)$ simply by $\sigma(b)$ 
when there is no confusion.

\begin{rem}
(1).
According to the method to calculate the action in \cite{Dr}
(explicitly presented in the appendix of \cite{IM}),
particularly we have
\begin{align}\label{GT-action on eta}
(\sigma_1\cdots\sigma_i)\mapsto & \
f_{[1],[i],[n-i-1]}^{-1}\cdot
(\sigma_1\cdots\sigma_i)\cdot x_{1\cdots i,i+1}^m ,\\ \notag
(\sigma_i\cdots\sigma_1)\mapsto  & \
x_{1\cdots i,i+1}^m \cdot (\sigma_i\cdots\sigma_1) \cdot
f_{[1],[i],[n-i-1]}.
\end{align}
Here 
$m=\frac{\lambda-1}{2}$
and for $f_{[1],[i],[n-i-1]}$, see \eqref{f[]}.

(2).
We note that $\rho_n$ is injective when $n\geqslant 4$. 
\end{rem}

By Theorem \ref{GQ to GT} and Theorem \ref{GT-action theorem on braids},
we obtain the absolute Galois representation
\begin{equation}\label{GQ to Aut Bn}
\rho_n:  G_{\mathbb Q}\to \mathrm{Aut}\ \widehat{B}_n .
\end{equation}
The below is an algebraic-geometrical interpretation of the Galois action
in terms of Grothendieck's  theory \cite{SGA1} on profinite (scheme-theoretical) fundamental groups.

\begin{rem}\label{algebraic geometry interpretation}
We have a well-known identification between 
the braid group $B_n$
with the topological fundamental group
$\pi_1(X_n(\mathbb C),*)$.
Here  $X_n({\mathbb C})=\mathrm{Conf}^n_{{\frak S}_n}({\mathbb C})$ means  the quotient of
the {\it configuration space} 
$$
\mathrm{Conf}^n({\mathbb C})=\{(z_1,\dots,z_n)\in{\mathbb C}^n| z_i\neq z_j (i\neq j)\}
$$
by the symmetric group ${\frak S}_n$ action
and $*$ is a basepoint. 
%

Let 
$\widehat{\pi}_1(X_n\times{\overline{\mathbb Q}},*)$
denote the profinite (scheme-theoretical) fundamental group of 
$X_n\times{\overline{\mathbb Q}}$ 
in the sense of Grothendieck \cite{SGA1}.
Here the scheme $X_n$ means the $\mathbb Q$-structure of $X_n({\mathbb C})$
and $*$ is a basepoint defined over $\overline{\mathbb Q}$ in the sense of loc.cit.
%
Fix an embedding from $\overline{\mathbb Q}$ into $\mathbb C$,
then, by the so-called  Riemann's existence theorem (lo.cit. VII.Th\'{e}or\`{e}me 5.1),
the group $\widehat{\pi}_1(X_n\times{\overline{\mathbb Q}},*)$ is identified with
the profinite completion of $\pi_1(X_n(\mathbb C),*)$.
Hence we have an identification
\begin{equation}\label{profinite identification}
\widehat{B}_n\simeq \widehat{\pi}_1(X_n\times{\overline{\mathbb Q}},*).
\end{equation}
%
%

Next assume that  $*$ is defined over $\mathbb Q$.
Then by \cite{SGA1} IX.Th\'{e}or\`{e}me 6.1,
we have the homotopy exact sequence of the profinite fundamental group
$$
1\to \widehat{\pi}_1(X_n\times{\overline{\mathbb Q}}, *)\to
\widehat{\pi}_1(X_n, *)\to
\widehat{\pi}_1(Spec{\mathbb Q},*)\to 1.
$$
The last $\widehat{\pi}_1(Spec \ {\mathbb Q},*)$ is nothing but the absolute Galois group $G_{\mathbb Q}$.
A point here is that each basepoint $*$ 
determines a section $s_*$ of the exact sequence.
By \eqref{profinite identification},
the section $s_*$ yields a continuous Galois representation  on  
$\widehat{B}_n$
$$
\rho_{n,*}:G_{\mathbb Q}\to \mathrm{Aut}\ \widehat{B}_n
$$
by inner conjugation, i.e.,
$\rho_{n,*}(\sigma)(b)=s_*(\sigma)\cdot b\cdot s_*(\sigma)^{-1}$
($\sigma\in G_{\mathbb Q}$ and $b\in\widehat{B}_n$).
A specific (tangential  in the sense of Deligne \cite{De})
basepoint $t_n$  is constructed in \cite{IM},
where they showed that the resulting $\rho_{n,t_n}$ is equal to our 
$\rho_n$ in \eqref{GQ to Aut Bn}.
\end{rem}

Special properties
of  the $\widehat{GT}$-action in Theorem \ref{GT-action theorem on braids}
are presented in 
the following Proposition \ref{action-change-basepoint-braids}
and Proposition \ref{action-evaluation}
(though they are implicitly suggested in \cite{Dr}).
They will be employed several times in our paper.

\begin{nota}
Put $n>0$ and
$m_1,m_2\geqslant 0$. 
On the continuous homomorphism 
$$
e_{m_1}\otimes\cdot\otimes e_{m_2}:\widehat{B}_n\to \widehat{B}_{m_1+n+m_2}
$$
which is defined by
$\sigma_i\mapsto \sigma_{m_1+i}$
(obtained by placing the trivial braids $e_{m_1}$ and $e_{m_2}$ on the  left and right respectively),
we denote the image of $b\in\widehat{B}_n$ by
$e_{m_1}\otimes b\otimes e_{m_2}$.
\end{nota}

\begin{prop}\label{action-change-basepoint-braids}
Put $n>0$ and $m_1,m_2\geqslant 0$. 
Let $\sigma=(\lambda,f)\in \widehat{GT}$ and $b\in\widehat{B}_n$.
Then 
\begin{equation}\label{acbb}
\sigma(e_{m_1}\otimes b\otimes e_{m_2})=
f_{[m_1],[n],[m_2]}^{-1}
\cdot (e_{m_1}\otimes \sigma(b)\otimes e_{m_2})\cdot
f_{[m_1],[n],[m_2]}.
\end{equation}
Here
\begin{align}\label{f[]}
f&_{[m_1],[n],[m_2]}:=
f_{1\cdots m_1, m_1+1\cdots m_1+n-1,m_1+n}\cdot \\ \notag
&\quad f_{1\cdots m_1, m_1+1\cdots m_1+n-2,m_1+n-1} \cdots
f_{1\cdots m_1, m_1+1,m_1+2}
\in\widehat{B}_{m_1+n+m_2}. 
\end{align}
\end{prop}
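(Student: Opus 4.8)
The plan is to verify \eqref{acbb} on a topological generating set of $\widehat{B}_n$ and propagate it by multiplicativity; write $F:=f_{[m_1],[n],[m_2]}$ throughout. Both sides of \eqref{acbb} are continuous in $b$ (the action of Theorem \ref{GT-action theorem on braids}, the map $e_{m_1}\otimes b\otimes e_{m_2}$, and the conjugation $z\mapsto F^{-1}zF$ are all continuous), so by the density \eqref{residually finiteness} it suffices to treat $b\in B_n$. The assignment sending $b$ to either side of \eqref{acbb} is multiplicative: if the identity holds for $b$ and $b'$, then, since $\rho(\sigma)$ is an automorphism and both $e_{m_1}\otimes\,\cdot\,\otimes e_{m_2}$ and conjugation by $F$ are homomorphisms, it holds for $bb'$ and for $b^{-1}$. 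Hence it is enough to check \eqref{acbb} for the generators $b=\sigma_i$, $1\leqslant i\leqslant n-1$. Finally, the action formula of Theorem \ref{GT-action theorem on braids} on $\sigma_i$ involves only the strands $1,\dots,i+1$, and the product \eqref{f[]} defining $F$ does not involve the last $m_2$ strands at all; so the trivial strands $e_{m_2}$ are inert and I may assume $m_2=0$.

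Fix $i$ and set $\sigma:=\sigma_{m_1+i}$. By Theorem \ref{GT-action theorem on braids} the left-hand side of \eqref{acbb} for $b=\sigma_i$ is $g_i^{-1}\sigma^\lambda g_i$ with $g_i:=f_{1\cdots m_1+i-1,\,m_1+i,\,m_1+i+1}$, while the right-hand side is $F^{-1}h_i^{-1}\sigma^\lambda h_i F$ with $h_i:=f_{m_1+1\cdots m_1+i-1,\,m_1+i,\,m_1+i+1}$ (the index-shift by $m_1$ of the coefficient of $\sigma(\sigma_i)\in\widehat{B}_n$; here $h_1=1$ and $g_1=f_{1\cdots m_1,\,m_1+1,\,m_1+2}$). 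Thus \eqref{acbb} is equivalent to the commutation statement
\begin{equation*}
\bigl(h_i F g_i^{-1}\bigr)\,\sigma^\lambda\,\bigl(h_i F g_i^{-1}\bigr)^{-1}=\sigma^\lambda .
\end{equation*}
The only structural input I will need is an elementary commuting lemma: for adjacent intervals $I,J$ the linking pure braid $x_{I,J}$ commutes with every braid supported on strands inside $I$ and with every braid supported on strands inside $J$; this follows from the full-twist decomposition $x_{I,J}=\Delta_{I\cup J}^2(\Delta_I^2)^{-1}(\Delta_J^2)^{-1}$, where $\Delta_K^2$ is the (central) full twist on a block $K$. Consequently each block element $f_{P,Q,R}=f(x_{P,Q},x_{Q,R})$ commutes with any braid supported strictly inside one of $P,Q,R$.

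For $i=1$ this finishes the argument directly: $h_1=1$ and $g_1$ is the last factor of $F$, so $Fg_1^{-1}=\prod_{k\geqslant 2}f_{1\cdots m_1,\,m_1+1\cdots m_1+k,\,m_1+k+1}$, and every such factor commutes with $\sigma=\sigma_{m_1+1}$ because $\sigma$ is supported inside the block $\{m_1+1,\dots,m_1+k\}$; hence $Fg_1^{-1}$ commutes with $\sigma^\lambda$. For $i\geqslant 2$ the element $g_i$ carries the wide first block $\{1,\dots,m_1+i-1\}$, whereas $h_i$ and the factors of $F$ keep $A:=\{1,\dots,m_1\}$ separate, and I would bridge the gap by the pentagon equation \eqref{pentagon equation}. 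Applying the block-substitution (cabling) homomorphism $\widehat{P}_4\to\widehat{P}_{m_1+n}$ that sends the four strands to the adjacent blocks $A$, $B'=\{m_1+1,\dots,m_1+i-1\}$, $C=\{m_1+i\}$, $D=\{m_1+i+1\}$ turns \eqref{pentagon equation} into $f_{A,B',CD}\,g_i=h_i\,f_{A,B'C,D}\,f_{A,B',C}$. This expresses $g_i$ through $h_i$, the two consecutive factors $f_{A,B'C,D}$ and $f_{A,B',C}$ of $F$ (with $M_k:=\{m_1+1,\dots,m_1+k\}$ these are the $k=i$ and $k=i-1$ factors), and the extra term $f_{A,B',CD}$, which commutes with $\sigma$ by the commuting lemma. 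Substituting and cancelling $f_{A,B',CD}$ reduces the commutation statement to showing that the remaining product of block-$f$'s of $F$ commutes with $e_{m_1}\otimes\sigma(\sigma_i)\otimes e_{m_2}=h_i^{-1}\sigma^\lambda h_i$.

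I expect this last matching to be the main obstacle. After one pentagon step one must compare $F$ with the two factors $f_{A,M_i,\cdot}f_{A,M_{i-1},\cdot}$ deleted: the high-index factors ($k\geqslant i+1$) commute with $\sigma(\sigma_i)$ for free, since their block $M_k$ contains the whole support $\{m_1+1,\dots,m_1+i+1\}$ of $\sigma(\sigma_i)$, but the low-index factors ($k\leqslant i-2$) are supported on strands overlapping that support and do not commute automatically. The natural remedy is to run the whole step as an induction (on $m_1$, or by peeling off one strand of $B'$ at a time), each inductive stage consuming exactly one cabled pentagon relation \eqref{pentagon equation} against the defining product \eqref{f[]}, until the first block is exhausted. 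No use is made of the hexagon equations \eqref{2-cycle}--\eqref{hexagon equation}; for $i\geqslant 2$ the entire content of the proposition is this bookkeeping of iterated pentagon relations, and organizing it cleanly is the crux.
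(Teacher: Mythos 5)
Your reduction to the generators $\sigma_i$, your block-commutation lemma (via full twists), and your single cabled pentagon identity $f_{A,B',CD}\,g_i = h_i\,f_{A,B'C,D}\,f_{A,B',C}$ are all correct, and they are exactly the ingredients of the paper's proof, which splits the factors of $F$ into the ranges $M\geqslant i+2$, $M=i,i+1$, $M\leqslant i-1$ and applies the pentagon once to the middle pair. The gap is in your last paragraph: after substituting the pentagon you reduce the problem to showing that the remaining factors of $F$ commute with $h_i^{-1}\sigma^\lambda h_i$, observe (correctly) that the low-index factors do not commute with $h_i$, and then punt to an unspecified induction consuming iterated pentagon relations. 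That reduction is the wrong one, and the proposed induction is unnecessary: no second pentagon is ever needed, so as written the proof is incomplete precisely at what you call the crux.

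The fact you are missing is that $g_i=f_{1\cdots m_1+i-1,\,m_1+i,\,m_1+i+1}$ --- unlike $h_i$ --- commutes with every low-index factor $f_{1\cdots m_1,\,m_1+1\cdots m_1+M-1,\,m_1+M}$ with $M\leqslant i-1$: such a factor is supported in $\{1,\dots,m_1+M\}\subseteq\{1,\dots,m_1+i-1\}$, i.e.\ strictly inside the \emph{first} block of $g_i$ and away from the strands $m_1+i,m_1+i+1$, so your own commuting lemma applies to it. Write $F=F_{\mathrm{high}}\cdot f_{A,B'C,D}\,f_{A,B',C}\cdot F_{\mathrm{low}}$; since $h_i$ commutes with every factor of $F_{\mathrm{high}}$ (its support lies inside their middle blocks), one gets
$$
h_iFg_i^{-1}
=F_{\mathrm{high}}\cdot\bigl(h_i\, f_{A,B'C,D}\,f_{A,B',C}\bigr)\cdot F_{\mathrm{low}}\,g_i^{-1}
=F_{\mathrm{high}}\cdot f_{A,B',CD}\cdot g_i F_{\mathrm{low}} g_i^{-1}
=F_{\mathrm{high}}\cdot f_{A,B',CD}\cdot F_{\mathrm{low}},
$$
and each of the three remaining pieces commutes with $\sigma_{m_1+i}^\lambda$: the factors of $F_{\mathrm{high}}$ because $\sigma_{m_1+i}$ is supported inside their middle blocks, $f_{A,B',CD}=f(x_{A,B'},x_{B',CD})$ because $\sigma_{m_1+i}$ is supported inside the block $CD$, and the factors of $F_{\mathrm{low}}$ by disjointness of support. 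This closes your commutation statement using only the one pentagon you already invoked; it is precisely the paper's case $M\leqslant i-1$, where commutation is checked against $g_i$ and $\sigma_{m_1+i}$, not against $h_i$.
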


\begin{proof}
It is enough to check \eqref{acbb} for $b=\sigma_i$ ($1\leqslant i \leqslant n-1$).
By Theorem \ref{GT-action theorem on braids},
\begin{align}\label{GT-action on tensor for braid}
&\qquad f_{[m_1],[n],[m_2]}^{-1}\cdot
(e_{m_1}\otimes\sigma(\sigma_i)\otimes e_{m_2})
\cdot f_{[m_1],[n],[m_2]}= \\ \notag
f_{[m_1],[n],[m_2]}^{-1}\cdot
&f_{m_1+1\cdots  m_1+i-1, m_1+i, m_1+i+1}^{-1}\cdot 
\sigma_{m_1+i}^\lambda 
\cdot  f_{m_1+1\cdots m_1+i-1, m_1+i, m_1+i+1} 
\cdot f_{[m_1],[n],[m_2]}.
\notag
\end{align}

\begin{itemize}
\item
When $M\geqslant i+2$,
both
$f_{m_1+1\cdots m_1+i-1, m_1+i, m_1+i+1} $
and $\sigma_{m_1+i}$ commute with
$f_{1\cdots m_1, m_1+1\cdots m_1+M-1,m_1+M}$
because $x_{m_1+1\cdots m_1+i-1,m_1+i}$, $x_{m_1+i, m_1+i+1}$ and
$\sigma_{m_1+i}$ commute with
$x_{1\cdots m_1, m_1+1\cdots m_1+M-1}$ and
$x_{m_1+1\cdots m_1+M-1,m_1+M}$.
Therefore
\begin{align*}
\eqref{GT-action on tensor for braid}=
f_{[m_1],[i+1],[m_2]}^{-1}\cdot
&f_{m_1+1\cdots m_1+i-1, m_1+i, m_1+i+1}^{-1}\cdot 
\sigma_{m_1+i}^\lambda  \\ \notag
&\qquad\qquad  \qquad
\cdot  f_{m_1+1\cdots m_1+i-1, m_1+i, m_1+i+1} 
\cdot f_{[m_1],[i+1],[m_2]}.
\end{align*}

\item
When $M=i, i+1$, our calculation goes as follows.
\begin{align*}
\eqref{GT-action on tensor for braid}&
=f_{[m_1],[i-1],[m_2]}^{-1}\cdot
f_{1\cdots m_1, m_1+1\cdots m_1+i-1,m_1+i}^{-1}\cdot
f_{1\cdots m_1, m_1+1\cdots m_1+i,m_1+i+1}^{-1}\cdot \\ \notag
&\quad f_{m_1+1\cdots m_1+i-1, m_1+i, m_1+i+1}^{-1}\cdot 
\sigma_{m_1+i}^\lambda 
\cdot  f_{m_1+1\cdots m_1+i-1, m_1+i, m_1+i+1}\cdot \\ \notag
&\qquad f_{1\cdots m_1, m_1+1\cdots m_1+i,m_1+i+1}\cdot
f_{1\cdots m_1, m_1+1\cdots m_1+i-1,m_1+i}\cdot
f_{[m_1],[i-1],[m_2]}, \\
\intertext{by the pentagon equation \eqref{pentagon equation}, }
&=f_{[m_1],[i-1],[l+m_2]}^{-1}\cdot
f_{1\cdots m_1+i-1, m_1+i, m_1+i+1}^{-1} \cdot \\
&\quad f_{1\cdots m_1, m_1+1\cdots m_1+i-1, m_1+i\ m_1+i+1}^{-1}\cdot
\sigma_{m_1+i}^\lambda \cdot 
f_{1\cdots m_1, m_1+1\cdots m_1+i-1, m_1+i\ m_1+i+1}\cdot \\
&\qquad f_{1\cdots m_1+i-1, m_1+i, m_1+i+1}\cdot
f_{[m_1],[i-1],[l+m_2+2]}. \\
\intertext{Since $\sigma_{m_1+i}$ commutes with $f_{1\cdots m_1, m_1+1\cdots m_1+i-1, m_1+i\ m_1+i+1}$,}
&=f_{[m_1],[i-1],[l+m_2]}^{-1}\cdot
f_{1\cdots m_1+i-1, m_1+i, m_1+i+1}^{-1} \cdot 
\sigma_{m_1+i}^\lambda \cdot  \\
&\qquad \qquad \qquad
f_{1\cdots m_1+i-1, m_1+i, m_1+i+1}\cdot
f_{[m_1],[i-1],[l+m_2+2]}. 
\end{align*}

\item
When $M\leqslant i-1$, both 
$f_{1\cdots m_1+i-1, m_1+i, m_1+i+1}$ and $\sigma_{m_1+i}$ commute with
$f_{1\cdots m_1, m_1+1\cdots m_1+M-1,m_1+M}$.
Therefore
\begin{align*}
\eqref{GT-action on tensor for braid}
&=f_{1\cdots m_1+i-1, m_1+i, m_1+i+1}^{-1} \cdot 
\sigma_{m_1+i}^\lambda \cdot 
f_{1\cdots m_1+i-1, m_1+i, m_1+i+1} \\
&=\sigma(\sigma_{m_1+i})
=\sigma(e_{m_1}\otimes\sigma_i\otimes e_{m_2}).
\end{align*}
\end{itemize}
Hence we get the equality \eqref{acbb}.
\end{proof}

\begin{nota}\label{ev-nota}
Let $l,n \geqslant 1$ and $1\leqslant k\leqslant l$.
We consider the continuous group homomorphism
\begin{equation}\label{ev}
P_l\to P_{l+n-1}
\end{equation}
sending, for $1\leqslant i<j\leqslant l$,
\begin{equation*}
x_{i,j}\mapsto
\begin{cases}
x_{i+k-1,j+k-1} & (k<i), \\
x_{i\cdots i+k-1, j+k-1} & (k=i), \\
x_{i,j+k-1} &(i<k<j), \\
x_{i,j\cdots j+k-1} & (k=j), \\
x_{i,j} & (j<k).
\end{cases}
\end{equation*}
We obtain the map by replacing the $k$-th string (from the left)
by the trivial braid $e_n$ with $n$ strings,
hence it naturally extends to two maps (not homomorphisms)
$$
\mathrm{ev}_{k, e_n}:{B}_l\to {B}_{l+n-1}\qquad
\text{ and } \qquad
\mathrm{ev}^{k, e_n}:{B}_l\to {B}_{l+n-1}
$$
which replaces the $k$-th string (from the bottom and  the above left respectively)
by the trivial braid $e_n$ with $n$ strings.
Both of their restrictions into $P_l$ are equal to the above map \eqref{ev}.
Since the map \eqref{ev} also continuously extends into the homomorphism
$\widehat{P}_l\to\widehat{P}_l$, our two maps naturally extend to the maps (not homomorphisms)
$$
\mathrm{ev}_{k, e_n}:\widehat{B}_l\to \widehat{B}_{l+n-1}\qquad
\text{ and } \qquad
\mathrm{ev}^{k, e_n}:\widehat{B}_l\to \widehat{B}_{l+n-1}.
$$
Here we employ the same symbols because there would be no confusion.
\end{nota}

\begin{prop}\label{action-evaluation}
Put $l\geqslant 1$.
Let $\sigma=(\lambda,f)\in\widehat{GT}$ and $b\in \widehat{B}_l$.
Set $k'=b (k)$.
Here $b(k)$ stands for the image of $k$ by the permutation
corresponding to $b$
by the projection $B_l\to{\frak S}_l$.
Then,  for each $k$ with $1\leqslant k\leqslant l$,
we have
\begin{align} \label{ae}
\sigma(\mathrm{ev}_{k,e_n}(b))=&
f_{[k'-1],[n],[l-k']}^{-1}\cdot \mathrm{ev}_{k,e_n}(\sigma(b))\cdot f_{[k-1],[n],[l-k]}, \\
\sigma(\mathrm{ev}^{k',e_n}(b))=&
f_{[k'-1],[n],[l-k']}^{-1}\cdot \mathrm{ev}^{k',e_n}(\sigma(b))\cdot f_{[k-1],[n],[l-k]}.\notag
\end{align}
\end{prop}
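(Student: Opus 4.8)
The plan is to prove the first identity in \eqref{ae} and then obtain the second one for free. Indeed, the two evaluation maps describe the \emph{same} fattening of a single strand read from opposite ends: for a fixed $b$ with $b(k)=k'$ the $k$-th strand from the bottom is the $k'$-th strand from the top, so by the geometric description in Notation \ref{ev-nota} one has $ev_{k,e_n}(b)=ev^{k',e_n}(b)$. Moreover every $\sigma=(\lambda,f)\in\widehat{GT}$ acts on $\widehat{B}_l$ by an automorphism that preserves the underlying permutation, since by Theorem \ref{GT-action theorem on braids} $\sigma(\sigma_i)$ is a conjugate of $\sigma_i^\lambda$ by a pure braid and $\lambda\in\widehat{\mathbb Z}^\times$ is odd. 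Hence $(\sigma(b))(k)=b(k)=k'$ and $ev_{k,e_n}(\sigma(b))=ev^{k',e_n}(\sigma(b))$, so the two lines of \eqref{ae} are literally the same statement and it suffices to treat the first. I would then prove it in three stages: verify it on the generators $\sigma_i^{\pm1}$, propagate it through products by a twisted multiplicativity, and finally pass from $B_l$ to $\widehat B_l$ by continuity.

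For the base case $b=\sigma_i^{\pm1}$, the braid $ev_{k,e_n}(\sigma_i)$ is explicit: when $k\notin\{i,i+1\}$ it is a single shifted crossing flanked by trivial strands, while when $k\in\{i,i+1\}$ it is the cabled crossing in which one strand is carried across the fattened block of $n$ strands, i.e.\ precisely an element of the type $(\sigma_1\cdots\sigma_i)$ or $(\sigma_i\cdots\sigma_1)$ occurring in \eqref{GT-action on eta}, surrounded by the trivial strands $e_{k-1}$ and $e_{l-k}$. I would compute $\sigma(ev_{k,e_n}(\sigma_i))$ by combining the explicit action \eqref{GT-action on eta} on such ``move one strand across a block'' elements with Proposition \ref{action-change-basepoint-braids} to absorb the flanking trivial strands, and compare the result with $ev_{k,e_n}(\sigma(\sigma_i))$, where $\sigma(\sigma_i)=f_{1\cdots i-1,i,i+1}^{-1}\sigma_i^\lambda f_{1\cdots i-1,i,i+1}$. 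I expect this reconciliation to be the main obstacle: one must match the half-twist factors $x^m$ produced by \eqref{GT-action on eta} against the $ev_{k,e_n}$-image of the conjugating $f$ inside $\sigma(\sigma_i)$, and show, after invoking the pentagon relation \eqref{pentagon equation}, that everything collapses to the two asymmetric conjugators $f_{[k'-1],[n],[l-k']}^{-1}$ on the left and $f_{[k-1],[n],[l-k]}$ on the right, with $k'=\sigma_i(k)$.

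The decisive structural point for the inductive step is that, although $ev_{k,e_n}$ is not a homomorphism, it obeys the \emph{twisted} rule $ev_{k,e_n}(b_1b_2)=ev_{b_2(k),e_n}(b_1)\cdot ev_{k,e_n}(b_2)$, because the strand fattened at bottom position $k$ of $b_1b_2$ leaves $b_2$ at position $k_1:=b_2(k)$ and enters $b_1$ there. Writing $F_j:=f_{[j-1],[n],[l-j]}\in\widehat B_{l+n-1}$ (note the left conjugator at top index $j$ and the right conjugator at bottom index $j$ in \eqref{ae} are the same element $F_j$), I would apply the automorphism $\sigma$ to this product and insert the inductive hypotheses for $b_1$ at index $k_1$ and for $b_2$ at index $k$. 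The middle factor is then $F_{k_1}\cdot F_{k_1}^{-1}$, which cancels, leaving $F_{k'}^{-1}\cdot ev_{k_1,e_n}(\sigma(b_1))\,ev_{k,e_n}(\sigma(b_2))\cdot F_k$; applying the twisted rule once more to $\sigma(b_1)\sigma(b_2)=\sigma(b_1b_2)$ (again using that $\sigma$ preserves permutations, so $(\sigma(b_2))(k)=k_1$) turns this into $F_{k'}^{-1}\cdot ev_{k,e_n}(\sigma(b_1b_2))\cdot F_k$ with $k'=(b_1b_2)(k)$. This cancellation at the shared intermediate index $k_1$ is exactly what makes the argument go through. Since the generators and their inverses satisfy \eqref{ae} at every index $k$ (the inverse case following either from a direct check or from the product rule applied to $\sigma_i^{-1}\sigma_i=e_l$), induction on word length yields \eqref{ae} for all $b\in B_l$.

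It remains to pass to the profinite completion. The projection $\widehat B_l\to\mathfrak{S}_l$ is locally constant, so on each coset of $\widehat P_l$ the data $k'$, $F_{k'}$ and $F_k$ are constant, and on such a coset both sides of \eqref{ae} are continuous functions of $b$, using the continuity of the action in Theorem \ref{GT-action theorem on braids}, the continuity of the extended maps $ev_{k,e_n}$ noted in Notation \ref{ev-nota}, and continuity of multiplication. Since $B_l$ is dense in $\widehat B_l$ and meets each such coset densely (as $P_l$ is dense in $\widehat P_l$), the identity \eqref{ae} extends from $B_l$ to all of $\widehat B_l$, completing the proof.
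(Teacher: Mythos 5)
Your proposal is correct and follows essentially the same route as the paper's own proof: reduce the second identity in \eqref{ae} to the first via permutation-preservation of the $\widehat{GT}$-action, verify the first identity on generators using \eqref{GT-action on eta}, Proposition \ref{action-change-basepoint-braids} and the pentagon equation \eqref{pentagon equation}, extend to $B_l$ by the twisted product rule $ev_{k,e_n}(b_1b_2)=ev_{b_2(k),e_n}(b_1)\cdot ev_{k,e_n}(b_2)$, and pass to $\widehat{B}_l$ by continuity and density. The one caveat is that the generator case --- which you rightly flag as the main obstacle, and which in the paper occupies nearly the entire proof as a four-way case analysis on the position of $k$ relative to $i$ (using Lemma \ref{aux-lem} to absorb the mismatched $f$-factors) --- is left as a plan rather than executed, whereas your inductive step, treatment of inverse generators, and coset-wise density argument correctly spell out details the paper leaves implicit.
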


\begin{proof}
It suffices to prove the first equality, for
the validity of the second equality is immediate once we have the first equality.
Firstly we prove \eqref{ae} for $b=\sigma_i$ ($1\leqslant i\leqslant l-1$).

\begin{itemize}
\item When $k<i$, we have $\mathrm{ev}_{k,e_n}(\sigma_i)=\sigma_{i+n-1}$ and $k'=k$. Therefore
\begin{align*}
\mathrm{RHS}
&=f_{[k-1],[n],[l-k]}^{-1}\cdot 
\mathrm{ev}_{k,e_n}(f_{1\cdots i-1,i,i+1}^{-1}\sigma_i^\lambda f_{1\cdots i-1,i,i+1})
\cdot f_{[k-1],[n],[l-k]} \\
&=f_{[k-1],[n],[l-k]}^{-1}\cdot 
(f_{1\cdots i+n-2,i+n-1,i+n}^{-1}\sigma_{i+n-1}^\lambda f_{1\cdots i+n-2,i+n-1,i+n})
\cdot f_{[k-1],[n],[l-k]}. \\
\intertext{By $k-1+n\leqslant i+n-2$, $ f_{[k-1],[n],[l-k]}$ commutes with
$f_{1\cdots i+n-2,i+n-1,i+n}$ and $\sigma_{i+n-1}$. Thus}
&=
f_{1\cdots i+n-2,i+n-1,i+n}^{-1}\sigma_{i+n-1}^\lambda f_{1\cdots i+n-2,i+n-1,i+n}
= \sigma(\sigma_{i+n-1}) \\
&=\sigma(\mathrm{ev}_{k,e_n}(\sigma_i))=\mathrm{LHS}.
\end{align*}

\item When $k=i$, we have
$\mathrm{ev}_{k,e_n}(\sigma_i)=\mathrm{ev}_{i,e_n}(\sigma_i)
=\sigma_i\sigma_{i+1}\cdots\sigma_{i+n-1}$
and $k'=k+1=i+1$. Therefore
\begin{align*}
\mathrm{RHS}
&=f_{[i],[n],[l-i-1]}^{-1}\cdot 
\mathrm{ev}_{i,e_n}(f_{1\cdots i-1,i,i+1}^{-1}\sigma_i^\lambda f_{1\cdots i-1,i,i+1})
\cdot f_{[i-1],[n],[l-i]} \\
&=f_{[i],[n],[l-i-1]}^{-1}\cdot 
f_{1\cdots i-1,i,i+1\cdots i+n}^{-1}\cdot
\mathrm{ev}_{i,e_n}(\sigma_i^\lambda) \cdot f_{1\cdots i-1,i\cdots i+n-1,i+n}
\cdot f_{[i-1],[n],[l-i]} \\
&=f_{[i],[n],[l-i-1]}^{-1}\cdot 
f_{1\cdots i-1,i,i+1\cdots i+n}^{-1}\cdot
\mathrm{ev}_{i,e_n}(\sigma_i\cdot x_{i,i+1}^m) 
\cdot f_{[i-1],[n+1],[l-i-1]} \\
&= f_{[i],[n],[l-i-1]}^{-1}\cdot 
f_{1\cdots i-1,i,i+1\cdots i+n}^{-1}\cdot
(\sigma_i\cdots \sigma_{i+n-1})\cdot x_{i\cdots i+n-1,i+n}^m \\
&\qquad\qquad \cdot f_{[i-1],[n+1],[l-i-1]} \\
&=
f_{[i],[n],[l-i-1]}^{-1}\cdot 
f_{1\cdots i-1,i,i+1\cdots i+n}^{-1}\cdot
(e_{i-1}\otimes (\sigma_1\cdots\sigma_n)\cdot x_{1\cdots n,n+1}^m\otimes e_{l-i-1}) \\
&\qquad\qquad \cdot f_{[i-1],[n+1],[l-i-1]}.\\
\intertext{By \eqref{GT-action on eta},}
&=
f_{[i],[n],[l-i-1]}^{-1}\cdot 
f_{1\cdots i-1,i,i+1\cdots i+n}^{-1}\cdot
(e_{i-1}\otimes f_{[1],[n],[0]}\cdot
\sigma(\sigma_1\cdots\sigma_n)\otimes e_{l-i-1}) \\
&\qquad\qquad \cdot f_{[i-1],[n+1],[l-i-1]}\\
&=
f_{[i],[n],[l-i-1]}^{-1}\cdot 
f_{1\cdots i-1,i,i+1\cdots i+n}^{-1}\cdot
(e_{i-1}\otimes f_{[1],[n],[0]}\cdot
\otimes e_{l-i-1}) \\
&\qquad\qquad \cdot (e_{i-1}\otimes 
\sigma(\sigma_1\cdots\sigma_n)\otimes e_{l-i-1}) 
\cdot f_{[i-1],[n+1],[l-i-1]}.\\
\intertext{By Lemma \ref{aux-lem} below,}
&=f_{[i-1],[n+1],[l-i-1]}^{-1}\cdot
(e_{i-1}\otimes \sigma(\sigma_1\cdots\sigma_n)\otimes e_{l-i-1}) 
\cdot f_{[i-1],[n+1],[l-i-1]}.\\
\intertext{By Proposition \ref{action-change-basepoint-braids},}
&=\sigma(e_{i-1}\otimes (\sigma_1\cdots\sigma_n) \otimes e_{l-i-1}) 
=\sigma(\sigma_i\cdots\sigma_{i+n-1}) 
=\sigma(\mathrm{ev}_{i,e_n}(\sigma_i))=\mathrm{LHS}.
\end{align*}

\item When $k=i+1$, we have
$\mathrm{ev}_{k,e_n}(\sigma_i)=\mathrm{ev}_{i+1,e_n}(\sigma_i)
=\sigma_{i+n-1}\cdots\sigma_{i+1}\sigma_{i}$
and $k'=k-1=i$. Therefore
\begin{align*}
\mathrm{RHS}
&=f_{[i-1],[n],[l-i]}^{-1}\cdot 
\mathrm{ev}_{i+1,e_n}(f_{1\cdots i-1,i,i+1}^{-1}\sigma_i^\lambda f_{1\cdots i-1,i,i+1})
\cdot f_{[i],[n],[l-i-1]}\\
&=f_{[i-1],[n],[l-i]}^{-1}\cdot 
 f_{1\cdots i-1,i\cdots i+n-1,i+n}^{-1}\cdot
\mathrm{ev}_{i+1,e_n}(\sigma_i^\lambda) \cdot f_{1\cdots i-1,i,i+1\cdots i+n}
\cdot f_{[i],[n],[l-i-1]} \\
&= f_{[i-1],[n+1],[l-i-1]}^{-1}\cdot
\mathrm{ev}_{i+1,e_n}(x_{i,i+1}^m\cdot\sigma_i) 
\cdot f_{1\cdots i-1,i,i+1\cdots i+n} \cdot
f_{[i],[n],[l-i-1]} \\
&=f_{[i-1],[n+1],[l-i-1]}^{-1}\cdot
x_{i\cdots i+n-1, i+n}^m\cdot
(\sigma_{i+n-1}\cdots\sigma_{i+1}\sigma_{i}) \\
&\qquad\qquad 
\cdot f_{1\cdots i-1,i,i+1\cdots i+n} 
\cdot f_{[i],[n],[l-i-1]} \\
&=f_{[i-1],[n+1],[l-i-1]}^{-1}\cdot
(e_{i-1}\otimes x_{1\cdots n,n+1}^m\cdot (\sigma_n\cdots\sigma_1)\otimes e_{l-i-1}) \\
&\qquad\qquad \cdot f_{1\cdots i-1,i,i+1\cdots i+n} \cdot f_{[i],[n],[l-i-1]}.
\intertext{By \eqref{GT-action on eta},}
&=f_{[i-1],[n+1],[l-i-1]}^{-1}\cdot
(e_{i-1}\otimes \sigma(\sigma_n\cdots\sigma_1)\cdot
f_{[1],[n],[0]}^{-1}\otimes e_{l-i-1}) \\
&\qquad\qquad \cdot f_{1\cdots i-1,i,i+1\cdots i+n}\cdot 
f_{[i],[n],[l-i-1]}\\
&=f_{[i-1],[n+1],[l-i-1]}^{-1}\cdot
(e_{i-1}\otimes\sigma(\sigma_n\cdots\sigma_1)\otimes e_{l-i-1}) \cdot
(e_{i-1}\otimes f_{[1],[n],[0]}^{-1}\cdot \otimes e_{l-i-1}) \\
&\qquad\qquad \cdot
f_{1\cdots i-1,i,i+1\cdots i+n}\cdot
f_{[i],[n],[l-i-1]}. \\
\intertext{By Lemma \ref{aux-lem} below,}
&=f_{[i-1],[n+1],[l-i-1]}^{-1}\cdot
(e_{i-1}\otimes \sigma(\sigma_n\cdots\sigma_1)\otimes e_{l-i-1}) 
\cdot f_{[i-1],[n+1],[l-i-1]}.\\
\intertext{By Proposition \ref{action-change-basepoint-braids},}
&=\sigma(e_{i-1}\otimes (\sigma_n\cdots\sigma_1) \otimes e_{l-i-1}) 
=\sigma(\sigma_{i+n-1}\cdots\sigma_{i}) 
=\sigma(\mathrm{ev}_{i+1,e_n}(\sigma_i))=\mathrm{LHS}.
\end{align*}

\item When $k>i+1$, we have
$\mathrm{ev}_{k,e_n}(\sigma_i)=\sigma_i$ and $k'=k$. Therefore
\begin{align*}
\mathrm{RHS}
&=f_{[k-1],[n],[l-k]}^{-1}\cdot 
\mathrm{ev}_{k,e_n}(f_{1\cdots i-1,i,i+1}^{-1}\sigma_i^\lambda f_{1\cdots i-1,i,i+1})
\cdot f_{[k-1],[n],[l-k]} \\
&=f_{[k-1],[n],[l-k]}^{-1}\cdot 
(f_{1\cdots i-1,i,i+1}^{-1}\sigma_i^\lambda f_{1\cdots i-1,i,i+1})
\cdot f_{[k-1],[n],[l-k]}. \\
\intertext{By $i+1\leqslant k-1$, $ f_{[k-1],[n],[l-k]}$ commutes with
$f_{1\cdots i-1,i,i+1}$ and $\sigma_{i}$. Thus}
&=f_{1\cdots i-1,i,i+1}^{-1}\sigma_i^\lambda f_{1\cdots i-1,i,i+1}
= \sigma(\sigma_{i})
=\sigma(\mathrm{ev}_{k,e_n}(\sigma_i))=\mathrm{LHS}.
\end{align*}
\end{itemize}
Whence the equation \eqref{ae} for $b=\sigma_i$ is obtained.

The validity for $b=\sigma_i$ implies the validity for $b\in B_l$
because each element of $B_l$ is a finite product of $\sigma_i$'s.
Whence particularly  we have  the validity for  $P_l$.
Then by continuity we have for $\widehat{P}_l$.
Since we have the validity for $B_l$ and $\widehat{P}_l$,
we have the validity for $\widehat{B}_l$.
%
\end{proof}

The auxiliary lemma below is required to prove the above proposition.

\begin{lem}\label{aux-lem}
For $\sigma=(\lambda,f)\in\widehat{GT}$ and $i,n,l>0$ with $l>i$, the following equation holds
in $\widehat{B}_{l+n-1}$:
\begin{equation*}
(e_{i-1}\otimes f_{[1],[n],[0]}\otimes e_{l-i-1})\cdot f_{[i-1],[n+1],[l-i-1]}
=f_{1\cdots i-1,i,i+1\cdots i+n}\cdot f_{[i],[n],[l-i-1]}.
\end{equation*}
\end{lem}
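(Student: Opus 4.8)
The plan is to unwind every symbol so that the asserted identity becomes a clean statement about iterated associators, and then to prove that statement by induction on $n$, where each step consumes exactly one pentagon relation. Write $A=1\cdots i-1$ for the left block and $b_0=i,\,b_1=i+1,\dots,b_n=i+n$ for the $n+1$ consecutive middle strands, and set $Ab_0=1\cdots i$. Expanding the four ingredients straight from \eqref{f[]}, together with the shift $j\mapsto j+(i-1)$ induced by $e_{i-1}\otimes(-)\otimes e_{l-i-1}$, I would introduce the shorthand
\[
\Phi(L;c_1,\dots,c_m):=f_{L,\,c_1\cdots c_{m-1},\,c_m}\cdot f_{L,\,c_1\cdots c_{m-2},\,c_{m-1}}\cdots f_{L,\,c_1,\,c_2}
\]
for the associator product attached to a left block $L$ and middle strands $c_1,\dots,c_m$. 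The computation then gives $e_{i-1}\otimes f_{[1],[n],[0]}\otimes e_{l-i-1}=\Phi(b_0;b_1,\dots,b_n)$, $f_{[i-1],[n+1],[l-i-1]}=\Phi(A;b_0,\dots,b_n)$ and $f_{[i],[n],[l-i-1]}=\Phi(Ab_0;b_1,\dots,b_n)$, so the lemma is equivalent to
\begin{equation*}
\Phi(b_0;b_1,\dots,b_n)\cdot\Phi(A;b_0,\dots,b_n)=f_{A,\,b_0,\,b_1\cdots b_n}\cdot\Phi(Ab_0;b_1,\dots,b_n).\tag{$\star$}
\end{equation*}
For $n=1$ the factors $\Phi(b_0;b_1)$ and $\Phi(Ab_0;b_1)$ are empty products while $\Phi(A;b_0,b_1)=f_{A,b_0,b_1}$, so $(\star)$ reads $f_{A,b_0,b_1}=f_{A,b_0,b_1}$, which settles the base case.

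For the inductive step, assuming $(\star)$ for $n-1$, I would peel the leftmost factor off each product on the left of $(\star)$ by the obvious recursion $\Phi(L;c_1,\dots,c_m)=f_{L,\,c_1\cdots c_{m-1},\,c_m}\cdot\Phi(L;c_1,\dots,c_{m-1})$, obtaining
\[
\Phi(b_0;b_1,\dots,b_n)\cdot\Phi(A;b_0,\dots,b_n)=f_{b_0,\,b_1\cdots b_{n-1},\,b_n}\cdot\Phi(b_0;b_1,\dots,b_{n-1})\cdot f_{A,\,b_0\cdots b_{n-1},\,b_n}\cdot\Phi(A;b_0,\dots,b_{n-1}).
\]
Now $\Phi(b_0;b_1,\dots,b_{n-1})$ is supported on the block $B'=\{b_0,\dots,b_{n-1}\}$, whereas $f_{A,\,b_0\cdots b_{n-1},\,b_n}$ is a word in $x_{A,B'}$ and $x_{B',b_n}$, each of which commutes with every internal pure braid of $B'$ (the same block-commutation repeatedly used in the proof of Proposition \ref{action-change-basepoint-braids}). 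Hence I may slide $\Phi(b_0;b_1,\dots,b_{n-1})$ to the right past $f_{A,\,b_0\cdots b_{n-1},\,b_n}$, bringing the two $\Phi$'s together, and apply the inductive hypothesis $\Phi(b_0;b_1,\dots,b_{n-1})\cdot\Phi(A;b_0,\dots,b_{n-1})=f_{A,\,b_0,\,b_1\cdots b_{n-1}}\cdot\Phi(Ab_0;b_1,\dots,b_{n-1})$. This rewrites the left side of $(\star)$ as $f_{b_0,\,b_1\cdots b_{n-1},\,b_n}\cdot f_{A,\,b_0\cdots b_{n-1},\,b_n}\cdot f_{A,\,b_0,\,b_1\cdots b_{n-1}}\cdot\Phi(Ab_0;b_1,\dots,b_{n-1})$.

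The heart of the argument is that the resulting three-fold product of $f$'s is resolved by a single pentagon relation. Applying the block form of \eqref{pentagon equation}, obtained from $f_{1,2,34}f_{12,3,4}=f_{2,3,4}f_{1,23,4}f_{1,2,3}$ via the cabling homomorphism $\widehat{P}_4\to\widehat{P}_{l+n-1}$ that sends strands $1,2,3,4$ to the blocks $A,\,b_0,\,b_1\cdots b_{n-1},\,b_n$, gives
\[
f_{b_0,\,b_1\cdots b_{n-1},\,b_n}\cdot f_{A,\,b_0\cdots b_{n-1},\,b_n}\cdot f_{A,\,b_0,\,b_1\cdots b_{n-1}}=f_{A,\,b_0,\,b_1\cdots b_n}\cdot f_{Ab_0,\,b_1\cdots b_{n-1},\,b_n}.
\]
Substituting, the left side of $(\star)$ becomes $f_{A,\,b_0,\,b_1\cdots b_n}\cdot f_{Ab_0,\,b_1\cdots b_{n-1},\,b_n}\cdot\Phi(Ab_0;b_1,\dots,b_{n-1})$, and recollecting the last two factors by the recursion $\Phi(Ab_0;b_1,\dots,b_n)=f_{Ab_0,\,b_1\cdots b_{n-1},\,b_n}\cdot\Phi(Ab_0;b_1,\dots,b_{n-1})$ produces exactly the right side $f_{A,\,b_0,\,b_1\cdots b_n}\cdot\Phi(Ab_0;b_1,\dots,b_n)$, completing the induction.

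I expect the main obstacle to be organizational rather than conceptual. The essential input is the single cabled pentagon relation; everything else is bookkeeping. The delicate points are translating $f_{[\cdot],[\cdot],[\cdot]}$ and the $e\otimes(-)\otimes e$ shift faithfully into the $\Phi$-form, keeping the block indices aligned through the peeling and reassembly, and justifying the two commutation facts (that $x_{A,B'}$ and $x_{B',b_n}$ commute with $P_{B'}$). These are standard pure-braid identities of precisely the kind already invoked earlier in this section, so they may be cited rather than reproved, but one must check the index ranges so that $A$ genuinely lies to the left of $B'$ and $b_n$ to its right.
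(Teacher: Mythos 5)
Your proof is correct and takes essentially the same route as the paper: the paper also unwinds $f_{[\cdot],[\cdot],[\cdot]}$ into the product of block-indexed associators and then invokes ``successive applications'' of the pentagon equation \eqref{pentagon equation}. Your induction on $n$ — one cabled pentagon per step, with the block-commutation facts (the same ones used in Proposition \ref{action-change-basepoint-braids}) justifying the sliding and reassembly — is precisely the bookkeeping that the paper's one-line proof leaves implicit.
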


\begin{proof}
The above equation can be read as
\begin{align*}
(f_{i,i+1\cdots i+n-1,i+n}\cdots f_{i,i+1,i+2})\cdot
f_{1\cdots i-1,i\cdots i+n-1,i+n}\cdots f_{1\cdots i-1,i,i+1} \\
=f_{1\cdots i-1,i,i+1\cdots i+n} \cdot
(f_{1\cdots i,i+1\cdots i+n-1,i+n}\cdots f_{1\cdots i,i+1,i+2}). 
\end{align*}
It can be proved by successive applications of \eqref{pentagon equation}.
\end{proof}


\section{Profinite knots}\label{Galois action on profinite knots}
This section is to present our main results.
Our ABC-construction of profinite knots is introduced and the basic properties of profinite knots
are shown in \S \ref{definition and properties}.
An action of the absolute Galois group on profinite knots is rigorously established in \S \ref{absolute Galois action},
where the notion of  pro-$l$ knots introduced in \S \ref{pro-l knots}
serves to show its property.

\subsection{ABC-construction}\label{definition and properties}
Profinite tangle diagrams,
profinite analogues of tangle
diagrams, 
are introduced as  consistent finite sequences of symbols of
three types $A$, $\widehat{B}$ and $C$ in Definition \ref{definition of profinite tangle}.
Profinite link diagrams mean profinite tangle diagrams without endpoints and profinite knot diagrams mean profinite link diagrams
with a single connected component (Definition \ref{definition of profinite knot}).
The notion of isotopy for them are given by a profinite analogue of Turaev moves in 
Definition \ref{profinite Turaev moves}.
Two fundamental properties for
the set $\widehat{\mathcal{K}}$ of profinite knots (isotopy classes of profinite knot diagrams) are presented;
Theorem \ref{profinite realization theorem}  explains that
there is  a natural map from the set $\mathcal{K}$ of isotopy classes of (topological) knots
to our set $\widehat{\mathcal K}$
and our Theorem \ref{topological monoid} says that  our $\widehat{\mathcal K}$
carries a structure of a topological commutative monoid.

Here is a brief review of tangles and knots.

\begin{nota}
Let $k,l\geqslant 0$.
Let $\epsilon=(\epsilon_1,\dots,\epsilon_k)$ and $\epsilon'=(\epsilon'_1,\dots,\epsilon'_l)$
be sequences (including the empty sequence $\emptyset$)
of  symbols $\uparrow$ and $\downarrow$.
An (oriented)
\footnote{We occasionally omit to mention it.
Throughout the paper all tangles are assumed to be oriented.}
 {\it tangle} of type $(\epsilon,\epsilon')$
means a smooth embedded compact oriented one-dimensional  real manifolds
in $[0,1]\times {\mathbb C}$
(hence it is a finite disjoint union of embedded one-dimensional intervals
and circles),
whose boundaries are $\{(1,1),\dots,(1,k),(0,1)\dots,(0,l)\}$
such that $\epsilon_i$ (resp. $\epsilon_j'$) is $\uparrow$ or $\downarrow$ 
if the tangle is oriented upwards or downwards at $(1,i)$  (resp. at $(0,j)$) respectively.
A {\it link} is a tangle of type $(\emptyset, \emptyset)$ , i.e. $k=l=0$,
and a {\it knot} means a link with a single connected component.
\end{nota}

A tangle is like \lq a braid'  whose each connected component is allowed 
to be a circle and have endpoints on the same plane.
Figure \ref{example of tangles} might help the readers 
to have a good understanding of the definition.
\begin{figure}[h]
\begin{center}
           \begin{tikzpicture}
                \draw[dotted] (0,0)--(4.5,0);

                 \draw[->] (2,0) ..controls(0.6,2.0) and (1.2,2.6)     ..(1.6,3);
                \draw[-] (1.6,3) ..controls(2.0,3.4) and (3,3)   ..(2.6,2.0);
                 \draw[color=white, line width=7pt](2.6,2.0) ..controls(2.2,1.2)  and (0.4,2.)      ..(1,4.0);
                \draw[->] (2.6,2.0) ..controls(2.2,1.2)  and (0.4,2.)      ..(1,4.0);

                 \draw[->] (2.0,4.0)--(1.82,3.34) ;
                 \draw[color=white, line width=7pt] (1.8,2.9)  arc (180:360:0.7);
                  \draw[->] (1.8,2.9)  arc (180:360:0.7);
                  \draw[->] (3.2,2.9)--(3,4) ;

               \draw[->] (1.4,0.6) ..controls(0.2,0.2) and (-0.1,0.5)  ..(0.2,0.8);
                 \draw[color=white, line width=7pt](0.2,0.8) ..controls(0.6,1.2) and (1.2,1.6)  ..(1.6,1.4);
                \draw[->](0.2,0.8) ..controls(0.6,1.2) and (1.2,1.6)  ..(1.6,1.4);
              \draw[-] (1.6,1.4) ..controls(2.0,1.0) and (1.8,0.9)  ..(1.7,0.8);

                 \draw[color=white, line width=7pt](1,0) ..controls(1.,0.8) and (3.8,-0.5)  ..(4,4);
                 \draw[<-] (1,0) ..controls(1.,0.8) and (3.8,-0.5)  ..(4,4);

                \draw[dotted] (0,4)--(4.5,4.0);
            \end{tikzpicture}
\end{center}
\caption{A tangle of type $({\uparrow\downarrow\uparrow\downarrow, \downarrow\uparrow})$}
\label{example of tangles}  
\end{figure}

The following notion of profinite fundamental tangle diagrams
plays a role of composite elements
of the notion of profinite tangles.

\begin{defn}\label{definition of fundamental profinite tangle}
The set of  {\it fundamental profinite 
tangle diagrams}  means the disjoint union of
the following three sets 
$A$, $\widehat{B}$ and $C$
\footnote{
A, B and C stand for
Annihilations, Braids and Creations respectively.
}
of symbols:
\begin{align*}
A&:=\left\{a_{k,l}^\epsilon \bigm| k,l=0,1,2,\dots
, \  \epsilon=(\epsilon_i)_{i=1}^{k+l+1}\in
\{\uparrow, \downarrow\}^{k}
\times \{\annihilation, \opannihilation\}
\times\{\uparrow, \downarrow\}^{l}
\right\},
\\
\widehat{B}&:=\left\{b_n^\epsilon \bigm| b_n^\epsilon=
\left(b_n,\epsilon=(\epsilon_i)_{i=1}^{n}\right)
\in \widehat{B}_n\times \{\uparrow, \downarrow\}^{n}, n=1,2,3,4,\dots
\right\},
\\
C&:=\left\{c_{k,l}^\epsilon \bigm| k,l=0,1,2,\dots, \ \epsilon=(\epsilon_i)_{i=1}^{k+l+1}\in
\{\uparrow, \downarrow\}^{k}
\times \{\creation,\opcreation\}
\times\{\uparrow, \downarrow\}^{l}
\right\}.
\end{align*}
Here all arrows are merely regarded as symbols.
\end{defn}

We occasionally depict 
these fundamental profinite tangle diagrams with ignorance of arrows
(which represent orientation of each strings)
as the pictures in Figure \ref{fundamental profinite tangles},
which we call their topological pictures.

\begin{figure}[h]
\begin{center}
          \begin{tikzpicture}
\draw  (1.3,-0.7)node{$a_{k,l}^\epsilon$};
                    \draw[-] (0,0) --(0, 0.5) ;
                    \draw[dotted] (0.1,0.3) --(0.6, 0.3) ;
                    \draw[-] (0.7,0) --(0.7, 0.5) ;
                     \draw[decorate,decoration={brace,mirror}] (-0.1,0) -- (0.8,0) node[midway,below]{$k$};
                   \draw (0.9,0.1)  arc (180:0:0.4);
                    \draw[-] (1.8,0) --(1.8, .5)  ;
                    \draw[-] (2.5,0) --(2.5, .5)  ;
                   \draw[dotted] (1.9,0.3) --(2.4, 0.3)  ;
                    \draw[decorate,decoration={brace,mirror}] (1.75,0) -- (2.6,0) node[midway,below]{$l$};
\draw  (4.6,-0.7)node{$b_{n}^\epsilon$};
                    \draw[-] (4.0,0) --(4.0, 0.5) (4.0,1.0) --(4.0, 1.5) ;
                    \draw[-] (4.1,0) --(4.1, 0.5) (4.1,1.0) --(4.1, 1.5) ;
                    \draw[dotted] (4.2,0.3) --(4.9, 0.3) (4.2,1.3) --(4.9, 1.3) ;
                    \draw[-] (5.0,0) --(5.0, 0.5) (5.0,1.0) --(5.0, 1.5) ;
                    \draw (3.9,0.5) rectangle (5.1,1);
                    \draw (4.5,0.7) node{$b_n$};
                    \draw[decorate,decoration={brace,mirror}] (3.9,0) -- (5.1,0) node[midway,below]{$n$};
                    \draw[decorate,decoration={brace}]  (3.9,1.5) -- (5.1,1.5) node[midway,above]{$n$};
\draw  (7.9,-0.7)node{$c_{k,l}^\epsilon$};
                    \draw[-] (6.6,0) --(6.6, 0.5) ;
                    \draw[dotted] (6.7,0.3) --(7.2, 0.3) ;
                    \draw[-] (7.3,0) --(7.3, 0.5) ;
                     \draw[decorate,decoration={brace,mirror}] (6.5,0) -- (7.4,0) node[midway,below]{$k$};
                    \draw (7.5,0.5)  arc (180:360:0.4);
                    \draw[-] (8.4,0) --(8.4, .5)  ;
                    \draw[-] (9.1,0) --(9.1, .5)  ;
                   \draw[dotted] (8.5,0.3) --(9, 0.3)  ;
                    \draw[decorate,decoration={brace,mirror}] (8.3,0) -- (9.2,0) node[midway,below]{$l$};
           \end{tikzpicture}
\caption{Topological picture of fundamental profinite tangle diagrams}
\label{fundamental profinite tangles}
\end{center}
\end{figure}
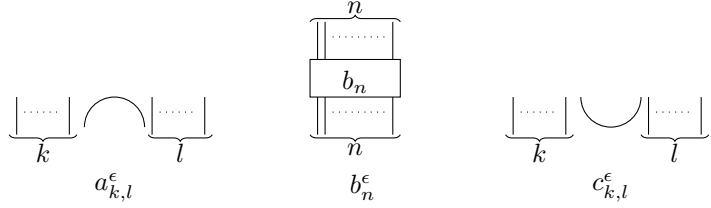

For a fundamental profinite tangle diagram $\gamma$, its {\it source} $s(\gamma)$ and
its {\it target} $t(\gamma)$ are sequences of $\uparrow$ and $\downarrow$ defined below:
\begin{enumerate}
\item 
When $\gamma=a_{k,l}^\epsilon$,  $s(\gamma)$ is the sequence of
$\uparrow$ and $\downarrow$
replacing $\annihilation$ (resp. $\opannihilation$) 
by $\uparrow\downarrow$ (resp.$\downarrow\uparrow$)
in $\epsilon$ and
$t(\gamma)$ is the sequence omitting
$\annihilation$ and $\opannihilation$ in $\epsilon$
(cf. Figure\ref{source and target for a}).
\begin{figure}[h]
\begin{tikzpicture}
                      \draw[-] (0,0) node{$\uparrow$}--(0, 1)node{$\uparrow$};
                      \draw[-] (1,0)node{$\downarrow$} --(1, 1)node{$\downarrow$};
                      \draw (2,0)node{$\downarrow$}  arc (180:0:0.5);
                      \draw[-] (4,0)node{$\uparrow$} --(4, 1)node{$\uparrow$};
                      \draw (3,0) node {$\uparrow$};
\end{tikzpicture}
\caption{$a_{2,1}^{\epsilon}$
with $s(a_{2,1}^{\epsilon})={\uparrow\downarrow\downarrow\uparrow\uparrow}$ and
$t(a_{2,1}^{\epsilon})={\uparrow\downarrow\uparrow}$}
\label{source and target for a}
\end{figure}
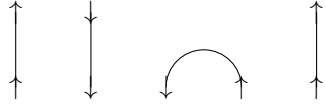

\item
When $\gamma=b_n^\epsilon$, 
$s(\gamma)=\epsilon$ and $t(\gamma)$ is the permutation of $\epsilon$
induced by the image of $b_n^\epsilon$ of the projection $\widehat{B}_n$ to the symmetric group $\frak S_n$  (cf. Figure \ref{source and target}). 
\begin{figure}[h]
\begin{tikzpicture}
\braid  (braid) at (2,0)  a_1^{-1} a_2^{-1};
    \node[at=(braid-1-s)]{$\uparrow$};
    \node[at=(braid-2-s)]{$\downarrow$};
    \node[at=(braid-3-s)] {$\uparrow$};
    \node[at=(braid-1-e)]{$\uparrow$};
    \node[at=(braid-2-e)]{$\downarrow$};
    \node[at=(braid-3-e)] {$\uparrow$};
\end{tikzpicture}
\caption{An example  of $b_3^{\epsilon}$
with $s(b_3^{\epsilon})=\epsilon={\downarrow\uparrow\uparrow}$
and $t(b_3^{\epsilon})={\uparrow\downarrow\uparrow}$}
\label{source and target}
\end{figure}
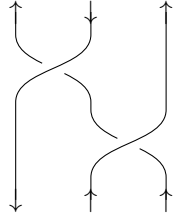

\item
When $\gamma=c_{k,l}^\epsilon$,
$s(\gamma)$ is the set omitting
$\creation$ and $\opcreation$ in $\epsilon$ and
$t(\gamma)$ is the set replacing
$\creation$ (resp. $\opcreation$) 
by $\downarrow\uparrow$ (resp.$\uparrow\downarrow$)
in $\epsilon$.
\end{enumerate}

\begin{defn}\label{definition of profinite tangle}
A {\it profinite  (oriented) tangle diagram} means 
a finite {\it consistent}
\footnote{Here \lq consistent' means successively composable, that is,
$s(\gamma_{i+1})=t(\gamma_i)$ holds for all $i=1,2,\dots, n-1$. 
}
sequence $T=\{\gamma_i\}_{i=1}^n$ of fundamental profinite tangles
(which we denote by $\gamma_n\cdots\gamma_2\cdot\gamma_1$).
Its source and its target are defined by $s(T):=s(\gamma_1)$ and
$t(T):=t(\gamma_n)$. 
A {\it profinite (oriented) link diagram} means a profinite tangle diagram $T$ with $s(T)=t(T)=\emptyset$.
%
\end{defn}

For a fundamental profinite tangle diagram $\gamma$, 
its {\it skeleton} ${\Bbb S}(\gamma)$ is the graph consisting of
finitely many vertices 
and finitely many edges connecting them as follows:
\begin{enumerate}
\item 
When $\gamma=a_{k,l}^\epsilon$ or $c_{k,l}^\epsilon$,
${\Bbb S}(\gamma)$ is nothing but the graph of
the topological picture of $\gamma$ in Figure \ref{fundamental profinite tangles}
whose set of vertices is the collection of its endpoints
and whose set of edges is given by the arrows connecting them
(cf. Figure \ref{skeleton for a}).
\begin{figure}[h]
\begin{center}
\begin{tikzpicture}
                      \draw[-] (0,0) node{$\bullet$}--(0, 1)node{$\bullet$};
                      \draw[-] (1,0)node{$\bullet$} --(1, 1)node{$\bullet$};
                      \draw (2,1)node{$\bullet$}  arc (180:360:0.5);
                      \draw[-] (4,0)node{$\bullet$} --(4, 1)node{$\bullet$};
                      \draw (3,1) node {$\bullet$};
\end{tikzpicture}
\end{center}
\caption{Skeleton of Figure \ref{source and target for a}}
\label{skeleton  for a}
\end{figure}
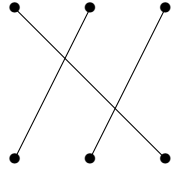

\item
When $\gamma=b_n^\epsilon$, 
${\Bbb S}(\gamma)$ is the graph 
describing the permutation $p(b_n^\epsilon)$ (cf. Figure \ref{skeleton}),
where $p$ means the projection $\widehat{B}_n\to\frak S_n$. 
Namely its set of vertices is the collection of its endpoints and
its set of edges is the set of diagonal lines combining
the corresponding vertices. 
\end{enumerate}
\begin{figure}[h]
\begin{center}
\begin{tikzpicture}
\draw[-] (0,0) node{$\bullet$} --(1, 2) node{$\bullet$};
\draw[-] (1,0) node{$\bullet$}--(2,2) node{$\bullet$};
\draw[-] (2,0) node{$\bullet$} --(0,2) node{$\bullet$};
\end{tikzpicture}
\caption{Skeleton of Figure \ref{source and target}}
\label{skeleton}
\end{center}
\end{figure}
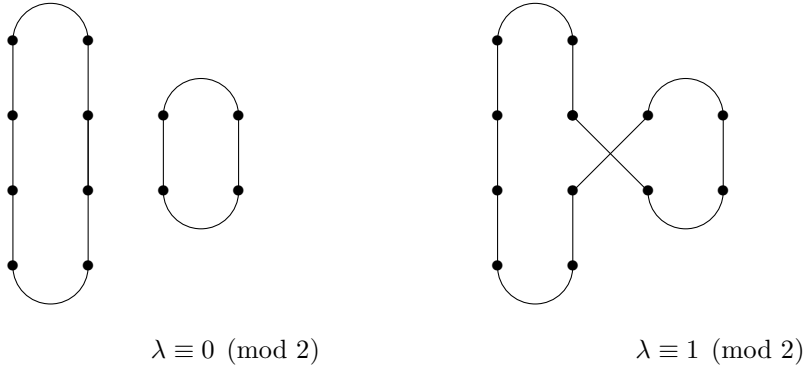

For a profinite tangle diagram $T=\gamma_n\cdots\gamma_2\cdot\gamma_1$,
its skeleton 
${\Bbb S}(T)={\Bbb S}(\gamma_n)\cdots{\Bbb S}(\gamma_2)\cdot{\Bbb S}(\gamma_1)$
means the graph obtained by the composition of 
${\Bbb S}(\gamma_i)$ ($1\leqslant i \leqslant n$)
(cf. Figure \ref{knot or link}-\ref{knot or link 2}).

\begin{defn}\label{definition of profinite knot}
A {\it connected component} of a profinite tangle diagram $T$
means a connected component of the skeleton ${\Bbb S}(T)$ as a graph.
A {\it profinite  (oriented) knot diagram} means a profinite  (oriented) link diagram with a single connected  component.
\end{defn}

It is easy to see that
the number of connected components of any profinite tangle diagram
is always finite.
A profinite knot diagram is a profinite version of 
a (topological) oriented knot diagram.

\begin{prob}
Can we regard
a profinite knot  as a wild knot\footnote{
A  {\it wild knot} means a {\it topological} embedding of the oriented circle into 
$S^3$ (or $\mathbb R^3$).
}?
\end{prob}

It might be nice if we could give any topological meaning for all (or a part of) profinite knot diagrams.

\begin{eg}
The profinite link diagram 
$$
a_{0,0}^{\opannihilation}\cdot 
a_{2,0}^{\downarrow\uparrow\annihilation}\cdot
{(\sigma_2^{\downarrow\uparrow\uparrow\downarrow})}^\lambda\cdot c_{2,0}^{\downarrow\uparrow\opcreation}\cdot 
c^{\creation}_{0,0}
\qquad (\lambda\in\widehat{\mathbb Z})
$$
\begin{figure}[h]
\begin{center}
\begin{tikzpicture}
\draw (0,0)  arc (180:360:0.5);
\draw[-] (0,0) node{$\downarrow$} --(0, 1) node{$\downarrow$}
--(0, 2) node{$\downarrow$}--(0, 3) node{$\downarrow$};
\draw[-] (1,0)node{$\uparrow$} --(1,1)node{$\uparrow$}--(1,1.3);
\draw (2,1)  arc (180:360:0.5);
\draw  (0.9,1.3) rectangle (2.1,1.7)node[right]{$\lambda$};
\draw[-] (1,1.3)--(2,1.7)
             (2,1.3)--(1.75,1.4)
             (1.25,1.6)--(1,1.7);
\draw[-] (3,1) node{$\downarrow$} --(3,2) node{$\downarrow$};
\draw[-] (1,1.7)--(1,2)node{$\uparrow$} --(1,3)node{$\uparrow$};
\draw[-] (2,1)node{$\uparrow$}--(2,1.3);
\draw[-] (2,1.7)--(2,2)node{$\uparrow$};
\draw (1,3)  arc (0:180:0.5);
\draw (3,2)  arc (0:180:0.5);
\draw[color=white, very thick] (-0.1,0)--(1.1,0) (-0.1,1)--(3.1,1) (-0.1,2)--(3.1,2) (-0.1,3)--(1.1,3);
\end{tikzpicture}
\caption{Is this a profinite knot?}
\label{knot or link}
\end{center}
\end{figure}
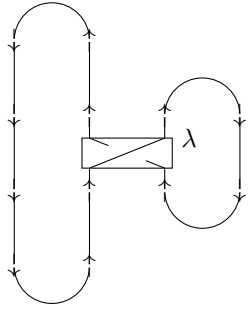
depicted in Figure \ref{knot or link}
(here $\sigma_2^{\downarrow\uparrow\uparrow\downarrow}$
is  the generator $\sigma_2$ of $\widehat{B_4}$
)
is with $2$ connected components if  $\lambda\equiv 0\pmod 2$
and is with a single connected component (hence is a profinite knot)
if   $\lambda\equiv 1\pmod 2$. (cf. Figure \ref{knot or link 2}).
\begin{figure}[h]
\begin{tabular}{cc}
     \begin{minipage}{0.50\hsize}
         \begin{tikzpicture}
                  \draw (0,0)  arc (180:360:0.5);
                  \draw[-] (0,0) node{$\bullet$} --(0, 1) node{$\bullet$}--(0, 2) node{$\bullet$}--(0, 3) node{$\bullet$};
                  \draw[-] (1,0)node{$\bullet$} --(1,1)node{$\bullet$}--(1,2);
                  \draw (2,1)  arc (180:360:0.5);
                  \draw[-] (3,1) node{$\bullet$} --(3,2) node{$\bullet$};
                  \draw[-] (1,1)--(1,2)node{$\bullet$} --(1,3)node{$\bullet$};
                  \draw[-] (2,1)node{$\bullet$}--(2,2)node{$\bullet$};
                  \draw (1,3)  arc (0:180:0.5);
                  \draw (3,2)  arc (0:180:0.5);
           \end{tikzpicture}
                      \caption*{$\lambda \equiv 0\pmod 2$}
    \end{minipage}
    \begin{minipage}{0.50\hsize}
         \begin{tikzpicture}
                  \draw (0,0)  arc (180:360:0.5);
                  \draw[-] (0,0) node{$\bullet$} --(0, 1) node{$\bullet$}--(0, 2) node{$\bullet$}--(0, 3) node{$\bullet$};
                  \draw[-] (1,0)node{$\bullet$} --(1,1)node{$\bullet$}--(2,2)node{$\bullet$};
                  \draw (2,1)  arc (180:360:0.5);
                  \draw[-] (3,1) node{$\bullet$} --(3,2) node{$\bullet$};
                  \draw[-] (1,2)node{$\bullet$} --(1,3)node{$\bullet$};
                  \draw[-] (2,1)node{$\bullet$}--(1,2)node{$\bullet$};
                  \draw (1,3)  arc (0:180:0.5);
                  \draw (3,2)  arc (0:180:0.5);
           \end{tikzpicture}
                       \caption*{$\lambda \equiv 1\pmod 2$\qquad}      
     \end{minipage}
\end{tabular}
    \caption{Skeleton of Figure \ref{knot or link} }
    \label{knot or link 2}
\end{figure} 
\end{eg}

\begin{nota}\label{various notations}
\begin{enumerate}
\item
The symbol $e_n^\epsilon=(e_n,\epsilon)$ stands for 
the fundamental  profinite tangle diagram in $\widehat{B}$
with $s(e_n^\epsilon)=\epsilon$
which corresponds to the  trivial braid $e_n$ in $\hat B_n$.
For a fundamental  profinite tangle diagram $\gamma$,
we mean
$e_{n_1}^{\epsilon_1}\otimes\gamma\otimes e_{n_2}^{\epsilon_2}$
by the  fundamental  profinite tangle diagram obtained by
putting $ e_{n_1}^{\epsilon_1}$ and $ e_{n_2}^{\epsilon_2}$
on the left and on the right of $\gamma$ respectively.
So,   $e_{n_1}^{\epsilon_1}\otimes a_{k,l}^\epsilon\otimes e_{n_2}^{\epsilon_2}
=a_{n_1+k,l+n_2}^{\epsilon_1,\epsilon,\epsilon_2}$,
for instance.
For a profinite  tangle $T=\{\gamma_i\}_{i=1}^n$
($\gamma_i$: a fundamental profinite tangle diagram),
$e_{n_1}^{\epsilon_1}\otimes T\otimes e_{n_2}^{\epsilon_2}$
means the profinite tangle diagram
$\{e_{n_1}^{\epsilon_1}\otimes\gamma_i\otimes e_{n_2}^{\epsilon_2}\}_{i=1}^n$.
\item
Let $\epsilon_0\in\{\uparrow,\downarrow\}^n$ for some $n$.
For a fundamental profinite tangle diagram $b_l^\epsilon\in\widehat{B}$
and $k$ with $1\leqslant k\leqslant l$,
the symbol 
$\mathrm{ev}_{k,\epsilon_0}(b_l^\epsilon)$ (resp. 
$\mathrm{ev}^{k,\epsilon_0}(b_l^\epsilon)$ )
means the element in $\widehat{B}$
which represents the profinite braid replacing the $k$-th string
(from its bottom  (resp. above) left) of $b_l^\epsilon$  by the trivial braid
$e_n^{\epsilon_0}$ with
$n$ strings whose source is $\epsilon_0$
(cf. Notation \ref{ev-nota}).
For instance, the profinite tangle diagram in Figure \ref{source and target}
is described as
$\mathrm{ev}_{1,\downarrow\uparrow} \left( \diaCrossP \right)$
or
$\mathrm{ev}^{2,\downarrow\uparrow} \left( \diaCrossP \right)$.
\end{enumerate}
\end{nota}

\begin{defn}\label{profinite Turaev moves}
For profinite tangle diagrams,  the moves (T1)-(T6) are defined as follow.
\end{defn}

(T1) {\it Trivial braids invariance}:
for a profinite tangle diagram $T$ with $|s(T)|=m$ (resp. $| t(T)|=n$),
\footnote{
For a set $S$, $|S|$ stands for its cardinality.
}
$$
e_n^{t(T)}\cdot T=T=T\cdot e_m^{s(T)}.
$$
For $e_n$, see Notation \ref{various notations}.
Figure \ref{T1} depicts the move.
\begin{figure}[h]
          \begin{tikzpicture}
                      \draw (0,0.3) rectangle (1.0,0.8);
                      \draw (0.5,0.5) node{$T$};
                      \draw[-] (0.1,0) --(0.1, 0.3);
                      \draw[-] (0.2,0) --(0.2, 0.3);
                      \draw[dotted] (0.3,0.1) --(0.8, 0.1);
                      \draw[-] (0.8,0) --(0.8, 0.3);
                      \draw[-] (0.9,0) --(0.9, 0.3);
                      \draw[-] (0.1,0.8) --(0.1, 1.1);
                      \draw[-] (0.25,0.8) --(0.25, 1.1);
                      \draw[dotted] (0.35,1) --(0.8, 1);
                      \draw[-] (0.9,0.8) --(0.9, 1.1);
                      \draw[decorate,decoration={brace,mirror}] (0,0) -- (1,0) node[midway,below]{$m$};
                      \draw[-] (0.1,1.2) --(0.1, 1.5);
                      \draw[-] (0.25,1.2) --(0.25, 1.5);
                      \draw[dotted] (0.35,1.3) --(0.8, 1.3);
                      \draw[-] (0.9,1.2) --(0.9, 1.5);
                      \draw[decorate,decoration={brace}] (0,1.5) -- (1,1.5) node[midway,above]{$n$};
\draw  (1.3,0.5)node{$=$};
                      \draw (1.6,0.3) rectangle (2.6,0.8);
                      \draw (2.1,0.5) node{$T$};
                      \draw[-] (1.7,0) --(1.7, 0.3);
                      \draw[-] (1.8,0) --(1.8, 0.3);
                      \draw[dotted] (1.9,0.1) --(2.4, 0.1);
                      \draw[-] (2.4,0) --(2.4, 0.3);
                      \draw[-] (2.5,0) --(2.5, 0.3);
                      \draw[-] (1.7,0.8) --(1.7, 1.1);
                      \draw[-] (1.85,0.8) --(1.85, 1.1);
                      \draw[dotted] (1.95,1) --(2.4, 1);
                      \draw[-] (2.5,0.8) --(2.5, 1.1);
                      \draw[decorate,decoration={brace,mirror}] (1.6,0) -- (2.6,0) node[midway,below]{$m$};
                      \draw[decorate,decoration={brace}] (1.6,1.1) -- (2.6,1.1) node[midway,above]{$n$};
\draw  (2.9,0.5)node{$=$};
                      \draw (3.2,0.3) rectangle (4.2,0.8);
                      \draw (3.7,0.5) node{$T$};
                      \draw[-] (3.3,-0.1) --(3.3, -0.4);
                      \draw[-] (3.4,-0.1) --(3.4, -0.4);
                      \draw[dotted] (3.5,-0.3) --(4, -0.3);
                      \draw[-] (4,-0.1) --(4, -0.4);
                     \draw[-] (4.1,-0.1) --(4.1, -0.4);
                      \draw[-] (3.3,0) --(3.3, 0.3);
                      \draw[-] (3.4,0) --(3.4, 0.3);
                      \draw[dotted] (3.5,0.1) --(4, 0.1);
                      \draw[-] (4,0) --(4, 0.3);
                      \draw[-] (4.1,0) --(4.1, 0.3);
                      \draw[-] (3.3,0.8) --(3.3, 1.1);
                      \draw[-] (3.45,0.8) --(3.45, 1.1);
                      \draw[dotted] (3.55,1) --(4, 1);
                      \draw[-] (4.1,0.8) --(4.1, 1.1);
                      \draw[decorate,decoration={brace,mirror}] (3.2,-0.4) -- (4.2,-0.4) node[midway,below]{$m$};
                      \draw[decorate,decoration={brace}] (3.2,1.1) -- (4.2,1.1) node[midway,above]{$n$};
           \end{tikzpicture}
\caption{(T1): Trivial braids invariance}
\label{T1}
\end{figure}
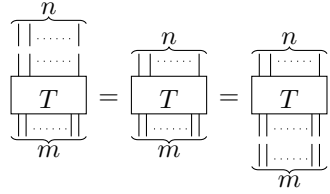 

(T2) {\it Braids composition}:
for  $b_n^{\epsilon_1}, b_n^{\epsilon_2}\in \widehat{B}$ with 
$t(b_n^{\epsilon_1})=s(b_n^{\epsilon_2})$,
$$
b_n^{\epsilon_2}\cdot b_n^{\epsilon_1}=b_n^{\epsilon_3}.
$$
Here $b_n^{\epsilon_3}$ means 
the element in $\widehat{B}$
with $s(b_n^{\epsilon_3})=s(b_n^{\epsilon_1})$ and
$t(b_n^{\epsilon_3})=t(b_n^{\epsilon_2})$
which represents the product $b_2\cdot b_1$ of two braids in $\widehat{B}_n$.
Figure \ref{T2} depicts the move.
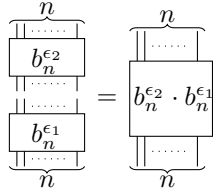
\begin{figure}[h]
       \begin{tikzpicture}
                      \draw[-] (0.1,0) --(0.1, 0.2);
                      \draw[-] (0.2,0) --(0.2, 0.2);
                      \draw[dotted] (0.3,0.1) --(0.8, 0.1);
                      \draw[-] (0.9,0) --(0.9, 0.2);
                      \draw[decorate,decoration={brace,mirror}] (0,0) -- (1,0) node[midway,below]{$n$};
                   \draw (0,0.2) rectangle (1.0,0.7);
                   \draw (0.5,0.4) node{$b_n^{\epsilon_1}$};
                      \draw[-] (0.1,0.7) --(0.1, 0.9);
                      \draw[-] (0.2,0.7) --(0.2, 0.9);
                      \draw[dotted] (0.3,0.8) --(0.8, 0.8);
                      \draw[-] (0.9,0.7) --(0.9, 0.9);
                      \draw[-] (0.1,1) --(0.1, 1.2);
                      \draw[-] (0.2,1) --(0.2, 1.2);
                      \draw[dotted] (0.3,1.1) --(0.8, 1.1);
                      \draw[-] (0.9,1) --(0.9, 1.2);
                   \draw (0,1.2) rectangle (1.0,1.7);
                   \draw (0.5,1.4) node{$b_n^{\epsilon_2}$};
                      \draw[-] (0.1,1.7) --(0.1, 1.9);
                      \draw[-] (0.2,1.7) --(0.2, 1.9);
                      \draw[dotted] (0.3,1.8) --(0.8, 1.8);
                      \draw[-] (0.9,1.7) --(0.9, 1.9);
                      \draw[decorate,decoration={brace}] (0,1.9) -- (1,1.9) node[midway,above]{$n$};
\draw  (1.3,0.9)node{$=$};
                     \draw[-] (1.7,0) --(1.7, 0.4);
                      \draw[-] (1.8,0) --(1.8, 0.4);
                      \draw[dotted] (1.9,0.2) --(2.4, 0.2);
                      \draw[-] (2.5,0) --(2.5, 0.4);
                  \draw (1.6,0.4) rectangle (2.7,1.4);
                  \draw (2.2,0.9) node{$b_n^{\epsilon_2}\cdot b_n^{\epsilon_1}$};
                     \draw[-] (1.7,1.4) --(1.7, 1.8);
                      \draw[-] (1.8,1.4) --(1.8, 1.8);
                      \draw[dotted] (1.9,1.6) --(2.4, 1.6);
                      \draw[-] (2.5,1.4) --(2.5, 1.8);
                      \draw[decorate,decoration={brace,mirror}] (1.6,0) -- (2.6,0) node[midway,below]{$n$};
                     \draw[decorate,decoration={brace}] (1.6,1.8) -- (2.6,1.8) node[midway,above]{$n$};
          \end{tikzpicture}
\caption{(T2): Braids composition}
\label{T2}
\end{figure}

(T3) {\it Independent tangles relation}:
for profinite tangle diagrams $T_1$ and $T_2$ with
$| s(T_1)|=m_1$, $| t(T_1)|=n_1$,
$| s(T_2)|=m_2$ and $ | t(T_2)|=n_2$,
$$
(e^{t(T_1)}_{n_1}\otimes T_2)\cdot (T_1\otimes e^{s(T_2)}_{m_2})=
(T_1\otimes e^{t(T_2)}_{n_2})\cdot (e^{s(T_1)}_{m_1}\otimes T_2).
$$
We occasionally denote both hands side by $T_1\otimes T_2$.
For the symbol $\otimes$, see Notation \ref{various notations}.
Figure \ref{T3} depicts the move.
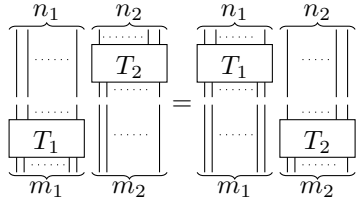
\begin{figure}[h]
           \begin{tikzpicture}
                      \draw[-] (0.1,0) --(0.1, 0.2);
                      \draw[-] (0.2,0) --(0.2, 0.2);
                      \draw[dotted] (0.3,0.1) --(0.8, 0.1);
                      \draw[-] (0.8,0) --(0.8, 0.2);
                      \draw[-] (0.9,0) --(0.9, 0.2);
                      \draw[decorate,decoration={brace,mirror}] (0,0) -- (1,0) node[midway,below]{$m_1$};
                   \draw (0,0.2) rectangle (1.0,0.7);
                   \draw (0.5,0.4) node{$T_1$};
                      \draw[-] (0.1,0.7) --(0.1, 0.9);
                      \draw[-] (0.25,0.7) --(0.25, 0.9);
                      \draw[dotted] (0.35,0.8) --(0.85, 0.8);
                      \draw[-] (0.9,0.7) --(0.9, 0.9);
                      \draw[-] (0.1,1) --(0.1, 1.9);
                      \draw[-] (0.25,1) --(0.25, 1.9);
                      \draw[dotted] (0.35,1.5) --(0.8, 1.5);
                      \draw[-] (0.9,1) --(0.9, 1.9);
                      \draw[decorate,decoration={brace}] (0,1.9) -- (1,1.9) node[midway,above]{$n_1$};
                      \draw[-] (1.2,0) --(1.2, 0.9);
                      \draw[-] (1.3,0) --(1.3, 0.9);
                      \draw[dotted] (1.4,0.5) --(1.9, 0.5);
                     \draw[-] (2,0) --(2, 0.9);
                     \draw[-] (1.2,1.0) --(1.2, 1.2);
                     \draw[-] (1.3,1.0) --(1.3, 1.2);
                      \draw[dotted] (1.4,1.1) --(1.9, 1.1);
                      \draw[-] (2,1) --(2, 1.2);
                   \draw (1.1,1.2) rectangle (2.1,1.7);
                   \draw (1.6,1.4) node{$T_2$};
                      \draw[-] (1.2,1.7) --(1.2, 1.9);     
                     \draw[dotted] (1.25,1.8) --(1.8, 1.8);           
                      \draw[-] (1.85,1.7) --(1.85, 1.9);           
                      \draw[-] (2,1.7) --(2, 1.9);                     
                      \draw[decorate,decoration={brace,mirror}] (1.1,0) -- (2.1,0) node[midway,below]{$m_2$};
                     \draw[decorate,decoration={brace}] (1.1,1.9) -- (2.1,1.9) node[midway,above]{$n_2$};
\draw  (2.3,0.9)node{$=$};
                     \draw[-] (2.6,0) --(2.6, 0.9);
                      \draw[-] (2.7,0) --(2.7, 0.9);
                      \draw[dotted] (2.8,0.5) --(3.3, 0.5);
                      \draw[-] (3.3,0) --(3.3, 0.9);
                     \draw[-] (3.4,0) --(3.4, 0.9);
                     \draw[-] (2.6,1.0) --(2.6, 1.2);
                     \draw[-] (2.7,1.0) --(2.7, 1.2);
                      \draw[dotted] (2.8,1.1) --(3.3, 1.1);
                      \draw[-] (3.3,1) --(3.3, 1.2);
                      \draw[-] (3.4,1) --(3.4, 1.2);
                   \draw (2.5,1.2) rectangle (3.5,1.7);
                   \draw (3,1.4) node{$T_1$};
                      \draw[-] (2.6,1.7) --(2.6, 1.9);     
                     \draw[-] (2.75,1.7) --(2.75, 1.9);     
                     \draw[dotted] (2.8,1.8) --(3.3, 1.8);           
                      \draw[-] (3.4,1.7) --(3.4, 1.9);                     
                      \draw[decorate,decoration={brace,mirror}] (2.5,0) -- (3.5,0) node[midway,below]{$m_1$};
                     \draw[decorate,decoration={brace}] (2.5,1.9) -- (3.5,1.9) node[midway,above]{$n_1$};
                      \draw[-] (3.7,0) --(3.7, 0.2);    
                      \draw[-] (3.8,0) --(3.8, 0.2);
                      \draw[dotted] (3.9,0.1) --(4.4, 0.1);
                      \draw[-] (4.5,0) --(4.5, 0.2);
                      \draw[decorate,decoration={brace,mirror}] (3.6,0) -- (4.6,0) node[midway,below]{$m_2$};
                   \draw (3.6,0.2) rectangle (4.6,0.7);
                   \draw (4.1,0.4) node{$T_2$};
                      \draw[-] (3.7,0.7) --(3.7, 0.9);
                      \draw[dotted] (3.9,0.8) --(4.3, 0.8);
                     \draw[-] (4.35,0.7) --(4.35, 0.9);
                      \draw[-] (4.5,0.7) --(4.5, 0.9);
                      \draw[-] (3.7,1) --(3.7, 1.9);
                      \draw[dotted] (3.9,1.5) --(4.3, 1.5);
                     \draw[-] (4.35,1) --(4.35, 1.9);
                      \draw[-] (4.5,1) --(4.5, 1.9);
                      \draw[decorate,decoration={brace}] (3.6,1.9) -- (4.6,1.9) node[midway,above]{$n_2$};  
          \end{tikzpicture}  
\caption{(T3): Independent tangles relation}
\label{T3}
\end{figure}

(T4) {\it Braid-tangle relations}:
for $b_l^\epsilon\in \widehat{B}$,  $k$ with $1\leqslant k\leqslant l$
and a profinite tangle diagram $T$ with
$| s(T)|=m$ and $| t(T)|=n$,
$$
\mathrm{ev}_{k,t(T)}(b_l^\epsilon)\cdot (e_{k-1}^{s_1}\otimes T\otimes e_{l-k}^{s_2})=
(e_{k'-1}^{t_1}\otimes T\otimes e_{l-k'}^{t_2})\cdot \mathrm{ev}^{k',s(T)}(b_l^\epsilon).
$$
For $\mathrm{ev}$, see Notation \ref{various notations}.
For $s(b_l^\epsilon)=\epsilon=(\epsilon_i)_{i=1}^l$ we put
$s_1:=(\epsilon_i)_{i=1}^{k-1}$ and $s_2:=(\epsilon_i)_{i=k+1}^{l}$.
Put $k'=b_l^\epsilon(k)$. 
Here $b_l^\epsilon(k)$ stands for the image of $k$ by the permutation 
which corresponds to  $b_l^\epsilon$
by the projection $B_l\to{\frak S}_l$.
For $t(b_l^\epsilon)=(\epsilon'_i)_{i=1}^l$ we put
$t_1:=(\epsilon'_i)_{i=1}^{k'-1}$ and $t_2:=(\epsilon'_i)_{i=k'+1}^{l}$.
Figure \ref{T4} depicts the move.
\begin{figure}[h]
         \begin{tikzpicture}
                      \draw[-] (0.1,0) --(0.1, 0.2)  (0.1,-0.1) --(0.1, -0.9);
                      \draw[-] (0.2,0) --(0.2, 0.2) (0.2,-0.1) --(0.2, -0.9);
                      \draw[dotted] (0.3,0.1) --(0.4, 0.1);
                      \draw[dotted] (0.3,-0.5) --(0.4, -0.5);
                      \draw[-]  (0.5,-0.1) --(0.5, -0.9) (0.5,0)--(0.5,0.2) ;
                      \draw[decorate,decoration={brace,mirror}] (0,-0.9) -- (0.6,-0.9) node[midway,below]{\tiny $k-1$};

                      \draw[-] (0.75,0) --(0.75, 0.2) (0.75,-0.1) --(0.75, -0.3) ;
                      \draw[-] (0.78,0) --(0.78, 0.2) (0.78,-0.1) --(0.78, -0.3) ;
                     \draw[-]  (0.78,-0.7) --(0.78, -0.9)  (0.81,-0.7) --(0.81, -0.9) (0.89,-0.7) --(0.89, -0.9)  (0.92,-0.7) --(0.92, -0.9)  (0.96,-0.7)--(0.96,-0.9)  ;
                     \draw[-] (0.81,0) --(0.81, 0.2) (0.81,-0.1) --(0.81, -0.3) ;
                     \draw[dotted] (0.75,0.1) --(0.91, 0.1)   (0.75,-0.2) --(0.91, -0.2) (0.78,-0.8)--(0.96,-0.8) ;
                     \draw[-] (0.88,0) --(0.88, 0.2) (0.88,-0.1) --(0.88, -0.3) ;
                     \draw[-] (0.91,0) --(0.91, 0.2)  (0.91,-0.1)--(0.91,-0.3);
                  \draw (0.65,-0.3) rectangle (1.05,-0.7);
                   \draw (0.85,-0.5) node{$T$};

                     \draw[-]  (1.2,0) --(1.2, 0.2)  (1.2,-0.1) --(1.2, -0.9);
                     \draw[dotted] (1.3,0.1) --(1.9, 0.1)  (1.3,-0.5) --(1.9, -0.5);
                     \draw[-]  (2.0,0) --(2.0, 0.2)  (2.0,-0.1) --(2.0, -0.9) (2.0,0.7) --(2.0, 0.9);
                   \draw[-]  (2.1,0) --(2.1, 0.2)  (2.1,-0.1) --(2.1, -0.9)   (2.1,0.7) --(2.1,0.9);
                   \draw[decorate,decoration={brace,mirror}] (0.65,-0.9) -- (1.05,-0.9) node[midway,below]{\small $m$};
                   \draw[decorate,decoration={brace,mirror}] (1.1,-0.9) -- (2.2,-0.9) node[midway,below]{\tiny $l-k$};
                   \draw (0,0.2) rectangle (2.3,0.7);
                   \draw (1.2,0.4) node{$\mathrm{ev}_{k,t(T)}(b_l^\epsilon)$};
 
                      \draw[-] (0.1,0.7) --(0.1, 0.9);
                      \draw[-] (0.2,0.7) --(0.2, 0.9);
                      \draw[dotted] (0.3,0.8) --(1.1, 0.8);
                     \draw[-] (1.15, 0.7) --(1.15, 0.9);

           \draw[-]  (1.38, 0.9) --(1.38, 0.7)  (1.41, 0.9) --(1.41, 0.7) (1.44, 0.9) --(1.44, 0.7)  (1.52,0.9) --(1.52, 0.7)  (1.56, 0.9)--(1.56, 0.7)  ;
                     \draw[dotted]    (1.38, 0.8)--(1.56, 0.8) ;

                    \draw[-] (1.75,0.7) --(1.75, 0.9);
              \draw[dotted]  (1.8,0.8) --(2.0, 0.8)  ;   
              \draw[-] (2.,0.7) --(2., 0.9) ;
                      \draw[-] (2.1,0.7) --(2.1, 0.9) ;

              \draw[decorate,decoration={brace}] (0,.9) -- (2.3,.9) node[midway,above]{$l-1+n$};

\draw  (2.7,0.4)node{$=$};

                   \draw (3.1,0.2) rectangle (5.4,0.7);
                   \draw (4.3,0.45) node{$\mathrm{ev}^{k',s(T)}(b_l^\epsilon)$};
                      \draw[-] (3.2,0) --(3.2, 0.2);
                      \draw[-] (3.3,0) --(3.3, 0.2);
                      \draw[dotted] (3.4,0.1) --(3.5, 0.1);
                     \draw[-] (3.6,0) --(3.6, 0.2);

                      \draw[-] (3.85,0) --(3.85, 0.2) ;
                      \draw[-] (3.88,0) --(3.88, 0.2) ;
                     \draw[dotted] (3.85,0.1) --(4.01, 0.1) ;
                     \draw[-] (3.95,0) --(3.95, 0.2)  ;
                     \draw[-] (3.98,0) --(3.98, 0.2)  ;
                     \draw[-] (4.01,0) --(4.01, 0.2)  ;

                      \draw[-] (4.2,0) --(4.2, 0.2);
                     \draw[dotted] (4.3,0.1) --(5.0, 0.1) ;
                      \draw[-] (5.1,0) --(5.1, 0.2);
                      \draw[-] (5.2,0) --(5.2, 0.2);
                   \draw[decorate,decoration={brace,mirror}] (3.1,-0.1) -- (5.3,-0.1) node[midway,below]{$l-1+m$};
                      \draw[-] (3.2,0.7) --(3.2, 0.9)  (3.2,1.0)--(3.2,1.9);
                      \draw[-] (3.3,0.7) --(3.3, 0.9)  (3.3,1.0)--(3.3,1.9);
                      \draw[dotted] (3.4,0.8) --(4.1, 0.8)  (3.4,1.5) --(4.1, 1.5);
                      \draw[-] (4.2,0.7) --(4.2, 0.9)  (4.2,1.0)--(4.2,1.9)  ;

                    \draw[-] (4.41, 1.9) --(4.41, 1.7) ;
                      \draw[-]  (4.45,1.9) --(4.45, 1.7) ;
                      \draw[-]  (4.48, 1.9) --(4.48, 1.7) ;
                     \draw[-]  (4.48, 1.3) --(4.48, 1)  (4.51, 1.3) --(4.51, 1) (4.59, 1.3) --(4.59, 1)  (4.62,1.3) --(4.62, 1)  (4.66, 1.3)--(4.66, 1)  ;
                     \draw[dotted]  (4.45,1.8) --(4.61, 1.8)  (4.48, 1.15)--(4.66, 1.15) ;
                     \draw[-]  (4.48, 0.9) --(4.48, 0.7)  (4.51, 0.9) --(4.51, 0.7) (4.59, 0.9) --(4.59, 0.7)  (4.62,0.9) --(4.62, 0.7)  (4.66, 0.9)--(4.66, 0.7)  ;
                     \draw[dotted]    (4.48, 0.8)--(4.66, 0.8) ;
                     \draw[-]  (4.58, 1.9) --(4.58, 1.7) ;
                     \draw[-]  (4.61, 1.9)--(4.61,1.7);
                  \draw (4.35,1.7) rectangle (4.75,1.3);
                   \draw (4.55, 1.5) node{$T$};

               \draw[-] (4.85,0.7) --(4.85, 0.9)  (4.85,1.0)--(4.85,1.9);
               \draw[dotted]  (4.9,0.8) --(5.1, 0.8)  (4.9,1.5) --(5.1, 1.5) ;

                     \draw[-] (5.1,0.7) --(5.1, 0.9)  (5.1,1.0)--(5.1,1.9);
                      \draw[-] (5.2,0.7) --(5.2, 0.9)  (5.2,1.0)--(5.2,1.9);

              \draw[decorate,decoration={brace}] (3.1,1.9) -- (4.3,1.9) node[midway,above]{\small $k'-1$};
\draw[decorate,decoration={brace}] (4.35,1.9) -- (4.7,1.9) node[midway,above]{\small $n$};
             \draw[decorate,decoration={brace}] (4.78,1.9) -- (5.3,1.9) node[midway,above]{\tiny $l-k'$};
          \end{tikzpicture}
\caption{(T4): Braid-tangle relation }
\label{T4}
\end{figure}
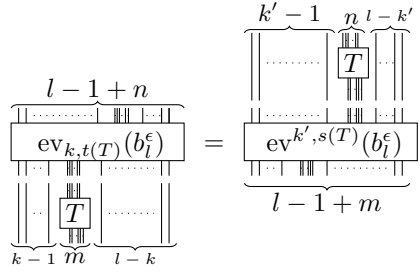

(T5) {\it Creation-annihilation relation}:
for $c_{k,l}^{\epsilon}\in C$ and $a_{k+1,l-1}^{\epsilon'}\in A$
with $t(c_{k,l}^{\epsilon})=s(a_{k+1,l-1}^{\epsilon'})$
$$a_{k+1,l-1}^{\epsilon'}\cdot c_{k,l}^{\epsilon}=e_{k+l}^{s(c_{k,l}^{\epsilon})}.$$
And
for $c_{k,l}^{\epsilon}\in C$ and $a_{k-1,l+1}^{\epsilon'}\in A$
with $t(c_{k,l}^{\epsilon})=s(a_{k-1,l+1}^{\epsilon'})$
$$a_{k-1,l+1}^{\epsilon'}\cdot c_{k,l}^{\epsilon}=e_{k+l}^{s(c_{k,l}^{\epsilon})}.$$
Figure \ref{T5} depicts the move.
\begin{figure}[h]
\begin{tabular}{cc}
\begin{minipage}{0.5\hsize}
\begin{center}
           \begin{tikzpicture}
                    \draw[-] (0,0) --(0, 0.5)  (0,0.6)--(0,1.1);
                    \draw[dotted] (0.1,0.3) --(0.3, 0.3) (0.1,0.9)--(0.3,.9);
                    \draw[-] (0.4,0) --(0.4, 0.5) (0.4,0.6)--(0.4,1.1);
                    \draw[-] (0.5,0.6)--(0.5,1.1);
                     \draw[decorate,decoration={brace,mirror}] (-0.1,-0.1) -- (0.4,-0.1) node[midway,below]{$k$};
\draw (0.5,0.5)  arc (180:360:0.2);
 \draw (0.9,0.6)  arc (180:0:0.2);
                     \draw[decorate,decoration={brace,mirror}] (1.2,-0.1) -- (1.9,-0.1) node[midway,below]{$l$};
                    \draw[-] (1.3,0) --(1.3, 0.5);
                    \draw[-] (1.4,0) --(1.4, 0.5)  (1.4,0.6) --(1.4, 1.1);
                    \draw[dotted] (1.5,0.3) --(1.7, 0.3)  (1.5,0.9) --(1.7, 0.9);
                    \draw[-] (1.8,0) --(1.8, 0.5) (1.8,0.6) --(1.8, 1.1);
\draw  (2.2,0.5)node{$=$};
                    \draw[-] (2.5,0) --(2.5, 1.1);
                    \draw[-] (2.6,0) --(2.6, 1.1);
                   \draw[dotted] (2.7,0.5) --(3.4, 0.5) ;
                    \draw[-] (3.5,0) --(3.5, 1.1);
                    \draw[decorate,decoration={brace,mirror}] (2.4,-0.1) -- (3.6,-0.1) node[midway,below]{$k+l$};
           \end{tikzpicture}
\end{center}
\end{minipage}
\begin{minipage}{0.5\hsize}
\begin{center}
           \begin{tikzpicture}
                    \draw[-] (0,0) --(0, 0.5)  (0,0.6)--(0,1.1);
                    \draw[dotted] (0.1,0.3) --(0.3, 0.3) (0.1,0.9)--(0.3,.9);
                    \draw[-] (0.4,0) --(0.4, 0.5) (0.4,0.6)--(0.4,1.1);
                    \draw[-] (0.5,0)--(0.5,.5);
                     \draw[decorate,decoration={brace,mirror}] (0.,-0.1) -- (0.55,-0.1) node[midway,below]{$k$};
\draw (0.5,0.6)  arc (180:0:0.2);
 \draw (0.9,0.5)  arc (180:360:0.2);
                     \draw[decorate,decoration={brace,mirror}] (1.4,-0.1) -- (1.8,-0.1) node[midway,below]{$l$};
                    \draw[-] (1.3,0.6) --(1.3, 1.1);
                    \draw[-] (1.4,0) --(1.4, 0.5)  (1.4,0.6) --(1.4, 1.1);
                    \draw[dotted] (1.5,0.3) --(1.7, 0.3)  (1.5,0.9) --(1.7, 0.9);
                    \draw[-] (1.8,0) --(1.8, 0.5) (1.8,0.6) --(1.8, 1.1);
\draw  (2.2,0.5)node{$=$};
                    \draw[-] (2.5,0) --(2.5, 1.1);
                    \draw[-] (2.6,0) --(2.6, 1.1);
                   \draw[dotted] (2.7,0.5) --(3.4, 0.5) ;
                    \draw[-] (3.5,0) --(3.5, 1.1);
                    \draw[decorate,decoration={brace,mirror}] (2.5,-0.1) -- (3.5,-0.1) node[midway,below]{$k+l$};
           \end{tikzpicture}
\end{center}
\end{minipage}
\end{tabular}
\caption{(T5): Creation-annihilation relations}
\label{T5}
\end{figure}
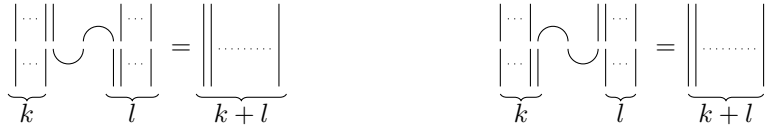

(T6) {\it  First Reidemeister move}:
for $c\in\widehat{\mathbb Z}$
\footnote{
It should be worthy to emphasize that
$c$ is assumed to be not in $\mathbb Z$ but in  $\widehat{\mathbb Z}$.}, 
$c_{k,l}^{\epsilon}\in C$ and
$\sigma_{k+1}^{\epsilon'}\in \widehat{B}$
which represents $\sigma_{k+1}\in {\widehat B}_{k+l+2}$
and $t(c_{k,l}^{\epsilon})=s\bigl((\sigma_{k+1}^{\epsilon'})^c\bigr)$
$$
(\sigma_{k+1}^{\epsilon'})^{c}\cdot c_{k,l}^{\epsilon}=c_{k,l}^{\bar\epsilon}
$$
where $\bar\epsilon$ is chosen to be 
$t(\bar\epsilon)=t((\sigma_{k+1}^{\epsilon'})^{c})$.

For $c\in\widehat{\mathbb Z}$,
$a_{k,l}^{\epsilon}\in A$ and
$\sigma_{k+1}^{\epsilon'}\in \widehat{B}$
which represents $\sigma_{k+1}\in {\widehat B}_{k+l+2}$
and $s(a_{k,l}^{\epsilon})=t\bigl((\sigma_{k+1}^{\epsilon'})^c\bigr)$
$$
a_{k,l}^{\epsilon}\cdot (\sigma_{k+1}^{\epsilon'})^{c}=a_{k,l}^{\bar\epsilon}.
$$
where $\bar\epsilon$ is chosen to be 
$s(\bar\epsilon)=s((\sigma_{k+1}^{\epsilon'})^{c})$.
Figure \ref{T6} depicts the moves.

\begin{figure}[h]
\begin{tabular}{cc}
\begin{minipage}{0.5\hsize}
\begin{center}
          \begin{tikzpicture}
                    \draw[-] (0,0) --(0, 0.5)  (0,0.6)--(0,1.5);
                    \draw[dotted] (0.1,0.3) --(0.3, 0.3) (0.1,1)--(0.3,1);
                    \draw[-] (0.4,0) --(0.4, 0.5) (0.4,0.6)--(0.4,1.5);
                     \draw[decorate,decoration={brace,mirror}] (-0.1,0) -- (0.5,0) node[midway,below]{$k$};
\draw (0.6,0.5)  arc (180:360:0.2);
                    \draw[-] (.6,1.2) --(1, .8);
                    \draw[draw=white,double=black, very thick] (.6,.8) --(1, 1.2);
\draw  (0.5,0.8) rectangle (1.1,1.2) node[right]{$c$};
                    \draw[-] (0.6,0.6)--(0.6,.8) (1.0,0.6)--(1.0,.8);
                    \draw[-] (0.6,1.2)--(0.6,1.5) (1.0,1.2)--(1.0,1.5);
                    \draw[-] (1.5,0) --(1.5, .5) (1.5,.6)--(1.5,1.5) ;
                    \draw[-] (2.0,0) --(2.0, .5)  (2.0,.6)--(2.0,1.5) ;
                   \draw[dotted] (1.6,0.3) --(1.9, 0.3)   (1.6,1) --(1.9, 1) ;
                    \draw[decorate,decoration={brace,mirror}] (1.4,0) -- (2.1,0) node[midway,below]{$l$};
\draw  (2.5,0.7)node{$=$};
                    \draw[-] (3.,0) --(3.,1.5);
                    \draw[dotted] (3.1,0.7) --(3.3, 0.7) ;
                    \draw[-] (3.4,0) --(3.4,1.5);
                     \draw[decorate,decoration={brace,mirror}] (2.9,0) -- (3.5,0) node[midway,below]{$k$};
\draw (3.5,1.5)  arc (180:360:0.2);
                    \draw[-] (4.,0) --(4.,1.5) ;
                    \draw[-] (4.4,0) --(4.4,1.5) ;
                   \draw[dotted] (4.1,0.7) --(4.3, 0.7) ;
                    \draw[decorate,decoration={brace,mirror}] (3.9,0) -- (4.5,0) node[midway,below]{$l$};
\draw (4.7,0) node{,};
           \end{tikzpicture}
\end{center}
\end{minipage}
\begin{minipage}{0.5\hsize}
\begin{center}
          \begin{tikzpicture}
                    \draw[-] (0,0) --(0, 0.9)  (0,1)--(0,1.5);
                    \draw[dotted] (0.1,0.5) --(0.3, 0.5) (0.1,1.3)--(0.3,1.3);
                    \draw[-] (0.4,0) --(0.4, 0.9) (0.4,1)--(0.4,1.5);
                     \draw[decorate,decoration={brace,mirror}] (-0.1,0) -- (0.5,0) node[midway,below]{$k$};
\draw (0.6,1)  arc (180:0:0.2);
                    \draw[-] (.6,.6) --(1, .2);
                    \draw[draw=white,double=black, very thick] (.6,.2) --(1, .6);
\draw  (0.5,0.2) rectangle (1.1,.6) node[right]{$c$};
                    \draw[-] (0.6,0)--(0.6,.2) (1.0,0)--(1.0,.2);
                    \draw[-] (0.6,.6)--(0.6,.9)  (1.0,.6)--(1.0,.9) ;
                    \draw[-] (1.5,0) --(1.5, .9) (1.5,1)--(1.5,1.5) ;
                    \draw[-] (2.,0) --(2., .9)  (2.,1)--(2.,1.5) ;
                   \draw[dotted] (1.6,0.5) --(1.9, 0.5)   (1.6,1.3) --(1.9, 1.3) ;
                    \draw[decorate,decoration={brace,mirror}] (1.4,0) -- (2.1,0) node[midway,below]{$l$};
\draw  (2.5,0.7)node{$=$};
                    \draw[-] (3.,0) --(3.,1.5);
                    \draw[dotted] (3.1,0.7) --(3.3, 0.7) ;
                    \draw[-] (3.4,0) --(3.4,1.5);
                     \draw[decorate,decoration={brace,mirror}] (2.9,0) -- (3.5,0) node[midway,below]{$k$};
\draw (3.5,0.1)  arc (180:0:0.2);
                    \draw[-] (4.,0) --(4.,1.5) ;
                    \draw[-] (4.4,0) --(4.4,1.5) ;
                   \draw[dotted] (4.1,0.7) --(4.3, 0.7) ;
                    \draw[decorate,decoration={brace,mirror}] (3.9,0) -- (4.5,0) node[midway,below]{$l$};
           \end{tikzpicture}
\end{center}
\end{minipage}
\end{tabular}
\caption{(T6): First Reidemeister move}
\label{T6}
\end{figure}
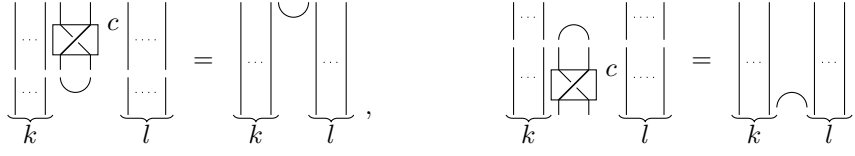
We note that in the first (resp. second) equation
$ c_{k,l}^{\epsilon}=c_{k,l}^{\bar\epsilon}$
(resp. $a_{k,l}^{\epsilon}=a_{k,l}^{\bar\epsilon}$)
if and only if $c \equiv 0\pmod 2$.

These moves (T1)-(T6) are profinite analogues of the  so-called
{\it Turaev moves} \cite{T}
for oriented tangles (consult also \cite{CDM, K, O}).
Our above formulation is stimulated by
the moves presented (R1)-(R11) in \cite{B}. 

\begin{defn}\label{definition of isotopy}
Two profinite (oriented) tangle diagrams $T_1$ and $T_2$ are {\it isotopic},
denoted $T_1=T_2$ by abuse of notation, if 
they are related by a {\it finite} number of the moves (T1)-(T6). 
A {\it profinite tangle}, {\it profinite link} and {\it profinite knot} means an isotopy class of profinite tangle diagrams, link diagrams, knot diagrams respectively.
The set of profinite tangles is denoted by  $\widehat{\mathcal T}$. 
The set $\widehat{\mathcal L}$ of profinite links
(resp. the set $\widehat{\mathcal K}$ of profinite knots)
is the subset of $\widehat{\mathcal T}$ which consists of
isotopy classes of  profinite links
(resp. of profinite knots).
\end{defn}

\begin{note}\label{topology on profinite knots}
Profinite topology on $\widehat{B}_n$ ($n=1,2,\dots$)
and the discrete topology on $A$ and on $C$ 
yield a topology on the space of profinite tangles.
Hence  $\widehat{\mathcal T}$ carries a structure of topological space.
\end{note}

\begin{thm}\label{profinite realization theorem}
(1). Let   $\mathcal T$ be
the set of isotopy classes of  (topological)  oriented tangles.
There is a natural map 
$$h:\mathcal T\to\widehat{\mathcal T},$$
which we call the profinite realization map.

(2). The above profinite realization map induces the map
$$h:\mathcal K\to\widehat{\mathcal K}.$$
Here  $\mathcal K$  stands for the set of  isotopy classes of topological oriented knots.
\end{thm}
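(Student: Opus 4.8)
The plan is to define $h$ at the level of tangle diagrams and then show it descends to isotopy classes. Given a topological oriented tangle, choose a representative diagram in $[0,1]\times\mathbb{C}$ and isotope it so that the height coordinate $[0,1]$ restricts to a Morse function in general position with respect to the crossings; then each sufficiently thin horizontal slab contains exactly one elementary event. A crossing involving the $i$-th and $(i+1)$-st strands is sent to a fundamental profinite tangle $b_n^\epsilon\in\widehat{B}$ whose underlying braid is $\sigma_i^{\pm1}\in B_n$, viewed inside $\widehat{B}_n$ through the residual-finiteness inclusion \eqref{residually finiteness}; a local maximum is sent to an annihilation symbol in $A$, a local minimum to a creation symbol in $C$, and the ambient parallel strands contribute trivial braids $e_m^\epsilon$. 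The orientations of the strands at each level fix the decorations $\epsilon$, and because consecutive slabs share their common boundary the resulting finite sequence is automatically \emph{consistent} in the sense of Definition \ref{definition of profinite tangle}, with source and target read off the bottom and top of the diagram. This assigns a profinite tangle $h(D)$ to each diagram $D$.

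The substance of the argument is well-definedness on isotopy classes, and this is where I expect the main obstacle. There are two layers. First, the slicing of a fixed diagram is not unique, but any two admissible slicings differ by sliding critical points and crossings past one another and by inserting or deleting trivial strands; these local changes are instances of (T1) (trivial braids invariance), (T2) (braids composition), (T3) (independent tangles relation) and (T4) (braid-tangle relations), so $h(D)$ is independent of the slicing up to the isotopy of Definition \ref{definition of isotopy}. Second, two diagrams representing isotopic topological tangles are related by a finite sequence of the oriented Turaev moves \cite{T}. The real work is to run through Turaev's list (in the form of the moves \cite{B} (R1)-(R11) that the paper cites) and check that each one is carried by $h$ into a composite of the profinite moves (T1)-(T6) of Definition \ref{profinite Turaev moves}. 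Here the key structural inputs are that $B_n\hookrightarrow\widehat{B}_n$ is a group homomorphism, so the oriented Reidemeister~II and~III moves follow from the braid relations already valid in $B_n$ and realized by (T2); that the straightening (zig-zag) moves are exactly (T5); that the moves sliding a crossing past a cap or cup are instances of (T4); and that the first Reidemeister move is the integral special case $c=\pm1\in\mathbb{Z}\subset\widehat{\mathbb{Z}}$ of (T6). Since the profinite moves were designed precisely as analogues of the topological ones, every oriented Turaev move lifts, which gives a well-defined map $h\colon\mathcal{T}\to\widehat{\mathcal{T}}$. I anticipate the bookkeeping of orientation decorations $\epsilon$ across the many oriented variants to be the most delicate, but routine, part of this verification.

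For part (2) I would observe that the construction preserves the combinatorial skeleton: the skeleton $\mathbb{S}(h(D))$ coincides with the underlying shadow graph of the diagram $D$, because caps, cups and crossings impose the same incidences on both sides and the projection $\widehat{B}_n\to\mathfrak{S}_n$ used to form $\mathbb{S}(b_n^\epsilon)$ agrees with the permutation of the corresponding topological braid in $B_n$. Hence $h$ preserves both the type $(\emptyset,\emptyset)$ and the number of connected components. Since a topological oriented knot is a link with a single connected component, its image is a profinite link with a single connected component, that is, a profinite knot in the sense of Definition \ref{definition of profinite knot}. Therefore $h$ restricts to the asserted map $h\colon\mathcal{K}\to\widehat{\mathcal{K}}$.
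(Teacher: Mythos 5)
Your proposal is correct and follows essentially the same route as the paper: both rest on the presentation of topological oriented tangles as consistent sequences of elements of $A$, $B$, $C$ modulo the discrete Turaev moves, combined with the natural map $B_n\to\widehat{B}_n$ and the observation that those discrete moves (with $c=\pm1\in\mathbb{Z}\subset\widehat{\mathbb{Z}}$ in (T6)) are special cases of the profinite moves (T1)--(T6). The only difference is that you sketch the Morse-slicing proof of that presentation, whereas the paper simply cites Bar-Natan \cite{B} for it; your skeleton argument for part (2) matches the paper's one-line justification that isotopy preserves the property of being a profinite knot.
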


\begin{proof}
(1).
The result in \cite{B} indicates that 
the set $\mathcal T$ is described by the set of consistent  finite sequences of  
fundamental tangles,
elements of $A$, 
$$B:=\left\{b_n^\epsilon \bigm| b_n^\epsilon=
\left(b_n,\epsilon=(\epsilon_i)_{i=1}^{n}\right)
\in {B}_n\times \{\uparrow, \downarrow\}^{n}, n=1,2,3,4,\dots
\right\}
$$
and $C$,
modulo the (discrete) Turaev moves.
Since the {\it (discrete) Turaev moves}  in this case mean the moves
replacing profinite tangles and braids by (discrete) tangles and braids
in (T1)-(T6) and $c\in\widehat{\mathbb Z}$ by $c\in{\mathbb Z}$ in (T6).
Because we have a natural map $B_n\to\widehat{B}_n$ and
the Turaev moves are special case of our 6 moves,
we have a natural map $h:\mathcal T\to\widehat{\mathcal T}$.

(2).
It is easy because the set of profinite tangles isotopic to a given profinite knot
consists of only profinite knots.
\end{proof}

We notice that the number of connected components
is an isotopic invariant of profinite tangles.
As a knot analogue of residually-finiteness
\eqref{residually finiteness} of the braid group $B_n$,
we raise the conjecture below.

\begin{conj}\label{injectivity on T}
The map $h:{\mathcal K}\to\widehat{\mathcal K}$ is injective.
\end{conj}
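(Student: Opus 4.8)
The plan is to produce a knot invariant that \emph{factors through} $h$ yet \emph{separates} distinct topological knots; injectivity of $h$ is then immediate. The natural candidate is the Kontsevich invariant $Z$, the universal finite-type (Vassiliev) invariant, for two reasons. First, its construction from a Drinfeld associator $\Phi$ is diagrammatically parallel to our ABC-construction: creations and annihilations supply the cups and caps, braids supply the elementary monodromies, and $\Phi$ intervenes exactly where fundamental tangles are recombined. Second, as recorded in Remark \ref{perfect remark} and \ref{Kontsevich factorization}, a failure of injectivity of $h$ would force $Z$ to fail to be a perfect invariant; the present conjecture is thus the contrapositive of the (expected) perfectness of $Z$.

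Concretely, I would first construct a factorization $\tilde Z\colon \widehat{\mathcal K}\to \widehat{\mathcal A}$, where $\widehat{\mathcal A}$ is a (suitably completed) algebra of chord/Jacobi diagrams, by assigning to each fundamental profinite tangle its Kontsevich building block: to $\sigma_i^{\,c}$ the exponential $\exp\!\bigl(\tfrac{c}{2}\,t_i\bigr)$, and to the creation/annihilation symbols the associator-corrected cups and caps (the assignment is most natural in the pro-$\ell$ setting of \S\ref{pro-l knots}, where $\ell$-adic scalars interpret $\exp\!\bigl(\tfrac{c}{2}\,t_i\bigr)$ for $c\in\widehat{\mathbb Z}$). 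The substantive step is to verify that the profinite Turaev moves (T1)--(T6) are sent to identities in $\widehat{\mathcal A}$: here (T2), (T3), (T4) encode functoriality and naturality, with the pentagon \eqref{pentagon equation} and the two hexagons \eqref{2-cycle}, \eqref{hexagon equation} furnishing precisely the relations demanded by the braid--tangle and creation--annihilation moves, while the first Reidemeister move (T6), carrying the $\widehat{\mathbb Z}$-parameter $c$, matches the framing anomaly of $Z$ and is absorbed by the standard normalization. Once $\tilde Z$ is well defined, a check on fundamental tangles gives $Z=\tilde Z\circ h$, so that $Z$ factors through $\widehat{\mathcal K}$; if $Z$ separates topological knots, then $Z=\tilde Z\circ h$ is injective, whence $h$ is injective.

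The hard part — and the reason this remains a conjecture — is exactly the last hypothesis: the \emph{perfectness} (completeness) of the Kontsevich invariant, equivalently the question of whether finite-type invariants separate knots, is a long-standing open problem. The approach above therefore does not settle the conjecture but reduces it to (and is essentially equivalent to) that open problem, modulo the genuinely checkable factorization step. A second, more arithmetic route would bypass $Z$ and instead try to recover from $h(K)\in\widehat{\mathcal K}$ the profinite completion of the knot group together with its peripheral structure, and then invoke profinite rigidity of knots in the spirit of Mazur's profinite equivalence \cite{Maz}; but extracting $\widehat\pi_1$ of the complement, with peripheral data, from our diagrammatic ABC-presentation is itself a serious gap, and I expect it to be no easier than the finite-type route.
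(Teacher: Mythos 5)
The statement you are addressing is a \emph{conjecture} in the paper, not a theorem: the paper offers no proof, only a rationale, and your proposal reproduces that rationale exactly. Your factorization $Z=\tilde Z\circ h$ is precisely what the paper constructs rigorously via the pro-$l$ and proalgebraic settings (Propositions \ref{from profinite to pro-l}, \ref{from pro-l to proalgebraic} and \ref{from profinite to pro-algebraic}, with $\tilde Z$ realized as the composite $\widehat{\mathcal K}\to\widehat{\mathcal K}^l\to\widehat{{\mathbb Q}_l[{\mathcal K}]}$ followed by the isomorphism $\widehat{{\mathbb Q}[{\mathcal K}]}\simeq\widehat{CD}$ of Remark \ref{Kontsevich factorization}), and the contrapositive you state — non-injectivity of $h$ forces non-perfectness of the Kontsevich invariant — is exactly Remark \ref{perfect remark}. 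Your closing admission that this reduces the conjecture to, rather than settles, the open perfectness problem for $Z$ is the correct assessment and matches the status the paper itself assigns to the statement.
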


\begin{rem}\label{perfect remark}
The Kontsevich invariant is an invariant of oriented knots
which is conjectured to be  a perfect invariant,
i.e. an invariant detecting all oriented knots (cf. \cite{O-Prob} etc).
We note that if the invariant is perfect,
then the map $h$ is injective
by the arguments given in Remark \ref{Kontsevich factorization} below.
\end{rem}
%

Since last century, lots of knot invariants have been investigated;
such as  the unknotting number, the Alexander polynomial, the  Jones polynomial, 
quandles, the Khovanov homology, etc.
It might be interesting to pose the following.

\begin{prob}\label{profinite knot invariant}
Extend various known knot invariants to those for profinite knots.
\end{prob}

Profinite analogues of  finite type knot invariants will be discussed
in Remark \ref{finite-type remark}.
 

Below we remind one of the most elementary results for oriented knots.

\begin{prop}\label{knot forms a monoid}
The space $\mathcal K$ of topological oriented knots
carries a structure of  a commutative associative monoid
by the connected sum (knot sum).
\end{prop}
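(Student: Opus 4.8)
The plan is to fix a precise model for the connected sum that manifestly respects orientations, and then to verify the monoid axioms in the order: well-definedness, existence of a unit, associativity, and commutativity, treating well-definedness as the crux.

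First I would set up the operation. Given oriented knots $K_1,K_2\subset S^3$, I isotope them into disjoint balls $B_1,B_2$ abutting along a common boundary disk, choose on each $K_i$ a short unknotted subarc lying near that disk, excise the two subarcs, and reconnect the four resulting endpoints by a thin band so that the orientations of $K_1$ and $K_2$ are matched across it. The result $K_1\# K_2$ is an oriented knot. For the unit I would take the class of the unknot $U$: joining a small round circle to $K$ by such a band can be undone by an isotopy that shrinks the trivial summand, yielding $K\# U=K=U\# K$.

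The main obstacle is well-definedness, i.e.\ showing the isotopy class of $K_1\# K_2$ is independent of the choices of balls, subarcs, and band. Here I would invoke the disk theorem (uniqueness of embeddings of balls into a connected oriented manifold up to ambient isotopy) together with the isotopy extension theorem: since each $K_i$ is connected, any two choices of excised subarc are interchanged by an ambient isotopy carrying $K_i$ to itself, and any two orientation-compatible bands are isotopic rel their endpoints because the relevant space of framed connecting arcs in the complementary ball is connected. This is exactly the step where the orientations enter—matching them removes the ambiguity that would otherwise arise for non-invertible knots.

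Finally I would dispatch associativity and commutativity. For associativity $(K_1\# K_2)\# K_3=K_1\#(K_2\# K_3)$, I place the three summands in a row of balls joined in sequence; by the well-definedness just established, the order in which the two band attachments are performed is immaterial, so both groupings agree with the single row configuration. For commutativity $K_1\# K_2=K_2\# K_1$, I use the standard shrink-and-slide isotopy: shrink the summand $K_2$ into a tiny knotted ball threaded on the strand, slide that ball along $K_1$ all the way around the circle so that it reappears on the opposite side of $K_1$, and re-expand, realizing the transposition of the two factors. Assembling these four verifications gives the commutative associative monoid structure asserted in Proposition \ref{knot forms a monoid}.
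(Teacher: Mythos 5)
Your overall architecture (well-definedness as the crux, unit, associativity deduced from well-definedness, commutativity by shrink-and-slide) is the standard textbook one; note that the paper itself gives no argument for this proposition and simply defers to standard references, so supplying that argument is the right goal. However, the step you single out as the crux rests on a false claim. The space of orientation-compatible bands joining the two knots, even bands with fixed attaching arcs and confined to a ball, is \emph{not} connected: an embedded arc in a $3$-ball with fixed endpoints can be knotted, and $\pi_0$ of the space of such embedded arcs is the set of all long-knot types. Concretely, if the core of your connecting band is tied into a nontrivial knot $K'$, the surgery you describe produces $K_1\sharp K'\sharp K_2$, which differs from $K_1\sharp K_2$ (e.g.\ by additivity of genus). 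Likewise a band that threads through $K_1$ before crossing the separating disk is not isotopic rel endpoints to a straight band, since comparing its core with the straight core yields a loop of linking number $\pm 1$ with $K_1$. So ``any two orientation-compatible bands are isotopic rel their endpoints'' is wrong; with arbitrary bands the operation you have defined is the \emph{band sum}, which genuinely fails to be well defined --- this failure is exactly why theorems about band sums (such as Scharlemann's theorem that a band sum is unknotted only if both summands are unknots and the band is trivial) are nontrivial.

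The gap is repairable, but by a different mechanism than connectivity of a space of bands. Either (a) build triviality into the definition: require that the band together with the two excised subarcs lies in a ball $B_0$ meeting $K_1\cup K_2$ in a standard (trivial ball--arc) configuration; then any two such configurations are related by an ambient isotopy preserving each $K_i$, using your sliding-along-the-knot argument for the attaching arcs, uniqueness of tubular neighborhoods, and the connectivity of the group of orientation-preserving homeomorphisms of the pair $(S^2,\{2\ \text{points}\})$; or (b) adopt the gluing definition: remove a trivial ball--arc pair from each $(S^3,K_i)$ and glue the complementary pairs along $(S^2,S^0)$ respecting orientations, proving well-definedness from the same two uniqueness facts. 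Once well-definedness is established by one of these routes, your remaining three verifications (unit, associativity, and the shrink-and-slide proof of commutativity) go through as you wrote them.
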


Here the connected sum (knot sum)  is a natural way to fuse
two oriented knots, with an appropriate position of orientation,
into one 
(an example is illustrated in Figure \ref{Example of connected sum}).
\begin{figure}[h]
\begin{center}
\begin{tikzpicture}
                 \draw[<-] (0.8,2.2) arc (0:-90:0.6);
                  \draw[<-](0.2,1.6) arc (-90:-180:0.6);
                 \draw [<-] (0,2.0)  arc (270:180:0.2);
                 \draw[color=white, line width=5pt] (0,2.4) ..controls(0.4,2.4) and (0.4,2.0)..(0.8,2.0);
                 \draw[<-] (0,2.4) ..controls(0.4,2.4) and (0.4,2.0)..(0.8,2.0);
                 \draw[color=white, line width=5pt]  (0,2.0) ..controls(0.4,2.0) and (0.4,2.4) ..(0.8, 2.4);
                \draw[-] (0,2.0) ..controls(0.4,2.0) and (0.4,2.4) ..(0.8, 2.4);
                 \draw (0.8,2.0)  arc (-90:90:0.2);
                 \draw[color=white, line width=5pt] (-0.2,2.2) arc (180:0:0.5);  
                 \draw (-0.2,2.2) arc (180:0:0.5);  
                \draw[color=white, line width=5pt]  (-0.4,2.2) ..controls (-0.4,2.4) and (-0.1,2.4) ..(0,2.4);
                  \draw[<-] (-0.4,2.2) ..controls (-0.4,2.4) and (-0.1,2.4) ..(0,2.4);
              \draw[color=white, line width=5pt] (-0.1,0.1).. controls  (-0.3,0.1)  and (-0.1,0.8)..(0.2,0.8);
                \draw [<-] (-0.1,0.1).. controls  (-0.3,0.1)  and (-0.1,0.8)..(0.2,0.8); 
              \draw[color=white, line width=5pt]  (0.5,0.1).. controls  (0.1,0.1)  and (-0.1,1.4)..(0.2,1.4);
                \draw[<-]  (0.5,0.1).. controls  (0.1,0.1)  and (-0.1,1.4)..(0.2,1.4);
              \draw[color=white, line width=5pt] (-0.1,0.1).. controls  (0.3,0.1)  and (0.5,1.4)..(0.2,1.4);
                \draw[->] (-0.1,0.1).. controls  (0.3,0.1)  and (0.5,1.4)..(0.2,1.4);
               \draw[color=white, line width=5pt](0.5,0.1).. controls  (0.7,0.1)  and (0.5,0.8)..(0.2,0.8);
                \draw  (0.5,0.1).. controls  (0.7,0.1)  and (0.5,0.8)..(0.2,0.8);
  \draw[dotted]   (0.2,1.5) circle (0.3);
                \draw (2.2,1.2) node{$\Longrightarrow$};

                 \draw[<-] (4.8,2.2) .. controls (4.8,1.9) and  (4.7,1.6) ..(4.5,1.6);
                 \draw[->] (3.6,2.2) .. controls (3.6,1.9) and  (3.7,1.6) ..(3.9,1.6);
                 \draw [<-] (4.0,2.0)  arc (270:180:0.2);
                 \draw[color=white, line width=5pt] (4.0,2.4) ..controls(4.4,2.4) and (4.4,2.0)..(4.8,2.0);
                 \draw[<-] (4.0,2.4) ..controls(4.4,2.4) and (4.4,2.0)..(4.8,2.0);
                 \draw[color=white, line width=5pt]  (4.0,2.0) ..controls(4.4,2.0) and (4.4,2.4) ..(4.8, 2.4);
                \draw[-] (4.0,2.0) ..controls(4.4,2.0) and (4.4,2.4) ..(4.8, 2.4);
                 \draw (4.8,2.0)  arc (-90:90:0.2);
                 \draw[color=white, line width=5pt] (3.8,2.2) arc (180:0:0.5);  
                 \draw (3.8,2.2) arc (180:0:0.5);  
                \draw[color=white, line width=5pt]  (4-0.4,2.2) ..controls (4-0.4,2.4) and (4-0.1,2.4) ..(4.0,2.4);
                  \draw[<-] (4-0.4,2.2) ..controls (4-0.4,2.4) and (4-0.1,2.4) ..(4.0,2.4);
  \draw[-] (4.5,1.6) .. controls (4.7,1.6) and  (4.3,1.6) ..(4.3,1.4);
  \draw[-] (4-0.1,1.6) .. controls (4-0.3,1.6) and  (4.1,1.6) ..(4.1,1.4);
              \draw[color=white, line width=5pt] (4-0.1,0.1).. controls  (4-0.3,0.1)  and (4-0.1,0.8)..(4.2,0.8);
                \draw [<-] (4-0.1,0.1).. controls  (4-0.3,0.1)  and (4-0.1,0.8)..(4.2,0.8); 
              \draw[color=white, line width=3pt] (4.5,0.1).. controls  (4.1,0.1)  and (4.1,1.4)..(4.1,1.4); 
                \draw[<-]  (4.5,0.1).. controls  (4.1,0.1)  and (4.1,1.4)..(4.1,1.4); 
              \draw[color=white, line width=3pt](4-0.1,0.1).. controls  (4.3,0.1)  and (4.3,1.4)..(4.3,1.4); 
                \draw[->] (4-0.1,0.1).. controls  (4.3,0.1)  and (4.3,1.4)..(4.3,1.4); 
               \draw[color=white, line width=5pt](4.5,0.1).. controls  (4.7,0.1)  and (4.5,0.8)..(4.2,0.8);
                \draw  (4.5,0.1).. controls  (4.7,0.1)  and (4.5,0.8)..(4.2,0.8); 
\end{tikzpicture}
\end{center}
\caption{Connected sum (knot sum)}
\label{Example of connected sum}
\end{figure}
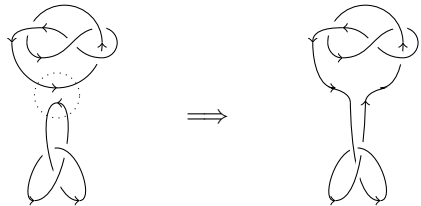

It can be done any points.
It is actually well-defined. 
It yields a commutative monoid structure on 
$\mathcal K$.
For more,
consult the standard text book of knot theory.

The notion of connected sum can be extended into profinite knots.

\begin{thm}\label{topological monoid}
For  any two profinite knots $K_1=\alpha_m\cdots\alpha_1$ and $K_2=\beta_n\cdots\beta_1$
with
$(\alpha_m,\alpha_1)=(\opannihilation,\creation)$
and
$(\beta_n,\beta_1)=(\opannihilation,\creation)$,
their {\sf connected sum} means the  profinite tangle defined by
\begin{equation}\label{connected sum}
K_1\sharp K_2:=\alpha_{m}\cdots\alpha_2\cdot\beta_{n-1}\cdots\beta_1.
\end{equation}
Then

(1). the above connected sum
induces a well-defined product
$$
\sharp:\widehat{\mathcal K}\times\widehat{\mathcal K}\to\widehat{\mathcal K}.
$$

(2). By the product $\sharp$,
the set $\widehat{\mathcal K}$ forms a topological
(that is, the map $\sharp$ is continuous with respect to the topology given above)
commutative associative monoid,
whose unit is given by the oriented circle 
$\orientedcircle:=\opannihilation\cdot\creation$. 

(3). The profinite realization map $h:\mathcal K\to\widehat{\mathcal K}$ forms a
monoid homomorphism 
whose image is dense in $\widehat{\mathcal K}$.
\end{thm}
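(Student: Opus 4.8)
The plan is to treat the three assertions in order, isolating the two genuinely nontrivial points. Throughout I use that, for any profinite link, the empty-source constraint forces the bottom fundamental tangle to be a creation $c_{0,0}^{\creation}$ or $c_{0,0}^{\opcreation}$ and the empty-target constraint forces the top one to be an annihilation $a_{0,0}^{\annihilation}$ or $a_{0,0}^{\opannihilation}$, since these are the only fundamental profinite tangles with empty source resp.\ target. First I would record a normalization step: every class in $\widehat{\mathcal K}$ admits a representative in the standard form of the statement, with $\alpha_1=\creation$ and $\alpha_m=\opannihilation$, obtained by sliding a bottom cup of the opposite orientation type to a neighbouring strand and correcting its type by finitely many applications of (T4)--(T6). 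Before any invariance question I check that the right-hand side of \eqref{connected sum} is again a profinite knot: on skeletons, deleting the bottom cup of $K_1$ turns the circle ${\Bbb S}(K_1)$ into an arc with two endpoints, and likewise deleting the top cap of $K_2$; the concatenation glues these two arcs along their two pairs of endpoints in series, producing a single circle, so $K_1\sharp K_2$ has one connected component and empty source and target.

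The main obstacle is assertion (1), well-definedness on isotopy classes. Writing $K_1\sharp K_2=\widetilde K_1\cdot\widehat K_2$ with $\widetilde K_1=\alpha_m\cdots\alpha_2$ (a tangle $\downarrow\uparrow\to\emptyset$) stacked on top of $\widehat K_2=\beta_{n-1}\cdots\beta_1$ (a tangle $\emptyset\to\downarrow\uparrow$), I would show that the isotopy class of this composite depends only on the classes of $K_1$ and $K_2$. The clean way is a sliding argument: any one of the moves (T1)--(T6) applied to $K_1$ away from its bottom cup is local, hence applies verbatim inside $\widetilde K_1\cdot\widehat K_2$ and yields an isotopic connected sum; a move that touches the cup is first pushed away by relocating the cut point. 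The relocation itself, namely moving the bottom cup up past an adjacent fundamental tangle via (T3)--(T6), is shown to alter $K_1\sharp K_2$ only by an isotopy, and any two admissible cut points (hence any two standard-form representatives of a fixed knot) are joined by finitely many such relocations. This is where the bulk of the work lies, and it is what forces a compatibility check against each of (T1)--(T6) separately; the symmetric statement for $K_2$ is handled identically, and together they produce a well-defined map $\sharp\colon\widehat{\mathcal K}\times\widehat{\mathcal K}\to\widehat{\mathcal K}$.

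For assertion (2), associativity is immediate: since a connected sum is again in standard form, both $(K_1\sharp K_2)\sharp K_3$ and $K_1\sharp(K_2\sharp K_3)$ reduce, as sequences, to $\alpha_m\cdots\alpha_2\cdot\beta_{n-1}\cdots\beta_2\cdot\gamma_{p-1}\cdots\gamma_1$. The oriented circle $\orientedcircle=\opannihilation\cdot\creation$ is a two-symbol standard-form knot, and a direct computation gives $\orientedcircle\sharp K=K=K\sharp\orientedcircle$ on the nose, so it is the unit. Commutativity is the second nontrivial point: I would shrink the summand $\widehat K_2$ to a tangle supported near a single strand and slide it up through $\widetilde K_1$, around the top cap, and down the far side, by repeated applications of (T3)--(T5), arriving at $K_2\sharp K_1$; this is the combinatorial incarnation of the classical ``slide one summand around the other'' isotopy. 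Continuity of $\sharp$ is then routine, since \eqref{connected sum} is built from concatenation together with deletion of two discrete symbols, while the braid parts vary continuously in the profinite topology of the $\widehat B_n$, so the induced map on the quotient $\widehat{\mathcal K}$ is continuous.

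Finally, for assertion (3), that $h$ is a monoid homomorphism follows because the topological connected sum of Proposition~\ref{knot forms a monoid} is given by exactly the cut-and-glue description \eqref{connected sum}, whence $h(K_1\sharp K_2)=h(K_1)\sharp h(K_2)$ and $h$ carries the topological unknot to $\orientedcircle$. For density I would use that the number of connected components of a profinite tangle is read off from its skeleton, hence only from the images in the symmetric groups ${\frak S}_n$ of its braid parts. Given a profinite knot, I approximate each braid part $b_n\in\widehat B_n$ by elements of the dense subgroup $B_n$ having the \emph{same} image in ${\frak S}_n$, which is possible because $B_n\to{\frak S}_n$ is surjective; this keeps the skeleton, and hence the single-component property, fixed. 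The approximants therefore lie in the image of $h$ by Theorem~\ref{profinite realization theorem}, so $h(\mathcal K)$ is dense in $\widehat{\mathcal K}$.
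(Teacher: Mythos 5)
Your architecture matches the paper's quite closely: normalize every class to a representative beginning with $\creation$ and ending with $\opannihilation$ (the paper gets this directly from (T6)); reduce well-definedness to a single application of one move; note that a move not touching $\alpha_1$ passes through the connected sum verbatim; prove commutativity by sliding one summand around the other; obtain continuity through the auxiliary space of consistent sequences; and obtain density by approximating each profinite braid by discrete braids with the \emph{same} image in ${\frak S}_n$ (your explicit remark that this preserves the skeleton, hence the single-component property, is a point the paper leaves implicit, and it is correct). The unit and associativity computations are also right, and do hold on the nose.

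However, there is a genuine gap at the core of (1) and of commutativity in (2). The steps you describe as ``the relocation \dots is shown to alter $K_1\sharp K_2$ only by an isotopy'' and ``slide it up through $\widetilde K_1$, around the top cap, and down the far side, by repeated applications of (T3)--(T5)'' are precisely the substantive content of the theorem, and they follow neither from locality nor from (T3)--(T5). Sliding a cup, a cap, or a braid past an \emph{arbitrary} profinite tangle is not one of the moves (T1)--(T6); it has to be proved. The paper does this by first establishing the transpose lemma (Lemma \ref{transpose lemma}), whose proof uses (T6) in an essential way, and then Lemma \ref{CCAwithT} (creations and annihilations commute with any tangle $T$ with $s(T)=t(T)=\uparrow$); only with these in hand can one treat the two move types that can actually hit $\alpha_1$ --- which are exactly instances of (T3) and (T6) --- via the explicit computations of Figures \ref{proofT6} and \ref{proofT3}, and then prove commutativity (Figure \ref{proof of commutativity}, which uses (T3), (T4), (T5) \emph{and} Lemma \ref{CCAwithT}). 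In particular, your claim that (T3)--(T5) suffice for the slide around the cap is false: rotating a tangle around a cap genuinely requires a first-Reidemeister-type move (in the unframed setting (T6); in the framed setting of Appendix \ref{profinite framed knots} one needs (FT6) together with twist computations, cf.\ Lemma \ref{CCA with framed T}). A secondary soft spot is your assertion that any two standard-form representatives are joined by finitely many ``relocations'': isotopic representatives are joined by arbitrary sequences of moves whose intermediate stages need not be in standard form, which is why the paper checks each move type directly rather than arguing via cut points. As written, well-definedness and commutativity are asserted rather than proved.
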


\begin{proof}
(1).
Since each isotopy class of profinite knot contains by (T6) a profinite knot $K$
of the above type;
a profinite knot
starting with $\creation$ and ending at $\opannihilation$,
we can show that the connected sum extends to $\widehat{\mathcal K}$
once we have the well-definedness.

Firstly we prove that $K_1\sharp K_2$ is isotopic to $K_1'\sharp K_2$
if $K_1'$ is 
isotopic to $K_1$, i.e., $K_1'$ is obtained by a finite number of
our moves (T1)-(T6) from $K_1$.
We may assume that $K_1'$ is obtained from $K_1$
by a single operation of one of the 6 moves.
In the case where this move effects only on 
$\alpha_i$'s for $i>1$,  it is easy to see our claim.
If the moves effects on $\alpha_1$, it must be  (T3) or (T6).
Consider the latter case (T6).
It suffices to show that $K_1\sharp K_2$ is isotopic to
$K_3=\alpha_{m}\cdots\alpha_2\cdot\sigma^{2c}\cdot\beta_{n-1}\cdots\beta_1$
for $c\in\widehat{\mathbb Z}$.
The proof is depicted in Figure \ref{proofT6}.
Here $S_1=\alpha_{m-1}\cdots\alpha_2$ and $S_2=\beta_{n}\cdots\beta_2$.
We note that the first and the fourth equalities follow from  (T3) and (T5).
We use (T4) in the second equality for $\sigma^{2c}$ and the dashed box.
We derived the third equality from (T6).
\begin{figure}[h]
\begin{center}
         \begin{tikzpicture}
                    \draw[->] (0,0)  arc (180:360:0.3);
                     \draw[<-] (0,0)--(0,0.3)  ;
                     \draw[->] (0.6,0)--(0.6,0.3) ;
                  \draw (-0.1,0.3) rectangle (0.7,0.8);
    \draw  (0.3,0.5)node{$S_2$};
                     \draw[<-] (0,0.8)--(0,1.4)  ;
                     \draw[->]  (0.6,0.8)--(0.6,1.4) ;
                     \draw[-] (0.6,1.4)--(0,1.7)  ;
                    \draw[color=white, line width=5pt] (0,1.4)--(0.6,1.7)  ;
                     \draw[-] (0,1.4)--(0.6,1.7)  ;
                      \draw (-0.1,1.4) rectangle (0.7,1.7);                    
                     \draw  (0.9,1.7)node{$2c$};
                    \draw[<-] (0,1.7)--(0,2.3);
                    \draw[->]  (0.6,1.7)--(0.6,2.3) ;
                 \draw (-0.1,2.3) rectangle (0.7,2.8);
    \draw  (0.3,2.5)node{$S_1$};
                     \draw[-] (0,2.8)--(0,3.1) (0.6,2.8)--(0.6,3.1) ;
                    \draw[<-] (0,3.1)  arc (180:0:0.3);
\draw[color=white, very thick] (-0.1,0)--(0.7,0) (-0.1,1.2)--(0.7,1.2) (-0.1,1.9)--(0.7, 1.9)   (-0.1,3.1)--(0.7,3.1) ;
\draw  (1.3,1.4)node{$=$};
                     \draw[-]  (2,0.3)--(2,.4) (2,0.9)--(2,1.4) (2.2,0.3)--(2.2,.4)  (2.2,0.9)--(2.2,1.) ;
                    \draw[->] (2.,0.3)  arc (180:360:0.1);
                     \draw[-] (2.4,0)--(2.4,1.0) ;
                    \draw[->] (2.2,1.0)  arc (180:0:0.1);
                 \draw (1.9,0.4) rectangle (2.3,0.9);
    \draw  (2.1,.6)node{$S_2$};
                 \draw[dotted] (1.8,0.1) rectangle (2.5,1.2);
                    \draw[->] (2.4,0)  arc (180:360:0.3);
                     \draw[-] (3.,0)--(3.,1.4) ;
                     \draw[-] (3.0,1.4)--(2.0,1.7)  ;
                    \draw[color=white, line width=5pt] (2.0,1.4)--(3.0,1.7)  ;
                     \draw[-] (2.0,1.4)--(3.0,1.7)  ;
                 \draw (1.9,1.4) rectangle (3.1,1.7);
                     \draw  (3.3,1.7)node{$2c$};
                    \draw[-] (2.0,1.7)--(2.0,2.3) (3.0,1.7)--(3.0,2.3) ;
                 \draw (1.9,2.3) rectangle (3.1,2.8);
    \draw  (2.5,2.5)node{$S_1$};
                     \draw[-] (2.2,2.8)--(2.2,3.1) (2.8,2.8)--(2.8,3.1) ;
                    \draw[<-] (2.2,3.1)  arc (180:0:0.3);
\draw[color=white, very thick] (1.9,0)--(2.7,0) (1.9,0.3)--(3.1,0.3) (1.9,1.0)--(3.1,1.0)
(1.9,1.3)--(3.1,1.3) (1.9,1.9)--(3.1, 1.9)   (1.9,3.1)--(2.9,3.1) ;
\draw  (3.8,1.4)node{$=$};
                     \draw[-]  (4.5,1.1)--(4.5,1.2) (4.5,1.7)--(4.5,2.3) (4.7,1.1)--(4.7,1.2)  (4.7,1.7)--(4.7,1.8) ;
                    \draw[->] (4.5,1.1)  arc (180:360:0.1);
                     \draw[-] (4.9,0.5)--(4.9,1.8) ;
                    \draw[->] (4.7,1.8)  arc (180:0:0.1);
                 \draw (4.4,1.2) rectangle (4.8,1.7);
    \draw  (4.6,1.4)node{$S_2$};
                 \draw[dotted] (4.3,0.9) rectangle (5,2);
                     \draw[-] (4.9,0)--(4.9,.2) ;
                    \draw[->] (4.9,0)  arc (180:360:0.3);
                     \draw[-] (5.5,0.2)--(4.9,0.5) ;
                    \draw[color=white, line width=5pt] (4.9,.2)--(5.5,.5)  ;
                     \draw[-] (4.9,.2)--(5.5,.5)  ;
                 \draw (4.8,.2) rectangle (5.6,.5);
                     \draw  (5.8,.5)node{$2c$};
                     \draw[-] (5.5,0)--(5.5,0.2) (5.5,0.5)--(5.5,2.3);
                 \draw (4.4,2.3) rectangle (5.6,2.8);
    \draw  (5,2.5)node{$S_1$};
                     \draw[-] (4.7,2.8)--(4.7,3.1) (5.3,2.8)--(5.3,3.1) ;
                    \draw[<-] (4.7,3.1)  arc (180:0:0.3);
\draw[color=white, very thick] (4.4,0)--(5.2,0) (4.4,0.7)--(5.2,0.7)
(4.4,1.1)--(5.6,1.1) (4.4,1.8)--(5.6,1.8) (4.4,2.1)--(5.6, 2.1)  (4.4,3.1)--(5.4,3.1) ;
\draw  (6.2,1.4)node{$=$};
                     \draw[-] (6.6,2.3)--(6.6,1.7)  (6.6,1.2)--(6.6,1.1);
                     \draw (6.5,1.2) rectangle (7.1,1.7);
                     \draw  (6.8,1.4)node{$S_2$};
                     \draw[->] (6.6,1.1)  arc (180:360:0.2);
                     \draw (6.5,2.3) rectangle (7.9,2.8); 
                     \draw[->] (7,1.8)  arc (180:0:0.2);
                     \draw[-] (7,1.8)--(7,1.7)  (7,1.2)--(7,1.1);
                     \draw[-] (7.4,0)--(7.4,1.8) ;
                     \draw[->] (7.4,0.0)  arc (180:360:0.2);
                     \draw[-] (7.8,0.0)--(7.8,2.3);
                      \draw  (7.2,2.5)node{$S_1$};
                     \draw[-] (6.9,2.8)--(6.9,3.1) (7.5,2.8)--(7.5,3.1) ;
                    \draw[<-] (6.9,3.1)  arc (180:0:0.3);
\draw[color=white, very thick] (6.5,0)--(7.1,0) (6.5,1.1)--(7.9,1.1) (6.5,1.8)--(7.9,1.8) 
(6.5,2.1)--(7.9, 2.1)  (6.5,3.1)--(7.9,3.1) ;
\draw  (8.4,1.4)node{$=$};
                     \draw[-] (9.2,0.9)--(9.2,1.2) (9.2,1.7)--(9.2,2.3) (9.2,2.8)--(9.2,3.1);
                     \draw[-] (9.8,0.9)--(9.8,1.2) (9.8,1.7)--(9.8,2.3) (9.8,2.8)--(9.8,3.1);
                     \draw (9,1.2) rectangle (10.0,1.7); 
                      \draw  (9.6,1.4)node{$S_2$};
                     \draw (9,2.3) rectangle (10.0,2.8); 
                      \draw  (9.6,2.5)node{$S_1$};
                    \draw[->] (9.2,0.9)  arc (180:360:0.3);
                    \draw[<-] (9.2,3.1)  arc (180:0:0.3);
\draw[color=white, very thick] (9.1,1)--(9.9,1) (9.1,2)--(9.9,2) (9.1,3)--(9.9,3)  ;
         \end{tikzpicture}
 
\caption{$K_3=K_1\sharp K_2$}
\label{proofT6}
\end{center}
\end{figure}

Next consider the former case (T3).  
Since $K_1$ is a profinite knot, $m_1$ and $m_2$ are both $0$.
By the above argument in case (T6),
we may assume that both $T_1$  and $T_2$  in Figure \ref{T3}
should start from $\creation$
(i.e. $\alpha_1=\beta_1=\creation$).
Define  $T$  as in Figure \ref{definition of T}.
A successive application of
commutativity of  profinite braids with $T$ shown in (T4)
and that of creations and annihilations with $T$ shown in Lemma \ref{CCAwithT}
lead the isotopy equivalence shown in Figure \ref{proofT3}.
Here $T_1'$ and $T_2'$ stand for emissions of the lowest $\creation$ from
$T_1$ and $T_2$.
We note that emission of $T$ in the figure represents $K_1$ and $K'_1$.
\begin{figure}[h]
\begin{tabular}{c}
  \begin{minipage}{0.5\hsize}
      \begin{center}
         \begin{tikzpicture}
                      \draw (0,0.3) rectangle (0.4,0.8);
                      \draw (0.2,0.5) node{$T$};
                      \draw[->] (0.2,0)--(0.2, 0.3)  ;
                      \draw[->] (0.2,0.8)--(0.2,1.1) ;
\draw  (0.7,0.5)node{$=$};
                      \draw[->] (1.0,-0.2)--(1.0,1.2);
                       \draw (1.0,-0.2) node{$\uparrow$};
                      \draw (1.2,0.5) rectangle (2.0,0.9);
                     \draw (1.6,0.7) node{$S_2$};
                     \draw[<-] (1.4,0.2)--(1.4,0.5);
                    \draw[<-] (1.4,0.9)--(1.4,1.2);
                     \draw[->] (1.4,0.2)  arc (180:360:0.2);
                     \draw[<-] (1.4,1.2)  arc (0:180:0.2);
                     \draw[->] (1.8,0.2)--(1.8,0.5);
                     \draw[->] (1.8,0.9)--(1.8,1.4);
\draw[color=white, very thick] (0.9,0.2)--(2.0,0.2) (0.9,1.2)--(2.0,1.2) ;
         \end{tikzpicture}
\caption{definition of $T$}
\label{definition of T}
     \end{center}
 \end{minipage}

 \begin{minipage}{0.5\hsize}
     \begin{center}
         \begin{tikzpicture}
                      \draw[->] (0,0)  arc (180:360:0.3);
                      \draw[->] (0,1.1)--(0,0) ;
                      \draw (0.4,0.3) rectangle (0.8,0.8);
                      \draw[->] (0.6,0)--(0.6,0.3) ;
                      \draw (0.6,0.5) node{$T$};
                      \draw[<-] (0.6,1.1)--(0.6,0.8) ;
                      \draw (-0.2,1.1) rectangle (0.8,1.6);
                      \draw (0.3,1.3) node{$T'_1$};
                      \draw[-] (-0.1,1.6)--(-0.1,3.4) (0.1,1.6)--(0.1,3.4) (0.7,1.6)--(0.7,3.4);
                  \draw[dotted] (0.2,2.5) --(0.6, 2.5) ;
                      \draw[->] (1.2,2.1)  arc (180:360:0.3);
                       \draw[<-] (1.2,2.1)--(1.2,2.4) ; 
                     \draw[->] (1.8,2.1)--(1.8,2.4) ;
                      \draw (1.0,2.4) rectangle (2.0,2.9);
                      \draw (1.5,2.6) node{$T'_2$};
                      \draw[-] (1.1,2.9)--(1.1,3.4) (1.2,2.9)--(1.2,3.4)
(1.9,2.9)--(1.9,3.4);
                  \draw[dotted] (1.3,3.1) --(1.8, 3.1) ;
                      \draw (-0.2,3.4) rectangle (2.0,4.4);
\draw  (2.5,2.2)node{$=$};
                      \draw[->] (3,2.1)  arc (180:360:0.3);
                      \draw[<-] (3,2.1)--(3,2.4) ;
                     \draw[->] (3.6,2.1)--(3.6,2.4) ;
                      \draw (2.8,2.4) rectangle (3.8,2.9);
                      \draw (3.3,2.6) node{$T'_1$};
                      \draw[-] (2.9,2.9)--(2.9,3.4) (3.1,2.9)--(3.1,3.4) (3.7,2.9)--(3.7,3.4);
                     \draw[->] (4.2,0)  arc (180:360:0.3);
                       \draw[<-] (4.2,0)--(4.2,1.1) ; 
                       \draw (4.6,0.3) rectangle (5,0.8);
                      \draw (4.8,0.5) node{$T$};
                      \draw[<-] (4.8,1.1)--(4.8,0.8) ;
                     \draw[->] (4.8,0)--(4.8,0.3) ;
                      \draw (4.0,1.1) rectangle (5.0,1.6);
                      \draw (4.5,1.3) node{$T'_2$};
                      \draw[-] (4.1,1.6)--(4.1,3.4) (4.2,1.6)--(4.2,3.4)
(4.9,1.6)--(4.9,3.4);
                  \draw[dotted] (4.3,2.5) --(4.8, 2.5) ;
                     \draw (2.8,3.4) rectangle (5.0,4.4);
\draw[color=white, very thick] (-0.1,0)--(4.9,0) (-0.1,0.9)--(4.9,0.9) (-0.1,2.1)--(5, 2.1)   (-0.1,3.2)--(5.,3.2) ;
         \end{tikzpicture}
\caption{$K_1\sharp K_2=K_1'\sharp K_2$}
\label{proofT3}
      \end{center}
  \end{minipage}
\end{tabular}
\end{figure}

Secondly we must prove that $K_1\sharp K_2$ is isotopic to $K_1\sharp K_2'$
if $K_2'$ is isotopic to $K_2$.
It can be proved in a completely same way to the above arguments.
Thus our proof is finally completed.

(2). Associativity, i.e. $(K_1\sharp K_2)\sharp K_3=K_1\sharp (K_2\sharp K_3)$,
is easy to see.
A proof of commutativity is illustrated in Figure \ref{proof of commutativity}.
We note that we use (T3) and (T5) in the first, the third and the fifth equalities
and we apply (T4) and Lemma \ref{CCAwithT} for the dashed boxes in
the second and the fourth equalities.

\begin{figure}[h]
\begin{center}
        \begin{tikzpicture}
                    \draw[->] (0,-0.1)  arc (180:360:0.3);
                     \draw[<-] (0,-0.1)--(0,0.3)  ;
                     \draw[->] (0.6,-0.1)--(0.6,0.3) ;
                  \draw (-0.1,0.3) rectangle (0.7,0.8);
    \draw  (0.3,0.5)node{$S_2$};
                     \draw[<-] (0,0.8)--(0,1.4)  ;
                     \draw[->]  (0.6,0.8)--(0.6,1.4) ;
                 \draw (-0.1,1.4) rectangle (0.7,1.9);
    \draw  (0.3,1.6)node{$S_1$};
                     \draw[-] (0,1.9)--(0,2.3) (0.6,1.9)--(0.6,2.3) ;
                    \draw[<-] (0,2.3)  arc (180:0:0.3);
\draw[color=white, very thick] (-0.1,-0.1)--(0.7,-0.1) (-0.1,1.1)--(0.7,1.1)  (-0.1,2.3)--(0.7,2.3) ;
\draw  (1.3,1.1)node{$=$};
                     \draw[-] (2,-0.1)--(2,1.4) ;
                    \draw[->] (2,-0.1)  arc (180:360:0.3);
                     \draw[-] (2.6,-0.1)--(2.6,1.0) ;
                    \draw[->] (2.6,1.0)  arc (180:0:0.1);
                     \draw[-] (2.8,1.0)--(2.8,0.9) (3,1.4)--(3,0.9);
                 \draw (2.7,0.4) rectangle (3.1,0.9);
    \draw  (2.9,.6)node{$S_2$};
                     \draw[-] (2.8,0.4)--(2.8,0.3) (3,0.4)--(3,0.3);
                    \draw[->] (2.8,0.3)  arc (180:360:0.1);
                 \draw[dotted] (2.5,0.1) rectangle (3.2,1.2);
                 \draw (1.9,1.4) rectangle (3.1,1.9);
    \draw  (2.5,1.6)node{$S_1$};
                     \draw[-] (2.0,1.9)--(2.0,2.3) (3.0,1.9)--(3.0,2.3) ;
                    \draw[<-] (2.0,2.3)  arc (180:0:0.5);
\draw[color=white, very thick] (1.9,-0.1)--(2.7,-0.1) (1.9,0.3)--(3.1,0.3) (1.9,1.0)--(3.1,1.0)
(1.9,1.3)--(3.1,1.3)  (1.9,2.3)--(3.1,2.3) ;
\draw  (3.8,1.1)node{$=$};
                     \draw[-] (4.5,-0.1)--(4.5,0.3) (4.5,0.8)--(4.5,2.3);
                     \draw[->] (4.5,-0.1)  arc (180:360:0.3);
                      \draw (4.4,0.3) rectangle (5.2,0.8);
                      \draw  (4.8,0.5)node{$S_1$};
                    \draw[-] (5.1,-0.1)--(5.1,0.3) (5.1,0.8)--(5.1,2.0) ;
                    \draw[->] (5.1,2.0)  arc (180:0:0.1);
                     \draw[-] (5.3,2.0)--(5.3,1.9) (5.5,2.3)--(5.5,1.9);
                 \draw (5.2,1.4) rectangle (5.6,1.9);
    \draw  (5.4,1.6)node{$S_2$};
                     \draw[-] (5.3,1.3)--(5.3,1.4) (5.5,1.3)--(5.5,1.4);
                    \draw[->] (5.3,1.3)  arc (180:360:0.1);
                 \draw[dotted] (5,1.1) rectangle (5.7,2.2);
                    \draw[<-] (4.5,2.3)  arc (180:0:0.5);
\draw[color=white, very thick] (4.4,-0.1)--(5.2,-0.1) (4.4,1.0)--(5.2,1.0) 
(4.4,1.3)--(5.6,1.3) (4.4,2.0)--(5.6, 2.0)  (4.4,2.3)--(5.6,2.3) ;
\draw  (6.2,1.1)node{$=$};
                     \draw (6.8,1.4) rectangle (7.2,1.9); 
                      \draw  (7.0,1.6)node{$S_1$};
                     \draw[->] (6.9,1.3)  arc (180:360:0.1);
                     \draw[-] (6.9,1.3)--(6.9,1.4) (6.9,1.9)--(6.9,2.3) ;
                     \draw[-] (7.1,1.3)--(7.1,1.4) (7.1,1.9)--(7.1,2) ;
                     \draw[->] (7.1,2.0)  arc (180:0:0.1);
                 \draw[densely dotted] (6.7,1.1) rectangle (7.4,2.2);
                     \draw[->] (7.3,-0.1)  arc (180:360:0.3);
                     \draw[-] (7.3,-0.1)--(7.3, 0.3) (7.3,0.8)--(7.3,2.0)  ;
                     \draw (7.2,0.3) rectangle (8.0,0.8);
                     \draw  (7.6,0.5)node{$S_2$};
                     \draw[-]  (7.9,-0.1)--(7.9, 0.3) (7.9,0.8)--(7.9,2.3);
                    \draw[->] (7.9,2.3)  arc (0:180:0.5);
\draw[color=white, very thick] (7.2,-0.1)--(8.0,-0.1) (7.2,1.0)--(8.0,1.0) (6.8,1.3)--(8.0,1.3) 
(6.8,2.0)--(8.0, 2.0)  (6.8,2.3)--(8.0,2.3) ;
\draw  (8.4,1.1)node{$=$};
                   \draw (8.8,.4) rectangle (9.2,.9); 
                      \draw  (9.0,.6)node{$S_1$};
                     \draw[->] (8.9,.3)  arc (180:360:0.1);
                     \draw[-] (8.9,.3)--(8.9,.4) (8.9,.9)--(8.9,1.4) (8.9,1.9)--(8.9,2.3);
                     \draw[-] (9.1,.3)--(9.1,.4) (9.1,.9)--(9.1,1) ;
                     \draw[->] (9.1,1.0)  arc (180:0:0.1);
                 \draw[densely dotted] (8.7,.1) rectangle (9.4,1.2);
                     \draw[->] (9.3,-0.1)  arc (180:360:0.3);
                     \draw[-] (9.3,-0.1)--(9.3,1.0)  ;
                     \draw (8.8,1.4) rectangle (10.0,1.9);
                     \draw  (9.4,1.6)node{$S_2$};
                     \draw[-]  (9.9,-0.1)--(9.9, 1.4) (9.9,1.9)--(9.9,2.3);
                    \draw[->] (9.9,2.3)  arc (0:180:0.5);
\draw[color=white, very thick] (8.7,-0.1)--(10.1,-0.1)  (8.7,.3)--(10.1,.3) 
(8.7,1.0)--(10.1, 1.0)  (8.7,1.3)--(10.1,1.3) (8.7,2.3)--(10.1,2.3) ;
\draw  (10.4,1.1)node{$=$};
                    \draw[->] (11.0,-0.1)  arc (180:360:0.3);
                     \draw[<-] (11.0,-0.1)--(11.0,0.3)  ;
                     \draw[->] (11.6,-0.1)--(11.6,0.3) ;
                  \draw (10.9,0.3) rectangle (11.7,0.8);
    \draw  (11.3,0.5)node{$S_1$};
                     \draw[<-] (11.0,0.8)--(11,1.4)  ;
                     \draw[->]  (11.6,0.8)--(11.6,1.4) ;
                 \draw (10.9,1.4) rectangle (11.7,1.9);
    \draw  (11.3,1.6)node{$S_2$};
                     \draw[-] (11.0,1.9)--(11.0,2.3) (11.6,1.9)--(11.6,2.3) ;
                    \draw[<-] (11.0,2.3)  arc (180:0:0.3);
\draw[color=white, very thick] (11-0.1,-0.1)--(11.7,-0.1) (11-0.1,1.1)--(11.7,1.1)  (11-0.1,2.3)--(11.7,2.3) ;
         \end{tikzpicture}
\caption{$K_1\sharp K_2 = K_2\sharp K_1$}
\label{proof of commutativity}
\end{center}
\end{figure}

To show that $\sharp$ is continuous,
we define  by $\widehat{\mathcal T}^{\rm seq}$
the set of finite consistent sequences of profinite fundamental tangles
and  by $\widehat{\mathcal K'}^{\rm seq}$
the set of  finite consistent sequences of profinite fundamental tangles $\gamma_n\cdots\gamma_2\cdot\gamma_1$
with  a single connected component and
with $(\gamma_n,\gamma_1)=(\opannihilation,\creation)$.
We note that the quotient set of  $\widehat{\mathcal T}^{\rm seq}$
by the equivalence of finite sequences of the moves (T1)-(T6)
is  equal to the set $\widehat{\mathcal T}$ of profinite tangles.
We also note that
 $\widehat{\mathcal K'}^{\rm seq}$ is projected onto 
its subset  $\widehat{\mathcal K}$ of profinite knots.
The set  $\widehat{\mathcal T}^{\rm seq}$ 
carries 
a structure of a topological space by 
the profinite topologies on $\widehat{B}_n$ (Note \ref{topology on profinite knots}).
It induces a subspace topology on $\widehat{\mathcal K'}^{\rm seq}$.
The map 
$\widehat{\mathcal T}^{\rm seq}\twoheadrightarrow\widehat{\mathcal T}$
is continuous, 
hence so the projection
$\widehat{\mathcal K'}^{\rm seq}\twoheadrightarrow\widehat{\mathcal K}$ is.
By the topology it is easy to see that
the map
$$
\sharp:\widehat{\mathcal K'}^{\rm seq}\times\widehat{\mathcal K'}^{\rm seq}\to\widehat{\mathcal K'}^{\rm seq}
$$
caused by \eqref{connected sum} is continuous.
Because the following diagram is commutative
$$
\begin{CD}
\widehat{\mathcal K'}^{\rm seq}\times \widehat{\mathcal K'}^{\rm seq} @>\sharp>>
\widehat{\mathcal K'}^{\rm seq} \\
@VVV    @VVV \\
\widehat{\mathcal K}\times \widehat{\mathcal K}@>\sharp>>
\widehat{\mathcal K}  \\
\end{CD}
$$
and the projection
$\widehat{\mathcal K'}^{\rm seq}\twoheadrightarrow\widehat{\mathcal K}$
is continuous,
the lower map is also continuous.

(3).
The first statement is obvious.
Let ${\mathcal K'}^{\rm seq}$ be the subset of 
$\widehat{\mathcal K'}^{\rm seq}$ which consists of
consistent finite sequences of \lq topological fundamental tangles', that is,
sequences of elements in $A$, $C$ and $B$
with $b_n\in B_n\subset {\widehat B}_n$.
There are a natural inclusion 
 ${\mathcal K'}^{\rm seq}\to\widehat{\mathcal K'}^{\rm seq}$
and a surjection
 ${\mathcal K'}^{\rm seq}\to{\mathcal K}$.
Since the map $B_n\to\widehat{B}_n$ is with dense image,
so the inclusion 
 ${\mathcal K'}^{\rm seq}\to\widehat{\mathcal K'}^{\rm seq}$ is.
Therefore the map 
${\mathcal K}\to\widehat{\mathcal K}$ is
also with dense image.
It is because the following diagram is commutative
$$
\begin{CD}
{\mathcal K'}^{\rm seq} @>>> \widehat{\mathcal K'}^{\rm seq} \\
@VVV    @VVV \\
{\mathcal K}@>>>\widehat{\mathcal K}  \\
\end{CD}
$$
and the projection
$\widehat{\mathcal K'}^{\rm seq}\twoheadrightarrow\widehat{\mathcal K}$
is continuous.
\end{proof}

The following two lemmas are required to prove Theorem \ref{topological monoid} (1).

\begin{lem}\label{transpose lemma}
For a profinite tangle $T$ with  $s(T)=t(T)=\uparrow$,
define its transpose $\opT$ (occasionally also denoted by ${}^tT$ ) by
$$\opT:=
a_{0,1}^{\opannihilation\downarrow}
\cdot(e_1^\downarrow\otimes T\otimes e_1^\downarrow)\cdot 
c_{1,0}^{\downarrow\opcreation}.
$$
Then we have
$$
\opT=a_{1,0}^{\downarrow\annihilation}
\cdot(e_1^\downarrow\otimes T\otimes e_1^\downarrow)\cdot 
c_{0,1}^{\creation\downarrow}.
$$
Similar claim holds for a profinite tangle $T$ with  $s(T)=t(T)=\downarrow$
by reversing all arrows.
\end{lem}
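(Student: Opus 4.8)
The plan is to read both expressions geometrically and then straighten a zig-zag. Unwinding the sources and targets of the fundamental tangles, the defining expression $\opT = a_{0,1}^{\opannihilation\downarrow}\cdot(e_1^\downarrow\otimes T\otimes e_1^\downarrow)\cdot c_{1,0}^{\downarrow\opcreation}$ is the profinite tangle from $\downarrow$ to $\downarrow$ obtained by letting the incoming strand run up on the far left, fold back at a cap over $T$, pass downward through $T$, fold forward at a cup on the right, and exit upward; that is, it is the $180^\circ$ rotation of $T$ with its two folds placed on the left (at the top) and on the right (at the bottom). The second expression $a_{1,0}^{\downarrow\annihilation}\cdot(e_1^\downarrow\otimes T\otimes e_1^\downarrow)\cdot c_{0,1}^{\creation\downarrow}$ is the \emph{same} rotation with the two folds placed on the opposite sides. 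Hence the assertion is exactly the planar \emph{zig-zag (snake) straightening}, and I will prove it by exhibiting an explicit isotopy, namely a finite chain of the moves (T1)--(T6).

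First I would enlarge the diagram: reading the creation--annihilation relation (T5) backwards replaces a trivial strand $e_{k+l}^{s(c_{k,l}^{\epsilon})}$ by a cancelling cup--cap pair, producing a five-strand diagram in which $T$ still sits on the middle strand, flanked by one fold on each side together with an auxiliary cup--cap. Next I would \emph{transport the fold across the diagram}: using the braid--tangle relation (T4) I commute the block $e_1^\downarrow\otimes T\otimes e_1^\downarrow$ past the trivial strand that forms the outer fold (the braid in (T4) being the relevant single permutation crossing $\sigma_i^{\pm1}$ carrying that strand across $T$), and using the independent-tangles relation (T3) I slide the auxiliary cup and cap to the other side; any kink produced in the process is absorbed by the first Reidemeister move (T6). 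This sweeps the left/right fold of the first expression onto the right/left fold of the second. Finally I would apply (T5) once more to annihilate the now-adjacent auxiliary cup and cap and (T1) to absorb the remaining trivial strands, arriving at the second expression. The case $s(T)=t(T)=\downarrow$ is handled identically after interchanging $\uparrow\leftrightarrow\downarrow$ throughout.

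The main obstacle will be bookkeeping rather than any conceptual point: one must track the orientation strings $\epsilon$ and the index pairs $(k,l)$ of every fundamental tangle through each move, and verify that each invoked instance of (T5) has matching source and target so that the cancellation $a_{k\pm1,l\mp1}^{\epsilon'}\cdot c_{k,l}^{\epsilon}=e_{k+l}^{s(c_{k,l}^{\epsilon})}$ is legitimate. It is worth emphasizing that because profinite tangles are unframed---the first Reidemeister move being available as (T6)---the twist is trivial, so no framing correction survives the transport and the two placements of the folds genuinely yield isotopic profinite tangles; this is precisely why the two expressions for $\opT$ agree. Finally, one should present the chain so that it does not implicitly invoke the commutation of creations and annihilations past $T$, which is not yet available at this stage; the transport above is arranged to pass $T$ only past a \emph{braid} via (T4), so this is avoided.
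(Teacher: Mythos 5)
Your geometric reading of the two expressions is right, and so is your closing remark that unframedness---the availability of the first Reidemeister move (T6)---is the conceptual reason the two placements of the folds agree. But the chain of moves you describe cannot be executed as written, and the failure sits exactly at your central step. The transport of the fold across $T$ requires the fold strand to cross $T$'s strand, and you propose to realize this with (T4), ``the braid in (T4) being the relevant single permutation crossing $\sigma_i^{\pm1}$''. However, (T4) does not create crossings: it is an identity between two diagrams that both already contain the braid $b_l^{\epsilon}$, and when only trivial braids are present it says nothing. Likewise (T5) read backwards, (T3) and (T1) are planar moves that never produce a crossing; and (T3) in particular only exchanges the vertical order of tangles occupying disjoint sets of strands, so it cannot ``slide the auxiliary cup and cap to the other side'' of $T$---no move ever changes which strands a cup or cap attaches to except through braids. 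Consequently, after your (T5)-insertion the diagram is still crossing-free, none of the moves you then invoke can introduce the crossing that (T4) is supposed to slide past $T$, and there are no kinks ``produced in the process'' for (T6) to absorb: in this combinatorial setting kinks never arise as byproducts of other moves, they must be deliberately created. With the moves deployed as you deploy them, every diagram reachable from the first expression remains planar, and the second expression (which differs from the first by a full rotation of $T$) is not reachable; this is the same reason left and right transposes differ in a general rigid category.

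The missing idea is precisely the first step of the paper's proof (Figure \ref{proof of transpose}): read (T6) \emph{backwards} at the cap and the cup that are already present, i.e., suppressing orientation decorations, rewrite $a_{0,1}=a_{0,1}\cdot\sigma_1^{\pm1}$ and $c_{1,0}=\sigma_2^{\pm1}\cdot c_{1,0}$, thereby inserting one crossing under the cap and one above the cup. Only then does (T4) have something to act on: it slides $T$ through these crossings, and (T2) recombines the resulting braids so that the diagram reassembles into $a_{1,0}^{\downarrow\annihilation}\cdot(e_1^\downarrow\otimes T\otimes e_1^\downarrow)\cdot c_{0,1}^{\creation\downarrow}$. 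No auxiliary (T5)-snake is needed at all. If you repair your outline by making this (T6)-insertion explicit (or, equivalently, by inserting $\sigma_i^{-1}\sigma_i$ via (T1)+(T2) and then absorbing each crossing into an adjacent fold by (T6)), your argument collapses to the paper's two-step proof; as written, the outline lacks its engine.
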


Figure \ref{opT} describes our lemma.
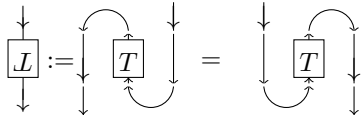
\begin{figure}[h]
\begin{center}
         \begin{tikzpicture}
                      \draw (0,0.3) rectangle (0.4,0.8);
                      \draw (0.2,0.5) node[rotate=180]{$T$};
                      \draw[-] (0.2,0) node{$\downarrow$} --(0.2, 0.3) (0.2,0.8)--(0.2,1.1)node{$\downarrow$} ;
\draw  (0.7,0.5)node{$:=$};
                      \draw[<-] (1.0,-0.2)--(1.0,0.9);
                       \draw (1.0,0.4) node{$\downarrow$};
                      \draw (1.4,0.3) rectangle (1.8,0.8);
                     \draw (1.6,0.5) node{$T$};
                     \draw[->] (1.6,0.2)--(1.6,0.3);
\draw[->] (1.6,0.8)--(1.6,0.9);
                     \draw[<-] (1.6,0.2)  arc (180:360:0.3);
                     \draw[->] (1.6,0.9)  arc (0:180:0.3);
                      \draw[<-] (2.2,0.2)--(2.2,1.3);
                       \draw (2.2,1.1) node{$\downarrow$};
\draw  (2.7,0.5)node{$=$};
                      \draw[<-] (3.4,0.2)--(3.4,1.3);
                       \draw (3.4,1.1) node{$\downarrow$};
                     \draw (3.8,0.3) rectangle (4.2,0.8);
                   \draw (4,0.5) node{$T$};
                     \draw[->] (4,0.2)--(4,0.3) ;
\draw[->](4,0.8)--(4,0.9);
                     \draw[<-] (4,0.2)  arc (360:180:0.3);
                     \draw[->] (4,0.9)  arc (180:0:0.3);
                      \draw[<-] (4.6,-0.2)--(4.6,0.9);
                       \draw (4.6,0.4) node{$\downarrow$};
\draw[color=white, very thick] (0.9,0.2)--(4.7,0.2) (0.9,0.9)--(4.7,0.9) ;
         \end{tikzpicture}
\caption{transpose of $T$}
\label{opT}
\end{center}
\end{figure}

\begin{proof}
A proof is depicted in Figure \ref{proof of transpose}.
We note that we apply (T6) in the first equality and (T2) and (T4) in the second equality.
\end{proof}
\begin{figure}[h]
\begin{center}
        \begin{tikzpicture}
                      \draw[<-] (-0.2,-0.4)--(-0.2,1.0) ;
                     \draw[->] (0.2,1.0)  arc (0:180:0.2);
                     \draw[<-] (0.2,0.2)  arc (180:360:0.2);
                      \draw (0,0.4) rectangle (0.4,0.8);
                      \draw (0.2,0.6) node{$T$};
                      \draw[->] (0.2,0.2)--(0.2, 0.4)  ;
                      \draw[->] (0.2,0.8)--(0.2,1.0) ;
                      \draw[<-] (0.6,0.2)--(0.6,1.6) ;
               \draw[color=white, very thick] (-0.3,0.2)--(0.7,0.2) (-0.3,1.0)--(0.7,1.0)  ;
\draw  (1.0,0.6)node{$=$};
                      \draw[<-] (1.4,-0.4)--(1.4,1.0) ;
                      \draw[<-] (1.4,1.0)--(1.8,1.4) ;
        \draw[color=white, line width=5pt]  (1.8, 1.0)--(1.4,1.4) ;
                     \draw[->] (1.8,1.0)--(1.4,1.4) ;
                     \draw[<-] (1.8,1.4)  arc (0:180:0.2);
                      \draw[<-] (1.8,0.2)--(2.2,-0.2) ;
        \draw[color=white, line width=5pt]  (1.8, -0.2)--(2.2,0.2) ;
                     \draw[<-] (1.8,-0.2)--(2.2,0.2) ;
                     \draw[->] (1.8,-0.2)  arc (180:360:0.2);
                      \draw (1.6,0.4) rectangle (2.0,0.8);
                      \draw (1.8,0.6) node{$T$};
                      \draw[->] (1.8,0.2)--(1.8, 0.4)  ;
                      \draw[->] (1.8,0.8)--(1.8,1.0) ;
                      \draw[<-] (2.2,0.2)--(2.2,1.6) ;
               \draw[color=white, thick] (1.3,-0.2)--(2.3,-0.2) (1.3,0.2)--(2.3,0.2) (1.3,1.0)--(2.3,1.0) (1.3,1.4)--(2.3,1.4) ;
\draw  (2.6,0.6)node{$=$};
                      \draw[<-] (3,0.2)--(3,1.6) ;
                     \draw[->] (3.4,1.0)  arc (180:0:0.2);
                     \draw[<-] (3.4,0.2)  arc (0:-180:0.2);
                      \draw (3.2,0.4) rectangle (3.6,0.8);
                      \draw (3.4,0.6) node{$T$};
                      \draw[->] (3.4,0.2)--(3.4, 0.4)  ;
                      \draw[->] (3.4,0.8)--(3.4,1.0) ;
                      \draw[<-] (3.8,-0.4)--(3.8,1.0) ;
               \draw[color=white, very thick](2.9,0.2)--(3.9,0.2) (2.9,1.0)--(3.9,1.0) ;
         \end{tikzpicture}
\caption{proof of Lemma \ref{transpose lemma}}
\label{proof of transpose}
\end{center}
\end{figure}

By (T5) and the above lemma, we see that ${}^{tt}T$ is isotopic to $T$.

\begin{lem}\label{CCAwithT}
For a profinite tangle $T$ with  $s(T)=t(T)=\uparrow$,
the equalities in Figure \ref{CAT} hold.
The same claim also holds for a profinite tangle $T$ with  $s(T)=t(T)=\downarrow$
by reversing all arrows.
\end{lem}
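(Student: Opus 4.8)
The plan is to read each equality of Figure \ref{CAT} as an instance of sliding the one-in-one-out tangle $T$ (with $s(T)=t(T)=\uparrow$) along a cap or a cup, the net effect being to replace $T$ by its transpose $\opT$ on the partner strand. Since the content is entirely pictorial, I would present the argument as a short chain of figures in the exact spirit of the proof of Lemma \ref{transpose lemma}; the only tools needed are the creation--annihilation cancellation (T5), the braid--tangle relation (T4), and the defining identity for $\opT$ recorded in Lemma \ref{transpose lemma} (together with (T2) when recombining braids, as in that proof).

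Concretely, starting from the side on which $T$ sits on the strand adjacent to a creation, I would first use (T5) to introduce an auxiliary cancelling creation--annihilation pair, producing two extra oppositely oriented strands beside the block carrying $T$. Because $T$ occupies a single strand of the skeleton ${\Bbb S}(T)$, the braid--tangle relation (T4) applies and transports that block past the newly created cup just as it would a braid generator, moving $T$ off the original strand onto the complementary one. I would then regroup the original creation, the transported copy of $T$ sandwiched between two trivial strands, and the auxiliary annihilation, and recognise this composite as $\opT$ by Lemma \ref{transpose lemma}; a final application of (T5) deletes the leftover auxiliary pair and produces the stated right-hand side. The annihilation equalities follow from the same recipe read upside down, and the case $s(T)=t(T)=\downarrow$ is obtained by reversing every arrow, which is a symmetry of the moves (T1)--(T6).

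The step I expect to be the real obstacle is the orientation bookkeeping. Every fundamental tangle carries a decoration $\epsilon$, so at each stage one must confirm consistency (that sources and targets match) and that the precise index constraints in (T4) and (T5) are met for the chosen placement of $T$ and of the auxiliary cup--cap pair; in particular one must check that the transported $T$ ends up flanked by trivial strands of exactly the orientation demanded by Lemma \ref{transpose lemma}, so that that lemma can be invoked verbatim. Once this matching is verified, the remaining manipulations are routine applications of the listed moves, and the figure can be drawn without further difficulty.
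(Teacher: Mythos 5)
Your overall architecture---insert a cancelling cup--cap pair by (T5), regroup so that Lemma \ref{transpose lemma} identifies a sub-block as $\opT$, and account for the leftover piece---is indeed the ``direct computation using Lemma \ref{transpose lemma}'' that the paper intends. But your middle step is not a legal move: (T4) cannot ``transport $T$ past the newly created cup just as it would a braid generator.'' The move (T4) commutes a tangle block with a \emph{braid} $b_l^\epsilon\in\widehat{B}$ via the maps $ev_{k,t(T)}$ and $ev^{k',s(T)}$; a cup or cap is not a braid, and none of the moves (T1)--(T6) slides a tangle around a bend. If such a move existed, this lemma and Lemma \ref{transpose lemma} would be vacuous: $T$ would slide over the cap unchanged and the transpose $\opT$ would never appear. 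That $T$ becomes $\opT$ when it crosses a cap is exactly what is being proved, so it cannot be an input to the proof. Your toolkit also omits the move that is actually needed, namely (T3) (independent tangles on disjoint strands commute), while listing (T2) and (T4), which play no role here since no braids occur.

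The repair keeps your skeleton but changes the mechanism: $T$ is never pushed past the cup at all. After the (T5) insertion, you commute $T$ (and the various caps) only past blocks occupying \emph{disjoint} strand positions, which is (T3); then you regroup so that the \emph{original} cap (or cup), the block $T$, and \emph{one half} of the auxiliary pair together form exactly the composite $a\cdot(e\otimes T\otimes e)\cdot c$, i.e.\ $\opT$, while the \emph{other half} of the auxiliary pair becomes the outer cap (or cup) of the right-hand side. In particular there is no ``leftover auxiliary pair'' for a final (T5) to delete---the pair is absorbed, one piece into $\opT$ and one piece into the ambient picture---and the apparent jump of $T$ to the complementary strand is produced by the (T5) straightening of a zigzag that $T$ does not sit on, which merely renames the strand carrying $T$. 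Finally, note that you need \emph{both} expressions for $\opT$ given in Lemma \ref{transpose lemma} (internal cup to the right of $T$, and internal cup to the left), not only the defining one: for two of the four equalities in Figure \ref{CAT} the internal cup/cap of $\opT$ must face the outer cap/cup for the zigzag cancellation (T5) to apply to adjacent strands, and this is precisely where the nontrivial content of Lemma \ref{transpose lemma} enters. With these corrections, and the orientation bookkeeping you already flagged, your outline becomes a complete proof.
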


\begin{figure}[h]
\begin{tabular}{cccc}
\begin{minipage}{0.25\hsize}
\begin{center}
         \begin{tikzpicture}
                      \draw[<-] (0,0)--(0,1.1);
                     \draw (.4,0.3) rectangle (.8,0.8);
                     \draw (.6,0.5) node{$T$};
                     \draw[->] (.6,0.)--(.6,0.3) ;
                     \draw[->] (.6,0.8)--(.6,1.1);
                     \draw[->] (.6,1.1)  arc (0:180:0.3);
\draw  (1.1,0.5)node{$=$};
                     \draw (1.3,0.3) rectangle (1.7,0.8);
                     \draw (1.5,0.5) node[rotate=180]{$T$};
                     \draw[<-](1.5,0)--(1.5,0.3);
                     \draw[<-] (1.5,0.8)--(1.5,1.1);
                     \draw[<-] (1.5,1.1)  arc (180:0:0.3);
                     \draw[->] (2.1,0)--(2.1,1.1);
\draw[color=white,  thick]  (-0.1,1.1)--(2.2,1.1) ;
         \end{tikzpicture}
\end{center}
\end{minipage}
\begin{minipage}{0.25\hsize}
\begin{center}
         \begin{tikzpicture}
                     \draw (-0.2,0.3) rectangle (0.2,0.8);
                     \draw (.0,0.5) node{$T$};
                     \draw[->](.0,0)--(.0,0.3);
                     \draw[->] (.0,0.8)--(.0,1.1);
                     \draw[<-] (0,0)  arc (180:360:0.3);
                      \draw[<-] (0.6,0)--(0.6,1.1);
\draw  (1.,0.2)node{$=$};
                   \draw[->] (1.5,0)--(1.5,1.1);  
                   \draw (1.9,0.3) rectangle (2.3,0.8);
                     \draw (2.1,0.5) node[rotate=180]{$T$};
                     \draw[<-](2.1,0)--(2.1,0.3);
                     \draw[<-] (2.1,0.8)--(2.1,1.1);
                     \draw[<-] (1.5,0)  arc (180:360:0.3);
\draw[color=white,  thick] (-0.1,0)--(2.2,0) ;
         \end{tikzpicture}
\end{center}
\end{minipage}
\begin{minipage}{0.25\hsize}
\begin{center}
         \begin{tikzpicture}
                     \draw (-.2,0.3) rectangle (.2,0.8);
                     \draw (.0,0.5) node{$T$};
                     \draw[->] (.0,0.)--(.0,0.3) ;
                     \draw[->] (.0,0.8)--(.0,1.1);
                     \draw[<-] (.6,1.1)  arc (0:180:0.3);
                      \draw[<-] (0.6,0)--(0.6,1.1);
\draw  (1.1,0.5)node{$=$};
                    \draw[->] (1.5,0)--(1.5,1.1); 
                    \draw (1.9,0.3) rectangle (2.3,0.8);
                     \draw (2.1,0.5) node[rotate=180]{$T$};
                     \draw[<-](2.1,0)--(2.1,0.3);
                     \draw[<-] (2.1,0.8)--(2.1,1.1);
                     \draw[->] (1.5,1.1)  arc (180:0:0.3);
\draw[color=white,  thick]  (-0.1,1.1)--(2.2,1.1) ;
         \end{tikzpicture}
\end{center}
\end{minipage}
\begin{minipage}{0.25\hsize}
\begin{center}
         \begin{tikzpicture}
                      \draw[<-] (0,0)--(0,1.1);
                     \draw (0.4,0.3) rectangle (0.8,0.8);
                     \draw (0.6,0.5) node{$T$};
                     \draw[->](.6,0)--(.6,0.3);
                     \draw[->] (.6,0.8)--(.6,1.1);
                     \draw[->] (0,0)  arc (180:360:0.3);
\draw  (1.,0.2)node{$=$};
                   \draw (1.3,0.3) rectangle (1.7,0.8);
                     \draw (1.5,0.5) node[rotate=180]{$T$};
                     \draw[<-](1.5,0)--(1.5,0.3);
                     \draw[<-] (1.5,0.8)--(1.5,1.1);
                     \draw[->] (1.5,0)  arc (180:360:0.3);
                    \draw[->] (2.1,0)--(2.1,1.1);  
\draw[color=white,  thick] (-0.1,0)--(2.2,0) ;
         \end{tikzpicture}
\end{center}
\end{minipage}
\end{tabular}
\caption{Creation and annihilation commute with $T$.}
\label{CAT}
\end{figure}
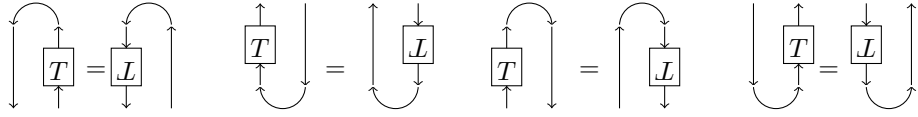

\begin{proof}
It can be checked by direct computation
using Lemma \ref{transpose lemma}.
\end{proof}

In knot theory, the so-called Alexander-Markov's theorem
is fundamental  on constructions of knot invariants.
The theorem is to translate knots and links into purely algebraic objects:

\begin{thm}[Alexander-Markov's theorem]
There is a one-to-one correspondence 
$$
{\mathcal L}\longleftrightarrow \sqcup_{n}B_n/ \sim_{\tiny\text{M}}
$$
between
the set  ${\mathcal L}$ of isotopy classes of oriented links and
the (disjoint) union $\sqcup_{n}B_n$ of  braids groups 
modulo the equivalence $\sim_{\tiny\text{M}}$ given by the following Markov moves
$$
\text{(M1). }  b_1\cdot b_2 \sim_{\tiny\text{M}} b_2\cdot b_1 \quad (b_1, b_2\in B_n), \qquad
\text{(M2). } 
b\in B_n\sim_{\tiny\text{M}} 
b\sigma_n^{\pm 1} \in B_{n+1} \quad (b\in B_{n}) 
$$
\end{thm}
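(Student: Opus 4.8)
The plan is to split the statement into its two classical halves. The closure operation assigns to a braid $b \in B_n$ the isotopy class of the oriented link $\hat b$ obtained by joining the top endpoints to the corresponding bottom endpoints by $n$ parallel unknotted arcs, giving a map $\sqcup_n B_n \to \mathcal L$. \emph{Alexander's theorem} is the assertion that this map is surjective, while \emph{Markov's theorem} identifies its fibers precisely with the equivalence classes of $\sim_{\mathrm M}$; together they say that the induced map $\sqcup_n B_n/\!\sim_{\mathrm M}\ \to \mathcal L$ is a bijection. I would establish surjectivity and the description of the fibers separately.

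First I would treat Alexander's theorem by the Seifert-circle braiding algorithm in the form due to Yamada and Vogel. Starting from any oriented link diagram $D$, I would decompose it into its Seifert circles. If all the Seifert circles are coherently oriented about a common axis point, then $D$ already presents a closed braid and there is nothing to do. Otherwise some pair of circles is incoherently nested, and I would apply a braiding move, namely a Reidemeister-II-type modification that reverses an offending sub-arc by pushing it across the axis; this trades the incoherent pair for fewer circles without altering the link type. A complexity argument on the number of Seifert circles (together with their nesting defect) shows the procedure terminates, yielding a closed-braid diagram and hence a braid $b$ with $\hat b \simeq D$.

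Next I would dispose of the easy (``if'') direction of Markov's theorem. The closures of $b_1 b_2$ and of $b_2 b_1$ differ by sliding the factor $b_1$ once around the closing arcs, which is a planar isotopy of the closed-braid diagram, so the move (M1) preserves the closure class. The stabilization $b \mapsto b\sigma_n^{\pm 1}$ introduces a single kink on a new strand, which is removed on the closure by a first Reidemeister move, so (M2) likewise preserves the closure class. Hence $b \sim_{\mathrm M} b'$ implies $\hat b \simeq \hat{b'}$, and $\sim_{\mathrm M}$ is contained in the fiber relation.

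The hard part, and the main obstacle, is the converse (``only if'') direction: if $\hat b \simeq \hat{b'}$ as oriented links, then $b$ and $b'$ are joined by a finite chain of (M1) and (M2). Here I would fix a braid axis and study the given ambient isotopy --- equivalently the finite sequence of Reidemeister moves relating the two closed-braid diagrams --- relative to this axis, keeping each intermediate diagram in closed-braid form by a general-position argument. The goal is to show that every elementary event changes the underlying braid word only by conjugation or by a stabilization or destabilization: an event that slides strands around the axis yields an instance of (M1), while an event in which an arc sweeps across the axis creates or absorbs a trivial loop, i.e.\ an instance of (M2). The genuine difficulty, and the reason this direction historically required real work (it was left with gaps by Markov, made rigorous by Birman, and later re-proved by Morton, by Traczyk, and via the $L$-moves of Lambropoulou--Rourke), is that a generic isotopy does \emph{not} respect the closed-braid structure. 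One must therefore continually re-braid the intermediate diagrams and carry out careful combinatorial bookkeeping of the resulting braid-word changes, verifying that at no stage is a move outside the subgroup generated by (M1) and (M2) ever forced.
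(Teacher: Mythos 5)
The paper itself does not prove this theorem: it is recalled as a classical result and the reader is referred to the literature (\cite{CDM, O} right after the statement, and the proofs of \cite{Bi, Tr, V, Y} are mentioned immediately below it). So there is no in-paper argument to measure your proposal against; the comparison has to be with the classical proofs, which is exactly the route you outline. Your decomposition into Alexander's theorem (surjectivity of the closure map) plus Markov's theorem (identification of the fibers) is the standard and correct one, and your treatment of Alexander's theorem via Seifert-circle braiding and of the easy (``if'') direction of Markov is a reasonable sketch, though even there the termination argument for the Yamada--Vogel reduction (the complexity function on Seifert circles and their nesting defect) would need to be made precise.

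The genuine gap is the converse (``only if'') direction of Markov's theorem, and you have in effect flagged it yourself. What you write there --- keeping intermediate diagrams in closed-braid form by general position, and ``verifying that at no stage is a move outside the subgroup generated by (M1) and (M2) ever forced'' --- is a restatement of the theorem, not an argument for it. The whole difficulty is that a generic ambient isotopy between two closed braids does \emph{not} preserve closed-braid position, and the re-braiding needed to restore it changes the underlying braid word in ways that are not obviously compositions of conjugations and (de)stabilizations; showing that they always are is precisely the content of Birman's rigorous proof (and of the later arguments of Morton, Traczyk, and Lambropoulou--Rourke via threading or $L$-moves), each of which occupies many pages of combinatorial bookkeeping that your proposal defers rather than performs. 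As it stands, your text is a correct plan with the hardest third of the theorem left as an announcement; to count as a proof it would need to either carry out one of those bookkeeping schemes in full or reduce the statement to a precisely quoted result from the literature, which is in fact all the paper under review does.
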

For more on the theorem, consult \cite{CDM, O} for example.
The question below is to ask 
a validity of 
profinite analogue  of Alexander-Markov's theorem.

\begin{prob}
Is there
a \lq profinite analogue' of the Alexander-Markov's theorem which holds
for the set $\widehat{\mathcal L}$ of isotopy classes of profinite links ?
\end{prob}

There are  
several proofs of Alexander-Markov's theorem 
for topological links (\cite{Bi, Tr, V, Y} etc).
But they look heavily based on a certain finiteness property,
which  we (at least the author)  may not expect the validity
for profinite links.

\subsection{Pro-$l$ knots}\label{pro-l knots}
Pro-$l$ tangle diagrams, pro-$l$ knot diagrams and isotopy among them
which are pro-$l$ analogues of our  corresponding notions
given in the previous subsection,
are introduced in Definition \ref{definition of pro-l knots}.
A natural map
from profinite tangles (resp. profinite knots) to pro-$l$ tangles (resp. pro-$l$ knots)
is constructed in Proposition \ref{from profinite to pro-l}.
Proalgebraic tangles and proalgebraic knots are recalled in Definition \ref{definition of proalgebraic knots}.
A natural map
from pro-$l$ tangles (resp. pro-$l$ knots) to proalgebraic tangles (resp. proalgebraic knots)
is given in Proposition \ref{from pro-l to proalgebraic}.
It is explained
that the Kontsevich invariant factors through these natural maps  in Remark \ref{Kontsevich factorization}.
Our discussion in this subsection will serve for a proof of non-triviality of the
Galois action constructed in \S \ref{absolute Galois action}.

Let $l$ be a prime. We may include $l=2$.

\begin{nota}
A topological group $G$ is called a {\it pro-$l$ group}
if it is a projective limit $\varprojlim G_i$ of a projective system of finite $l$-groups $\{G_i\}_{i\in I}$.
For a discrete group $\Gamma$,
its {\it pro-$l$ completion} $\widehat{\Gamma}^l$ is the pro-$l$ group
defined by the projective limit
$$
\widehat{\Gamma}^l=\varprojlim \Gamma/N
$$
where $N$ runs  over all normal subgroups of $\Gamma$ with finite indices of power of $l$.
\end{nota}

For more on pro-$l$ groups, consult \cite{RZ} for example.
We note there is a natural homomorphism $\Gamma\to\widehat{\Gamma}^l$.

\begin{nota}
Let $n\geqslant 2 $.
Let ${\widehat P}_n\rtimes B_n$ be the semi-direct product of ${\widehat P}_n$
and $B_n$ with respect to the $B_n$-action on  ${\widehat P}_n$ given by $p\mapsto bpb^{-1}$ 
($p\in\widehat{P}_n$ and $b\in B_n$).   
consider the inclusion $P_n\hookrightarrow {\widehat P}_n\rtimes B_n$
sending $p\mapsto (p,p^{-1})$. 
Then it is easy to see 
the homomorphism sending $(p,b)\mapsto pb$
yields an isomorphism:
\begin{equation*}
({\widehat P}_n\rtimes B_n) / P_n \simeq \widehat{B}_n.
\end{equation*}
\end{nota}

\begin{defn}
(1). The {\it pro-$l$ pure braid group} ${\widehat P}_n^l$ is the pro-$l$ completion of   $P_n$.

(2). The {\it pro-$(l)$ braid group} ${\widehat B}_n^{(l)}$ is defined to be the induced quotient
$$
{\widehat B}_n^{(l)}:=
(\widehat{P}_n^{l}\rtimes B_n) / P_n .
$$ 
\end{defn}

We encode 
a topological group structure on ${\widehat B}_n^{(l)}$
by the pro-$l$ topology on  ${\widehat P}_n^l$ and the discrete topology on $B_n$.
We note that this  ${\widehat B}_n^{(l)}$  appears also in \cite{LS}.

\begin{rem}
(1).
Our  ${\widehat B}_n^{(l)}$ is different from the pro-$l$ completion ${\widehat B}_n^{l}$ of $B_n$.

(2).
There is an exact sequence:
$$
1\to {\widehat P}_n^l\to {\widehat B}_n^{(l)} \to {\frak S}_n\to 1.
$$

(3).
There are natural group homomorphisms:
\begin{equation}\label{two natural group homomorphisms}
B_n\to \widehat{B}_n\to {\widehat B}_n^{(l)}.
\end{equation}
\end{rem}

The map \eqref{two natural group homomorphisms} is induced from 
$P_n\to \widehat{P}_n\to {\widehat P}_n^{l}$.

\begin{defn}\label{definition of pro-l knots}
(1).
A {\it pro-$l$ tangle diagram} means a consistent finite sequence of
{\it fundamental pro-$l$ tangle diagrams}, which are elements in $A$, $C$ 
(in Definition \ref{definition of fundamental profinite tangle}) or
$$
\widehat{B}^l:=\left\{b_n^\epsilon \bigm| b_n^\epsilon=
\left(b_n,\epsilon=(\epsilon_i)_{i=1}^{n}\right)
\in \widehat{B}_n^{(l)}\times \{\uparrow, \downarrow\}^{n}, n=1,2,3,4,\dots
\right\}.
$$
A {\it pro-$l$ knot diagram} means a pro-$l$ tangle diagram without endpoints
(their sources and targets are both empty)
and with a single connected component.

(2).
Two pro-$l$ tangle diagrams $T_1$ and $T_2$ are said to be {\it isotopic}
if they are related by a finite number of the moves 
replacing profinite  tangles and profinite braids by pro-$l$ tangles and elements in ${\widehat B}^l$
in (T1)-(T6) and $c\in\widehat{\mathbb Z}$ by $c\in{\mathbb Z}_l$ in (T6).
\footnote{
We note that, for $\sigma_{i}\in {\widehat B}_N^{(l)}$ and $c\in{\mathbb Z}_l$,
the power $\sigma_i^{2c+1}$ makes sense in ${\widehat B}_{N}^{(l)}$
because, by $\sigma_i^2\in \widehat{P}_N^l$, we have  $\sigma_i^{2c}\in {\widehat P}_N^l$.
}

(3). A {\it pro-$l$ tangle} (resp. {\it pro-$l$}) means an isotopic class of pro-$l$ tangle  (resp. knot) diagram.
We denote the set of  pro-$l$ tangles by $\widehat{\mathcal T}^l$
and the set of pro-$l$ knots by $\widehat{\mathcal K}^l$.
\end{defn}

Both $\widehat{\mathcal T}^l$ and $\widehat{\mathcal K}^l$ carry a structure of topological space by the method in
Note \ref{topology on profinite knots}. 

\begin{prop}\label{from profinite to pro-l}
(1).
The set $\widehat{\mathcal K}^l$ forms a topological monoid with respect to 
the connected sum.

(2).
There are continuous maps:
\begin{align}
\widehat{\mathcal T} &\to \widehat{\mathcal T}^l,  \label{factorization0} \\
\widehat{\mathcal K} &\to \widehat{\mathcal K}^l. \label{factorization1}
\end{align}

(3).
The map \eqref{factorization1} is monoid homomorphisms.
The image of its composition with $h$ in Theorem \ref{profinite realization theorem}.(2)
\begin{equation}\label{factorization10}
h_l:{\mathcal K} \overset{h}{\to} \widehat{\mathcal K} \to \widehat{\mathcal K}^l. 
\end{equation}
is with dense image
in $\widehat{\mathcal K}^l$.
\end{prop}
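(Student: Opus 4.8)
The plan is to deduce all three parts from the single continuous group homomorphism $\widehat{B}_n\to\widehat{B}_n^{(l)}$ of \eqref{two natural group homomorphisms}. This homomorphism is surjective, since it is induced by the canonical surjection $\widehat{P}_n\to\widehat{P}_n^{l}$ onto the pro-$l$ completion, and it is compatible with the projections onto $\mathfrak{S}_n$. First I would build the map \eqref{factorization0} on the level of sequences. On fundamental tangles I send the elements of $A$ and $C$ identically (they are common to both theories) and send $b_n^\epsilon\in\widehat{B}$ to the fundamental pro-$l$ tangle whose braid is the image of $b_n$ in $\widehat{B}_n^{(l)}$. Because $\widehat{B}_n\to\widehat{B}_n^{(l)}$ commutes with the projections to $\mathfrak{S}_n$, this preserves source, target and skeleton, hence carries a consistent sequence to a consistent sequence and induces a map $\widehat{\mathcal T}^{\rm seq}\to(\widehat{\mathcal T}^l)^{\rm seq}$ (notation as in the proof of Theorem \ref{topological monoid}); its continuity is immediate from the topology of Note \ref{topology on profinite knots} and the continuity of $\widehat{B}_n\to\widehat{B}_n^{(l)}$.

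The key step is to verify that this sequence map respects the moves (T1)--(T6), so that it descends to isotopy classes. Moves (T1)--(T5) are relations phrased purely through composition of braids, insertion of trivial braids and the creation/annihilation symbols, so each is sent to its pro-$l$ counterpart once one checks the routine compatibility of $\widehat{B}_n\to\widehat{B}_n^{(l)}$ with the operations $\otimes$ and $ev$. The main obstacle is move (T6), where the parameter $c$ lives in $\widehat{\mathbb{Z}}=\overline{\langle\sigma_i\rangle}\subset\widehat{B}_n$ but must lie in $\mathbb{Z}_l$ in the pro-$l$ setting. I would resolve this by identifying the closure of $\langle\sigma_i\rangle$ inside $\widehat{B}_n^{(l)}$: since $\sigma_i^2=x_{i,i+1}$ maps into $\widehat{P}_n^{l}$ (with closure $\mathbb{Z}_l$) while its parity is detected discretely in $\mathfrak{S}_n$, this closure is $\varprojlim_k\mathbb{Z}/(2l^k)$, and our homomorphism restricts to the canonical continuous projection $\widehat{\mathbb{Z}}\to\varprojlim_k\mathbb{Z}/(2l^k)$. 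Thus a (T6)-move with parameter $c\in\widehat{\mathbb{Z}}$ is sent to a (T6)-move whose parameter is the image of $c$, which lies in $\mathbb{Z}_l$; in particular the parity, and hence the skeleton, is preserved, so the target sequence $\bar\epsilon$ matches. This yields \eqref{factorization0}, and restricting to the single-component locus — using that the number of connected components is an isotopy invariant preserved by the skeleton-preserving map — gives \eqref{factorization1}.

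For part (1) I would rerun the proof of Theorem \ref{topological monoid} verbatim with $\widehat{B}_n^{(l)}$ in place of $\widehat{B}_n$: Lemma \ref{transpose lemma}, Lemma \ref{CCAwithT}, and the figures establishing well-definedness, associativity, commutativity and continuity of $\sharp$ use only the moves (T1)--(T6) and the topological structure of the sequence spaces, all available verbatim for pro-$l$ tangles. For part (3), the connected sum \eqref{connected sum} is defined by concatenation of sequences, and the sequence-level map of the previous paragraph acts componentwise and so commutes with concatenation; passing to the square relating $\sharp$ on sequences and on isotopy classes shows \eqref{factorization1} is a monoid homomorphism. Finally, for density I note that surjectivity of $\widehat{P}_n\to\widehat{P}_n^{l}$ makes $\widehat{B}_n\to\widehat{B}_n^{(l)}$ surjective, so every fundamental pro-$l$ tangle lifts with the same permutation, hence preserving consistency; this makes $\widehat{\mathcal K}\to\widehat{\mathcal K}^l$ surjective. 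Since $h$ has dense image by Theorem \ref{topological monoid}(3) and the image of a dense subset under a continuous surjection is dense, the composite $h_l$ has dense image in $\widehat{\mathcal K}^l$.
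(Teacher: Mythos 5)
Your proposal is correct and is essentially the paper's own argument: both induce \eqref{factorization0} and \eqref{factorization1} from the homomorphism $\widehat{B}_n\to\widehat{B}_n^{(l)}$ in \eqref{two natural group homomorphisms}, prove (1) by rerunning the proof of Theorem \ref{topological monoid} with $\widehat{B}_n^{(l)}$ in place of $\widehat{B}_n$, and treat the homomorphism property in (3) as formal. You also supply two points that the paper's terse proof leaves implicit, both of which are sound: the (T6) exponent bookkeeping through the projection $\widehat{\mathbb Z}\to\varprojlim_k \mathbb Z/2l^k\mathbb Z$ (which is exactly what is needed for the move to map to an allowed pro-$l$ move with the correct parity), and the density in (3) obtained from surjectivity of $\widehat{\mathcal K}\to\widehat{\mathcal K}^l$ together with the fact that a continuous surjection carries dense subsets to dense subsets, whereas the paper instead repeats the dense-image argument of Theorem \ref{topological monoid}(3) using density of $B_n$ in $\widehat{B}_n^{(l)}$.
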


\begin{proof}
(1).
It is obtained by the  same arguments to the proof of Theorem \ref{topological monoid}.

(2).
The map \eqref{factorization0} is induced from the second map in \eqref{two natural group homomorphisms},
whose continuity implies ours.
It preserves  each connected component, which yields the map \eqref{factorization1}.

(3).
To see that they form homomorphisms are immediate.
The density can be proved by the same arguments to the proof of Theorem \ref{topological monoid}.
\end{proof}


\begin{defn}
Let $R$ be a commutative ring.

(1).
Let  $I$ be the two-sided ideal of the group algebra ${R}[B_n]$ of $B_n$
generated by $\sigma_i-\sigma_i^{-1}$ ($1\leqslant i\leqslant n-1$).
The topological $R$-algebra $\widehat{{R}[B_n]}$ of {\it  proalgebraic braids}
means its completion 
with respect to the $I$-adic  filtration, i.e.
$$
\widehat{{R}[B_n]}:=\underset{N}{\varprojlim} \ {R}[B_n]/I^N.
$$

(2).
Put $I_0:=I\cap{R}[P_n]$. Then $I_0$ is the augmentation ideal of ${R}[P_n]$
(cf. \cite{KT98}).
The topological $R$-algebra $\widehat{{R}[P_n]}$ of {\it  proalgebraic pure braids}
means its completion 
$$
\widehat{{R}[P_n]}:=\underset{N}{\varprojlim} \ {R}[P_n]/I_0^N.
$$
It is a subalgebra of $\widehat{{R}[B_n]}$.
\end{defn}

It is direct to see that 
both algebras naturally equip structures of co-commutative Hopf algebras.

\begin{rem}
(i). There is a short exact sequence
$$
0\to \widehat{{R}[P_n]} \to \widehat{{R}[B_n]} \to R[{\frak S_n}]\to 0.
$$

(ii). We remark that
(the group-like part of) $\widehat{{R}[P_n]}$ is the unipotent (Malcev) completion  of $P_n$ and
(the group-like part of) $\widehat{{R}[B_n]}$ is Hain's \cite{H} relative completion 
of $B_n$ with respect to the natural projection $B_n\to \frak S_n$.

(iii). The natural morphisms $P_n\to R[P_n]$ and $B_n\to R[B_n]$ yield injections
\begin{equation}\label{injections for braids}
P_n\hookrightarrow\widehat{{R}[P_n]} \quad\text{and}\quad B_n\hookrightarrow\widehat{{R}[B_n]}
\end{equation}
(cf. \cite{CDM} \S 12).
\end{rem}

Since $\widehat{P}_n^{l}$ is a pro-$l$ group
and $I_0$ is the augmentation ideal,
we have a natural continuous homomorphism
$\widehat{P}_n^{l}\to \widehat{{\mathbb Q}_l[P_n]}$ (cf. \cite{CDM} for example).

\begin{prop}
There is a natural continuous group homomorphism
\begin{equation}\label{map to proalgebraic braids}
\widehat{B}_n^{(l)}\to \widehat{{\mathbb Q}_l[B_n]}.
\end{equation}
\end{prop}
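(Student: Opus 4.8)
The plan is to realize the desired map as the descent to the quotient $\widehat{B}_n^{(l)}=(\widehat{P}_n^l\rtimes B_n)/P_n$ of a homomorphism defined on the semi-direct product itself. I have two ingredients at hand: the natural continuous homomorphism $\phi\colon\widehat{P}_n^l\to\widehat{{\mathbb Q}_l[P_n]}$ constructed just above, and the inclusion $\iota\colon B_n\hookrightarrow\widehat{{\mathbb Q}_l[B_n]}$ of \eqref{injections for braids}, whose image consists of units (each $\iota(b)$ being invertible with inverse $\iota(b^{-1})$). Composing $\phi$ with the subalgebra inclusion $\widehat{{\mathbb Q}_l[P_n]}\hookrightarrow\widehat{{\mathbb Q}_l[B_n]}$, I regard both as landing in the unit group $\widehat{{\mathbb Q}_l[B_n]}^{\times}$, and I define
\[
\Phi\colon \widehat{P}_n^l\rtimes B_n \longrightarrow \widehat{{\mathbb Q}_l[B_n]}^{\times},
\qquad (p,b)\longmapsto \phi(p)\cdot\iota(b).
\]

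First I would check that $\Phi$ is a group homomorphism. Writing out the semi-direct product law $(p_1,b_1)(p_2,b_2)=(p_1\cdot b_1 p_2 b_1^{-1},\,b_1b_2)$ and using that $\phi$ and $\iota$ are homomorphisms, the required identity reduces to the equivariance relation
\[
\phi(bpb^{-1})=\iota(b)\,\phi(p)\,\iota(b)^{-1}
\qquad(b\in B_n,\ p\in\widehat{P}_n^l).
\]
This is the technical heart of the argument and the step I expect to be the main obstacle. Conjugation by $b\in B_n$ is an automorphism of $P_n$ (since $P_n\trianglelefteq B_n$), hence induces an automorphism of ${\mathbb Q}_l[P_n]$ preserving the augmentation ideal $I_0$, and therefore a continuous automorphism of $\widehat{{\mathbb Q}_l[P_n]}$; inside $\widehat{{\mathbb Q}_l[B_n]}$ this automorphism is realized as conjugation by $\iota(b)$, because $bP_nb^{-1}=P_n$ and this persists in the completion. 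The equivariance of $\phi$ itself holds because $\phi$ is induced by the universal property of pro-$l$ completion from the $B_n$-equivariant map $P_n\to{\mathbb Q}_l[P_n]$, and equivariance passes to $\widehat{P}_n^l$ by continuity together with the density of $P_n$.

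Next I would verify that $\Phi$ annihilates the copy of $P_n$ embedded via $p\mapsto(p,p^{-1})$, so that it descends to $\widehat{B}_n^{(l)}$. This is immediate: for $p\in P_n$ the value $\phi(p)$ is the image of $p$ in $\widehat{{\mathbb Q}_l[P_n]}$ (since $\phi$ is compatible with the two maps out of $P_n$), which agrees with $\iota(p)$ inside $\widehat{{\mathbb Q}_l[B_n]}$; hence $\Phi(p,p^{-1})=\phi(p)\,\iota(p)^{-1}=1$. As the embedded $P_n$ is precisely the normal subgroup we quotient by, it lies in $\ker\Phi$, so $\Phi$ factors through $\widehat{B}_n^{(l)}=(\widehat{P}_n^l\rtimes B_n)/P_n$.

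Finally, continuity: $\phi$ is continuous, $\iota$ has discrete source, and multiplication in $\widehat{{\mathbb Q}_l[B_n]}$ is continuous, so $\Phi$ is continuous for the product of the pro-$l$ topology on $\widehat{P}_n^l$ and the discrete topology on $B_n$ — which is exactly the topology defining $\widehat{B}_n^{(l)}$; the descended homomorphism is then continuous as well. The one subtlety to keep straight throughout is the equivariance step, where one must make sure that conjugation in the ambient algebra $\widehat{{\mathbb Q}_l[B_n]}$ genuinely restricts to the intrinsic $B_n$-action on the subalgebra $\widehat{{\mathbb Q}_l[P_n]}$ through which $\phi$ is factored.
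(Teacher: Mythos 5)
Your proof is correct and takes essentially the same approach as the paper: the paper's own (one-line) proof constructs the map exactly from the pair $\widehat{P}_n^{l}\to \widehat{{\mathbb Q}_l[P_n]}\subset \widehat{{\mathbb Q}_l[B_n]}$ and $B_n\hookrightarrow {\mathbb Q}_l[B_n]\to \widehat{{\mathbb Q}_l[B_n]}$, asserting that the required properties "can be directly checked." Your write-up — the homomorphism property via conjugation-equivariance, the vanishing on the antidiagonally embedded $P_n$ so that the map descends to the quotient $(\widehat{P}_n^l\rtimes B_n)/P_n$, and the continuity for the stated topology — is precisely that direct check carried out in full.
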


\begin{proof}
It can be directly checked that the map induced from
the above $\widehat{P}_n^{l}\to \widehat{{\mathbb Q}_l[P_n]}$
($\subset \widehat{{\mathbb Q}_l[B_n]} $)
and the natural map $B_n\hookrightarrow {\mathbb Q}_l[B_n]\to \widehat{{\mathbb Q}_l[B_n]} $
holds the property.
\end{proof}

Next we discuss the corresponding notions in tangles and knots settings.
The following notions go back to the idea of Vassiliev.

\begin{defn}[\cite{KT98}]
\label{definition of proalgebraic knots}
Let $R$ be a commutative ring.
Let ${\mathcal T}_{\epsilon,\epsilon'}$  be the full set of isotopy classes of oriented tangles with
type $({\epsilon,\epsilon'})$.

(1).
Let ${R}[{\mathcal T}_{\epsilon,\epsilon'}]$ be the free $R$-module of finite formal sums
of elements of ${\mathcal T}_{\epsilon,\epsilon'}$.
A singular oriented tangle 
\footnote{
It is an \lq oriented tangle' 
which  is allowed to have  a finite number of transversal double points (see \cite{KT98} for precise).
}
determines an element of ${R}[{\mathcal T}_{\epsilon,\epsilon'}]$ by the desingularization
of each double point
by the following relation
$$
\diaProjected=\diaCrossP -\diaCrossN.
$$
Let ${\mathcal T}_n$ ($n\geqslant 0$) be the $R$-submodule of $R[{\mathcal T}_{\epsilon,\epsilon'}]$ generated by
all singular oriented tangles with type $(\epsilon,\epsilon')$
and with $n$ double points.
The descending filtration $\{{\mathcal T}_n\}_{n\geqslant 0}$
is called the {\it singular filtration}.
The topological $R$-module $\widehat{{R}[{\mathcal T}_{\epsilon,\epsilon'}]}$ of {\it proalgebraic tangles} means its completion
with respect to the singular filtration:
$$
\widehat{{R}[{\mathcal T}_{\epsilon,\epsilon'}]}:=
\underset{N}{\varprojlim} \ {R}[{\mathcal T}_{\epsilon,\epsilon'}]/{\mathcal T}_N.
$$

(2). 
Let ${R}[{\mathcal K}]$ be the  $R$-submodule of ${R}[{\mathcal T}_{\emptyset,\emptyset}]$ generated by
elements of ${\mathcal K}$.
By Proposition \ref{knot forms a monoid},
it forms a commutative $R$-algebra.
Put ${\mathcal K}_n:={\mathcal T}_n\cap{R}[{\mathcal K}]$ ($n\geqslant 0$).
Then ${\mathcal K}_n$ forms an ideal of $R[{\mathcal K}]$
and the descending filtration $\{{\mathcal K}_n\}_{n\geqslant 0}$
is called the {\it singular knot filtration} (cf. loc.cit.).
The topological commutative $R$-algebra $\widehat{{R}[{\mathcal K}]}$ of {\it proalgebraic knots} means its completion
with respect to the singular knot filtration:
$$
\widehat{{R}[{\mathcal K}]}:=
\underset{N}{\varprojlim} \ {R}[{\mathcal K}]/{\mathcal K}_N.
$$
Actually it equips a structure of co-commutative and commutative Hopf algebra.
\end{defn}

The maps below are  tangle and knot analogues of the map \eqref{map to proalgebraic braids}.

\begin{prop}\label{from pro-l to proalgebraic}
(1).
There are continuous maps:
\begin{align}
\widehat{\mathcal T}^l\to \widehat{{\mathbb Q_l}[{\mathcal T}]}, \label{factorization20} \\
\widehat{\mathcal K}^l\to \widehat{{\mathbb Q_l}[{\mathcal K}]}. \label{factorization2}
\end{align}

(2).
The map \eqref{factorization2} is a continuous monoid homomorphism
and its image lies on the set  $\widehat{{\mathbb Q_l}[{\mathcal K}]}^\times$
of invertible elements.
\end{prop}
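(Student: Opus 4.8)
The plan is to realize \eqref{factorization20} and \eqref{factorization2} as a single evaluation morphism from the pro-$l$ tangle ``category'' into the proalgebraic tangle category, built out of the homomorphism \eqref{map to proalgebraic braids} on the braiding part. First I would define the map on fundamental pro-$l$ tangles: an element of $A$ or $C$ is sent to the class of the corresponding honest elementary tangle diagram inside the appropriate $\widehat{{\mathbb Q}_l[{\mathcal T}_{\epsilon,\epsilon'}]}$, while a fundamental tangle $b_n^\epsilon\in\widehat{B}^l$ is sent to the image of $b_n$ under \eqref{map to proalgebraic braids} $\widehat{B}_n^{(l)}\to\widehat{{\mathbb Q}_l[B_n]}$ followed by the natural map from proalgebraic braids to proalgebraic tangles. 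A pro-$l$ tangle $T=\gamma_n\cdots\gamma_1$ is then sent to the composite of the images of its fundamental pieces; since composition and the tensor product $\otimes$ are continuous on the proalgebraic side, this assignment is well defined and continuous on consistent sequences of fundamental pro-$l$ tangles.

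The heart of the argument is to check that this assignment descends to isotopy classes, that is, that it respects the moves (T1)--(T6). Moves (T1)--(T5) are all \emph{honest} tangle isotopies: trivial-braid invariance, braid composition, the interchange law for $\otimes$, naturality of the braiding against a tangle, and the creation--annihilation (zigzag) identities hold already in ${\mathcal T}$, hence in $\widehat{{\mathbb Q}_l[{\mathcal T}]}$; for the pieces carrying genuinely pro-$l$ braids one reduces to the honest case by the homomorphism property of \eqref{map to proalgebraic braids} together with the density of $B_n$ in $\widehat{B}_n^{(l)}$. The serious point is (T6), the first Reidemeister move, because it involves the power $\sigma^{2c}$ (or $\sigma^{2c+1}$) with $c\in{\mathbb Z}_l$ rather than an integer. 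Here I would argue that for an \emph{integer} $c$ the relation $\sigma_{k+1}^{2c}\cdot c_{k,l}^{\epsilon}=c_{k,l}^{\epsilon}$ (and the odd analogue, which produces $c_{k,l}^{\bar\epsilon}$) holds in ${\mathcal T}$ by repeated application of R1 to the kinked cup, hence in $\widehat{{\mathbb Q}_l[{\mathcal T}]}$; since $c\mapsto$ (image of $\sigma_{k+1}^{2c}\cdot c_{k,l}^{\epsilon}$) is a continuous map ${\mathbb Z}_l\to\widehat{{\mathbb Q}_l[{\mathcal T}]}$ that is constant equal to the image of $c_{k,l}^{\epsilon}$ on the dense subset ${\mathbb Z}\subset{\mathbb Z}_l$, it is constant throughout ${\mathbb Z}_l$, and the odd case follows by writing $\sigma_{k+1}^{2c+1}=\sigma_{k+1}^{2c}\cdot\sigma_{k+1}$ and invoking R1 once for the genuine generator. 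This passage from integer to $l$-adic exponents is where the real work lies, and I expect it to be the main obstacle.

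Restricting the resulting map to links (resp.\ knots) yields \eqref{factorization20} (resp.\ \eqref{factorization2}), as the construction preserves the number of connected components. For part (2), I would observe that the connected sum $K_1\sharp K_2$ of \eqref{connected sum} is by construction the concatenation of $K_1$ and $K_2$ after deletion of the matching cap and cup, which is precisely the product in the commutative ${\mathbb Q}_l$-algebra ${\mathbb Q}_l[{\mathcal K}]$ whose multiplication is the connected sum (Proposition \ref{knot forms a monoid}); hence the map carries $\sharp$ to the algebra multiplication and the unit circle $\orientedcircle$ to $1$, so \eqref{factorization2} is a continuous monoid homomorphism. For invertibility, any two knots differ by finitely many crossing changes, and each crossing change lies in ${\mathcal K}_1$; therefore $[K]\equiv 1\pmod{{\mathcal K}_1}$ for every knot $K$, so its image in $\widehat{{\mathbb Q}_l[{\mathcal K}]}$ has the form $1+x$ with $x$ in the closure of ${\mathcal K}_1$, which is topologically nilpotent for the singular knot filtration. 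Thus $1+x$ is invertible via the convergent series $\sum_{m\geqslant 0}(-x)^m$, and the image of \eqref{factorization2} lies in $\widehat{{\mathbb Q}_l[{\mathcal K}]}^\times$, as required.
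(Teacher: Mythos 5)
Your part (1) is essentially the paper's construction: the map is assembled from \eqref{map to proalgebraic braids} on the braid letters and the tautological assignments on $A$ and $C$, and then checked against the moves. The only real divergence is (T6): the paper makes the $l$-adic power $\sigma_i^{c}$ explicit in ${\mathbb Q}_l[B_N]/I^m$ via $\exp\{\frac{c}{2}\log\sigma_i^2\}$ (with the usual adjustment for $l=2$, odd $c$), whereas you verify the move for integer exponents by honest Reidemeister moves and then extend by continuity of $c\mapsto\sigma^{2c}$ on ${\mathbb Z}_l$, density of ${\mathbb Z}$ in ${\mathbb Z}_l$, and Hausdorffness of the completion. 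Both are legitimate; your interpolation argument is a clean substitute for the paper's explicit formulas.

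The genuine gap is in part (2), in the invertibility claim. Your argument --- untie the knot by finitely many crossing changes, each crossing change lying in ${\mathcal K}_1$ --- only makes sense for \emph{topological} knots, i.e.\ for elements of $h_l({\mathcal K})$: a pro-$l$ knot has braid letters in $\widehat{B}_n^{(l)}$, which are not finite diagrams, and ``crossing change'' and ``unknotting number'' have no meaning for them. So what you have actually proved is that the dense subset $h_l({\mathcal K})\subset\widehat{\mathcal K}^l$ maps into the subspace $\orientedcircle+{\mathcal K}_1\cdot\widehat{{\mathbb Q}_l[{\mathcal K}]}$, not that the image of the whole of $\widehat{\mathcal K}^l$ does, which is what the statement asserts. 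The missing step --- and it is exactly how the paper concludes --- is a topological one: the subspace $\orientedcircle+{\mathcal K}_1\cdot\widehat{{\mathbb Q}_l[{\mathcal K}]}$ is a coset of an open ideal, hence both open and closed in $\widehat{{\mathbb Q}_l[{\mathcal K}]}$; since the map \eqref{factorization2} is continuous (your part (1)) and $h_l({\mathcal K})$ is dense in $\widehat{\mathcal K}^l$ by Proposition \ref{from profinite to pro-l}.(3), the image of \emph{every} pro-$l$ knot must also lie in this closed subspace. Once that is in place, your geometric-series inversion of $\orientedcircle+x$ (which implicitly uses multiplicativity of the singular knot filtration, ${\mathcal K}_a\sharp\,{\mathcal K}_b\subset{\mathcal K}_{a+b}$, to make $x$ topologically nilpotent) finishes the proof; the paper instead quotes that $\widehat{{\mathbb Q}_l[{\mathcal K}]}/{\mathcal K}_1$ is one-dimensional, generated by $\orientedcircle$, but these two justifications are equivalent.
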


\begin{proof}
(1).
Since an element $b_n^\epsilon \in B_n\times \{\uparrow, \downarrow\}^n$
($n\geqslant 1$),
a braid $b_n\in B_n$ with an orientation $\epsilon\in  \{\uparrow, \downarrow\}^n$
(namely its  source),
is naturally regarded as a special type of an oriented tangle,
each orientation $\epsilon$ yields a natural inclusion
$$
{\mathbb Q_l}[B_n]\hookrightarrow {\mathbb Q_l}[{\mathcal T}].
$$
On the embedding, we have
$
{\mathcal T}_m \cap  {\mathbb Q_l}[B_n]= I^m
$
for $m\geqslant 0$.
Therefore the above map and the map \eqref{map to proalgebraic braids} induce
$$
\widehat{B}_n^{(l)}\to \  {\mathbb Q_l}[{\mathcal T}]/{\mathcal T}_m.
$$
Hence it determines the map of sets
\begin{equation}\label{B to T}
\widehat{B}^l\to {\mathbb Q_l}[{\mathcal T}]/{\mathcal T}_m.
\end{equation}
We also have the natural maps of sets 
\begin{equation}\label{A,C to T}
A\to {\mathbb Q_l}[{\mathcal T}]/{\mathcal T}_m \  \text{ and } \ 
C\to {\mathbb Q_l}[{\mathcal T}]/{\mathcal T}_m.
\end{equation}

As is described in the proof of Theorem \ref{profinite realization theorem},
the set $\mathcal T$ of (topological) oriented tangles is described by 
the set of consistent finite sequences of elements of $A$, $B$ and $C$
modulo the (discrete) Turaev moves.
By ${\mathbb Q}_l$-linearly extending the description,
we obtain the same description of  ${\mathbb Q_l}[{\mathcal T}]$.
Since our three maps \eqref{B to T} and \eqref{A,C to T}
are consistent with the moves, we obtain
$$
\widehat{\mathcal T}^l\to \  {\mathbb Q_l}[{\mathcal T}]/{\mathcal T}_m.
$$
(Again we note that, for $\sigma_{i}\in {B}_N$ and $c\in{\mathbb Z}_l$,
the power $\sigma_i^{c}$ makes sense in ${\mathbb Q_l}[B_N]/I^m$
by the formula
$$
\sigma_i^{c}:= \exp\{\frac{c}{2}\log \sigma_i^2\}
$$
when $l\neq 2$ or when $l=2$ and $c\in 2{\mathbb Z}_2$, and
$$
\sigma_i^{c}:=\sigma_i\cdot \exp\{\frac{c-1}{2}\log \sigma_i^2\}
$$
when $l=2$ and $c\not\in 2{\mathbb Z}_2$.
Here $\exp$ and $\log$ are defined by the usual Taylor expansions.
The RHS is well-defined by $\sigma_i^2-1\in I$.)

It yields the map \eqref{factorization20} which is continuous.
Since this map preserves each connected component, 
the map \eqref{factorization2} is also obtained.

(2).
It is immediate to see that it forms a continuous homomorphism.

Each oriented knot, an element of ${\mathcal K}$, is congruent to the unit,
the trivial knot $\orientedcircle\in {\mathbb Q_l}[{\mathcal K}]$, modulo ${\mathcal K}_1$,
because any knot can be untied by a finite times of changing crossings
(consult for unknotting number, say, in \cite{CDM}).
Therefore the image of $h_l({\mathcal K})$ ($\subset \widehat{\mathcal K}^l$)
is contained in the subspace
$\orientedcircle +{\mathcal K}_1\cdot\widehat{{\mathbb Q_l}[{\mathcal K}]}$.
Hence the image of $\widehat{\mathcal K}^l$ should lie on the subspace.
It is
because the subspace is open in $\widehat{{\mathbb Q_l}[{\mathcal K}]}$,
our map \eqref{factorization2} is continuous as shown above and
$h_l({\mathcal K})$ is dense in $\widehat{\mathcal K}_l$ by Proposition \ref{from profinite to pro-l}.(3).
All elements of the subspace are invertible
because it is known that the quotient $\widehat{{\mathbb Q_l}[{\mathcal K}]}/ {\mathcal K}_1$
is $1$-dimensional and generated by $\orientedcircle$.
Thus the claim is obtained.
\end{proof}

The author is not aware if our above two maps are injective or not.

\begin{prop}\label{from profinite to pro-algebraic}
(1). For each prime $l$, there are continuous maps:
\begin{align}
\widehat{\mathcal T}\to \widehat{{\mathbb Q_l}[{\mathcal T}]}, \label{factorization30} \\
\widehat{\mathcal K}\to \widehat{{\mathbb Q_l}[{\mathcal K}]}. \label{factorization3}
\end{align}

(2).
The map \eqref{factorization3} is a continuous monoid homomorphism
and its image lies on the set  $\widehat{{\mathbb Q_l}[{\mathcal K}]}^\times$
of  invertible elements.

(3).  The image of the composition of \eqref{factorization3}  with $h$ in
Theorem \ref{profinite realization theorem}.(2)
\begin{equation}\label{from K to Q_lK}
{\mathcal K} \overset{h}{\to} \widehat{\mathcal K} \to \widehat{{\mathbb Q}_l[{\mathcal K}]}. 
\end{equation}
lies on the rational invertible part $\widehat{{\mathbb Q}[{\mathcal K}]}^\times$
($\subset \widehat{{\mathbb Q}_l[{\mathcal K}]}$).
Furthermore the resulting morphism 
\begin{equation}\label{from K to QK}
i:{\mathcal K}\to \widehat{{\mathbb Q}[{\mathcal K}]}
\end{equation}
is independent of a prime $l$.
\end{prop}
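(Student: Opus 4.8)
The plan is to obtain \eqref{factorization30} and \eqref{factorization3} as composites of maps already in hand, and then to analyse the restriction along $h$ by tracing a topological knot through the whole chain on the level of fundamental tangles. First I would \emph{define} \eqref{factorization30} (resp. \eqref{factorization3}) to be the composite of the continuous map \eqref{factorization0} (resp. \eqref{factorization1}) of Proposition \ref{from profinite to pro-l} with the continuous map \eqref{factorization20} (resp. \eqref{factorization2}) of Proposition \ref{from pro-l to proalgebraic}, i.e.
$$
\widehat{\mathcal K}\to\widehat{\mathcal K}^l\to\widehat{{\mathbb Q}_l[{\mathcal K}]}.
$$
Since both arrows are continuous, this settles (1). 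For (2), both arrows are monoid homomorphisms by Proposition \ref{from profinite to pro-l}.(3) and Proposition \ref{from pro-l to proalgebraic}.(2), so the composite is one; and as the image of the second arrow already lies in $\widehat{{\mathbb Q}_l[{\mathcal K}]}^\times$ by Proposition \ref{from pro-l to proalgebraic}.(2), so does the image of \eqref{factorization3}.

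The substance is (3). I would represent $K\in{\mathcal K}$ as a consistent finite sequence $\gamma_N\cdots\gamma_1$ of topological fundamental tangles (elements of $A$, $B$, $C$) as in the proof of Theorem \ref{profinite realization theorem}. Under $h$ this goes to the same sequence in the profinite setting, the braid entries $b_n\in B_n$ being sent through $B_n\hookrightarrow\widehat{B}_n$. The next arrow $\widehat{\mathcal K}\to\widehat{{\mathbb Q}_l[{\mathcal K}]}$ acts entry-wise, and on a braid entry it is exactly the composite of \eqref{two natural group homomorphisms} and \eqref{map to proalgebraic braids}, namely $B_n\to\widehat{B}_n\to\widehat{B}_n^{(l)}\to\widehat{{\mathbb Q}_l[B_n]}$. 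By the construction of \eqref{map to proalgebraic braids} this composite is the tautological embedding $B_n\hookrightarrow{\mathbb Q}_l[B_n]\to\widehat{{\mathbb Q}_l[B_n]}$ of \eqref{injections for braids}, which factors as $B_n\hookrightarrow{\mathbb Q}[B_n]\to\widehat{{\mathbb Q}[B_n]}\subset\widehat{{\mathbb Q}_l[B_n]}$. Since the entries of $A$ and $C$ are sent to themselves, the whole composite \eqref{from K to Q_lK} therefore factors through the ${\mathbb Q}$-linear description of ${\mathbb Q}[{\mathcal K}]$ and lands in $\widehat{{\mathbb Q}[{\mathcal K}]}$.

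To upgrade this to the \emph{invertible} rational part, I would repeat over ${\mathbb Q}$ the argument of Proposition \ref{from pro-l to proalgebraic}.(2): any oriented knot is congruent to $\orientedcircle$ modulo ${\mathcal K}_1$ (it can be untied by finitely many crossing changes), so $i(K)\in\orientedcircle+{\mathcal K}_1\cdot\widehat{{\mathbb Q}[{\mathcal K}]}$, and every such element is a unit because $\widehat{{\mathbb Q}[{\mathcal K}]}/{\mathcal K}_1$ is one-dimensional on $\orientedcircle$ and the algebra is complete for the singular knot filtration. This yields the map \eqref{from K to QK}, $i:{\mathcal K}\to\widehat{{\mathbb Q}[{\mathcal K}]}^\times$. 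Independence of $l$ then falls out of the same factorization: $i$ is nothing but the tautological map ${\mathcal K}\to{\mathbb Q}[{\mathcal K}]\to\widehat{{\mathbb Q}[{\mathcal K}]}$, into which no prime $l$ enters.

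The formal parts (1) and (2) are immediate, so I expect the only real difficulty to be the bookkeeping behind the factorization used in (3): one must check that the proalgebraic arrow of Proposition \ref{from pro-l to proalgebraic} restricts on \emph{topological} braid generators to the tautological rational embedding, and that the several completions $\widehat{B}_n^{(l)}$, $\widehat{{\mathbb Q}_l[B_n]}$, $\widehat{{\mathbb Q}[B_n]}$ are genuinely compatible so that the relevant diagram commutes. Concretely, the $\exp$–$\log$ formula defining $\sigma_i^c$ must specialize, for $c\in{\mathbb Z}$, back to the honest power $\sigma_i^c\in B_n$ (which holds because $\sigma_i^2-1\in I$ makes $\exp\{\frac{c}{2}\log\sigma_i^2\}$ the group element $\sigma_i^c$); verifying this compatibility is what makes the diagram commute and hence makes $i$ well-defined and independent of $l$.
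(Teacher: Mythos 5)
Your proof is correct and takes essentially the same route as the paper: (1) and (2) are obtained by composing the maps of Propositions \ref{from profinite to pro-l} and \ref{from pro-l to proalgebraic}, and (3) by observing that the composite \eqref{from K to Q_lK} is induced from the natural inclusion ${\mathcal K}\hookrightarrow{\mathbb Q}[{\mathcal K}]$ --- which is exactly the paper's one-line argument, here unpacked entry-wise on fundamental tangles. One small correction to your final parenthetical: $\exp\{\frac{c}{2}\log \sigma_i^2\}$ equals the honest group element $\sigma_i^c$ only for \emph{even} integers $c$ (for odd $c$ the group element is $\sigma_i\cdot \exp\{\frac{c-1}{2}\log \sigma_i^2\}$, since $\sigma_i$ itself is not congruent to $1$ modulo $I$); this is harmless for your argument, because on a topological knot every braid entry is an honest element of $B_n$ and the map \eqref{map to proalgebraic braids} sends such elements tautologically by its very construction, so the $\exp$--$\log$ formula for ${\mathbb Z}_l$-powers never actually enters.
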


\begin{proof}
(1) and (2) follow from
Proposition \ref{from profinite to pro-l} and \ref{from pro-l to proalgebraic}.
Our claim (3) follows from that
the map \eqref{from K to Q_lK} is induced from the natural inclusion
${\mathcal K}\hookrightarrow{R[{\mathcal K}]}$
with $R=\mathbb Q$.
\end{proof}

Relating to Problem \ref{profinite knot invariant},
the following is obtained as an application of 
Proposition \ref{from profinite to pro-algebraic}.

\begin{rem}\label{finite-type remark}
(1).
Finite type knot invariants (resp. and their projective limits)
valued on $\mathbb Q$
are knot invariants which factor through
${\mathcal K}\to \widehat{{\mathbb Q}[{\mathcal K}]}\to 
{\mathbb Q}[{\mathcal K}]/{\mathcal K}_N$
for some $N$ (resp. ${\mathcal K}\to \widehat{{\mathbb Z}[{\mathcal K}]}$).
By using the map \eqref{from K to Q_lK},
we can extend all finite type knot invariants and their projective limits,
such as each coefficient of the Jones polynomial substituting $e^h$
(cf. \cite{CDM}),
into profinite knot invariants valued on $\mathbb Q_l$ for each prime $l$.
It is easy to see that the same holds for tangle invariants.

(2). The linking number is 
an invariant of two components links which values on $\mathbb Z$ and
which is known to be of finite type (\cite{CDM}, etc).
Thus it can be extended to an invariant of two components profinite links
which values on $\mathbb Q_l$, actually on $\mathbb Z_l$,
for each prime $l$.
The Milnor $\bar\mu$-invariant  \cite{Mi1,Mi2} is an invariant of links
which is known as
a higher generalization of the linking number. 
It is defined on $\mathbb Z$ but it has a certain indeterminacy.
The author is not sure if it can be also extended to an invariant of profinite links, but
he expects that the works \cite{HL, Ba2, Lin}
concerning its associated invariant of string links
would help to detect that.
\end{rem}

As a complementation of Remark \ref{perfect remark}, we have

\begin{rem}\label{Kontsevich factorization}
The Kontsevich invariant $Z:{\mathcal K}\to\widehat{CD}$
is a knot invariant which is a composition of $i$ with
the isomorphism $\widehat{{\mathbb Q}[{\mathcal K}]}\simeq\widehat{CD}$
constructed in \cite{K}.
Here $\widehat{CD}$ stands for the $\mathbb Q$-vector space (completed by degree)
of chord diagrams
modulo 4T- and FI-relations (consult also \cite{CDM, O}).
The invariant is conjectured to be perfect, i.e., the map $Z$ is conjectured to be injective
(cf. \cite{O-Prob} Conjecture 3.2).
The conjecture is equivalent to saying that  the above map $i$ in \eqref{from K to QK}
is injective.
Hence it naturally leads us to conjecture 
that  $h:{\mathcal K} \to \widehat{\mathcal K}$ 
is injective (Conjecture \ref{injectivity on T})
because the non-injectivity of $h$  imply  the non-injectivity of $i$,
that is, the non-perfectness of
the Kontsevich invariant, by the above proposition.
We may also say that Conjecture \ref{injectivity on T} is an assertion on a knot analogue of
the injectivity of the maps \eqref{injections for braids}.
\end{rem}

\subsection{Action of the absolute Galois group}\label{absolute Galois action}
The group $\mathrm{Frac}\widehat{\mathcal K}$ of profinite knots is introduced as
the group of fraction of the topological monoid  $\widehat{\mathcal K}$
in Definition \ref{definition of GK}
and its basic property is shown in Theorem \ref{theorem of GK}.
A continuous action of the profinite Grothendieck-Teichm\"{u}ler group  $\widehat{GT}$ 
(cf. Definition \ref{definition of GT}) on $\mathrm{Frac}\widehat{\mathcal K}$
is established
in Definition \ref{GT-action on profinite knots}-Theorem \ref{GT-action theorem}. 
As a result of our construction, 
an action of
the absolute Galois group $G_{\mathbb Q}$ of the rational number field 
on $\mathrm{Frac}\widehat{\mathcal K}$
is obtained (Theorem \ref{Galois representation theorem}).
We post several projects and problems on this Galois representation in the end.

\begin{defn}\label{definition of GK}
The {\it group of profinite knots} $\mathrm{Frac}\widehat{\mathcal K}$ is defined to be the group of fraction
of the monoid  $\widehat{\mathcal K}$, i.e.,
the quotient space of $\widehat{\mathcal K}^2$ by the equivalent relations
$(r,s)\approx (r',s')$ if  $r\sharp s'\sharp t\sim r'\sharp s\sharp t$
for some profinite knot $t$,
i.e. $r\sharp s'\sharp t=r'\sharp s\sharp t$
holds in $\widehat{\mathcal K}$.
Occasionally we denote the equivalent class  $[(r,s)]$ by $\frac{r}{s}$.
\end{defn}

For $[(r_1,s_1)]$ and $[(r_2,s_2)]\in \mathrm{Frac}\widehat{\mathcal K}$,
define its product by
$$
[(r_1,s_1)]\sharp [(r_2,s_2)]:=
[(r_1\sharp r_2,s_1\sharp s_2)]\in \mathrm{Frac}\widehat{\mathcal K},
\qquad
\text{i.e.}\qquad
\frac{r_1}{s_1}\sharp \frac{r_2}{s_2}=\frac{r_1\sharp r_2}{s_1\sharp s_2}
\in \mathrm{Frac}\widehat{\mathcal K}.
$$
We encode $\mathrm{Frac}\widehat{\mathcal K}$ with the quotient topology of
$\widehat{\mathcal K}^2$. 

\begin{thm}\label{theorem of GK}
(1).
The product $\sharp$ is well-defined on  $\mathrm{Frac}\widehat{\mathcal K}$.
The set $\mathrm{Frac}\widehat{\mathcal K}$ forms a topological commutative group.

(2). 
It is a non-trivial group. Actually it is an infinite group.
\end{thm}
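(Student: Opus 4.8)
The plan is to treat (1) by the standard group-completion formalism for a commutative monoid, with care taken over the quotient topology, and to deduce (2) by pushing a non-trivial knot invariant through $G\widehat{\mathcal K}$. For the algebraic content of (1), I would first verify that $\approx$ is an equivalence relation. Reflexivity and symmetry are immediate; transitivity is the only place where the auxiliary profinite knot is needed, for if $r\sharp s'\sharp t_1=r'\sharp s\sharp t_1$ and $r'\sharp s''\sharp t_2=r''\sharp s'\sharp t_2$, then multiplying these two relations together and rearranging by the commutativity and associativity of $\sharp$ (Theorem \ref{topological monoid}) shows that $t:=s'\sharp t_1\sharp t_2$ witnesses $(r,s)\approx(r'',s'')$. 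Well-definedness of $\sharp$ on classes follows in the same spirit: multiplying a witnessing relation for $(r_1,s_1)\approx(r_1',s_1')$ by $(r_2,s_2)$ yields the required identity. The monoid axioms for $\sharp$ on $G\widehat{\mathcal K}$ are then inherited from $\widehat{\mathcal K}$; the class $[(\orientedcircle,\orientedcircle)]$ is the unit, and $[(s,r)]$ is the inverse of $[(r,s)]$ since $[(r\sharp s,\, s\sharp r)]=[(\orientedcircle,\orientedcircle)]$ by commutativity.

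For the topological statements in (1), the inversion $[(r,s)]\mapsto[(s,r)]$ is induced by the coordinate swap on $\widehat{\mathcal K}^2$ and is therefore continuous for the quotient topology. For the product, the map sending $((r_1,s_1),(r_2,s_2))\mapsto(r_1\sharp r_2,\, s_1\sharp s_2)$ is continuous on $\widehat{\mathcal K}^2\times\widehat{\mathcal K}^2$ because $\sharp$ is continuous on $\widehat{\mathcal K}$ (Theorem \ref{topological monoid}(2)); composing with the quotient $q\colon\widehat{\mathcal K}^2\to G\widehat{\mathcal K}$ and factoring through $G\widehat{\mathcal K}\times G\widehat{\mathcal K}$ produces the product. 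The one technical point requiring care is that the product $q\times q$ of quotient maps should again be a quotient map, so that continuity genuinely descends; I would secure this either by passing to compactly generated spaces or by checking continuity directly against the quotient topology.

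For (2), the idea is to use the proalgebraic invariant of \S\ref{pro-l knots} as a non-trivial receptacle. Fix a prime $l$. By Proposition \ref{from profinite to pro-algebraic}, the map \eqref{factorization3} is a continuous monoid homomorphism from $\widehat{\mathcal K}$ into the group $\widehat{{\mathbb Q}_l[{\mathcal K}]}^\times$ of invertible elements, so by the universal property of the group of fractions it extends uniquely to a group homomorphism $\Phi\colon G\widehat{\mathcal K}\to\widehat{{\mathbb Q}_l[{\mathcal K}]}^\times$; precomposing $\Phi$ with the canonical map $h\colon{\mathcal K}\to\widehat{\mathcal K}\to G\widehat{\mathcal K}$ recovers the map $i$ of \eqref{from K to QK} by Proposition \ref{from profinite to pro-algebraic}(3). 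It then suffices to exhibit infinitely many knots on which $i$ takes pairwise distinct values. This is exactly the content of the Kontsevich invariant: by Remark \ref{Kontsevich factorization}, $i$ coincides with $Z$ up to the isomorphism $\widehat{{\mathbb Q}[{\mathcal K}]}\simeq\widehat{CD}$, and $Z$ is a universal finite-type invariant, while finite-type invariants are already unbounded on simple families (for instance the second Conway coefficient, which is of finite type, is unbounded on the $(2,2k+1)$-torus knots). Hence $i$, and therefore $\Phi$, has infinite image, so $G\widehat{\mathcal K}$ is infinite; and a single knot with non-trivial Kontsevich invariant, such as the trefoil, already maps to a non-identity element, giving non-triviality.

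The main obstacle is (2): the construction of $G\widehat{\mathcal K}$ is purely formal and carries no non-triviality of its own, so the whole argument rests on importing a genuinely topological fact — the separating power of finite-type invariants — through the chain of maps built in \S\ref{pro-l knots}. Routing through $\widehat{{\mathbb Q}_l[{\mathcal K}]}^\times$ via the universal property is precisely what guarantees that distinctions visible to the Kontsevich invariant are not collapsed when passing to the group of fractions.
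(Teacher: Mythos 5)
Your proposal is correct and takes essentially the same route as the paper: part (1) by group-completion of the commutative monoid $\widehat{\mathcal K}$ with continuity of $\sharp$ and of inversion checked through commutative diagrams over the quotient topology of $\widehat{\mathcal K}^2$, and part (2) by pushing into $\widehat{{\mathbb Q}_l[{\mathcal K}]}^\times$ via Propositions \ref{from profinite to pro-l} and \ref{from pro-l to proalgebraic}, identifying the composite with $i$ of \eqref{from K to QK}, and citing that the Kontsevich invariant takes infinitely many values. The only differences are that you spell out details the paper dismisses as easy (transitivity of $\approx$, well-definedness, the universal property of the group of fractions, a concrete torus-knot family) and that you explicitly flag the subtlety that a product of two quotient maps need not be a quotient map, which the paper's diagram argument for continuity of $\sharp$ silently assumes.
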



\begin{proof}
(1).
It is easy to see that $\sharp$ is well-defined and
$\mathrm{Frac}\widehat{\mathcal K}$ forms a commutative group
with unit
$$e=(\orientedcircle, \orientedcircle)$$
by Theorem \ref{topological monoid}.

Consider the  commutative diagram
$$
\begin{CD}
\widehat{\mathcal K}^2\times \widehat{\mathcal K}^2
@>\sharp>> \widehat{\mathcal K}^2 \\
@VVV    @VVV \\
\mathrm{Frac}\widehat{\mathcal K}\times \mathrm{Frac}\widehat{\mathcal K}
@>\sharp>> \mathrm{Frac}\widehat{\mathcal K}. \\
\end{CD}
$$
Since the upper map is continuous by Theorem \ref{topological monoid}
and the surjection
$\widehat{\mathcal K}^2\to \mathrm{Frac}\widehat{\mathcal K}$ is continuous by definition,
it follows that the map $\sharp$ is continuous.

Let $\tau: \widehat{\mathcal K}^2\to\widehat{\mathcal K}^2$
be the switch map sending $(r,s)\mapsto (s,r)$.
It is easy to see that it is continuous and
it induces the inverse map on $\mathrm{Frac}\widehat{\mathcal K}$.
Then by the commutative diagram
$$
\begin{CD}
\widehat{\mathcal K}^2
@>\tau>> \widehat{\mathcal K}^2 \\
@VVV    @VVV \\
\mathrm{Frac}\widehat{\mathcal K}@>>> \mathrm{Frac}\widehat{\mathcal K}, \\
\end{CD}
$$
the inverse map is also continuous.

(2).
By Proposition \ref{from profinite to pro-l} and \ref{from pro-l to proalgebraic},
there is a continuous monoid homomorphism
\begin{equation}\label{K to Kl to QlK}
\widehat{\mathcal K} \to \widehat{\mathcal K}^l\to \widehat{{\mathbb Q_l}[{\mathcal K}]}
\end{equation}
for a prime $l$.
By Proposition \ref{from pro-l to proalgebraic},
the image lies on $\widehat{{\mathbb Q_l}[{\mathcal K}]}^\times$.
Whence it induces a continuous group homomorphism
\begin{equation}\label{GK to QlK}
\mathrm{Frac}\widehat{\mathcal K} \to\widehat{{\mathbb Q_l}[{\mathcal K}]}^\times.
\end{equation}
Thus it is enough to show that the image of the composition of the maps \eqref{K to Kl to QlK}
and $h:{\mathcal K}\to \widehat{\mathcal K}$
is infinite set.
The claim is obvious because 
this map is equal to $i$ in \eqref{from K to QK} and
the Kontsevich invariant takes infinite number of values (cf. Remark \ref{Kontsevich factorization})
\end{proof}


A reason why we introduce  $\mathrm{Frac}\widehat{\mathcal K}$ is that
we need to treat the inverse of  $\Lambda_f$ in Figure \ref{error term}
when we let $\widehat{GT}$ act on profinite knots (cf. Definition \ref{GT-action on profinite knots}).

We note that the natural morphism 
\begin{equation}\label{arithmetic realization map}
h':{\mathcal K}\to \mathrm{Frac}\widehat{\mathcal K}
\end{equation}
sending $K\mapsto [(K,\orientedcircle)]$
is a homomorphism as monoid.
By abuse of notations, we occasionally denote the image $h'(K)$ by the same symbol $K$.
Related to  Conjecture \ref{injectivity on T},
\begin{prob}
Is the 
map $h'$ injective?
\end{prob}

On a structure of $\mathrm{Frac}\widehat{\mathcal K}$,
we pose

\begin{prob}
Is $\mathrm{Frac}\widehat{\mathcal K}$ a profinite group?
\end{prob}

By \cite{RZ}, to  show that $\mathrm{Frac}\widehat{\mathcal K}$ is a profinite group, 
we must show that it is
compact, Hausdorff and totally-disconnected.
The author is not aware of any one of their validities.
It is worthy to note that the set $\widehat{\mathcal T}$ 
of isotopy classes of profinite tangles
is not compact,
hence not a profinite space.
It is because the map $|\pi_0|:\widehat{\mathcal T}\to{\Bbb N}$
taking the number of connected components of each profinite tangles
is continuous and surjective to the non-compact space $\Bbb N$. 

\begin{defn}\label{GT-action on profinite knots}
Let  $(r,s)$ be a pair of profinite knot diagrams 
with $r=\gamma_{1,m}\cdots\gamma_{1,2}\cdot\gamma_{1,1}$
and $s=\gamma_{2,n}\cdots\gamma_{2,2}\cdot\gamma_{2,1}$
($\gamma_{i,j}$: profinite fundamental tangle diagram).
For $\sigma=(\lambda,f)\in \widehat{GT}$
(hence $\lambda\in\widehat{\mathbb Z}^\times, f\in \widehat{F}_2$),
define its action by
\begin{equation}\label{GT-action on GK}
\sigma \left(\frac{r}{s}\right):=
\frac{\sigma(r)}{\sigma(s)}:=
\frac{
\{\sigma(\gamma_{1,m})\cdots\sigma(\gamma_{1,2})\cdot\sigma(\gamma_{1,1})\}
\sharp(\Lambda_f)^{\sharp\alpha(s)}    }
{
\{\sigma(\gamma_{2,n})\cdots\sigma(\gamma_{2,2})\cdot\sigma(\gamma_{2,1})\}
\sharp(\Lambda_f)^{\sharp\alpha(r)}  
}
\in  \mathrm{Frac}\widehat{\mathcal K}.
\end{equation}
It is well-defined by Proposition \ref{knots to knots proposition} and Theorem \ref{GT-action theorem}.
Here
\begin{equation*}
\sigma(r):=\frac{ \{\sigma(\gamma_{1,m})\cdots\sigma(\gamma_{1,2})\cdot\sigma(\gamma_{1,1})\}}
{(\Lambda_f)^{\sharp\alpha(r)}} 
\text{  and  }
\sigma(s):=\frac{
\{\sigma(\gamma_{2,n})\cdots\sigma(\gamma_{2,2})\cdot\sigma(\gamma_{2,1})\} }
{(\Lambda_f)^{\sharp\alpha(s)}}
\in \mathrm{Frac}\widehat{\mathcal K}
\end{equation*}
are defined as follows:
\end{defn}

\begin{enumerate}
\item When $\gamma_{i,j}=a_{k,l}^\epsilon$,
we define
$$
\sigma(\gamma_{i,j}):=\gamma_{i,j}\cdot
f_{1\cdots k,k+1,k+2}^{s(\gamma_{i,j})}
$$
Here
$f_{1\cdots k,k+1,k+2}^{s(\gamma_{i,j})}
=\mathrm{ev}_{1,\epsilon_1}(f^{\uparrow\epsilon_2\epsilon_3})\otimes e_l^{\epsilon_4}$
with $s(\gamma_{i,j})=\epsilon_1\epsilon_2\epsilon_3\epsilon_4\in
\{\uparrow,\downarrow\}^{k+l+2}$
($\epsilon_1\in\{\uparrow,\downarrow\}^k$,
$\epsilon_2,\epsilon_3\in\{\uparrow,\downarrow\}$,
$\epsilon_4\in\{\uparrow,\downarrow\}^l$).
It is also described by
$f_{1\cdots k,k+1,k+2}^{s(\gamma_{i,j})}=
\left(f_{1\cdots k,k+1,k+2}\otimes e_l,
{s(\gamma_{i,j})}\right)
\in\widehat{B}$
with
$$f_{1\cdots k,k+1,k+2}\otimes e_l
=f(x_{1\cdots k,k+1},x_{k+1,k+2})
\in\widehat{B}_{k+l+2}
$$
where $x_{1\cdots k,k+1}$ and $x_{k+1,k+2}$ are regarded as elements of $\widehat{B}_{k+l+2}$.
We mean $f_{1\cdots k,k+1,k+2}\otimes e_l$ by the trivial braid $e_{l+2}\in\widehat{B}_{l+2}$ when $k=0$.
Figure \ref{GT-action on a} depicts the action.
Here the thickened black band stands for the trivial braid $e_k$ with $k$-strings.

\begin{figure}[h]
\begin{center}
\begin{tikzpicture}
\draw  (-0.6, 0.4) node{\Huge{$f($}};
\draw[-] (0.5,0)--(0.5,0.5);
                   \draw[color=white, line width=7pt](0,0) ..controls(0.1,0.5) and (0.9,0)        ..(1,0.5);
                   \draw[color=black, line width=5pt] (0,0) ..controls(0.1,0.5) and (0.9,0)        ..(1,0.6);
                   \draw[color=black, line width=5pt] (1,0.5) ..controls(0.9,1.0) and (0.1,0.5)       ..(0,1);
                   \draw[color=white, line width=7pt] (0.5,0.5)--(0.5,1);
                   \draw[-] (0.5,0.5)--(0.5,1);
                   \draw[-] (1.2,0)--(1.2,1);
\draw (1.5,0) node{,};
                   \draw[color=black, line width=5pt] (2.2,0)--(2.2,1);
                   \draw[-] (3,0)--(3,0.5);
                   \draw[color=white, line width=5pt](2.5,0) ..controls(2.6,0.5) and (3.4,0)        ..(3.5,0.5);
                   \draw[color=black] (2.5,0) ..controls(2.6,0.5) and (3.4,0)        ..(3.5,0.5);
                   \draw[color=black] (3.5,0.5) ..controls(3.4,1.0) and (2.6,0.5)       ..(2.5,1);
                   \draw[color=white, line width=5pt] (3,0.5)--(3,1);
                   \draw[-] (3,0.5)--(3,1);
\draw  (3.7, 0.4) node{\Huge{$)$}};
\draw (-1.0,-0.2) rectangle (4.0,1.2);
                   \draw[color=black, line width=5pt] (1.0,-0.5)--(1.0,-0.2) (1.0,1.2)--(1.0,2.0);
\draw[decorate,decoration={brace, mirror}] (0.85,-0.6) -- (1.15,-0.6) node[midway,below]{$k$};
                   \draw (2,-0.5)--(2,-0.2) (2,1.2)--(2,1.5);
                   \draw (2.8,-0.5)--(2.8,-0.2) (2.8,1.2)--(2.8,1.5);
                   \draw (2,1.5)  arc (180:0:0.4);
                  \draw (4.3,-0.5)--(4.3,2.0);
                  \draw (4.4,-0.5)--(4.4,2.0);
                  \draw[dotted] (4.45,0.6)--(4.75,0.6);
                  \draw[dotted] (4.45,1.7)--(4.75,1.7);
\draw[decorate,decoration={brace, mirror}] (4.3,-0.6) -- (4.8,-0.6) node[midway,below]{$l$};
                  \draw (4.8,-0.5)--(4.8,2.0);
\draw[color=white, very thick]  (0,1.5)--(5,1.5) ;
          \end{tikzpicture}
\caption{$\sigma(a_{k,l}^\epsilon)$}
\label{GT-action on a}
\end{center}
\end{figure}
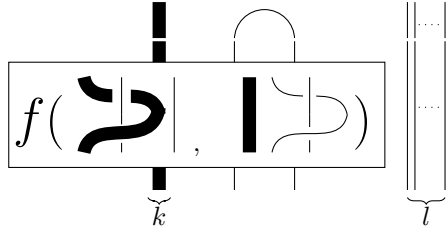
\item When $\gamma_{i,j}=b_{n}^\epsilon=(b_n,\epsilon)\in \widehat{B}$,  
we define
$$\sigma(\gamma_{i,j}):=(\sigma(b_n),\epsilon)
$$
which is nothing but the image of $b_{n}\in \widehat{B}_n$
by the $\widehat{GT}$-action on $\widehat{B}_n$ explained in \S \ref{Galois action on profinite braids}.

\item When $\gamma_{i,j}=c_{k,l}^\epsilon$,
we define
$$
\sigma(\gamma_{i,j}):=
f_{1\cdots k,k+1,k+2}^{-1,t(\gamma_{i,j})}
\cdot\gamma_{i,j}
$$
with
$f_{1\cdots k,k+1,k+2}^{-1,t(\gamma_{i,j})}
=\left(f_{1\cdots k,k+1,k+2}^{-1}\otimes e_l,
t(\gamma_{i,j})\right)
\in\widehat{B}$.
Figure \ref{GT-action on c} depicts the action.
\begin{figure}[h]
\begin{center}
\begin{tikzpicture}
\draw  (-0.6, 0.4) node{\Huge{$f($}};
                   \draw[-] (0.5,0)--(0.5,0.5);
                   \draw[color=white, line width=7pt](0,0) ..controls(0.1,0.5) and (0.9,0)        ..(1,0.5);
                   \draw[color=black, line width=5pt] (0,0) ..controls(0.1,0.5) and (0.9,0)        ..(1,0.6);
                   \draw[color=black, line width=5pt] (1,0.5) ..controls(0.9,1.0) and (0.1,0.5)       ..(0,1);
                   \draw[color=white, line width=7pt] (0.5,0.5)--(0.5,1);
                   \draw[-] (0.5,0.5)--(0.5,1);
                   \draw[-] (1.2,0)--(1.2,1);
\draw (1.5,0) node{,};
                   \draw[color=black, line width=5pt] (2.2,0)--(2.2,1);
                   \draw[-] (3,0)--(3,0.5);
                   \draw[color=white, line width=5pt](2.5,0) ..controls(2.6,0.5) and (3.4,0)        ..(3.5,0.5);
                   \draw[color=black] (2.5,0) ..controls(2.6,0.5) and (3.4,0)        ..(3.5,0.5);
                   \draw[color=black] (3.5,0.5) ..controls(3.4,1.0) and (2.6,0.5)       ..(2.5,1);
                   \draw[color=white, line width=5pt] (3,0.5)--(3,1);
                   \draw[-] (3,0.5)--(3,1);
\draw  (4, 0.4) node{\Huge{$)^{-1}$}};
\draw (-1.0,-0.2) rectangle (4.5,1.2);
                   \draw[color=black, line width=5pt] (1.0,-1.0)--(1.0,-0.2) (1.0,1.2)--(1.0,1.5);
\draw[decorate,decoration={brace, mirror}] (0.85,-1.1) -- (1.15,-1.1) node[midway,below]{$k$};
                   \draw (2,-0.5)--(2,-0.2) (2,1.2)--(2,1.5);
                   \draw (2.8,-0.5)--(2.8,-0.2) (2.8,1.2)--(2.8,1.5);
                   \draw (2,-0.5)  arc (180:360:0.4);
                  \draw (4.8,-1.0)--(4.8,1.5);
                  \draw (4.9,-1.0)--(4.9,1.5);
                  \draw[dotted] (4.95,0.6)--(5.25,0.6);
                  \draw[dotted] (4.95,-0.7)--(5.25,-0.7);
\draw[decorate,decoration={brace, mirror}] (4.8,-1.1) -- (5.3,-1.1) node[midway,below]{$l$};
                  \draw (5.3,-1.0)--(5.3,1.5);
\draw[color=white, very thick]  (0,-0.5)--(5.5,-0.5) ;
          \end{tikzpicture}
\caption{$\sigma(c_{k,l}^\epsilon)$}
\label{GT-action on c}
\end{center}
\end{figure}
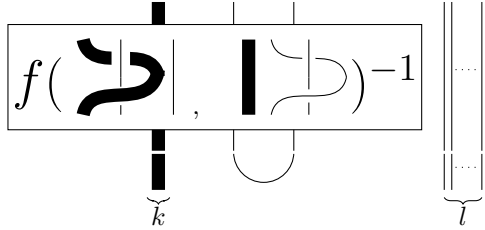
\end{enumerate}

The symbol $\Lambda_f$ represents the profinite tangle
$$
a_{0,0}^{\opannihilation}\cdot a_{0,2}^{\opannihilation\downarrow\uparrow}
\cdot (e_1^{\downarrow}\otimes f)\cdot c_{1,1}^{\downarrow\opcreation\uparrow}\cdot c_{0,0}^{\creation}
$$
(cf. Figure \ref{error term}).
\begin{figure}[h]
\begin{center}
         \begin{tikzpicture}
                      \draw[<-] (1.0,-0.2)--(1.0,1.2);
                       \draw (1.0,0.4) node{$\downarrow$};
                     \draw[->] (1.0,-0.2)  arc (180:360:0.6);
                      \draw (1.2,0.5) rectangle (2.4,0.9);
                     \draw (1.8,0.7) node{$f$};
                     \draw[->] (1.4,0.2)--(1.4,0.5);
                    \draw[->] (1.4,0.9)--(1.4,1.2);
                     \draw[<-] (1.4,0.2)  arc (180:360:0.2);
                     \draw[->] (1.4,1.2)  arc (0:180:0.2);
                     \draw[<-] (1.8,1.6)  arc (180:0:0.2);
                     \draw[<-] (1.8,0.2)--(1.8,0.5);
                     \draw[<-] (1.8,0.9)--(1.8,1.6);
                      \draw[->] (2.2,-0.2)--(2.2,0.5);
                      \draw[->] (2.2,0.9)--(2.2,1.6);
                       \draw (2.2,0) node{$\uparrow$};
                      \draw (2.2,1) node{$\uparrow$};
\draw[color=white, very thick] (0.9,-0.2)--(2.3,-0.2) (0.9,0.2)--(2.3,0.2) (0.9,1.2)--(2.3,1.2) (0.9,1.6)--(2.3,1.6) ;
         \end{tikzpicture}
\caption{$\Lambda_f$}
\label{error term}
\end{center}
\end{figure}

The symbol $\alpha(r)$ (resp. $\alpha(s)$) means
the number of annihilations;
the cardinality of the set
$\{j|\gamma_{i,j}\in A\}$ for $i=1$ (resp. $i=2$)
and $(\Lambda_f)^{\sharp\alpha(r)}$ (resp. $(\Lambda_f)^{\sharp\alpha(s)}$)
means the $\alpha(r)$-th (resp. the $\alpha(s)$-th ) power  of $\Lambda_f$ with respect to $\sharp$.
Particularly we have
\begin{equation}\label{oriented circle and lambda}
\sigma(\orientedcircle)
\sharp{\Lambda_f}={\orientedcircle} \ 
\in \mathrm{Frac}\widehat{\mathcal K} 
\end{equation}

\begin{prop}\label{knots to knots proposition}
Let $\sigma=(\lambda,f)\in\widehat{GT}$.
\begin{enumerate}
\item If  $r=\gamma_{1,m}\cdots\gamma_{1,2}\cdot\gamma_{1,1}$ 
($\gamma_{1,j}$: a profinite fundamental tangle) is a profinite knot, then
$\{\sigma(\gamma_{1,m})\cdots\sigma(\gamma_{1,2})\cdot\sigma(\gamma_{1,1})\}$
is again a profinite knot.
\item The profinite tangle $\Lambda_f$ (Figure \ref{error term}) is a profinite knot.
\end{enumerate}
\end{prop}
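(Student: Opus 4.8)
The plan is to decide whether each tangle in question is a profinite knot purely at the level of its skeleton ${\Bbb S}$, since by Definition \ref{definition of profinite knot} being a profinite knot means being a profinite link (empty source and target) whose skeleton has a single connected component. Thus it suffices to keep track of sources, targets and permutations.

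For (1), I would first show that for every fundamental profinite tangle $\gamma$ the image $\sigma(\gamma)$ has the same source, the same target, and the same skeleton as $\gamma$. Two inputs are needed. First, $f$ lies in the topological commutator of $\widehat{F}_2$ (cf.\ Lemma \ref{projection of f}), so each braid factor $f_{1\cdots k,k+1,k+2}$ entering the $a$- and $c$-cases is a \emph{pure} braid; prepending or appending a pure braid changes neither the boundary nor the connected-component structure, so ${\Bbb S}(\sigma(a_{k,l}^\epsilon))={\Bbb S}(a_{k,l}^\epsilon)$ and ${\Bbb S}(\sigma(c_{k,l}^\epsilon))={\Bbb S}(c_{k,l}^\epsilon)$. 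Second, since $\lambda\in\widehat{\mathbb Z}^\times$ its image in ${\mathbb Z}/2$ is $1$, so $\lambda=1+2\mu$ with $\mu\in\widehat{\mathbb Z}$ and $\sigma_i^\lambda=\sigma_i\cdot(\sigma_i^2)^\mu\equiv\sigma_i\pmod{\widehat P_n}$; together with the pure conjugating factors of Theorem \ref{GT-action theorem on braids} this shows that the $\widehat{GT}$-action descends to the identity on $\widehat{B}_n/\widehat{P}_n={\frak S}_n$, whence $\sigma(b_n^\epsilon)=(\sigma(b_n),\epsilon)$ carries the same permutation, hence the same skeleton, as $b_n^\epsilon$.

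Granting this, the sequence $\sigma(\gamma_{1,m})\cdots\sigma(\gamma_{1,1})$ is again consistent, because source and target are preserved factor by factor, and its skeleton is graph-isomorphic to ${\Bbb S}(r)$. As $r$ is a knot, ${\Bbb S}(r)$ is a single circle, so the image has empty boundary and a single connected component and is therefore a profinite knot. For (2), I would compute ${\Bbb S}(\Lambda_f)$ directly. Again $f$ is pure, so its skeleton is trivial and, after deleting it by (T1), ${\Bbb S}(\Lambda_f)$ coincides with the skeleton of $a_{0,0}^{\opannihilation}\cdot a_{0,2}^{\opannihilation\downarrow\uparrow}\cdot c_{1,1}^{\downarrow\opcreation\uparrow}\cdot c_{0,0}^{\creation}$. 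Labelling the four strand-segments in the middle by $1,2,3,4$ from the left, the bottom creations glue $1$ to $4$ and $2$ to $3$, while the top annihilations glue $1$ to $2$ and $3$ to $4$; following the edges $\{1,4\},\{3,4\},\{2,3\},\{1,2\}$ produces the single cycle $1-4-3-2-1$. Hence ${\Bbb S}(\Lambda_f)$ is one circle and $\Lambda_f$ is a profinite knot.

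The conceptual heart is the single observation in (1) that the $\widehat{GT}$-action preserves the permutation image, which rests on $\lambda$ being odd and $f$ being a commutator; everything else is formal. The step most likely to cause slips is the position bookkeeping in (2) — correctly reading off, from the superscripts of each $a$ and $c$ symbol, which two strands are capped or cupped — so I would carry that out with explicit care.
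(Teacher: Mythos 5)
Your proof is correct and follows essentially the same route as the paper: both arguments reduce everything to the skeleton, using that the $\widehat{GT}$-action is trivial on the symmetric-group quotient of $\widehat{B}_n$ (so braid factors keep their skeleton) and that the inserted factors $f_{1\cdots k,k+1,k+2}$ are pure braids (so the $a$- and $c$-cases only acquire straight bars), with (2) settled by the same purity observation plus the explicit one-cycle count you carry out. One minor point: the purity of $f_{1\cdots k,k+1,k+2}$ does not need $f$ to lie in the commutator of $\widehat{F}_2$ --- it follows already from $f\in\widehat{F}_2\subset\widehat{P}_3$, since the homomorphism sends $x,y$ to pure braids; the commutator property is what Lemma \ref{projection of f} uses for a different purpose.
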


\begin{proof}
(1).
When $\gamma=b_{n}^\epsilon$,
since the projection 
$$
p:\widehat{B}_n\to\frak S_n
$$
is $\widehat{GT}$-equivalent
(the action on $\frak S_n$ means the trivial action),
the skeleton never change, i.e.
${\Bbb S}(\sigma(\gamma))={\Bbb S}(\gamma)$.

When $\gamma=a_{k,l}^\epsilon$ (resp. $c_{k,l}^\epsilon$),
the skeleton ${\Bbb S}(\sigma(\gamma))$ is obtained by connecting  $k+l+2$
straight bars
on the top  (resp. bottom) of ${\Bbb S}(\gamma)$
because $f\in \widehat{F}_2\subset\widehat{P}_3$.

Therefore
$\{\sigma(\gamma_{1,m})\cdots\sigma(\gamma_{1,2})\cdot\sigma(\gamma_{1,1})\}$
is again a profinite knot.

(2). By Figure \ref{error term},
it is easy because $f\in \widehat{F}_2\subset\widehat{P}_3$.
\end{proof}


\begin{thm}\label{GT-action theorem}
The equation \eqref{GT-action on GK}  determines a well-defined 
$\widehat{GT}$-action on $\mathrm{Frac}\widehat{\mathcal K}$.
Namely,

(1).
$\sigma(\frac{r_1}{s_1})=\sigma(\frac{r_2}{s_2})\in \mathrm{Frac}\widehat{\mathcal K}$
if $r_1\sim r_2$ and $s_1\sim s_2$, i.e. 
if $r_1=r_2$ and $s_1=s_2$ in $\widehat{\mathcal K}$.

(2).
$\sigma(\frac{r_1}{s_1})=\sigma(\frac{r_2}{s_2})\in \mathrm{Frac}\widehat{\mathcal K}$
if $(r_1,s_1)\approx (r_2,s_2)$, i.e.
if $\frac{r_1}{s_1}=\frac{r_2}{s_2}$ in  $\mathrm{Frac}\widehat{\mathcal K}$.

(3).
$\sigma_1(\sigma_2(x))=(\sigma_1\circ\sigma_2)(x)$
for any $\sigma_1,\sigma_2\in \widehat{GT}$ and
$x\in \mathrm{Frac}\widehat{\mathcal K}$.\\
Furthermore $\mathrm{Frac}\widehat{\mathcal K}$ forms a topological
$\widehat{GT}$-module. Namely,

(4). the action is compatible with the group structure, i.e.
$$
\sigma(e)=e, \qquad
\sigma(x\sharp y)=\sigma(x)\sharp\sigma(y), \qquad
\sigma(x^{-1})=\sigma(x)^{-1}
$$
for any $\sigma\in \widehat{GT}$ and
$x,y\in \mathrm{Frac}\widehat{\mathcal K}$.

(5). the action is continuous.
\end{thm}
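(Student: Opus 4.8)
The plan is to establish the five assertions in the order (1), (4), (2), (3), (5), since the last four all rest on the isotopy invariance proved in (1) together with the multiplicativity of (4).

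The core is (1). By Definition~\ref{definition of isotopy} it suffices to check that if two profinite knots differ by a single application of one of the moves (T1)--(T6), then the resulting elements of $G\widehat{\mathcal K}$ coincide; the argument for $s_1,s_2$ is identical to that for $r_1,r_2$. I would proceed move by move. Moves (T1) and (T2) are immediate, because $\sigma$ acts on each $\widehat{B}_n$ as a topological automorphism fixing the trivial braid $e_n$, so neither the numerator $\{\sigma(\gamma_m)\cdots\sigma(\gamma_1)\}$ (a profinite knot by Proposition~\ref{knots to knots proposition}) nor the annihilation count $\alpha$ is affected. For (T3) and (T4) the numerator is merely rearranged, and the interaction of $\sigma$ with the juxtaposition and evaluation operations is exactly the content of Proposition~\ref{action-change-basepoint-braids} and Proposition~\ref{action-evaluation} respectively; the $f$-factors produced there are conjugations that vanish upon passing to $G\widehat{\mathcal K}$, while $\alpha$ again stays fixed.

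The two delicate moves are (T5) and (T6). In (T5) a creation and an annihilation cancel, so $\alpha$ drops by one; after applying $\sigma$ the $f$-factors attached to $\sigma(a_{\ast}^{\epsilon'})$ and to $\sigma(c_{\ast}^{\epsilon})$ do not cancel completely, and I would show that the residual factor reassembles, once the remaining strands close up, into a single connect-summed copy of $\Lambda_f$ (Figure~\ref{error term}) in the numerator. This extra $\sharp\Lambda_f$ is precisely compensated by the loss of one power of $\Lambda_f$ in the denominator $(\Lambda_f)^{\sharp\alpha}$, so the two fractions agree in $G\widehat{\mathcal K}$; this matching is the very reason $\Lambda_f$ and the exponent $\alpha$ are built into \eqref{GT-action on GK}. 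In (T6) the count $\alpha$ is unchanged; here I would use $\sigma(\sigma_{k+1})=f_{1\cdots k,k+1,k+2}^{-1}\sigma_{k+1}^{\lambda}f_{1\cdots k,k+1,k+2}$ from Theorem~\ref{GT-action theorem on braids}, so that the conjugating $f$ cancels against the $f^{-1}$ carried by $\sigma(c_{k,l}^{\epsilon})$, leaving a bare twist $\sigma_{k+1}^{\lambda c}$ in front of the creation; a second application of (T6), now with parameter $\lambda c\in\widehat{\mathbb Z}$, collapses this to a creation with the correct orientation. The fact that (T6) is posited for every $c\in\widehat{\mathbb Z}$, so that $\lambda c$ is again admissible, is exactly what lets the action descend, and I expect this interplay of $\lambda$ with the first Reidemeister move to be the principal obstacle of the whole proof.

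Granting (1), the remaining parts are comparatively formal. For (4) one notes $\alpha(r_1\sharp r_2)=\alpha(r_1)+\alpha(r_2)$ and that numerators multiply under $\sharp$, giving $\sigma(x\sharp y)=\sigma(x)\sharp\sigma(y)$; the identity $\sigma(e)=e$ is immediate from the definition and \eqref{oriented circle and lambda}, and $\sigma(x^{-1})=\sigma(x)^{-1}$ then follows. Part (2) is deduced from (1) and (4): from $(r_1,s_1)\approx(r_2,s_2)$, i.e.\ $r_1\sharp s_2\sharp t=r_2\sharp s_1\sharp t$ in $\widehat{\mathcal K}$ for some $t$, applying (1) to this isotopy and (4) to distribute $\sigma$ over $\sharp$ yields the claim. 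For (3) I would carry out a bookkeeping check that the two successive actions match the group law \eqref{product of GT}: on the $\widehat{B}$-symbols this is the composition already recorded in Theorem~\ref{GT-action theorem on braids}, and on the $A$- and $C$-symbols one verifies that the two layers of $f$-factors together with the two copies of $\Lambda_f$ recombine as dictated by $f_2\cdot f_1(x^{\lambda_2},f_2^{-1}y^{\lambda_2}f_2)$. Finally, for (5), the $\widehat{GT}$-action on each $\widehat{B}_n$ is continuous by Theorem~\ref{GT-action theorem on braids} and the operations $ev$, $\otimes$, $\sharp$ are continuous, so $\sigma$ acts continuously on the sequence space $\widehat{\mathcal K'}^{\rm seq}$; since $G\widehat{\mathcal K}$ carries the quotient topology of $\widehat{\mathcal K}^2$, which is in turn a quotient of that sequence space, continuity descends exactly as in the proof of Theorem~\ref{topological monoid}.
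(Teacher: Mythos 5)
Your overall architecture coincides with the paper's: a move-by-move verification of (T1)--(T6) for part (1), with the compensation between $\Lambda_f$ and the annihilation count $\alpha$ carrying (T5) and the oddness of $\lambda$ carrying (T6); the formal deduction of (2)--(4) from a knot-level connected-sum formula; and (5) via the sequence space $\widehat{\mathcal K'}^{\rm seq}$ and quotient topologies. However, there are two concrete flaws. First, your count $\alpha(r_1\sharp r_2)=\alpha(r_1)+\alpha(r_2)$ is false: the connected sum \eqref{connected sum} deletes the initial creation of one factor \emph{and} the terminal annihilation $\beta_n=\opannihilation$ of the other, so in fact $\alpha(r_1\sharp r_2)=\alpha(r_1)+\alpha(r_2)-1$. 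Consequently the correct knot-level identity is not plain multiplicativity but $\sigma(k_1\sharp k_2)=\sigma(k_1)\sharp\sigma(k_2)\sharp\Lambda_f$ (the paper's \eqref{sigma-connected sum}). Your conclusions in (2) and (4) happen to survive, because the extra copies of $\Lambda_f$ enter the numerator and denominator of the fraction symmetrically and cancel in $G\widehat{\mathcal K}$; but the derivation as written is wrong, and in part (3) this is not harmless: there one must establish $\sigma_2(\Lambda_{f_1})=\Lambda_{f_2\circ f_1}\sharp\,\sigma_2(\orientedcircle)^{\sharp 2}$ (itself a pentagon/(T6)/Lemma \ref{projection of f} computation you do not indicate) and then match the powers of $\Lambda_{f_2}$ and $\Lambda_{f_3}$ exactly against the formula \eqref{GT-action on GK}; with an off-by-one in $\alpha$ the composition law simply does not close. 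So your description of (3) as "bookkeeping" underestimates the one genuinely computational step outside part (1).

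Second, for (T3), (T4) and (T6) you invoke only the braid-level statements, Propositions \ref{action-change-basepoint-braids} and \ref{action-evaluation}. But the blocks being slid past braids in these moves include annihilations and creations, and for those one needs the analogous conjugation formula for the $A$- and $C$-symbols, namely $\sigma(a_{m_1+k,l+m_2})=f_{[m_1],[k+l],[m_2]}^{-1}\cdot(e_{m_1}\otimes\sigma(a_{k,l})\otimes e_{m_2})\cdot f_{[m_1],[k+l+2],[m_2]}$ and its creation counterpart — the paper's Proposition \ref{action-change-basepoint-knots}, whose proof is a separate pentagon-equation argument. Without this ingredient the verification of those moves for $A$- and $C$-type fundamental tangles is incomplete. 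Finally, a small logical caution on your ordering (1),(4),(2): statement (4) concerns the action on $G\widehat{\mathcal K}$, which is only defined once (2) holds, so what you must prove before (2) is the representative-level identity $\sigma(k_1\sharp k_2)=\sigma(k_1)\sharp\sigma(k_2)\sharp\Lambda_f$ for pairs of knots — which is exactly how the paper organizes it.
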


\begin{proof}
(1).
Firstly we prove that
$\sigma\left( (r_1,s)\right)=\sigma\left( (r_2,s)\right)\in \widehat{\mathcal K}^2$
when $r_1$ is isotopic to $r_2$
for $\sigma=(\lambda,f)\in\widehat{GT}$.
We may further assume that $r_1$ is obtained from $r_2$ by a single operation of 
one of the moves (T1)-(T6).
\begin{itemize}
\item
If it is (T1), it is clear.
\item
If it is (T2), it is immediate because $\sigma(b_2)\cdot \sigma(b_1)=\sigma(b_2b_1)$
holds for $b_1,b_2\in\widehat{B}_n$.
\item
If it is (T3),
we may further assume that its $T_1$ and $T_2$ in (T3) are both fundamental profinite tangles.
Then
by Proposition \ref{action-change-basepoint-braids}
and Proposition \ref{action-change-basepoint-knots}
\begin{align*}
\sigma(e^{t(T_1)}_{n_1}\otimes T_2)&\cdot \sigma(T_1\otimes e^{s(T_2)}_{m_2})\\
&=
f_{[n_1],[n_2],[0]}^{-1, (t(T_1),t(T_2))}\cdot
(e^{t(T_1)}_{n_1}\otimes \sigma(T_2))\cdot 
f_{[n_1],[m_2],[0]}^{(t(T_1),s(T_2))}\cdot 
(\sigma(T_1)\otimes e^{s(T_2)}_{m_2}),\\
\intertext{by (T4)}
&=
f_{[n_1],[n_2],[0]}^{-1, (t(T_1),t(T_2))}\cdot
(e^{t(T_1)}_{n_1}\otimes \sigma(T_2))\cdot 
(\sigma(T_1)\otimes e^{s(T_2)}_{m_2})\cdot
f_{[m_1],[m_2],[0]}^{(s(T_1),s(T_2))} \\
&=
f_{[n_1],[n_2],[0]}^{-1, (t(T_1),t(T_2))}\cdot
(\sigma(T_1)\otimes e^{t(T_2)}_{n_2})\cdot
(e^{s(T_1)}_{m_1}\otimes \sigma(T_2))\cdot
f_{[m_1],[m_2],[0]}^{(s(T_1),s(T_2))} \\
&=
(\sigma(T_1)\otimes e^{t(T_2)}_{n_2})\cdot
f_{[m_1],[n_2],[0]}^{-1, (s(T_1),t(T_2))}\cdot
(e^{s(T_1)}_{m_1}\otimes \sigma(T_2))\cdot
f_{[m_1],[m_2],[0]}^{(s(T_1),s(T_2))} \\
&=\sigma(T_1\otimes e^{t(T_2)}_{n_2})\cdot \sigma(e^{s(T_1)}_{m_1}\otimes T_2).
\end{align*}
Whence (T3) is preserved by the $\widehat{GT}$-action.

\item
If it is (T4), we may assume that $T$ in (T4) is a fundamental profinite tangle.
Then by Proposition \ref{action-evaluation}
and Proposition \ref{action-change-basepoint-knots}
\begin{align*}
\sigma(\mathrm{ev}_{k,t(T)}&(b_l^\epsilon))\cdot
\sigma(e_{k-1}^{s_1}\otimes T\otimes e_{l-k}^{s_2})\\
=&
f_{[k'-1],[n],[l-k']}^{-1, (t_1,t(T),t_2)}\cdot
\mathrm{ev}_{k,t(T)}(\sigma(b_l^\epsilon))\cdot
f_{[k-1],[n],[l-k]}^{(s_1,t(T),s_2)} \\
& \quad\cdot
f_{[k-1],[n],[l-k]}^{-1,(s_1,t(T),s_2)}\cdot
(e_{k-1}^{s_1}\otimes \sigma(T)\otimes e_{l-k}^{s_2})\cdot
f_{[k-1],[m],[l-k]}^{(s_1,s(T),s_2)}
\\
=& f_{[k'-1],[n],[l-k']}^{-1,(t_1,t(T),t_2)}\cdot
\mathrm{ev}_{k,t(T)}(\sigma(b_l^\epsilon))\cdot
(e_{k-1}^{s_1}\otimes \sigma(T)\otimes e_{l-k}^{s_2})\cdot
f_{[k-1],[m],[l-k]}^{(s_1,s(T),s_2)},\\
\intertext{by (T4)}
=& f_{[k'-1],[n],[l-k']}^{-1,(t_1,t(T),t_2)}\cdot
(e_{k'-1}^{t_1}\otimes \sigma(T)\otimes e_{l-k'}^{t_2})\cdot
\mathrm{ev}^{k',s(T)}(\sigma(b_l^\epsilon))\cdot
f_{[k-1],[m],[l-k]}^{(s_1,s(T),s_2)},\\
=&f_{[k'-1],[n],[l-k']}^{-1,(t_1,t(T),t_2)}\cdot
(e_{k'-1}^{t_1}\otimes \sigma(T)\otimes e_{l-k'}^{t_2})\cdot
f_{[k'-1],[m],[l-k']}^{(t_1,s(T),t_2)} \\
& \quad\cdot
f_{[k'-1],[m],[l-k']}^{-1,(t_1,s(T),t_2)}\cdot
\mathrm{ev}^{k',s(T)}(\sigma(b_l^\epsilon)) \cdot
f_{[k-1],[m],[l-k]}^{(s_1,s(T),s_2)} \\
=&\sigma(e_{k'-1}^{t_1}\otimes T\otimes e_{l-k'}^{t_2})\cdot \sigma(\mathrm{ev}^{k',s(T)}(b_l^\epsilon)). 
\end{align*}
Whence (T4) is preserved by the action.

\item
If it is (T5), we have
\begin{align*}
\sigma(a_{k+1,l-1}^{\epsilon'})&\cdot \sigma(c_{k,l}^{\epsilon})
=a_{k+1,l-1}^{\epsilon'}\cdot 
f_{1\cdots k+1, k+2,k+3}^{s(c_{k,l}^{\epsilon})}\cdot
f_{1\cdots k, k+1,k+2}^{-1, s(c_{k,l}^{\epsilon})}\cdot
c_{k,l}^{\epsilon}. \\
\intertext{
By the pentagon equation \eqref{pentagon equation}
}
=&a_{k+1,l-1}^{\epsilon'}\cdot 
f^{-1,s(c_{k,l}^{\epsilon})}_{1\cdots k, k+1, k+2 \ k+3}\cdot
f_{k+1,k+2,k+3}^{s(c_{k,l}^{\epsilon})}\cdot
f_{1\cdots k, k+1 \ k+2, k+3}^{s(c_{k,l}^{\epsilon})}
\cdot c_{k,l}^{\epsilon}, \\
\intertext{by (T4) and Lemma \ref{projection of f} }
=&a_{k+1,l-1}^{\epsilon'}\cdot  f_{k+1,k+2,k+3}^{s(c_{k,l}^{\epsilon})}\cdot c_{k,l}^{\epsilon}. 
\end{align*}
It looks that (T5) is not preserved by the $\widehat{GT}$-action.
But actually it means that 
$\sigma(r_1)$ is obtained by an insertion of $f_{k+1,k+2,k+3}^{s(c_{k,l}^{\epsilon})}$
between $a_{k+1,l-1}^{\epsilon'}$ and $c_{k,l}^{\epsilon}$ in $\sigma(r_2$).
Thus 
$\sigma(r_1)=\sigma(r_2)\sharp\Lambda_f$.
Because $\alpha(r_1)=\alpha(r_2)+1$, we may say that
(T5) is compatible with the action
by \eqref{GT-action on GK}.
The second equality can be proved in the same way.

\item
If it is (T6), again by Proposition \ref{action-change-basepoint-braids}
and Proposition \ref{action-change-basepoint-knots},
\begin{align*}
\sigma(a_{k,l}^{\epsilon})&\cdot \sigma((\sigma_{k+1}^{\epsilon'})^{c})\\
=&a_{k,l}^{\epsilon}\cdot f_{1\cdots k, k+1,k+2}^{s(a_{k,l}^{\epsilon})}\cdot
 f_{1\cdots k, k+1,k+2}^{-1, s(a_{k,l}^{\epsilon})}\cdot
(\sigma_{k+1}^{\epsilon'})^{\lambda c}\cdot
f_{1\cdots k, k+1,k+2}^{s(a_{k,l}^{\bar\epsilon})} \\
=&a_{k,l}^{\epsilon}\cdot 
(\sigma_{k+1}^{\epsilon'})^{\lambda c}\cdot
f_{1\cdots k, k+1,k+2}^{s(a_{k,l}^{\bar\epsilon})}, \\
\intertext{by $\lambda\equiv 1\pmod 2$ and (T6)}
=&a_{k,l}^{\bar\epsilon} \cdot
f_{1\cdots k, k+1,k+2}^{s(a_{k,l}^{\bar\epsilon})}
=\sigma(a_{k,l}^{\bar\epsilon}). 
\end{align*}

The case for $c_{k,l}^\epsilon$ can be checked in the same way.
\end{itemize}

Secondly we prove that
$\sigma\left( (r,s_1)\right)=\sigma\left( (r,s_2)\right)\in \widehat{\mathcal K}^2$
when $s_1$ is isotopic to $s_2$. 
But it can be proved in a similar way to the above.
Hence our claim of (1) is obtained.

(2).
By definition,
$$r_1\sharp s_2\sharp t=r_2\sharp s_1\sharp t$$
in ${\mathcal K}$ for some profinite knot $t$.
By the definition of $\sharp$,
\begin{equation}
\sigma(k_1\sharp k_2)\sharp\sigma(\orientedcircle)
=\sigma(k_1)\sharp\sigma(k_2)
\end{equation}
equivalently
\begin{equation}\label{sigma-connected sum}
\sigma(k_1\sharp k_2)=\sigma(k_1)\sharp\sigma(k_2)\sharp\Lambda_f,
\end{equation}
holds in $\mathrm{Frac}\widehat{\mathcal K}$ for any profinite knot $k_1$ and $k_2$.
Therefore our claim is immediate because
$$
\sigma(r_1\sharp s_2\sharp t)=\sigma(r_1)\sharp\sigma(s_2)\sharp\sigma(t)\sharp
\Lambda_f\sharp\Lambda_f
$$
and
$$
\sigma(r_2\sharp s_1\sharp t)=\sigma(r_2)\sharp\sigma(s_1)\sharp\sigma(t)\sharp
\Lambda_f\sharp\Lambda_f.
$$

(3).
For $\sigma_1=(\lambda_1,f_1)$ and $\sigma_2=(\lambda_2,f_2)\in \widehat{GT}$,
put $\sigma_3=\sigma_2\circ\sigma_1\in \widehat{GT}$.
Hence, by \eqref{product of GT},
$\sigma_3=(\lambda_3,f_3)$ with
$\lambda_3=\lambda_2\lambda_1$ and
\begin{equation}\label{f3}
f_3=f_2\cdot \sigma_2(f_1)=
f_2\cdot f_1(x^{\lambda_2},f_2^{-1}y^{\lambda_2}f_2)
(=:f_2\circ f_1).
\end{equation}

Firstly we note that
$$
\sigma_3(\gamma_{i,j})=\sigma_2\left(\sigma_1(\gamma_{i,j})\right).
$$
When $\gamma_{i,j}=a_{k,l}^\epsilon$ or $c_{k,l}^\epsilon$, 
the equality is derived from \eqref{f3}.
When  $\gamma_{i,j}=b_{n}^\epsilon$, it is easy because of the
$\widehat{GT}$-action on $\widehat{B}_n$.

Secondly by definition we have
\begin{align*}
\sigma_2(\Lambda_{f_1})
&=
\{\sigma_2(a_{0,0})\cdot 
\sigma_2(a_{0,2})\cdot 
\sigma_2(e_1\otimes f_1)\cdot 
\sigma_2(c_{1,1})\cdot 
\sigma_2(c_{0,0})\}/\Lambda_{f_2}^{\sharp 2}. \\
\intertext{By Proposition \ref{action-change-basepoint-braids} and 
Proposition \ref{action-change-basepoint-knots} }
&=
\{a_{0,0}\cdot 
a_{0,2}\cdot
(f_2^{-1})_{1,2,3}\cdot
(f_2^{-1})_{1,23,4}\cdot
(e_1\otimes \sigma_2(f_1)) \\
&\qquad\qquad
\cdot (f_2)_{1,23,4}\cdot
(f_2)_{1,2,3}\cdot
(f_2^{-1})_{1,2,3}\cdot
c_{1,1}\cdot 
c_{0,0}\}/\Lambda_{f_2}^{\sharp 2} \\
&=
\{a_{0,0}\cdot 
a_{0,2}\cdot
(f_2^{-1})_{1,2,3}\cdot
(f_2^{-1})_{1,23,4}\cdot
(e_1\otimes \sigma_2(f_1)) \cdot 
(f_2)_{1,23,4}\cdot
c_{1,1}\cdot 
c_{0,0}\}/\Lambda_{f_2}^{\sharp 2}, \\
\intertext{by the pentagon equation \eqref{pentagon equation} }
&=
\{a_{0,0}\cdot 
a_{0,2}\cdot
(f_2^{-1})_{12,3,4}\cdot
(f_2^{-1})_{1,2,34}\cdot
(f_2)_{2,3,4}\cdot
(e_1\otimes \sigma_2(f_1)) \\
& \qquad
\cdot (f_2)_{1,23,4}\cdot
c_{1,1}\cdot 
c_{0,0}\}/\Lambda_{f_2}^{\sharp 2}, \\
\intertext{by a successive application of (T6) and Lemma \ref{projection of f}}
&=
\{a_{0,0}\cdot 
a_{0,2}\cdot
(f_2)_{2,3,4}\cdot
(e_1\otimes \sigma_2(f_1)) \cdot
c_{1,1}\cdot 
c_{0,0}\}/\Lambda_{f_2}^{\sharp 2} \\
&=
\{a_{0,0}\cdot 
a_{0,2}\cdot
(e_1\otimes f_2\cdot\sigma_2(f_1)) \cdot
c_{1,1}\cdot 
c_{0,0}\}/\Lambda_{f_2}^{\sharp 2}\\
&=\Lambda_{f_2\circ f_1}
\sharp\sigma_2(\orientedcircle)^{\sharp 2}.
\end{align*}
We note that in the above computation we omit the symbol $\epsilon$
of orientation.

Finally
\begin{align*}
\sigma_2\left(\sigma_1 \left(\frac{r}{s}\right)\right)
&=
\sigma_2\left(
\frac{
\{\sigma_1(\gamma_{1,m})\cdots\sigma_1(\gamma_{1,2})\cdot\sigma_1(\gamma_{1,1})\}
\sharp(\Lambda_{f_1})^{\sharp\alpha(s)}    }
{
\{\sigma_1(\gamma_{2,n})\cdots\sigma_1(\gamma_{2,2})\cdot\sigma_1(\gamma_{2,1})\}
\sharp(\Lambda_{f_1})^{\sharp\alpha(r)}  
}
\right) \\
&=
\frac{
\sigma_2(\{\sigma_1(\gamma_{1,m})\cdots\sigma_1(\gamma_{1,2})
\cdot\sigma_1(\gamma_{1,1})\})
\sharp\sigma_2(\Lambda_{f_1})^{\sharp\alpha(s)}
\sharp\Lambda_{f_2}^{\sharp\alpha(s)}
    }
{
\sigma_2(\{\sigma_1(\gamma_{2,n})\cdots\sigma_1(\gamma_{2,2})
\cdot\sigma_2(\gamma_{2,1})\})
\sharp\sigma_2(\Lambda_{f_1})^{\sharp\alpha(r)} 
\sharp\Lambda_{f_2}^{\sharp\alpha(r)}
} \\
&=
\frac{
\{\sigma_2(\sigma_1(\gamma_{1,m}))\cdots
\cdot\sigma_2(\sigma_1(\gamma_{1,1}))\}
\sharp\Lambda_{f_2}^{\sharp \alpha(s)}
\sharp(\Lambda_{f_3})^{\sharp\alpha(s)}
\sharp\sigma_2(\orientedcircle)^{\sharp 2\alpha(s)}
\sharp \Lambda_{f_2}^{\sharp\alpha(s)}
    }
{
\{\sigma_2(\sigma_1(\gamma_{2,n}))\cdots
\cdot\sigma_2(\sigma_1(\gamma_{2,1}))\}
\sharp\Lambda_{f_2}^{\sharp \alpha(r)}
\sharp(\Lambda_{f_3})^{\sharp\alpha(r)} 
\sharp\sigma_2(\orientedcircle)^{\sharp 2\alpha(r)}
\sharp \Lambda_{f_2}^{\sharp\alpha(r)}
} \\
&=
\frac{
\{\sigma_3(\gamma_{1,m})\cdots\sigma_3(\gamma_{1,2})
\cdot\sigma_3(\gamma_{1,1})\}
\sharp(\Lambda_{f_3})^{\sharp\alpha(s)}
    }
{
\{\sigma_3(\gamma_{2,n})\cdots\sigma_3(\gamma_{2,2})
\cdot\sigma_3(\gamma_{2,1})\}
\sharp(\Lambda_{f_3})^{\sharp\alpha(r)} 
} 
=\sigma_3 \left(\frac{r}{s}\right)
\end{align*}
by \eqref{oriented circle and lambda} and \eqref{sigma-connected sum}.

(4).
Let $x=r_1/s_1$ and $y=r_2/s_2$ with profinite knots $r_1,r_2,s_1,s_2$.
Then by  \eqref{sigma-connected sum} it is easy to see
\begin{align*}
\sigma(x\sharp y)&=
\sigma(\frac{r_1\sharp r_2}{s_1\sharp s_2})=
\frac{\sigma({r_1\sharp r_2})}{\sigma({s_1\sharp s_2})} 
=\frac{\sigma(r_1)\sharp\sigma(r_2)\sharp\Lambda_f}
{\sigma(s_1)\sharp\sigma(s_2)\sharp\Lambda_f} 
=\frac{\sigma(r_1)\sharp\sigma(r_2)}{\sigma(s_1)\sharp\sigma(s_2)} \\
&=\frac{\sigma(r_1)}{\sigma(s_1)}\sharp\frac{\sigma(r_2)}{\sigma(s_2)}
=\sigma(\frac{r_1}{s_1})\sharp\sigma(\frac{r_2}{s_2})
=\sigma(x)\sharp\sigma(y).
\end{align*}
The inverse is also easy to check.

(5).
We recall that
$\widehat{\mathcal K'}^{\rm seq}$
(cf. the proof of Theorem \ref{topological monoid}. (2))
is the set of  finite consistent sequences of profinite fundamental tangles $\gamma_n\cdots\gamma_2\cdot\gamma_1$
with  a single connected component and
with $(\gamma_n,\gamma_1)=(\opannihilation,\creation)$.
We define the map
$$
A:\widehat{GT}\times \widehat{\mathcal K'}^{\rm seq}\times\widehat{\mathcal K'}^{\rm seq}\to
\widehat{\mathcal K'}^{\rm seq}\times \widehat{\mathcal K'}^{\rm seq}
$$
by
$$
A(\sigma,r,s)=
\left(\{\sigma(\gamma_{1,m})\cdots\sigma(\gamma_{1,2})\cdot\sigma(\gamma_{1,1})\}
\sharp(\Lambda_f)^{\sharp\alpha(s)},
\{\sigma(\gamma_{2,n})\cdots\sigma(\gamma_{2,2})\cdot\sigma(\gamma_{2,1})\}
\sharp(\Lambda_f)^{\sharp\alpha(r)}\right) 
$$
for $\sigma=(\lambda,f)$,
$r=\gamma_{1,m}\cdots\gamma_{1,2}\cdot\gamma_{1,1}$
and $s=\gamma_{2,n}\cdots\gamma_{2,2}\cdot\gamma_{2,1}$
($\gamma_{i,j}$: profinite fundamental tangle).
We know that the $\widehat{GT}$-action on $\widehat{B}_n$
and the map $\widehat{GT}\to\widehat{B}_3$: $\sigma=(\lambda,f)\mapsto f$ are continuous,
so the map $A$ is continuous.
Since the diagram below is commutative
$$
\begin{CD}
\widehat{GT}\times \widehat{\mathcal K'}^{\rm seq}\times\widehat{\mathcal K'}^{\rm seq}
@>A>> \widehat{\mathcal K'}^{\rm seq}\times\widehat{\mathcal K'}^{\rm seq} \\
@VVV    @VVV \\
\widehat{GT} \times \mathrm{Frac}\widehat{\mathcal K}
@>>> \mathrm{Frac}\widehat{\mathcal K} \\
\end{CD}
$$
and the projection
$\widehat{\mathcal K'}^{\rm seq}\times\widehat{\mathcal K'}^{\rm seq} 
\twoheadrightarrow \mathrm{Frac}\widehat{\mathcal K}$
is continuous,
the lower map is also continuous.
\end{proof}

The following is required to prove Theorem \ref{GT-action theorem}.

\begin{prop}\label{action-change-basepoint-knots}
%

Let $k,l,m_1,m_2\geqslant 0$ and
$\epsilon_i\in\{\uparrow,\downarrow\}^{m_i}$ ($i=1,2$).
For any $\sigma\in\widehat{GT}$,
$$
\sigma(a_{m_1+k,l+m_2}^{\epsilon_1\epsilon\epsilon_2})
=
f_{[m_1],[k+l],[m_2]}^{-1,\epsilon_t}\cdot
\left(e_{m_1}^{\epsilon_1}\otimes
\sigma(a_{k,l}^{\epsilon})\otimes e_{m_2}^{\epsilon_2}\right)\cdot
f_{[m_1],[k+l+2],[m_2]}^{\epsilon_s}
$$
with $\epsilon_t=t(a_{m_1+k,l+m_2}^{\epsilon_1\epsilon\epsilon_2})$ and
$\epsilon_s=s(a_{m_1+k,l+m_2}^{\epsilon_1\epsilon\epsilon_2})$.
And
$$
\sigma(c_{m_1+k,l+m_2}^{\epsilon_1\epsilon\epsilon_2})
=f_{[m_1],[k+l+2],[m_2]}^{-1, \epsilon_t}
\cdot\left(e_{m_1}^{\epsilon_1}\otimes
\sigma(c_{k,l}^{\epsilon})\otimes e_{m_2}^{\epsilon_2}\right)\cdot
f_{[m_1],[k+l],[m_2]}^{\epsilon_s}
$$
with $\epsilon_t=t(a_{m_1+k,l+m_2}^{\epsilon_1\epsilon\epsilon_2})$ and
$\epsilon_s=s(a_{m_1+k,l+m_2}^{\epsilon_1\epsilon\epsilon_2})$.
\end{prop}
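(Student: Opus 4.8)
The plan is to reduce the statement to the defining formulas of Definition \ref{GT-action on profinite knots} together with the composition law $a_{m_1+k,l+m_2}^{\epsilon_1\epsilon\epsilon_2}=e_{m_1}^{\epsilon_1}\otimes a_{k,l}^{\epsilon}\otimes e_{m_2}^{\epsilon_2}$ recorded in Notation \ref{various notations}, and then to a single identity among profinite braids attached to one fixed annihilation, which is finally dispatched by the pentagon equation \eqref{pentagon equation} through the braid--tangle move (T4). This is exactly the annihilation/creation analogue of Proposition \ref{action-change-basepoint-braids}, and its proof is meant to follow the same strategy; the orientation superscripts $\epsilon_s,\epsilon_t$ are determined by source and target and are merely carried along, so I suppress them below.

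First I would treat the annihilation. By Definition \ref{GT-action on profinite knots}, setting $a:=a_{m_1+k,l+m_2}^{\epsilon_1\epsilon\epsilon_2}$, $g:=f_{1\cdots m_1+k,\,m_1+k+1,\,m_1+k+2}$ and $\widetilde g:=f_{m_1+1\cdots m_1+k,\,m_1+k+1,\,m_1+k+2}$, the left-hand side is $\sigma(a)=a\cdot g$. For the right-hand side, unwinding $\sigma(a_{k,l}^{\epsilon})=a_{k,l}^{\epsilon}\cdot f_{1\cdots k,k+1,k+2}$ and placing trivial braids on both sides gives $e_{m_1}^{\epsilon_1}\otimes\sigma(a_{k,l}^{\epsilon})\otimes e_{m_2}^{\epsilon_2}=a\cdot\widetilde g$, since $e_{m_1}\otimes a_{k,l}^{\epsilon}\otimes e_{m_2}=a$ and tensoring shifts the bracket $f_{1\cdots k,k+1,k+2}$ by $m_1$. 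Hence the whole claim for $a$ reduces to the single identity
\[
a\cdot g=f_{[m_1],[k+l],[m_2]}^{-1}\cdot a\cdot\widetilde g\cdot f_{[m_1],[k+l+2],[m_2]} .
\]

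The core of the argument is to establish this identity by commuting the bracket product $f_{[m_1],[k+l],[m_2]}$, defined in \eqref{f[]} as the ordered product of the factors $f_{1\cdots m_1,\,m_1+1\cdots m_1+j,\,m_1+j+1}$, downward through the annihilation $a$ by repeated use of (T4) (here applied with $T$ an elementary annihilation, whose $ev$-maps delete on the target side and double on the source side). I would organize the computation by the position of the last index $m_1+j+1$ relative to the annihilated pair $(m_1+k+1,m_1+k+2)$, exactly paralleling the three-case analysis in the proof of Proposition \ref{action-change-basepoint-braids}: factors lying strictly below the pair ($j<k$) pass freely and reassemble into $f_{[m_1],[k+l+2],[m_2]}$ and $\widetilde g$; factors lying strictly above pass freely after (T4); and the two straddling factors $j=k,k+1$ fuse, via the pentagon equation \eqref{pentagon equation} (conveniently packaged as Lemma \ref{aux-lem}) together with Lemma \ref{projection of f}, precisely into $g$. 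The main obstacle I anticipate is purely the bookkeeping — tracking how each factor's index blocks expand under the doubling as it crosses the cap, and which brackets are annihilated by Lemma \ref{projection of f} — rather than any genuinely new input, since every single step is an instance of an already established move.

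Finally the creation case follows by the same mechanism with the roles of source and target interchanged: since $\sigma(c_{k,l}^{\epsilon})=f_{1\cdots k,k+1,k+2}^{-1}\cdot c_{k,l}^{\epsilon}$ places the bracket on the target side, commuting $f_{[m_1],[k+l],[m_2]}$ upward through the creation and fusing by \eqref{pentagon equation} yields the stated formula with the indices $[k+l+2]$ and $[k+l]$ exchanged. Alternatively I would deduce it directly from the annihilation case by turning the diagram upside down through the transpose of Lemma \ref{transpose lemma}, which converts a creation into an annihilation, reverses the order of composition, and thereby swaps the two displayed formulas.
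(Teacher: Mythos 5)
Your proposal is correct and follows essentially the same route as the paper: reduce both sides via Definition \ref{GT-action on profinite knots} and the tensor identity to a single braid-level equation attached to the cap, then run a three-case analysis on the bracket factors of \eqref{f[]} relative to the annihilated pair, with the non-straddling factors commuting through by (T4) and the two straddling factors fused by the pentagon equation \eqref{pentagon equation} and killed/absorbed via (T4) and Lemma \ref{projection of f}. The only cosmetic difference is that the paper applies the pentagon equation directly rather than through Lemma \ref{aux-lem}, and it disposes of the creation case by symmetry just as you do.
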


Here
$f_{[m_1],[M],[m_2]}^\epsilon\in\widehat{B}$ means
$(f_{[m_1],[M],[m_2]},\epsilon)\in\widehat{B}_{m_1+M+m_2}\times
\{\uparrow,\downarrow\}^{m_1+M+m_2}$
with  (see also \eqref{f[]})
\begin{align*}
f_{[m_1],[M],[m_2]}:=&
f_{1\cdots m_1, m_1+1\cdots m_1+M-1,m_1+M}\cdot
f_{1\cdots m_1, m_1+1\cdots m_1+M-2,m_1+M-1}\cdot \\
& \qquad
\cdots
f_{1\cdots m_1, m_1+1,m_1+2}
\in\widehat{B}_{m_1+M+m_2}. 
\end{align*}

\begin{proof}
%
We prove the first equality.
To avoid the complexity,
we again omit the symbol of orientations. 
By Definition \ref{GT-action on profinite knots}.(1),

\begin{align}\label{GT-action on tensor for annihilation}
&f_{[m_1],[k+l],[m_2]}^{-1}\cdot
(e_{m_1}\otimes\sigma(a_{k,l})\otimes e_{m_2})
\cdot f_{[m_1],[k+l+2],[m_2]} \\
&\quad=f_{[m_1],[k+l],[m_2]}^{-1}\cdot
a_{m_1+k,l+m_2}\cdot  f_{m_1+1\cdots m_1+k, m_1+k+1, m_1+k+2}
\cdot f_{[m_1],[k+l+2],[m_2]}.
\notag
\end{align}

\begin{itemize}
\item
When $M\geqslant k+3$,
by (T4), \\
$f_{m_1+1\cdots m_1+k, m_1+k+1, m_1+k+2}$
commutes with
$f_{1\cdots m_1, m_1+1\cdots m_1+M-1,m_1+M}$
and
$$
a_{m_1+k,l+m_2}\cdot
f_{1\cdots m_1, m_1+1\cdots m_1+M-1,m_1+M}
=
f_{1\cdots m_1, m_1+1\cdots m_1+M-3,m_1+M-2}
\cdot a_{m_1+k,l+m_2}.
$$
Therefore
$$
\eqref{GT-action on tensor for annihilation}=
f_{[m_1],[k],[l+m_2]}^{-1}\cdot
a_{m_1+k,l+m_2}\cdot  f_{m_1+1\cdots m_1+k, m_1+k+1, m_1+k+2}
\cdot f_{[m_1],[k+2],[l+m_2]}.
$$

\item
When $M=k+1,k+2$, our calculation goes as follows.
\begin{align*}
\eqref{GT-action on tensor for annihilation}&
=f_{[m_1],[k],[l+m_2]}^{-1}\cdot
a_{m_1+k,l+m_2}\cdot  f_{m_1+1\cdots m_1+k, m_1+k+1, m_1+k+2}\cdot \\
&\quad f_{1\cdots m_1, m_1+1\cdots m_1+k+1,m_1+k+2}\cdot
f_{1\cdots m_1, m_1+1\cdots m_1+k,m_1+k+1}\cdot
f_{[m_1],[k],[l+m_2+2]}, \\
\intertext{by the pentagon equation \eqref{pentagon equation}, }
&=f_{[m_1],[k],[l+m_2]}^{-1}\cdot
a_{m_1+k,l+m_2}\cdot  f_{1\cdots m_1, m_1+1\cdots m_1+k, m_1+k+1\ m_1+k+2}\cdot \\
&\quad f_{1\cdots m_1+k, m_1+k+1, m_1+k+2}\cdot
f_{[m_1],[k],[l+m_2+2]}, \\
\intertext{by (T4) and Lemma \ref{projection of f},}
&=f_{[m_1],[k],[l+m_2]}^{-1}\cdot
a_{m_1+k,l+m_2}\cdot 
f_{1\cdots m_1+k, m_1+k+1, m_1+k+2}\cdot
f_{[m_1],[k],[l+m_2+2]}. 
\end{align*}

\item
When $M\leqslant k$, by (T4) again, \\
$f_{1\cdots m_1+k, m_1+k+1, m_1+k+2}$ commutes with
$f_{1\cdots m_1, m_1+1\cdots m_1+M-1,m_1+M}$ and
$$
a_{m_1+k,l+m_2}\cdot
f_{1\cdots m_1, m_1+1\cdots m_1+M-1,m_1+M}
=
f_{1\cdots m_1, m_1+1\cdots m_1+M-1,m_1+M}
\cdot a_{m_1+k,l+m_2}.
$$
Therefore
\begin{align*}
\eqref{GT-action on tensor for annihilation}
&=f_{[m_1],[k],[l+m_2]}^{-1}\cdot
a_{m_1+k,l+m_2}\cdot f_{[m_1],[k],[l+m_2+2]}\cdot
f_{1\cdots m_1+k, m_1+k+1, m_1+k+2}, \\
&=a_{m_1+k,l+m_2}\cdot 
f_{1\cdots m_1+k, m_1+k+1, m_1+k+2}
=\sigma(a_{m_1+k,l+m_2}).
\end{align*}
\end{itemize}
Hence we get the equality.

The second equality can be proved in the same way.
\end{proof}

Thus by Theorem \ref{GT-action theorem}, the $\widehat{GT}$-action
\begin{equation}\label{GT-representation on GK}
\widehat{GT}\to \ \mathrm{Aut} \mathrm{Frac}\widehat{\mathcal K}
\end{equation}
is established.

We note that due to the creation-annihilation relations (T5)
we have to pass to the fractional group of $\widehat{\mathcal K}$
to construct $\widehat{GT}$-action on profinite knots.

\begin{rem}
In \cite{KT98}, it is explained that the category $\widehat{\mathcal T}(R)$
($R$: a commutative ring containing $\mathbb Q$)
of pro-$R$-algebraic framed tangles forms an $R$-linear ribbon category.
From which they deduced  an action of  the proalgebraic 
Grothendieck-Teichm\"{u}ller group $GT({R})$  \cite{Dr} on the space $\widehat{R[\mathcal K]}$
of pro-$R$-algebraic knots by a categorical arguments.
In our forthcoming paper \cite{F13}, 
it will be shown that our action \eqref{GT-representation on GK}
is a lift of their action.
An analogous deduction of our Theorem \ref{GT-action theorem}
by such categorical arguments
might be expectable.
However a completely same argument does not seem to work.
We may have a \lq ribbon' category $\widehat{\mathcal T}$ of profinite 
(framed) tangles
but a difficulty here is that the inverse
$(\Lambda_f)^{-1}$  does not look to exist generally 
in $\widehat{\mathcal T}$,
unlike the case of $\widehat{\mathcal T}(R)$.
(That is  why we introduced the group $\mathrm{Frac}\widehat{\mathcal K}$
of the fraction of the monoid $\widehat{\mathcal K}$.
A technical care to remedy this might be required.)
\end{rem}

\begin{rem}\label{factorization remark}
The Kontsevich knot invariant \cite{Ko}  is obtained by
integrating a formal analogue of the Knizhnik-Zamolodchikov (KZ) equation.
Bar-Natan \cite{B}, Kassel-Turaev \cite{KT98},  Le-Murakami \cite{LM} and
Pieunikhin \cite{P} gave a combinatorial
reconstruction of the invariant by using  an associator \cite{Dr}.
An {\it associator} means a pair $(\mu,\varphi)$ with $\mu\in {R}^\times$
and an $R$-coefficient
non-commutative formal power series $\varphi$ with two variables
satisfying  specific  relations which are analogues of our pentagon and hexagon equations
\eqref{pentagon equation}-\eqref{hexagon equation}
(\cite{Dr}, see also \cite{F10}).
One of striking results
\footnote{
It might be amazing to know that Drinfeld indicated it in \cite{Dr90}.
}
in Le-Murakami \cite{LM} is the rationality of the Kontsevich invariant
which follows from 
that the resulting invariant is, in fact,
independent of $\varphi$ (but depends on $\mu$).
Stimulated to their result,
Kassel and Turaev \cite{KT98} showed that their $GT(R)$-action on  $\widehat{R[\mathcal K]}$
factors through the cyclotomic action (cf. Appendix of \cite{KT98}).
The algebra  $\widehat{R[\mathcal K]}$ looks  \lq too linear'.
In contrast, in our profinite setting, it is totally unclear if
our above $\widehat{GT}$-action \eqref{GT-representation on GK} 
would depend only on $\lambda\in\widehat{\mathbb Z}^\times$ of
$(\lambda,f)\in \widehat{GT}$,
namely, the action would factor through $\widehat{\mathbb Z}^\times$.
We remind that their proof of the above independency is 
based on certain linear algebraic arguments, 
actually a vanishing of a (Harrison) cohomology of a chain complex associated with chords.
But here in our situation,
we are working not on their proalgebraic  setting but on the profinite setting
where such vanishing result has not been established.
And we do not know whether such a factorization would occur in our setting or not.
\end{rem}

Finally we obtain a Galois representation on knots
as an important consequence of Theorem \ref{GT-action theorem}.

\begin{thm}\label{Galois representation theorem}
Fix an embedding from $\overline{\mathbb Q}$ in to $\mathbb C$.
The group $\mathrm{Frac}\widehat{\mathcal K} $ of profinite knots admits
a non-trivial topological $G_{\mathbb Q}$-module structure.
Namely there is a non-trivial continuous Galois representation
\begin{equation}\label{GQ-action on GK}
\rho_0:G_{\mathbb Q}\to
\mathrm{Aut}\ \mathrm{Frac}\widehat{\mathcal K} .
\end{equation}
\end{thm}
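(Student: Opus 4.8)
The plan is to obtain $\rho_0$ as the composition of the two principal constructions already at hand. By Theorem \ref{GQ to GT} the fixed embedding $\overline{\mathbb Q}\hookrightarrow\mathbb C$ yields a continuous embedding $G_{\mathbb Q}\hookrightarrow\widehat{GT}$, and by Theorem \ref{GT-action theorem} together with \eqref{GT-representation on GK} there is a continuous action $\widehat{GT}\to\mathrm{Aut}\,G\widehat{\mathcal K}$ compatible with the group structure. I would simply set $\rho_0$ to be the composite $G_{\mathbb Q}\hookrightarrow\widehat{GT}\to\mathrm{Aut}\,G\widehat{\mathcal K}$. Continuity is then automatic as a composite of continuous maps, and the topological $G_{\mathbb Q}$-module structure is inherited verbatim from parts (3), (4) and (5) of Theorem \ref{GT-action theorem}. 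Thus the only genuine content is the \emph{non-triviality} of $\rho_0$.

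For non-triviality I would test $\rho_0$ on the complex conjugation $\varsigma_0\in G_{\mathbb Q}$, which by Example \ref{complex conjugation example} corresponds to the pair $(\lambda,f)=(-1,1)\in\widehat{GT}$. The virtue of $f=1$ is that every correction factor in Definition \ref{GT-action on profinite knots} collapses: since $f_{1\cdots k,k+1,k+2}=f(x_{1\cdots k,k+1},x_{k+1,k+2})=1$, the action fixes each $a_{k,l}^\epsilon$ and each $c_{k,l}^\epsilon$, and by Theorem \ref{GT-action theorem on braids} it sends every braid generator $\sigma_i$ to $\sigma_i^{-1}$; moreover the error term $\Lambda_f=\Lambda_1$ is isotopic to the unit circle $\orientedcircle$, so no $\Lambda_f$-factors survive in \eqref{GT-action on GK}. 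Consequently $\rho_0(\varsigma_0)$ reverses every crossing of a profinite knot while leaving all creations and annihilations untouched; that is, it is precisely the mirror-image operation, so that $\rho_0(\varsigma_0)\bigl(h'(K)\bigr)=h'(\overline{K})$ for every knot $K$, with $\overline{K}$ its mirror image. This is the assertion recorded in Example \ref{mirror image}.

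It then remains to exhibit a single knot $K$ with $h'(K)\neq h'(\overline{K})$ in $G\widehat{\mathcal K}$. Here lies the difficulty, since the injectivity of $h$ is only conjectural (Conjecture \ref{injectivity on T}), so that $K\neq\overline{K}$ in $\mathcal K$ does not immediately give $h'(K)\neq h'(\overline{K})$. I would circumvent this by pushing forward along the continuous group homomorphism $\Phi\colon G\widehat{\mathcal K}\to\widehat{{\mathbb Q}_l[{\mathcal K}]}^\times$ built in the proof of Theorem \ref{theorem of GK}.(2) out of Propositions \ref{from profinite to pro-l} and \ref{from pro-l to proalgebraic}. As recorded there, the composite $\Phi\circ h'$ restricted to $\mathcal K$ coincides with the map $i$ of \eqref{from K to QK}, i.e.\ with the Kontsevich invariant under the identification of Remark \ref{Kontsevich factorization}. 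Taking $K$ to be a chiral knot — the trefoil $3_1$ will do — the Kontsevich invariant, indeed already a suitable finite-type component of it, separates $K$ from $\overline{K}$, whence $\Phi(h'(K))\neq\Phi(h'(\overline{K}))$ and therefore $h'(K)\neq h'(\overline{K})$. Thus $\rho_0(\varsigma_0)\neq\mathrm{id}$ and $\rho_0$ is non-trivial.

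The main obstacle is exactly this last step: transferring chirality from the topological category $\mathcal K$ into the profinite group $G\widehat{\mathcal K}$ \emph{without} invoking the open injectivity of $h$. The pro-$l$ and proalgebraic realizations, together with the non-triviality of the Kontsevich invariant, are what make the conclusion unconditional; everything else is formal bookkeeping of the constructions already established.
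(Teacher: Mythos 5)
Your proposal is correct and follows essentially the same route as the paper: compose the embedding $G_{\mathbb Q}\hookrightarrow\widehat{GT}$ of Theorem \ref{GQ to GT} with the action of Theorem \ref{GT-action theorem}, then verify non-triviality by letting the complex conjugation $\varsigma_0=(-1,1)$ act as the mirror-image operation and separating the trefoil from its mirror image via the map \eqref{GK to QlK} and the Kontsevich invariant. The only difference is expository: you spell out explicitly why the conjectural injectivity of $h$ is not needed, a point the paper handles implicitly by the same push-forward to $\widehat{{\mathbb Q}_l[{\mathcal K}]}^\times$.
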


\begin{proof}
By Theorem \ref{GT-action theorem},  it is straightforward because
in Theorem \ref{GQ to GT} we see that the absolute Galois group
$G_{\mathbb Q}$ 
is mapped to $\widehat{GT}$.
It is proved that $\mathrm{Frac}\widehat{\mathcal K}$ is nontrivial in Theorem \ref{theorem of GK}.
The non-triviality of $\rho_0$ is a consequence of the example below
because generally we have $K\neq \overline{K}$ in $\mathrm{Frac}\widehat{\mathcal K}$:
For instance, the left trefoil (the knot in the left below of Figure \ref{Example of connected sum})
and the right trefoil (its mirror image) are mapped to different elements
by the map \eqref{GK to QlK}
because they are known to be separated by the Kontsevich invariant.
(cf. Remark \ref{Kontsevich factorization}.)
\end{proof}


\begin{eg}\label{mirror image}
Especially when $\sigma\in G_{\mathbb Q}$ is equal to the complex conjugation morphism $\varsigma_0$,
it corresponds to $(\lambda,f)=(-1,1)\in\widehat{GT}$
whose action on $\widehat{B}_n$ is given by $\sigma_i\mapsto \sigma_i^{-1}$
($1\leqslant i\leqslant n-1$)
(cf. Example \ref{complex conjugation example}).
Whence the action of $\varsigma_0$ on  $\mathrm{Frac}\widehat{\mathcal K}$
is particularly described by
$$
\rho_0(\varsigma_0)\left(K\right)=\overline{K}
$$
for $K\in{\mathcal K}$ because $\Lambda_1=\orientedcircle$.
Here we denote the image of the 
map $h'$
\eqref{arithmetic realization map} on  an oriented knot $K$ 
by the same symbol $K$ and we mean the mirror image of the knot $K$ by $\overline{K}$.
The easiest example is that the right trefoil knot is mapped to the left trefoil knot
by the complex conjugation.
\end{eg}

There is another type of involution. 
For each profinite oriented knot $K$, we define $\mathrm{rev}(K)$ to be a profinite 
oriented knot which is obtained by reversing the orientation of $K$.
It is easy to see that it induces a well-defined involution
$$
\mathrm{rev}:\mathrm{Frac}\widehat{\mathcal K}\to \mathrm{Frac}\widehat{\mathcal K}
$$
which is an automorphism as a topological group.

\begin{prob}
Is $\mathrm{rev}$ defined over $\mathbb Q$?
Namely does 
\begin{equation}\label{rev-sigma=sigma=rev}
\mathrm{rev}\circ\sigma=\sigma\circ\mathrm{rev}
\end{equation}
hold for all $\sigma\in G_{\mathbb Q}$?
\end{prob}

This problem would be proven affirmatively
if we could show that $\Lambda_f=\mathrm{rev}(\Lambda_f)$
in $\mathrm{Frac}\widehat{\mathcal K}$
for all $\sigma=(\lambda,f)\in  G_{\mathbb Q}$.
In the proalgebraic setting
(cf. Remark \ref{factorization remark} and \cite{F13}),
$\mathrm{rev}$ can be defined similarly for $\widehat{R[\mathcal K]}$.
We can show the validity of an analogue of 
\eqref{rev-sigma=sigma=rev} for the action of
$\sigma\in GT(R)$ on $\widehat{R[\mathcal K]}$
by transmitting the $GT(R)$-action on $\widehat{R[\mathcal K]}$ into
a $GRT(R)$-action on $\widehat{CD}$. 

Extending other standard operations on knots, such as mutation and cabling,
into those on  profinite knots and examining their Galois behaviors
is also worthy to pursue.

\begin{project}
In \S \ref{Galois action on profinite braids}, the actions of $\widehat{GT}$ and
$G_{\mathbb Q}$ on the profinite braid group $\widehat{B}_n$
are discussed.
In Remark \ref{algebraic geometry interpretation}
it is explained in the language of algebraic geometry that
the $G_{\mathbb Q}$-action on $\widehat{B}_n$ is caused by 
the homotopy exact sequence of the scheme-theoretic fundamental group
of the quotient variety $\mathrm{Conf}^n_{{\frak S}_n}$
of the configuration space $\mathrm{Conf}^n$.
Whilst as for our  Galois action on knots in Theorem \ref{Galois representation theorem},
the author is not sure whether there is such kind of  its \lq purely'
algebraic-geometrical  interpretation  (without a usage of $\widehat{GT}$-factorization) or not.
In other word, it is not clear if there exists any (co)homology theory $H_\star$ 
(or any fundamental group  theory $\pi_1^\star$) and 
any (pro-)variety $X$ defined over $\mathbb Q$
such that
$$
\mathrm{Frac}\widehat{\mathcal K}= H_\star(X_{\overline{\mathbb Q}})
$$
(or $\mathrm{Frac}\widehat{\mathcal K}=\pi_1^\star(X_{\overline{\mathbb Q}})$)
and the right hand side naturally carries a $G_{\mathbb Q}$-action
which yields our $G_{\mathbb Q}$-action on $\mathrm{Frac}\widehat{\mathcal K}$.
It would be our future research.
\end{project}

Asking the validity of an analogue of Bely\u \i's theorem \cite{Be} in \eqref{Belyi theorem}
is particularly significant.

\begin{prob}\label{Belyi-type problem}
Is the action of the absolute Galois group  \eqref{GQ-action on GK} on profinite knots faithful? \\
If not, then what is the corresponding kernel field? And what is the arithmetic meaning of this?
\end{prob}

This is also related to the problem discussed in Remark \ref{factorization remark} above.
In \cite{F13}, it will be explained that the corresponding kernel field
is bigger than the maximal abelian extension ${\mathbb Q}(\mu_\infty)$ of $\mathbb Q$.

\begin{prob}
What is the $G_{\mathbb Q}$-invariant subspace of $\mathrm{Frac}\widehat{\mathcal K}$?
\end{prob}

In \cite{F13}, we will settle a similar problem formulated in the proalgebraic setting.

Asking the same type of questions for each given knot is also worthy to discuss.

\begin{prob}
(1).
What is the Galois stabilizer of each given knot?
And what is the corresponding Galois extension field of $\mathbb Q$?

(2). Suppose that two topological knots $K_1$ and $K_2$ and
an open normal subgroup $\mathcal N$ of $\mathrm{Frac}\widehat{\mathcal K}$
with finite index are given.
When  do two cosets  $K_1\cdot {\mathcal N}$ and $K_2\cdot {\mathcal N}$ 
lie on a same Galois orbit?
If so, then which $\sigma\in G_{\mathbb Q}$ connect them?
\end{prob}

Example \ref{mirror image} tells that 
a knot and its mirror image lie on a same Galois orbit and that
the stabilizer of an amphicheiral knot, say,
the figure eight knot (the knot in the left above of Figure \ref{Example of connected sum}),
contains the subgroup $\{id, \varsigma_0 \}$ of order $2$.

\begin{project}
We can also consider Galois action not only on profinite knots 
but also on pro-solvable knots by replacing profinite pure braid groups $\hat{P}_n$
by their pro-solvable completions 
on the definition of profinite knots.
The direction might be also worthy to pursue.
Ihara's profinite beta function $B_\sigma$ \cite{Ih} 
which arises from a description of the action of the absolute Galois group on the double commutator quotient of $\widehat{F}_2$
might help to see the action on a certain quotient.
\end{project}

As is explained in Remark \ref{Kontsevich factorization},
we have an isomorphism 
$\widehat{{\mathbb Q}[{\mathcal K}]}\simeq\widehat{CD}$
between the linear space of pro-algebraic knots
and  that of chord diagrams.

\begin{prob}
 What is  a profinite analogue of  the linear space of chord diagrams?
Do we have a profinite analogue of the above isomorphism for 
$\widehat{\mathcal K}$ and this?
\end{prob}

Grothendieck's dessins d'enfants \cite{G} are combinatorial diagrams
which  describe the action of the  absolute Galois group on $\widehat{F}_2$.
The author wonders
if a certain projective system of dessins attains the above profinite analogue of
the space of chord diagrams.


\begin{project}
There are various notions of equivalences for (framed) knots (and links)
such as
the Kirby moves (the Fenn-Rourke moves), the knot cobordism, the knot concordance, etc.
Extending these notions into those for our profinite links and
examining their behaviors under our Galois action is
worthy to pursue.
Particularly  the Kirby moves are known (consult the standard textbook such as \cite{O})
to yield a one to one correspondence
between the set of framed links modulo the equivalence generated by the moves
and the  set of isomorphism classes of
closed connected oriented 3-manifolds (three dimensional manifolds) 
by Dehn surgery.
Giving a nice formulation of profinite analogues of Kirby moves 
and a description of their Galois behaviors 
looks significant for a {\it realization} of
the analogy between number rings and 3-manifolds,
which is one of the most fundamental ones posted in arithmetic topology
by Kapranov \cite{Kap}, Morishita \cite{Mo} and Reznikov \cite{R}.
%
\end{project}

%

\appendix
\section{Two-bridge profinite knots}\label{Two-bridge profinite knots}
We introduce profinite analogues of two-bridge knots
and observe that the subgroup of $G\widehat{\mathcal K}$
generated by them is stable under our Galois action.

We recall that
a {\it two-bridge knot} (or link) is a  topological knot (or link) which can be isotoped so that the natural height function given by the $z$-coordinate has only two maxima and two minima as critical points.

\begin{defn}
A profinite knot (resp. link) is called a {\it two-bridge profinite knot (resp. link)}
when it is isotopic to
the presentation
$a_{0,0}^{\epsilon_1}\cdot a_{0,2}^{\epsilon_2}
\cdot b_4^{\epsilon_3}\cdot c_{1,1}^{\epsilon_4}\cdot c_{0,0}^{\epsilon_5}
$
with $b_4\in \widehat{B}_4$,
which is depicted in Figure \ref{two bridge knot} with its orientation ignored.
\end{defn}
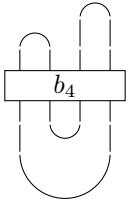
\begin{figure}[h]
\begin{center}
         \begin{tikzpicture}
                      \draw[-] (1.0,-0.2)--(1.0,0.5) (1.0,0.9)--(1.0,1.2);
                     \draw[-] (1.0,-0.2)  arc (180:360:0.6);
                      \draw (0.8,0.5) rectangle (2.4,0.9);
                     \draw (1.6,0.7) node{$b_4$};
                     \draw[-] (1.4,0.2)--(1.4,0.5);
                    \draw[-] (1.4,0.9)--(1.4,1.2);
                     \draw[-] (1.4,0.2)  arc (180:360:0.2);
                     \draw[-] (1.4,1.2)  arc (0:180:0.2);
                     \draw[-] (1.8,1.6)  arc (180:0:0.2);
                     \draw[-] (1.8,0.2)--(1.8,0.5);
                     \draw[-] (1.8,0.9)--(1.8,1.6);
                      \draw[-] (2.2,-0.2)--(2.2,0.5) (2.2,0.9)--(2.2,1.6);
\draw[color=white, very thick] (0.9,-0.2)--(2.3,-0.2) (0.9,0.2)--(2.3,0.2) (0.9,1.2)--(2.3,1.2) (0.9,1.6)--(2.3,1.6) ;
         \end{tikzpicture}
\caption{Two-bridge profinite knot}
\label{two bridge knot}
\end{center}
\end{figure}

We note that each isotopy class of
two-bridge (topological) knot naturally determines 
an isotopy class of  two-bridge profinite knot.

We recall that a two-bridge (topological) knot
is also called as a {\it rational knot} because each isotopy class of
two-bridge knot (or link) is characterized by a rational number 
via its continued fraction expansion (cf. \cite{Li}).
As its profinite analogue,
it is interesting to see if
the set of isotopy classes of two-bridge profinite knots 
can be also parametrized by any numbers or not.

We denote the subgroup of $G\widehat{\mathcal K}$
generated by two-bridge profinite knots by ${\mathcal Br}(2)$.

\begin{prop}
The subgroup ${\mathcal Br}(2)$ is stable under our Galois action.
\end{prop}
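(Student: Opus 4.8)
The plan is to use that, by Theorem~\ref{GT-action theorem}~(4), every $\sigma\in G_{\mathbb Q}$ (regarded inside $\widehat{GT}$ via Theorem~\ref{GQ to GT}) acts as a topological group automorphism of $G\widehat{\mathcal K}$. Since ${\mathcal Br}(2)$ is by definition generated by two-bridge profinite knots, it suffices to show $\sigma(K)\in{\mathcal Br}(2)$ for every two-bridge profinite knot $K$ and every $\sigma=(\lambda,f)\in\widehat{GT}$; applying the same conclusion to $\sigma^{-1}\in G_{\mathbb Q}$ then upgrades the containment $\sigma({\mathcal Br}(2))\subseteq{\mathcal Br}(2)$ to setwise stability.

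First I would fix a presentation $K=a_{0,0}^{\epsilon_1}\cdot a_{0,2}^{\epsilon_2}\cdot b_4^{\epsilon_3}\cdot c_{1,1}^{\epsilon_4}\cdot c_{0,0}^{\epsilon_5}$ with $b_4\in\widehat{B}_4$ and evaluate $\sigma(K)$ symbol by symbol through Definition~\ref{GT-action on profinite knots}. The governing numerology is that the error factor attached to a fundamental annihilation or creation $a_{k,l}^\epsilon$, $c_{k,l}^\epsilon$ is controlled by $f_{1\cdots k,k+1,k+2}$, which is the trivial braid precisely when $k=0$. Hence $\sigma(a_{0,0})=a_{0,0}$, $\sigma(a_{0,2})=a_{0,2}$ and $\sigma(c_{0,0})=c_{0,0}$ acquire no error term, whereas $\sigma(c_{1,1})=(f_{1,2,3}^{-1}\otimes e_1)\cdot c_{1,1}$ and $\sigma(b_4)$ both sit in the braid slot $\widehat{B}_4$. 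The numerator of $\sigma(K)$ is therefore
\[
a_{0,0}\cdot a_{0,2}\cdot \bigl(\sigma(b_4)\bigr)\cdot \bigl(f_{1,2,3}^{-1}\otimes e_1\bigr)\cdot c_{1,1}\cdot c_{0,0},
\]
and by the braid composition move (T2) the two adjacent braid symbols merge into the single element $b_4':=\sigma(b_4)\cdot(f_{1,2,3}^{-1}\otimes e_1)\in\widehat{B}_4$. This re-exhibits the numerator as a two-bridge presentation $a_{0,0}\cdot a_{0,2}\cdot (b_4')\cdot c_{1,1}\cdot c_{0,0}$.

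The decisive point is that the correction knot $\Lambda_f$ of Figure~\ref{error term} is itself two-bridge: reading it as $a_{0,0}\cdot a_{0,2}\cdot(e_1\otimes f)\cdot c_{1,1}\cdot c_{0,0}$ matches the two-bridge form with braid $e_1\otimes f\in\widehat{B}_4$, so $\Lambda_f\in{\mathcal Br}(2)$. Substituting the above computations into the fraction formula of Definition~\ref{GT-action on profinite knots}, while recording $\alpha(K)$ and, through \eqref{oriented circle and lambda}, the factor $\sigma(\orientedcircle)$, writes $\sigma(K)$ as a $\sharp$-product of a $b_4'$-type two-bridge knot with a power of $\Lambda_f$. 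As both factors lie in the subgroup ${\mathcal Br}(2)$, so does $\sigma(K)$, which completes the reduction.

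I expect the genuine content to be bookkeeping rather than a real obstacle: one must check that the braid slot carries exactly four strands throughout (so that $\sigma(b_4)$ and $f_{1,2,3}^{-1}\otimes e_1$ are composable elements of $\widehat{B}_4$), and one must propagate the orientation labels $\epsilon_i$ and their $\sigma$-images consistently through the source/target conventions. The main thing to get conceptually right is that merging the two braids needs only (T2) and no hexagon/pentagon identity, and that $\Lambda_f$ never leaves ${\mathcal Br}(2)$; without this last recognition the denominators produced by the action would escape the subgroup and the statement would fail.
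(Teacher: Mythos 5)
Your proposal is correct and follows essentially the same route as the paper's own proof: evaluate the action on the fixed presentation $a_{0,0}^{\epsilon_1}\cdot a_{0,2}^{\epsilon_2}\cdot b_4^{\epsilon_3}\cdot c_{1,1}^{\epsilon_4}\cdot c_{0,0}^{\epsilon_5}$, observe that only $c_{1,1}$ produces a correction $f_{1,2,3}^{-1}\otimes e_1$ which merges with $\sigma(b_4)$ by (T2) into a single element of $\widehat{B}_4$, note that $\Lambda_f$ is itself two-bridge, and invoke the group-automorphism property of the $\widehat{GT}$-action. The only (harmless) difference is presentational: the paper leaves the answer as the fraction with numerator the new two-bridge knot and denominator $\Lambda_f$, while you expand it as a $\sharp$-product and also spell out the $\sigma^{-1}$ argument that upgrades containment to stability, which the paper leaves implicit.
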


\begin{proof}
It can be verified by 
showing that ${\mathcal Br}(2)$ is stable by our $\widehat{GT}$-action.
Let $\sigma=(\lambda,f)\in\widehat{GT}$ and let $K$ be a  two-bridge profinite knot
with the above presentation. Then
\begin{align*}
\sigma(\frac{K}{\orientedcircle})&=\frac{\sigma(K)}{\sigma(\orientedcircle)}
=\frac{\{\sigma(a_{0,0}^{\epsilon_1})\cdot\sigma( a_{0,2}^{\epsilon_2})
\cdot \sigma(b_4^{\epsilon_3})\cdot \sigma(c_{1,1}^{\epsilon_4})\cdot \sigma(c_{0,0}^{\epsilon_5})\}\sharp\Lambda_f}
{(\Lambda_f)^{\sharp 2}}\\
&=\frac{a_{0,0}^{\epsilon_1}\cdot a_{0,2}^{\epsilon_2}
\cdot (\sigma(b_4^{\epsilon_3})\cdot f^{-1}_{123})\cdot c_{1,1}^{\epsilon_4}\cdot c_{0,0}^{\epsilon_5}}
{\Lambda_f}.
\end{align*}
Since both numerator and denominator of the last term are two-bridge knots
and $\widehat{GT}$ acts on  $G\widehat{\mathcal K}$ as a group automorphism,
our claim is obtained.
\end{proof}

Considering profinite analogues of  other types of (topological) knots
such as  hyperbolic knots, torus knots, etc.
and considering their Galois behaviors might be interesting problems.

\section{Profinite framed knots}\label{profinite framed knots}
We will extend our definition of profinite analogue of knots into that of
framed knots and show that the absolute Galois group acts on the group generated by
them.
The construction is given in a same way to our  arguments in \S \ref{Galois action on profinite knots}.

\begin{defn}\label{definition of profinite framed tangle}
The set $\widehat{\mathcal{FT}}$ of {\it profinite framed tangles} 
(isotopy classes of profinite  framed tangle diagrams) is 
defined to be the set of profinite tangle diagrams divided by the {\it framed isotopy},
the equivalence given by  a finite number of  the moves (FT1)-(FT6).
Here  (FT1)-(FT5)  are same to (T1)-(T5) given in \S \ref{Galois action on profinite knots}
while (FT6) is given below.
The set $\widehat{\mathcal{FK}}$ (resp. $\widehat{\mathcal{FL}}$)
of {\it profinite framed knots (resp. links)}
is the subset of $\widehat{\mathcal{FT}}$ which consists of
framed isotopy classes of  profinite knot (resp. link) diagrams.
\end{defn}

(FT6) {\it First framed Reidemeister move}:
for $c\in\widehat{\mathbb Z}$,
$c_{k,l+2}^{\epsilon_1}$, $c_{k+2,l}^{\epsilon_2}\in C$, 
$a_{k+1,l+1}^{\epsilon_3}\in A$ and
$\sigma_{k+1}^{\epsilon_1'}$, $\sigma_{k+3}^{\epsilon_2'}\in\widehat{B}$
representing $\sigma_{k+1}\in {\widehat B}_{k+l+2}$ and
$\sigma_{k+3}\in {\widehat B}_{k+l+4}$
such that the sequence of the left hand side is consistent,
\begin{equation}\label{FT6a}
a_{k+1,l+1}^{\epsilon_3}\cdot(\sigma_{k+3}^{\epsilon_2'})^{-c}\cdot c_{k+2,l}^{\epsilon_2}\cdot(\sigma_{k+1}^{\epsilon_1'})^c\cdot c_{k,l}^{\epsilon_1}
=c_{k,l}^{\epsilon_3'}.
\end{equation}
Here $\epsilon_3'$ is chosen to be  $t(a_{k+1,l+1}^{\epsilon_3})=t(c_{k,l}^{\epsilon_3'})$.

For $c\in\widehat{\mathbb Z}$,
$c_{k+1,l+1}^{\epsilon_1}\in C$, 
$a_{k,l}^{\epsilon_2}$, $a_{k+2,l}^{\epsilon_3}\in A$ and
$\sigma_{k+1}^{\epsilon_2'}$, $\sigma_{k+3}^{\epsilon_3'}\in\widehat{B}$
representing $\sigma_{k+1}\in {\widehat B}_{k+l+2}$ and
$\sigma_{k+3}\in {\widehat B}_{k+l+4}$
such that the sequence of the left hand side is consistent,
\begin{equation}\label{FT6b}
a_{k,l}^{\epsilon_2}\cdot(\sigma_{k+1}^{\epsilon_2'})^{-c}\cdot a_{k+2,l}^{\epsilon_3}\cdot(\sigma_{k+3}^{\epsilon_3'})^c\cdot c_{k+1,l+1}^{\epsilon_1}
=a_{k,l}^{\epsilon_1'}.
\end{equation}
Here again $\epsilon_1'$ is chosen to be  $s(c_{k+1,l+1}^{\epsilon_3})=s(a_{k,l}^{\epsilon_1'})$.
\begin{figure}[h]
\begin{tabular}{cc}
\begin{minipage}{0.5\hsize}
\begin{center}
          \begin{tikzpicture}
                    \draw[-] (0,0.2) --(0, 3.0) ;
                    \draw[dotted] (0.1,0.4) --(0.3, 0.4) (0.1,1)--(0.3,1)  (0.1,1.6)--(0.3,1.6)  (0.1,2.2)--(0.3,2.2)
 (0.1,2.8)--(0.3,2.8);
                    \draw[-] (0.4,0.2) --(0.4, 3.0);
                     \draw[decorate,decoration={brace,mirror}] (-0.1,0.2) -- (0.5,0.2) node[midway,below]{$k$};
\draw (0.6,0.5)  arc (180:360:0.2);
                    \draw[-] (.6,1.2) --(1, .8);
                    \draw[draw=white,double=black, very thick] (0.6,0.8) --(1, 1.2);
\draw  (0.5,0.8) rectangle (1.1,1.2) node[right]{$c$};
                    \draw[-] (0.6,0.5)--(0.6,.8) (1.0,0.5)--(1.0,.8);
                    \draw[-] (0.6,1.2)--(0.6,3.0) (1.0,1.2)--(1.0, 2.6);
\draw (1.0,2.6)  arc (180:0:0.25);
\draw (1.5,1.8)  arc (180:360:0.2);
                    \draw[-] (1.5,2) --(1.9, 2.4);
                    \draw[draw=white,double=black, very thick] (1.9,2) --(1.5, 2.4);

\draw  (1.4,2) rectangle (2,2.4) node[right]{$c$};
                    \draw[-] (1.5,1.8)--(1.5,2) (1.5,2.4)--(1.5,2.6);
                    \draw[-] (1.9,1.8)--(1.9,2) (1.9,2.4)--(1.9,3.0);

                    \draw[-] (2.4,0.2)--(2.4,3.0) ;
                    \draw[-] (2.9,0.2) --(2.9,3.0) ;
 \draw[dotted] (2.5,0.4) --(2.8, 0.4) (2.5,1)--(2.8,1)  (2.5,1.6)--(2.8,1.6)  (2.5,2.2)--(2.8,2.2)
 (2.5,2.8)--(2.8,2.8);
                    \draw[decorate,decoration={brace,mirror}] (2.3,0.2) -- (3.0,0.2) node[midway,below]{$l$};
               \draw[color=white, very thick] (-0.1,0.6)--(3,0.6)  (-0.1,1.4)--(3,1.4)  (-0.1,1.8)--(3,1.8)
 (-0.1,2.6)--(3,2.6) ;

\draw  (3.3,1.5)node{$=$};
                    \draw[-] (3.9,0.2) --(3.9,3);
                    \draw[dotted] (4,1.5) --(4.2, 1.5) ;
                    \draw[-] (4.3,0.2) --(4.3,3);
                     \draw[decorate,decoration={brace,mirror}] (3.8,0.2) -- (4.4,0.2) node[midway,below]{$k$};
\draw (4.4,3)  arc (180:360:0.3);
                    \draw[-] (5.1,0.2) --(5.1,3) ;
                    \draw[-] (5.6,0.2) --(5.6,3) ;
                   \draw[dotted] (5.2,1.5) --(5.5, 1.5) ;
                    \draw[decorate,decoration={brace,mirror}] (5,0.2) -- (5.7,0.2) node[midway,below]{$l$};
\draw (6.0,0.2) node{,};
           \end{tikzpicture}
\end{center}
\end{minipage}
\begin{minipage}{0.5\hsize}
\begin{center}
          \begin{tikzpicture}
                    \draw[-] (0,0.2) --(0, 3.0) ;
                    \draw[dotted] (0.1,0.4) --(0.3, 0.4) (0.1,1)--(0.3,1)  (0.1,1.6)--(0.3,1.6)  (0.1,2.2)--(0.3,2.2)
 (0.1,2.8)--(0.3,2.8);
                    \draw[-] (0.4,0.2) --(0.4, 3.0);
                     \draw[decorate,decoration={brace,mirror}] (-0.1,0.2) -- (0.5,0.2) node[midway,below]{$k$};
\draw (0.6,2.6)  arc (180:0:0.2);
                    \draw[-] (.6,2) --(1, 2.4);
                    \draw[draw=white,double=black, very thick] (0.6,2.4) --(1, 2);
\draw  (0.5,2) rectangle (1.1,2.4) node[right]{$c$};
                    \draw[-] (0.6,0.2)--(0.6,2) (0.6,2.4)--(0.6,2.6);
                    \draw[-] (1,0.6)--(1,2) (1.0,2.4)--(1.0, 2.6);
\draw (1.0,.6)  arc (180:360:0.25);
\draw (1.5,1.4)  arc (180:0:0.2);
                    \draw[-] (1.5,1.2) --(1.9, 0.8);
                    \draw[draw=white,double=black, very thick] (1.9,1.2) --(1.5, 0.8);
\draw  (1.4,0.8) rectangle (2,1.2) node[right]{$c$};

                    \draw[-] (1.5,.6)--(1.5,.8) (1.5,1.2)--(1.5,1.4);
                    \draw[-] (1.9,.2)--(1.9,.8) (1.9,1.2)--(1.9,1.4);

                    \draw[-] (2.4,0.2)--(2.4,3.0) ;
                    \draw[-] (2.9,0.2) --(2.9,3.0) ;
 \draw[dotted] (2.5,0.4) --(2.8, 0.4) (2.5,1)--(2.8,1)  (2.5,1.6)--(2.8,1.6)  (2.5,2.2)--(2.8,2.2)
 (2.5,2.8)--(2.8,2.8);
                    \draw[decorate,decoration={brace,mirror}] (2.3,0.2) -- (3.0,0.2) node[midway,below]{$l$};
               \draw[color=white, very thick] (-0.1,0.6)--(3,0.6)  (-0.1,1.4)--(3,1.4)  (-0.1,1.8)--(3,1.8)
 (-0.1,2.6)--(3,2.6) ;

\draw  (3.3,1.5)node{$=$};
                    \draw[-] (3.9,0.2) --(3.9,3);
                    \draw[dotted] (4,1.5) --(4.2, 1.5) ;
                    \draw[-] (4.3,0.2) --(4.3,3);
                     \draw[decorate,decoration={brace,mirror}] (3.8,0.2) -- (4.4,0.2) node[midway,below]{$k$};
\draw (4.4,0.3)  arc (180:0:0.3);
                    \draw[-] (5.1,0.2) --(5.1,3) ;
                    \draw[-] (5.6,0.2) --(5.6,3) ;
                   \draw[dotted] (5.2,1.5) --(5.5, 1.5) ;
                    \draw[decorate,decoration={brace,mirror}] (5,0.2) -- (5.7,0.2) node[midway,below]{$l$};
           \end{tikzpicture}
\end{center}
\end{minipage}
\end{tabular}
\caption{(FT6): First framed Reidemeister move}
\label{FT6}
\end{figure}
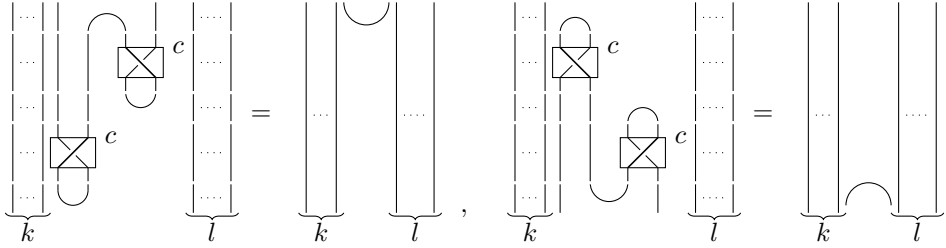

We note that, in the first (resp. second) equation,
$c_{k,l}^{\epsilon_1}=c_{k,l}^{\epsilon_3'}$
(resp. $a_{k,l}^{\epsilon_2}=a_{k,l}^{\epsilon_1'}$)
if and only if $c\equiv 0\pmod 2$.
It is easy to see that our previous (T6) implies (FT6).

\begin{rem}
There is a natural projection
$\widehat{\mathcal{FT}}\to \widehat{\mathcal{T}}$
and hence
\begin{equation}\label{forget frame}
\widehat{\mathcal{FK}}\to \widehat{\mathcal{K}}.
\end{equation}
It is clear by definition.
\end{rem}

\begin{defn}
We introduce special tangles, {\it twists}, $\varphi^\uparrow$ and $\varphi^\downarrow$
by 
$$\varphi^\uparrow:=a_{1,0}^{\uparrow\annihilation}
\cdot\sigma_1^{\uparrow\uparrow\downarrow}\cdot c_{1,0}^{\uparrow\opcreation},
\qquad
\varphi^\downarrow:=a_{1,0}^{\downarrow\opannihilation}
\cdot\sigma_1^{\downarrow\downarrow\uparrow}\cdot c_{1,0}^{\downarrow\creation}.
$$
The picture of $\varphi^\uparrow$ is depicted in the first term of Figure \ref{twist}.
\end{defn}

It is an easy exercise to derive the lemma below
from (FT1)-(FT6).

\begin{lem}\label{various twists}
(1).
The following equalities hold
(cf. Figure \ref{twist}):
$$
\varphi^\uparrow
=a_{0,1}^{\opannihilation\uparrow}\cdot\sigma_2^{\downarrow\uparrow\uparrow}\cdot
c_{0,1}^{\creation\uparrow}
=a_{0,1}^{\annihilation\uparrow}\cdot (\sigma_2^{-1})^{\uparrow\uparrow\downarrow}\cdot
c_{1,0}^{\uparrow\opcreation}
=a_{0,1}^{\opannihilation\uparrow}\cdot (\sigma_1^{-1})^{\uparrow\downarrow\uparrow}\cdot
c_{1,0}^{\uparrow\creation}.
$$
\begin{figure}[h]
\begin{center}
        \begin{tikzpicture}
                      \draw[->] (0,-0.1)--(0,.3) ;
                      \draw[->] (0,.6)--(0,1.0);
                    \draw[-] (0,.6) --(0.3, 0.3);
                    \draw[draw=white,double=black, very thick] (0,.3) --(0.3, 0.6);
                     \draw[<-] (0.3,0.3)  arc (180:360:0.2);
                     \draw[->] (0.3,0.6)  arc (180:0:0.2);
                    \draw[<-] (0.7,.3) --(0.7, 0.6);
\draw  (1.,0.4)node{$=$};
                    \draw[<-] (1.3,.3) --(1.3, 0.6);
                     \draw[->] (1.3,0.3)  arc (180:360:0.2);
                     \draw[<-] (1.3,0.6)  arc (180:0:0.2);
                    \draw[-] (1.7,0.6) --(2.0, 0.3);
                    \draw[draw=white,double=black, very thick] (1.7,0.3) --(2., 0.6);
                      \draw[->] (2.0,-0.1)--(2.0,.3); 
                      \draw[->]  (2.0,.6)--(2.0,1.0);
\draw  (2.3,0.4)node{$=$};
                    \draw[->] (2.6,-0.1) --(2.6, 0.6);
                     \draw[->] (2.6,0.6)  arc (180:0:0.2);
                    \draw[-] (3.0,0.3) --(3.4, 0.6);
                    \draw[draw=white,double=black, very thick] (3.4,0.3) --(3., 0.6);
                     \draw[<-] (3,0.3)  arc (180:360:0.2);
                      \draw[->]  (3.4,.6)--(3.4,1.0);
\draw  (3.7,0.4)node{$=$};
                    \draw[->] (4.0,-0.1) --(4, 0.3);
                    \draw[-] (4.0,0.3) --(4.4, 0.6);
                    \draw[draw=white,double=black, very thick] (4.4,0.3) --(4., 0.6);
                     \draw[<-] (4,0.6)  arc (180:0:0.2);
                     \draw[->] (4.4,0.3)  arc (180:360:0.2);
                    \draw[->] (4.8,0.3) --(4.8, 1.0);
\draw[color=white,  thick] (-0.1,0.3)--(4.9,0.3) (-0.1,0.6)--(4.9,0.6) ;
        \end{tikzpicture}
\caption{twist $\varphi^\uparrow$}
\label{twist}
\end{center}
\end{figure}

(2).
We have the following equalities (cf. Figure \ref{switch twists figure})
$$
(\varphi^\downarrow\otimes \uparrow)\cdot\creation=
(\downarrow\otimes\varphi^\uparrow)\cdot \creation,\qquad
\annihilation\cdot (\varphi^\uparrow\otimes \downarrow)
=\annihilation\cdot(\uparrow\otimes\varphi^\downarrow).
$$
We also have  the same equalities  obtained by reversing all arrows.
\end{lem}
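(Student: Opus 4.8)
The plan is to derive every equality purely from the framed Turaev moves (FT1)--(FT6), reading each displayed word as a planar diagram and following how the single crossing migrates around the small loop. Throughout I would use the three facts already available in the upward-oriented case: the braid--tangle relation (FT4), which transports a braid generator past an adjacent strand; the creation--annihilation relation (FT5), which cancels a cap sitting directly on a cup; and the transpose Lemma~\ref{transpose lemma} together with Lemma~\ref{CCAwithT}, which allow a cup or cap to be slid from one side of a tangle to the other.

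For part (1) I would view each of the four words as a positive curl on an up-strand, differing only in which side of the strand carries the loop and in whether the lone crossing is recorded on strands $(1,2)$ or $(2,3)$, positively or inversely. The first equality relocates the loop from the right of the strand to the left, and I would obtain it by sliding the cup and cap of $\varphi^\uparrow$ around the crossing using (FT4), (FT5) and Lemma~\ref{CCAwithT}; this moves $a_{1,0},c_{1,0}$ to $a_{0,1},c_{0,1}$ and shifts the active pair of strands, turning $\sigma_1$ into $\sigma_2$. The remaining two equalities are the ones producing $\sigma_i^{-1}$: here I would drag the crossing up over the annihilation, respectively down under the creation, an operation which by the framed moves replaces the generator by its inverse on the neighbouring pair of strands while swapping the left/right placement of the cap and cup. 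Each displayed identity is then a short chain of single moves, and the only thing to monitor is that the orientation word and the crossing sign change exactly as stated.

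For part (2) I would feed the presentations produced in (1) into the same machinery. Starting from $(\varphi^\downarrow\otimes\uparrow)\cdot\creation$, I would expand $\varphi^\downarrow$, pull the resulting crossing across the creation by (FT4), and then recognise the outcome, after an application of (FT5) and Lemma~\ref{CCAwithT}, as $(\downarrow\otimes\varphi^\uparrow)\cdot\creation$; the second identity is the same computation read upside down with an annihilation in place of the creation. The versions obtained by reversing all arrows need no separate argument, since the defining data of (FT1)--(FT6) and of $\varphi^\uparrow,\varphi^\downarrow$ are invariant under simultaneous reversal of every orientation symbol.

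The main obstacle is bookkeeping rather than conceptual: the genuinely delicate point is justifying the passage $\sigma_i\mapsto\sigma_i^{-1}$ when a crossing is carried over a cap or under a cup, that is, confirming that the framed moves really do flip the sign of the generator together with its relocation and with the left/right swap of the loop. Once this single sign rule is pinned down, all four equalities in (1) and both equalities in (2) reduce to transcribing a short sequence of moves, and the reversed-orientation statements follow automatically.
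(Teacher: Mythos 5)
There is a genuine gap, and it is not bookkeeping: your proposal leans on Lemma \ref{transpose lemma} and Lemma \ref{CCAwithT}, but those lemmas are established only modulo the \emph{unframed} moves (T1)--(T6). Their proofs in the paper explicitly use (T6) (the first equality in Figure \ref{proof of transpose} is an application of (T6), and Lemma \ref{CCAwithT} is deduced from Lemma \ref{transpose lemma}), whereas the present lemma is an assertion about \emph{framed} isotopy, i.e.\ equality modulo (FT1)--(FT6), in which (FT6) is strictly weaker than (T6). Any chain of moves that passes through a step valid only up to (T6) proves at best unframed isotopy of the four diagrams --- and that conclusion is vacuous here, since modulo (T1)--(T6) the twist $\varphi^\uparrow$ is already isotopic to the trivial strand $e_1^\uparrow$, so all the equalities in (1) collapse for free. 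The entire content of Lemma \ref{various twists} is framing-sensitive, so no (T6)-based tool may enter the argument. Worse, within the paper's own logical order the route is circular: the framed analogue of the cap/cup-sliding lemmas, Lemma \ref{CCA with framed T}, is proved \emph{after} and \emph{by means of} Lemma \ref{various twists}.(1) and (2), so you cannot appeal to any version of that sliding principle at this stage.

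The fix is to do what the paper intends when it calls this an exercise in (FT1)--(FT6): carry out your rotation of the curl using only (FT4) and (FT5) (with (FT1)--(FT3) for bookkeeping). Concretely, for each equality in (1) one introduces a cup--cap pair by (FT5), slides the single crossing through the adjacent cap or cup by (FT4) applied with $T$ a fundamental tangle in $A$ or $C$ --- this is exactly where the sign rule $\sigma_i\mapsto\sigma_j^{-1}$ you flagged comes from, and (FT4) delivers it without any appeal to auxiliary lemmas --- and then cancels the leftover zig-zag by (FT5). Part (2) is the same computation with the creation (resp.\ annihilation) kept in place, again using only (FT4) and (FT5). Your closing remark about orientation reversal is fine: the moves (FT1)--(FT6) and the pair $\varphi^\uparrow,\varphi^\downarrow$ are exchanged under simultaneous reversal of all arrows, so the reversed statements do follow formally.
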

\begin{figure}[h]
\begin{tabular}{cc}
\begin{minipage}{0.5\hsize}
\begin{center}
         \begin{tikzpicture}
                     \draw[<-](.0,0)--(.0,0.4);
                     \draw[<-] (.0,0.8)--(.0,1.3);
                    \draw[-] (.0,.8) --(.2, 0.4);
                    \draw[draw=white,double=black, very thick] (.0,.4) --(.2, 0.8);
                     \draw[->] (0.2,0.4)  arc (180:360:0.1);
                     \draw[<-] (0.2,0.8)  arc (180:0:0.1);
                    \draw[->] (.4,.4) --(.4, 0.8);
                     \draw[->] (0,0)  arc (180:360:0.4);
                      \draw[->] (0.8,0)--(0.8,1.3);
\draw[color=white,  thick] (-0.1,0)--(1.3,0) (-0.1,0.4)--(1.3,0.4)  (-0.1,0.8)--(1.3,0.8);
\draw  (1.4,0.5)node{$=$};
                     \draw[->] (2,0)  arc (180:360:0.4);
                      \draw[<-] (2.,0)--(2.,1.3);
                     \draw[->](2.8,0)--(2.8,0.4);
                     \draw[->] (2.8,0.8)--(2.8,1.3);
                    \draw[-] (2.8,.8) --(3.0, 0.4);
                    \draw[draw=white,double=black, very thick] (2.8,.4) --(3.0, 0.8);
                     \draw[<-] (3,0.4)  arc (180:360:0.1);
                     \draw[->] (3.,0.8)  arc (180:0:0.1);
                    \draw[<-] (3.2,.4) --(3.2, 0.8);
\draw[color=white,  thick] (1.9,0)--(3.3,0) (1.9,0.4)--(3.3,0.4)  (1.9,0.8)--(3.3,0.8);
                    \draw (3.6,-0.3) node{,};
         \end{tikzpicture}
\end{center}
\end{minipage}
\begin{minipage}{0.5\hsize}
\begin{center}
         \begin{tikzpicture}
                     \draw[->](.0,-0.1)--(.0,0.4);
                     \draw[->] (.0,0.8)--(.0,1.2);
                    \draw[-] (.0,.8) --(.2, 0.4);
                    \draw[draw=white,double=black, very thick] (.0,.4) --(.2, 0.8);
                     \draw[<-] (0.2,0.4)  arc (180:360:0.1);
                     \draw[->] (0.2,0.8)  arc (180:0:0.1);
                    \draw[<-] (.4,.4) --(.4, 0.8);
                     \draw[->] (0,1.2)  arc (180:0:0.4);
                      \draw[<-] (0.8,-0.1)--(0.8,1.2);
\draw[color=white,  thick]  (-0.1,0.4)--(1.3,0.4)  (-0.1,0.8)--(1.3,0.8) (-0.1,1.2)--(1.3,1.2);
\draw  (1.4,0.5)node{$=$};
                     \draw[->] (2,1.2)  arc (180:0:0.4);
                      \draw[->] (2.,-0.1)--(2.,1.2);
                     \draw[<-](2.8,-0.1)--(2.8,0.4);
                     \draw[<-] (2.8,0.8)--(2.8,1.2);
                    \draw[-] (2.8,.8) --(3.0, 0.4);
                    \draw[draw=white,double=black, very thick] (2.8,.4) --(3.0, 0.8);
                     \draw[->] (3,0.4)  arc (180:360:0.1);
                     \draw[<-] (3.,0.8)  arc (180:0:0.1);
                    \draw[->] (3.2,.4) --(3.2, 0.8);
\draw[color=white,  thick] (1.9,1.2)--(3.3,1.2) (1.9,0.4)--(3.3,0.4)  (1.9,0.8)--(3.3,0.8);
         \end{tikzpicture}
\end{center}
\end{minipage}
\end{tabular}
\caption{Lemma \ref{various twists}.(2).}
\label{switch twists figure}
\end{figure}

The space $\widehat{\mathcal{FT}}$ (hence the subspace $\widehat{\mathcal{FK}}$)
carries a structure of topological space by 
profinite topology on $\widehat{B}_n$ ($n=1,2,\dots$)
and the discrete topology on $A$ and on $C$. 

We have a framed version of  our Theorem A in \S \ref{introduction}.

\begin{thm}
(1). The space $\widehat{\mathcal{FK}}$ carries a structure of a topological commutative monoid
whose product structure
$$
\sharp^{\rm{fr}}: \widehat{\mathcal{FK}}\times \widehat{\mathcal{FK}} \to \widehat{\mathcal{FK}}. 
$$
is given by the connected sum \eqref{connected sum}
for $K_1=\alpha_m\cdots\alpha_1$ and $K_2=\beta_n\cdots\beta_1$.

(2). Let $\mathcal{FK}$ denote the set of framed isotopy classes of  (topological) oriented framed knots
\footnote{
Framed knot means a knot equipped with a framing, that is,
a nonzero normal vector field on it.
}.
Then there is a natural map
$$
h^{\rm{fr}}: {\mathcal{FK}}\to \widehat{\mathcal{FK}}.
$$
The map is with dense image and
is a monoid homomorphism with respect to the connected sum.
\end{thm}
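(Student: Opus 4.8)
The plan is to follow the proofs of Theorems \ref{profinite realization theorem} and \ref{topological monoid} almost verbatim, exploiting the fact that the moves (FT1)--(FT5) coincide with (T1)--(T5); only the replacement of (T6) by the weaker framed move (FT6) requires fresh attention. Since (T6) implies (FT6), framed isotopy is finer than isotopy, so the forgetful projection \eqref{forget frame} is already well defined, and I can reuse the sequential-space formalism of the unframed case. Writing $\widehat{\mathcal{FT}}^{\rm seq}$ for the set of consistent finite sequences of fundamental profinite tangles and $\widehat{\mathcal{FK}'}^{\rm seq}$ for its subset of single-component sequences $\gamma_n\cdots\gamma_1$ with $(\gamma_n,\gamma_1)=(\opannihilation,\creation)$, the profinite topologies on the $\widehat{B}_n$ equip these with topologies and with continuous projections onto $\widehat{\mathcal{FT}}$ and $\widehat{\mathcal{FK}}$.

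For part (2), I would invoke the framed analogue of the presentation result of \cite{B}: the set $\mathcal{FK}$, and more generally $\mathcal{FT}$, of framed topological tangles is described by consistent finite sequences of the fundamental tangles in $A$, $B$ and $C$ modulo the discrete framed Turaev moves, that is (FT1)--(FT6) with $c\in\mathbb{Z}$. Composing with the natural maps $B_n\to\widehat{B}_n$, which send discrete framed moves to profinite ones, then yields $h^{\rm fr}:\mathcal{FT}\to\widehat{\mathcal{FT}}$ together with its restriction $h^{\rm fr}:\mathcal{FK}\to\widehat{\mathcal{FK}}$. Density and the homomorphism property are formally identical to the unframed case: the image of $B_n\hookrightarrow\widehat{B}_n$ is dense, hence so is the image of ${\mathcal{FK}'}^{\rm seq}\to\widehat{\mathcal{FK}'}^{\rm seq}$, and the commutative square relating these to $\mathcal{FK}$ and $\widehat{\mathcal{FK}}$ together with the continuity of the projection descends the density.

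The heart of the argument is part (1), the well-definedness of $\sharp^{\rm fr}$ on $\widehat{\mathcal{FK}}$. After checking, as in the unframed reduction, that every framed-isotopy class contains a representative beginning with $\creation$ and ending with $\opannihilation$, I would show, as in the proof of Theorem \ref{topological monoid}.(1), that $K_1\sharp^{\rm fr}K_2$ is unchanged when $K_1$ is altered by a single move, reducing to the case where the move touches the bottom creation. For every move except (FT6) the computation is word-for-word the unframed one, since (FT1)--(FT5) are (T1)--(T5) and the relevant sliding uses only (T3), (T4), (T5); I would first re-establish the framed analogues of Lemmas \ref{transpose lemma} and \ref{CCAwithT}, replacing each single-curl cancellation appearing in those proofs by the framing-preserving manipulations supplied by the twist identities of Lemma \ref{various twists}. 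Associativity and commutativity then transfer directly, since the commutativity diagram of Figure \ref{proof of commutativity} uses only (T3), (T4), (T5) and Lemma \ref{CCAwithT}; the unit is $\orientedcircle$, and continuity of $\sharp^{\rm fr}$ follows from its evident continuity on $\widehat{\mathcal{FK}'}^{\rm seq}$ together with the continuous surjection $\widehat{\mathcal{FK}'}^{\rm seq}\twoheadrightarrow\widehat{\mathcal{FK}}$, exactly as before.

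The main obstacle will be the (FT6) case of this well-definedness. In the unframed setting, Figure \ref{proofT6}, a curl $\sigma^{2c}$ introduced at the bottom creation could be slid entirely around the knot and reabsorbed using (T6); here I may only move twists in the framing-preserving way permitted by (FT6), so the analogous slide must transport a matched pair of twists $\varphi^\uparrow,\varphi^\downarrow$ through the connected-sum region. I expect to accomplish this by commuting these twists past the caps and cups with Lemma \ref{various twists}.(2) and then applying \eqref{FT6a}--\eqref{FT6b} to cancel the pair, thereby identifying the two framed connected sums. Carefully tracking that no net framing is created or lost during this slide — equivalently, that the bookkeeping of $\varphi^\uparrow$ against $\varphi^\downarrow$ stays balanced throughout — is the delicate point of the whole proof.
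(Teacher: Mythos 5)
Your proposal follows essentially the same route as the paper: part (2) via the framed presentation result of \cite{B} composed with $B_n\to\widehat{B}_n$, and part (1) by reducing well-definedness to single moves at the bottom creation, re-establishing the framed analogues of Lemmas \ref{transpose lemma} and \ref{CCAwithT} through the twist identities of Lemma \ref{various twists}, and handling the (FT6) case by transporting a matched pair of opposite curls (the paper's $K_3=\alpha_m\cdots\alpha_2\cdot a_{1,1}\cdot\sigma_3^{-2c}\cdot c_{2,0}\cdot\sigma_1^{2c}\cdot\beta_{n-1}\cdots\beta_1$ and Figure \ref{proofFT6}) instead of a single slidable curl. The delicate point you flag — keeping the twist bookkeeping balanced while sliding past caps and cups — is exactly what the paper resolves with Lemma \ref{various twists}.(2), (FT4) and the cancellation \eqref{FT6a}--\eqref{FT6b} inside Lemma \ref{CCA with framed T}, so your plan is sound and matches the published argument.
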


The map $h^{\rm{fr}}$  should be conjectured to be injective
by the same reason to Conjecture \ref{injectivity on T}.

\begin{proof}
The proof can be done in an almost same way to that of Theorem \ref{profinite realization theorem} and
\ref{topological monoid}.
We outline it by strengthening a slight difference.

(1). 
Each framed isotopy class of profinite knot contains a profinite knot 
$K=\gamma_m\cdots \gamma_1$
for some $m$ with  $(\gamma_m,\gamma_1)=(\opannihilation, \creation)$
by (FT6)  with $c\equiv 1\pmod 2$. 
Hence the connected sum could extend to $\widehat{\mathcal{FK}}$
once we have the well-definedness.
To show the well-definedness,  it is enough to show that 
$K_1\sharp K_2$ is framed isotopic to $K_1'\sharp K_2$
when $K_1'$ is obtained  from $K_1$
by operating (FT3) or (FT6) on $\alpha_1$ by a similar reason to the proof of Theorem \ref{topological monoid}.
Consider the latter case (FT6).
It suffices to show that $K_1\sharp K_2$ is framed isotopic to
$K_3=\alpha_{m}\cdots\alpha_2\cdot  a_{1,1}\cdot\sigma_3^{-2c}\cdot c_{2,0}\cdot\sigma_1^{2c}\cdot\beta_{n-1}\cdots\beta_1$
for $c\in\widehat{\mathbb Z}$ (cf. Figure \ref{proofFT6}).
It 
can be shown in a similar way to Figure  \ref{proofT6}.
\begin{figure}[h]
\begin{center}
        \begin{tikzpicture}
                     \draw[-] (0.,0)  arc (180:360:0.25);
                      \draw[-] (0,0)--(0,0.2)  (0,0.6)--(0,1.0) (0,1.4)--(0,3.3) ;
                      \draw[-] (0.5,0)--(0.5,0.2) (0.5,0.6)--(0.5,1.0) (0.5,1.4)--(0.5,2.7);
 \draw  (0.3,.4)node{$S_2$};
 \draw (-0.1,0.2) rectangle (0.6,0.6); 
                   \draw[-] (0,1.4) --(0.5, 1);
                    \draw[draw=white,double=black, very thick] (0,1) --(0.5, 1.4);
 \draw  (-0.1,1.0) rectangle (0.6,1.4) node[right]{$2c$};
                     \draw[-] (1.1,1.9)  arc (180:360:0.25);
                      \draw[-] (1.1,1.9)--(1.1,2.1)  (1.1,2.5)--(1.1,2.7) ;
                      \draw[-] (1.6,1.9)--(1.6,2.1)  (1.6,2.5)--(1.6,3.3) ;
 \draw (1,2.1) rectangle (1.7,2.5) node[right]{$2c$};
                   \draw[-] (1.1,2.1) --(1.6, 2.5);
                    \draw[draw=white,double=black, very thick] (1.1,2.5) --(1.6, 2.1);
 \draw[-] (0.5,2.7)  arc (180:0:0.3);
 \draw  (0.8,3.5)node{$S_2$};
 \draw (-0.1,3.3) rectangle (1.7,3.7); 
                    \draw[-](0.3,3.7)--(0.3,3.9) (1.3,3.7)--(1.3, 3.9);
 \draw[-] (0.3,3.9)  arc (180:0:0.5);
\draw[color=white,  thick] (-0.1,0)--(1.7,0) (-0.1,0.8)--(1.7,0.8) (-0.1,1.6)--(1.7,1.6) (-0.1,1.9)--(1.7,1.9)
 (-0.1,2.7)--(1.7,2.7) (-0.1,3.1)--(1.7,3.1) (-0.1,3.9)--(1.7,3.9);
        \end{tikzpicture}
\caption{$K_3$}
\label{proofFT6}
\end{center}
\end{figure}

Next consider the former case (FT3).  
Since $K_1$ is a profinite knot, $m_1$ and $m_2$ are both $0$.
By the above argument in case (FT6),
we may assume that both $T_1$  and $T_2$  in Figure \ref{T3}
should start from $\creation$
(i.e. $\alpha_1=\beta_1=\creation$).
Define  $T$  as in Figure \ref{definition of T}.
A successive application of
commutativity of  profinite braids with $T$ shown in (FT4)
and that of creations and annihilations with $T$ shown in Lemma \ref{CCA with framed T}
lead the framed isotopy equivalence shown in Figure \ref{proofT3}.
Hence we get the the well-defined product $\sharp^{\rm{fr}}$.

The proofs of associativity, commutativity, continuity for  $\sharp^{\rm{fr}}$
can be done in a quite same way to the proof of Theorem \ref{topological monoid}.

(2).
The result in \cite{B} indicates that 
the set $\mathcal{FT}$ of (framed) isotopy classes of framed tangles
is described by the set of consistent  finite sequences of  
fundamental tangles,
elements of $A$,  $B$
and $C$
modulo the (discrete) framed Turaev moves,
which is equivalent to 
the moves
replacing profinite tangles and braids by (discrete) tangles and braids
in (FT1)-(FT6) and $c\in\widehat{\mathbb Z}$ by $c\in{\mathbb Z}$ in (FT6).
Because we have a natural map $B_n\to\widehat{B}_n$ and
the Turaev moves are special case of our 6 moves,
we have a natural map $\mathcal{FT}\to\widehat{\mathcal{FT}}$,
which yields our map $h^{\rm{fr}}$.
It is easy to see that it is homomorphic and is with dense image.
\end{proof}

A framed analogue of Lemma \ref{transpose lemma} and  \ref{CCAwithT} hold.

\begin{lem}\label{CCA with framed T}
Let $T$ be a profinite tangle $T$ with  $s(T)=t(T)=\uparrow$.

(1). Its transpose $\opT$ (see. Lemma \ref{transpose lemma} )
is equal to
$
a_{1,0}^{\downarrow\annihilation}
\cdot(e_1^\downarrow\otimes T\otimes e_1^\downarrow)\cdot 
c_{0,1}^{\creation\downarrow}
$ (cf. Figure \ref{opT}).

(2).
The equalities in Figure \ref{CAT} hold for $T$.

The same claim also holds for a profinite tangle $T$ with  $s(T)=t(T)=\downarrow$
by reversing all arrows.
\end{lem}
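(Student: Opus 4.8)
The plan is to establish part (1), the framed transpose identity, as the framed counterpart of Lemma \ref{transpose lemma}, and then to deduce part (2) from it exactly as Lemma \ref{CCAwithT} was deduced from Lemma \ref{transpose lemma}. Since the excerpt already records that (FT1)--(FT5) coincide with (T1)--(T5), the only place where the original proofs invoke a move absent from the framed list is the use of the first Reidemeister move (T6) in the first equality of Figure \ref{proof of transpose}. Thus the whole task reduces to reorganizing that single step so that it is carried out with the framed first Reidemeister move (FT6) instead.

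For part (1), I would start from the defining expression $\opT = a_{0,1}^{\opannihilation\downarrow}\cdot(e_1^\downarrow\otimes T\otimes e_1^\downarrow)\cdot c_{1,0}^{\downarrow\opcreation}$ and reproduce the three-diagram manipulation of Figure \ref{proof of transpose}. First I would use the independent-tangle relation (FT3) to isolate the spectator $\downarrow$-strand, then slide the block $e_1^\downarrow\otimes T\otimes e_1^\downarrow$ across it by repeated use of the braid--tangle relation (FT4) and braid composition (FT2), moving the cap and cup from the (left-cap, right-cup) configuration to the (right-cap, left-cup) configuration of the target expression $a_{1,0}^{\downarrow\annihilation}\cdot(e_1^\downarrow\otimes T\otimes e_1^\downarrow)\cdot c_{0,1}^{\creation\downarrow}$. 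The creation--annihilation relations (FT5) would then straighten the resulting zig-zags. Where the unframed argument inserted a single crossing at the cap and a single crossing at the cup via (T6), I would instead treat these two crossings together as one application of (FT6): the middle diagram of Figure \ref{proof of transpose} carries exactly one crossing at the top and one at the bottom, and the idea is that these occur with opposite twists, hence together match the $(\sigma^{c},\sigma^{-c})$ pattern that (FT6) inserts or deletes while preserving the framing.

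For part (2), the four identities of Figure \ref{CAT} can be verified by the same direct computation used for Lemma \ref{CCAwithT}: each side is rewritten using part (1) together with the snake relations (FT5), the independent-tangle relation (FT3) and the braid--tangle relation (FT4), all of which are literally among (FT1)--(FT6). No further Reidemeister-type move is needed once part (1) is in hand. The statement for a tangle $T$ with $s(T)=t(T)=\downarrow$ then follows by reversing all arrows throughout, using the symmetry of the moves (FT1)--(FT6) under orientation reversal.

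The hard part will be the sign bookkeeping in part (1). In the framed world a single kink carries a nonzero framing and cannot be erased, so it is not enough to know that two crossings are introduced and later removed; one must check that the two crossings produced when $T$ is slid past the spectator strand genuinely occur with opposite twists $+c$ and $-c$, and with the precise orientations $\epsilon_i$ demanded by the hypotheses of (FT6). Tracking these orientations through the caps, cups and the $180^\circ$ rotation of $T$, and confirming that the relevant local configuration is exactly the left-hand side of \eqref{FT6a} (respectively \eqref{FT6b}), is the delicate step; once it is settled the remainder is the routine framed analogue of Figures \ref{proof of transpose} and \ref{CAT}.
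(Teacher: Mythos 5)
Your part (2) and the overall architecture of your argument (deduce (2) from (1), reuse the fact that (FT1)--(FT5) coincide with (T1)--(T5)) agree with the paper, but part (1) has a genuine gap at the one step that matters. The move (FT6) is strictly local: its left-hand sides \eqref{FT6a} and \eqref{FT6b} are words of five \emph{consecutive} fundamental tangles, in which the two opposite crossings are separated only by a single creation or annihilation. In the middle diagram of Figure \ref{proof of transpose}, the two crossings you want to treat are not in such a position: read as a word of fundamental tangles that diagram is
$a_{0,1}\cdot\sigma_1^{\pm 1}\cdot(e_1^\downarrow\otimes T\otimes e_1^\downarrow)\cdot\sigma_2^{\pm 1}\cdot c_{1,0}$,
so one crossing is glued to the cap on the strand pair $(1,2)$, the other to the cup on the strand pair $(2,3)$, and the entire block containing $T$ sits between them. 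No choice of signs, exponents or orientations turns this word into an instance of \eqref{FT6a} or \eqref{FT6b}, so the ``single application of (FT6)'' you propose is not available; the sign bookkeeping you flag as the delicate step cannot even begin, because the configuration itself is wrong. (Note also that in the unframed proof the two kinks are inserted by two independent applications of (T6), one at the cap and one at the cup; they are never adjacent.)

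What is missing is a mechanism for transporting a kink along the diagram --- around cups, caps and crossings --- so that two opposite kinks can be brought into the adjacent position where (FT6), or simply (FT2), applies. A kink is not a braid (it contains a local maximum and a local minimum), so (FT4) alone cannot slide it past a cup or a cap; this is exactly why the paper introduces the twists $\varphi^\uparrow,\varphi^\downarrow$ and proves Lemma \ref{various twists} before the present lemma. The paper's proof of part (1), Figure \ref{framed transpose lemma}, runs as follows: one legitimate, local application of (FT6) inserts an adjacent pair of opposite kinks on a single strand; Lemma \ref{various twists}.(1) rewrites one of them as the twist $\varphi^\downarrow$; Lemma \ref{various twists}.(2) together with (FT4) carries that twist around the diagram to where it is needed; Lemma \ref{various twists}.(1) converts it back into a crossing; and (FT2), (FT4) finish as in the unframed case, after which part (2) is indeed a direct consequence of part (1). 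Supplying this twist-transport machinery is the content your proposal lacks: with it, your outline becomes the paper's proof; without it, the first equality of your argument is essentially equivalent to the lemma you are trying to prove.
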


\begin{proof}
(1). A proof is depicted in Figure \ref{framed transpose lemma}.
We use (FT6) in the first equality.
The second and the fourth equalities follow from  Lemma \ref{various twists}.(1).
The third equality is obtained by the \lq commutativitiy' of twists with
annihilation, creation and profinite braids
assured by Lemma \ref{various twists}.(2) and  (FT4).
The fifth equation follows from (FT2) and (FT4).

\begin{figure}[h]
\begin{center}
        \begin{tikzpicture}
                      \draw[<-] (-0.2,-0.4)--(-0.2,1.0) ;
                     \draw[->] (0.2,1.0)  arc (0:180:0.2);
                     \draw[<-] (0.2,0.2)  arc (180:360:0.2);
                      \draw (0,0.4) rectangle (0.4,0.8);
                      \draw (0.2,0.6) node{$T$};
                      \draw[->] (0.2,0.2)--(0.2, 0.4)  ;
                      \draw[->] (0.2,0.8)--(0.2,1.0) ;
                      \draw[<-] (0.6,0.2)--(0.6,1.6) ;
               \draw[color=white, very thick] (-0.3,0.2)--(0.7,0.2) (-0.3,1.0)--(0.7,1.0)  ;
\draw  (1.0,0.6)node{$=$};
                      \draw[<-] (1.4,-1.3)--(1.4,1.0) ;
    \draw[->] (1.8,1.0)  arc (0:180:0.2);
                      \draw[->] (1.8,0.8)--(1.8,1.0) ;
                      \draw (1.6,0.4) rectangle (2.0,0.8);
                      \draw (1.8,0.6) node{$T$};
                      \draw[->] (1.8,-0.7)--(1.8, 0.4)  ;
                      \draw[-] (1.8,-0.7)--(2.1,-1) ;
        \draw[draw=white,double=black, very thick]  (1.8, -1)--(2.1,-0.7) ;
                     \draw[->] (1.8,-1)  arc (180:360:0.15);
                      \draw[<-] (2.1,-0.7)--(2.1,0) ;
    \draw[<-] (2.1,0)  arc (180:0:0.15);
                      \draw[->] (2.4,-0.2)--(2.4,0) ;
                      \draw[-] (2.4,-0.5)--(2.7,-0.2) ;
        \draw[draw=white,double=black, very thick]  (2.4, -0.2)--(2.7,-0.5) ;
    \draw[->] (2.4,-0.5)  arc (180:360:0.15);
                      \draw[<-] (2.7,-0.2)--(2.7,1.6) ;
               \draw[color=white, thick] (1.3,-1)--(2.8,-1) (1.3,-0.7)--(2.8,-0.7) (1.3,-0.5)--(2.8,-0.5) 
                    (1.3,-0.2)--(2.8,-0.2) (1.3,0)--(2.8,0) (1.3,0.2)--(2.8,0.2) (1.3,1.0)--(2.8,1.0)  ;
\draw  (3.1, 0.6)node{$=$};
                      \draw[<-] (3.5,-1.3)--(3.5,1.0) ;
    \draw[->] (3.9,1.0)  arc (0:180:0.2);
                      \draw[->] (3.9,0.8)--(3.9,1.0) ;
                      \draw (3.7,0.4) rectangle (4.1,0.8);
                      \draw (3.9,0.6) node{$T$};
                      \draw[->] (3.9,-0.6)--(3.9, 0.4)  ;
     \draw[->] (3.9,-1)  arc (180:360:0.2);
                      \draw (4.1,-0.4) rectangle (4.5,0);
                      \draw (4.3,-0.2) node{$\varphi^\downarrow$};
                      \draw[<-] (4.3,-0.6)--(4.3, -0.4) ;
                      \draw[<-]  (4.3,0)--(4.3,1.6) ;
                      \draw[-] (3.9,-0.6)--(4.3,-1) ;
        \draw[draw=white,double=black, very thick]  (3.9, -1)--(4.3,-0.6) ;
               \draw[color=white, thick] (3.4,-1)--(4.4,-1) (3.4,-0.6)--(4.4,-0.6) (3.4,0.2)--(4.4,0.2) (3.4,1)--(4.4,1) ;
\draw  (4.7, 0.6)node{$=$};
                      \draw[<-] (5.1,-1.3)--(5.1,1.2) ;
                      \draw (4.9,1.2) rectangle (5.3,1.6);
                      \draw (5.1,1.4) node{$\varphi^\downarrow$};
                      \draw[<-] (5.1,1.6)--(5.1,1.8) ;
    \draw[->] (5.5,1.8)  arc (0:180:0.2);
                      \draw[->] (5.5,0.8)--(5.5,1.8) ;
                      \draw (5.3,0.4) rectangle (5.7,0.8);
                      \draw (5.5,0.6) node{$T$};
                      \draw[->] (5.5,-0.6)--(5.5, 0.4)  ;
     \draw[->] (5.5,-1)  arc (180:360:0.2);
                      \draw[<-] (5.9,-0.6)--(5.9, 2.1) ;
                      \draw[-] (5.5,-0.6)--(5.9,-1) ;
        \draw[draw=white,double=black, very thick]  (5.5, -1)--(5.9,-0.6) ;
               \draw[color=white, thick] (5,-1)--(6,-1)  (5,-0.6)--(6,-0.6) (5,0.2)--(6,0.2) (5,1)--(6,1) (5,1.8)--(6,1.8);
\draw  (6.4, 0.6)node{$=$};
                      \draw[<-] (6.8,-0.4)--(6.8,1.0) ;
                     \draw[-] (6.8,1)--(7.2,1.4) ;
        \draw[draw=white,double=black, very thick]  (6.8, 1.4)--(7.2,1) ;
                     \draw[<-] (7.2,1.4)  arc (0:180:0.2);
                     \draw[-] (7.2,0.2)--(7.6,-0.2) ;
        \draw[draw=white,double=black, very thick]  (7.2, -0.2)--(7.6,0.2) ;
                     \draw[->] (7.2,-0.2)  arc (180:360:0.2);
                      \draw (7.0,0.4) rectangle (7.4,0.8);
                      \draw (7.2,0.6) node{$T$};
                      \draw[->] (7.2,0.2)--(7.2, 0.4)  ;
                      \draw[->] (7.2,0.8)--(7.2,1.0) ;
                      \draw[<-] (7.6,0.2)--(7.6,1.6) ;
               \draw[color=white, very thick] (6.7,-0.2)--(7.7,-0.2) (6.7,0.2)--(7.7,0.2) (6.7,1)--(7.7,1)(6.7,1.4)--(7.7,1.4)  ;
\draw  (8, 0.6)node{$=$};
                      \draw[<-] (8.4,0.2)--(8.4,1.6) ;
                     \draw[->] (8.4,0.2)  arc (180:360:0.2);
                      \draw (8.6,0.4) rectangle (9,0.8);
                      \draw (8.8,0.6) node{$T$};
                      \draw[->] (8.8,0.2)--(8.8, 0.4)  ;
                      \draw[->] (8.8,0.8)--(8.8,1.0) ;
                      \draw[<-] (9.2,-0.4)--(9.2,1.0) ;
                     \draw[<-] (9.2,1.0)  arc (0:180:0.2);
               \draw[color=white, very thick] (8.3,0.2)--(9.3,0.2) (8.3,1.0)--(9.3,1.0)  ;
         \end{tikzpicture}
\caption{Proof of Lemma \ref{CCA with framed T}.(1).}
\label{framed transpose lemma}
\end{center}
\end{figure}

(2). It is a  direct consequence of (1).
\end{proof}

\begin{defn}\label{definition of GFK}
The {\it group of profinite framed knots} $\mathrm{Frac}\widehat{\mathcal{FK}}$ is defined to be the group of fraction
of the monoid  $\widehat{\mathcal{FK}}$
as in the same way to Definition \ref{definition of GK}.
\end{defn}

We also have a framed version of  our Theorem B in \S \ref{introduction}.

\begin{thm}
Fix an embedding from the algebraic closure $\overline{\mathbb Q}$ of 
the rational number field $\mathbb Q$
into the complex number field $\mathbb C$.
Then the group $\mathrm{Frac}\widehat{\mathcal{FK}} $ of profinite knots admits
a non-trivial topological $G_{\mathbb Q}$-module structure.
Namely, 
there is a non-trivial continuous Galois representation on profinite knots
$$
\rho_0^{\rm{fr}}:G_{\mathbb Q}\to \mathrm{Aut}\ \mathrm{Frac}\widehat{\mathcal{FK}} .
$$
\end{thm}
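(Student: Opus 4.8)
The plan is to reproduce, in the framed setting, the two-step structure of the unframed proof: first build a continuous $\widehat{GT}$-action on $G\widehat{\mathcal{FK}}$, then pull it back along the inclusion $G_{\mathbb Q}\hookrightarrow\widehat{GT}$ of Theorem \ref{GQ to GT}, and finally exhibit an explicit knot moved non-trivially by complex conjugation.

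First I would define the $\widehat{GT}$-action on $G\widehat{\mathcal{FK}}$ by exactly the formula \eqref{GT-action on GK} of Definition \ref{GT-action on profinite knots}, using the same auxiliary tangle $\Lambda_f$ (Figure \ref{error term}) and the same bookkeeping of its powers. Proposition \ref{knots to knots proposition} shows that each $\sigma(\gamma_{i,j})$ leaves the skeleton intact, so the framed-knot condition is preserved. To see that the action descends to $G\widehat{\mathcal{FK}}$ one must check compatibility with the framed isotopy moves (FT1)--(FT6). Since (FT1)--(FT5) coincide verbatim with (T1)--(T5), the corresponding parts of the proof of Theorem \ref{GT-action theorem}.(1) --- which use Propositions \ref{action-change-basepoint-braids}, \ref{action-evaluation} and \ref{action-change-basepoint-knots} together with the pentagon equation \eqref{pentagon equation} --- carry over unchanged; in particular the creation--annihilation relation (FT5)${}={}$(T5) again emits the controlled factor $\Lambda_f$ that \eqref{GT-action on GK} is designed to absorb.

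The one genuinely new verification is the first framed Reidemeister move (FT6), equations \eqref{FT6a} and \eqref{FT6b}. Here I would apply $\sigma=(\lambda,f)$ to the left-hand side of \eqref{FT6a}, using $\sigma\bigl((\sigma_i^{\epsilon})^c\bigr)=\bigl(\sigma(\sigma_i^\epsilon)\bigr)^c$ together with the hexagon equation \eqref{hexagon equation}, which governs precisely the interaction of $f$ with the pair of oppositely-signed twists $(\sigma_{k+3})^{-c}$ and $(\sigma_{k+1})^{c}$ surrounding the inner creation $c_{k+2,l}$; the surplus copies of $f$ emitted by $\sigma(a_{k+1,l+1})$ and $\sigma(c_{k,l})$ should then be cancelled by repeated use of (FT4), Lemma \ref{projection of f} and the pentagon equation \eqref{pentagon equation}, exactly as in the (T5) and (T6) computations. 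The expected upshot is that $\sigma$ carries the left-hand side to $\sigma(c_{k,l}^{\epsilon_3'})$ up to a $\Lambda_f$-factor matched by the annihilation count, so that (FT6) is respected after passing to $G\widehat{\mathcal{FK}}$; the verification of \eqref{FT6b} is symmetric. Granting this, parts (2)--(5) of Theorem \ref{GT-action theorem} transfer word for word, yielding a continuous topological $\widehat{GT}$-module structure on $G\widehat{\mathcal{FK}}$, and composing with Theorem \ref{GQ to GT} produces $\rho_0^{\rm fr}$.

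For non-triviality I would use the forgetful map \eqref{forget frame}, which induces a homomorphism $G\widehat{\mathcal{FK}}\to G\widehat{\mathcal K}$. Because the $\widehat{GT}$-action is given by the identical formula on both monoids, this map is $\widehat{GT}$-equivariant, hence $G_{\mathbb Q}$-equivariant. As in Example \ref{mirror image}, complex conjugation $\varsigma_0=(-1,1)$ sends a framed knot $K$ to its mirror image $\overline K$ (here $\Lambda_1=\orientedcircle$). Taking $K$ to be the framed right trefoil, its image and that of $\overline K$ under the forgetful map are the unframed right and left trefoils, which are already distinct in $G\widehat{\mathcal K}$ by Theorem \ref{Galois representation theorem} (they are separated by the Kontsevich invariant, cf. Remark \ref{Kontsevich factorization}); therefore $K\neq\overline K$ in $G\widehat{\mathcal{FK}}$ and $\rho_0^{\rm fr}$ is non-trivial. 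The main obstacle is the (FT6) computation: unlike (T6) it involves two twists of opposite sign together with an inner creation, so the hexagon equation must be invoked and the emitted $f$-factors tracked carefully, and one must confirm that the net $\Lambda_f$-discrepancy is exactly accounted for by the annihilation-count convention in \eqref{GT-action on GK}.
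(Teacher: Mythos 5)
Your overall architecture matches the paper's: the same action formula \eqref{GT-action on GK} on representatives, the reduction of well-definedness to the moves (FT1)--(FT6) with (FT1)--(FT5) carrying over verbatim from the proof of Theorem \ref{GT-action theorem}, and the composition with $G_{\mathbb Q}\hookrightarrow\widehat{GT}$ (Theorem \ref{GQ to GT}) at the end. Your non-triviality argument via the $\widehat{GT}$-equivariant forgetful map \eqref{forget frame} is also sound, and is consistent with the paper's closing remark that the unframed Galois action is induced from the framed one by that very map.

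The gap is in the one step you correctly single out as new, the verification of (FT6), where the mechanism you propose is the wrong one. The hexagon equation \eqref{hexagon equation} plays no role in this computation, and it cannot: it only controls $f$ against the specific power $m=\frac{\lambda-1}{2}$ of the generators, whereas (FT6) involves twists with an arbitrary exponent $c\in\widehat{\mathbb Z}$, which becomes $c\lambda$ after the action. What the paper actually does is: expand using Propositions \ref{action-change-basepoint-braids} and \ref{action-change-basepoint-knots}, cancel the adjacent factors $f^{\pm 1}$, and apply the pentagon equation \eqref{pentagon equation} together with (FT4) and Lemma \ref{projection of f} --- up to this point your sketch agrees. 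But this leaves (suppressing orientations) an expression of the shape
$$
a_{k,l}\cdot\sigma_{k+1}^{-c\lambda}\cdot f_{1\cdots k,k+1,k+2}\cdot a_{k+2,l}\cdot \sigma_{k+3}^{c\lambda}\cdot f_{k+2,k+3,k+4}\cdot c_{k+1,l+1},
$$
and the remaining problem is to slide the inner twist --- a $\widehat{\mathbb Z}$-power of $\varphi^{\uparrow}$ or $\varphi^{\downarrow}$ --- past the braid $f_{1\cdots k,k+1,k+2}$, so that the two twists with opposite exponents $\mp c\lambda$ can cancel by (FT6) itself, leaving exactly one copy of $f$ sandwiched between an annihilation and a creation, i.e.\ a connected summand $\Lambda_f$, matching $\alpha(r_1)=\alpha(r_2)+1$. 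This requires two framed-specific lemmas that your plan omits entirely and that constitute the genuinely new content of the appendix: Lemma \ref{infinite twists}, which makes sense of $(\varphi^{\uparrow})^c$ for profinite $c$ via the closed formula $(\varphi^{\uparrow})^c=a_{0,1}\cdot(\sigma_1^{-c})\cdot c_{1,0}$ (proved by induction on integer $c$ using Lemma \ref{various twists} and then by continuity), and Lemma \ref{ITC}, which shows that such $\widehat{\mathbb Z}$-power twists commute with arbitrary profinite tangles. Without these two lemmas the $f$-debris cannot be cleared out of the curl configuration, and your sketch does not close.
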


\begin{proof}
This is achieved by establishing a consistent 
continuous action of the profinite Grothendieck-Teichm\"{u}ller group 
$\widehat{GT}$ on $\mathrm{Frac}\widehat{\mathcal{FK}}$
and using the inclusion from 
$G_{\mathbb Q}$
into $\widehat{GT}$.

We introduce this $\widehat{GT}$-action on $\mathrm{Frac}\widehat{\mathcal{FK}}$
by using the action on each profinite fundamental tangle
given in Definition \ref{GT-action on profinite knots}.
The proof of  its well-definedness, i.e. to check  that the action preserves (FT1)-(FT6),
can be done in the same way to that of Theorem \ref{GT-action theorem}
but for (FT6), which is proved below:
Let $\sigma=(\lambda,f)\in\widehat{GT}$.
Let $r_1$ and $r_2$ be two profinite knots.
Assume that $r_1$ is obtained from  $r_2$ by a single operation of  the move (FT6).
Then by Proposition \ref{action-change-basepoint-braids}
and Proposition \ref{action-change-basepoint-knots},
we have
\begin{align*}
\sigma(a&_{k,l}^{\epsilon_2})\cdot\sigma((\sigma_{k+1}^{\epsilon_2'})^{-c})\cdot \sigma(a_{k+2,l}^{\epsilon_3})\cdot\sigma((\sigma_{k+3}^{\epsilon_3'})^c)\cdot \sigma(c_{k+1,l+1}^{\epsilon_1}) \\
=&a_{k,l}^{\epsilon_2}\cdot f_{1\cdots k,k+1,k+2}^{s(a_{k,l}^{\epsilon_2})}\cdot
 f_{1\cdots k,k+1,k+2}^{-1,s(a_{k,l}^{\epsilon_2})}\cdot
(\sigma_{k+1}^{\epsilon'_2})^{-c\lambda}\cdot
 f_{1\cdots k,k+1,k+2}^{t(a_{k+2,l}^{\epsilon_3})} \\
&\quad 
\cdot a_{k+2,l}^{\epsilon_3}\cdot
f_{1\cdots k+2,k+3,k+4}^{s(a_{k+2,l}^{\epsilon_3})}\cdot
f_{1\cdots k+2,k+3,k+4}^{-1,s(a_{k+2,l}^{\epsilon_3})}\cdot
(\sigma_{k+3}^{\epsilon'_3})^{c\lambda}\cdot
f_{1\cdots k+2,k+3,k+4}^{t(c_{k+1,l+1}^{\epsilon_1})} \\
&\qquad 
\cdot f_{1\cdots k+1,k+2,k+3}^{-1,t(c_{k+1,l+1}^{\epsilon_1})}\cdot
c_{k+1,l+1}^{\epsilon_1}
\\
=&a_{k,l}^{\epsilon_2}\cdot 
(\sigma_{k+1}^{\epsilon'_2})^{-c\lambda}\cdot
 f_{1\cdots k,k+1,k+2}^{t(a_{k+2,l}^{\epsilon_3})} 
\cdot a_{k+2,l}^{\epsilon_3}\cdot
(\sigma_{k+3}^{\epsilon'_3})^{c\lambda}\cdot
f_{1\cdots k+2,k+3,k+4}^{t(c_{k+1,l+1}^{\epsilon_1})} \\
&\qquad 
\cdot f_{1\cdots k+1,k+2,k+3}^{-1,t(c_{k+1,l+1}^{\epsilon_1})}\cdot
c_{k+1,l+1}^{\epsilon_1}
\\
=&a_{k,l}^{\epsilon_2}\cdot 
(\sigma_{k+1}^{\epsilon'_2})^{-c\lambda}\cdot
a_{k+2,l}^{\epsilon_3}\cdot
(\sigma_{k+3}^{\epsilon'_3})^{c\lambda} \\
&\qquad 
\cdot f_{1\cdots k+2,k+3,k+4}^{t(c_{k+1,l+1}^{\epsilon_1})} 
\cdot f_{1\cdots k,k+1,k+2}^{t(c_{k+1,l+1}^{\epsilon_1})} 
\cdot f_{1\cdots k+1,k+2,k+3}^{-1,t(c_{k+1,l+1}^{\epsilon_1})}\cdot
c_{k+1,l+1}^{\epsilon_1}. \\
\intertext{By a successive application of \eqref{pentagon equation},}
=&a_{k,l}^{\epsilon_2}\cdot 
(\sigma_{k+1}^{\epsilon'_2})^{-c\lambda}\cdot
a_{k+2,l}^{\epsilon_3}\cdot
(\sigma_{k+3}^{\epsilon'_3})^{c\lambda} \\
&\qquad 
\cdot f_{1\cdots k,k+1,k+2}^{t(c_{k+1,l+1}^{\epsilon_1})} 
\cdot f_{1\cdots k+1,k+2, k+3 \ k+4}^{-1,t(c_{k+1,l+1}^{\epsilon_1})} 
\cdot f_{k+2,k+3,k+4}^{t(c_{k+1,l+1}^{\epsilon_1})}\cdot
c_{k+1,l+1}^{\epsilon_1}. \\
\intertext{By (FT4) and Lemma \ref{projection of f}, }
=&a_{k,l}^{\epsilon_2}\cdot 
(\sigma_{k+1}^{\epsilon'_2})^{-c\lambda}\cdot
f_{1\cdots k,k+1,k+2}^{t(a_{k+2,l}^{\epsilon_3})} \cdot
a_{k+2,l}^{\epsilon_3}\cdot
(\sigma_{k+3}^{\epsilon'_3})^{c\lambda} 
\cdot f_{k+2,k+3,k+4}^{t(c_{k+1,l+1}^{\epsilon_1})}\cdot
c_{k+1,l+1}^{\epsilon_1}. \\
\intertext{By  Lemma \ref{ITC},
twists $(\varphi^\uparrow)^c$ and $(\varphi^\downarrow)^c$ ($c\in \widehat{\mathbb Z}$)
\lq commute' with  profinite tangles. 
Thus 
}
=&a_{k,l}^{\epsilon_2}\cdot 
(\sigma_{k+1}^{\epsilon'_2})^{-c\lambda}\cdot
a_{k+2,l}^{\epsilon_3}\cdot
(\sigma_{k+3}^{\epsilon'_3})^{c\lambda} \cdot
c_{k+1,l+1}^{\epsilon_1}\cdot
f_{1\cdots k,k+1,k+2}^{s(a_{k,l}^{\epsilon'_1})} \\
&\qquad
\cdot a_{k+2,l}^{\epsilon_0}
\cdot f_{k+2,k+3,k+4}^{s(a_{k+2,l}^{\epsilon_0})}\cdot
c_{k+1,l+1}^{\epsilon'_0} \\
\intertext{with appropriate orientations $\epsilon_0$ and $\epsilon'_0$. Hence}
=&a_{k,l}^{\epsilon'_1}\cdot
f_{1\cdots k,k+1,k+2}^{s(a_{k,l}^{\epsilon'_1})} 
\cdot a_{k+2,l}^{\epsilon_0}
\cdot f_{k+2,k+3,k+4}^{s(a_{k+2,l}^{\epsilon_0})}\cdot c_{k+1,l+1}^{\epsilon'_0} \\
=&\sigma(a_{k,l}^{\epsilon'_1})
\cdot a_{k+2,l}^{\epsilon_0}
\cdot f_{k+2,k+3,k+4}^{s(a_{k+2,l}^{\epsilon_0})}\cdot c_{k+1,l+1}^{\epsilon'_0} .
\end{align*}
Therefore we have
$\sigma(r_1)=\sigma(r_2)\sharp\Lambda_f$.
Because $\alpha(r_1)=\alpha(r_2)+1$, we may say that
\eqref{FT6b} is preserved 
by the $\widehat{GT}$-action.
The proof for \eqref{FT6a} can be done in the same way.
Thus  the first framed Reidemeister move (FT6) is preserved by the $\widehat{GT}$-action.

The rest part of our claim; continuity, non-triviality, etc, 
was shown in the proof of Theorem \ref{GT-action theorem}.(2)-(5).
\end{proof}

\begin{lem}\label{infinite twists}
The map  sending $c\in{\mathbb N}$ to  the $c$-th power
$(\varphi^\uparrow)^c\in\widehat{\mathcal{FT}}$
of $\varphi^\uparrow$
extends additively and continuously into $c\in\widehat{\mathbb Z}$
and it is given by
\begin{equation}\label{infinite twist formula}
(\varphi^\uparrow)^c
=a_{0,1}\cdot (\sigma_1^{-c})^{\uparrow\downarrow\uparrow}\cdot
c_{1,0}^{\uparrow\creation}.
\end{equation}
with $a_{0,1}=a_{0,1}^{\opannihilation\uparrow}$ or $a_{0,1}^{\annihilation\uparrow}$
according to $c\equiv 1 $ or  $0 \pmod 2$
(cf. Figure \ref{infinite twist figure}).

The same statement also holds for $\varphi^\downarrow$.
\end{lem}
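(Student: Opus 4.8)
The plan is to first establish the displayed formula \eqref{infinite twist formula} for $c\in\mathbb N$ by induction on $c$, and then to promote it to $c\in\widehat{\mathbb Z}$ by a continuity-and-density argument. For the base cases: when $c=1$ the asserted identity is literally the last equality of Lemma \ref{various twists}.(1) (with $a_{0,1}=a_{0,1}^{\opannihilation\uparrow}$, matching $1\equiv 1\pmod 2$); when $c=0$ the right-hand side reads $a_{0,1}^{\annihilation\uparrow}\cdot c_{1,0}^{\uparrow\creation}$, which straightens to the trivial strand $e_1^\uparrow=(\varphi^\uparrow)^0$ by the zig-zag (snake) relation, itself a consequence of (FT5) together with the framed transpose Lemma \ref{CCA with framed T}.(1).

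For the inductive step I would write $(\varphi^\uparrow)^{c+1}=(\varphi^\uparrow)^{c}\cdot\varphi^\uparrow$, inserting the inductive expression for $(\varphi^\uparrow)^c$ on top and the fourth form of Lemma \ref{various twists}.(1), namely $\varphi^\uparrow=a_{0,1}^{\opannihilation\uparrow}\cdot(\sigma_1^{-1})^{\uparrow\downarrow\uparrow}\cdot c_{1,0}^{\uparrow\creation}$, at the bottom. The junction then contains a cap $a_{0,1}^{\opannihilation\uparrow}$ sitting below a cup $c_{1,0}^{\uparrow\creation}$; sliding the lower crossing $(\sigma_1^{-1})^{\uparrow\downarrow\uparrow}$ up through this snake (using Lemma \ref{CCA with framed T} and (FT4)) transports it onto the same pair of strands as $(\sigma_1^{-c})^{\uparrow\downarrow\uparrow}$, whereupon (FT2) composes them to $(\sigma_1^{-(c+1)})^{\uparrow\downarrow\uparrow}$. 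The cap--cup is absorbed by (FT5), and the surviving top annihilation is forced to be $a_{0,1}^{(c+1)}$ precisely by the consistency requirement $s(a_{0,1})=t(\sigma_1^{-(c+1)})$; since $t(\sigma_1^{-(c+1)})$ differs from $t(\sigma_1^{-c})$ by one transposition, the annihilation symbol flips parity, as claimed.

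For the extension to $\widehat{\mathbb Z}$, note that the right-hand side of \eqref{infinite twist formula} already makes sense for every $c\in\widehat{\mathbb Z}$, since $\sigma_1^{-c}$ is the image of $-c$ under the continuous homomorphism $\widehat{\mathbb Z}\to\widehat B_3$, $1\mapsto\sigma_1$, and the residue $c\bmod 2$ (which pins down the annihilation symbol through the consistency condition) is a locally constant, hence continuous, function of $c$. Thus $\Phi(c):=a_{0,1}\cdot(\sigma_1^{-c})^{\uparrow\downarrow\uparrow}\cdot c_{1,0}^{\uparrow\creation}$ defines a continuous map $\widehat{\mathbb Z}\to\widehat{\mathcal{FT}}$ (by the profinite topology on $\widehat B_3$ together with Note \ref{topology on profinite knots} and continuity of the projection to isotopy classes). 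By the finite-case computation $\Phi$ agrees with $c\mapsto(\varphi^\uparrow)^c$ on $\mathbb N$, so in particular the additivity $\Phi(c)\cdot\Phi(c')=\Phi(c+c')$ holds on $\mathbb N^2$; both sides being continuous in $(c,c')\in\widehat{\mathbb Z}^2$ and $\mathbb N^2$ being dense there, additivity persists on all of $\widehat{\mathbb Z}$, yielding the desired additive, continuous extension. The statement for $\varphi^\downarrow$ then follows by reversing all arrows throughout.

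I expect the main obstacle to be the inductive step, and within it the careful tracking of orientations: verifying that the snake-straightening really deposits the extra crossing onto the correct pair of strands with the correct sign, and that the parity of the surviving annihilation symbol flips exactly as the consistency condition dictates. The continuity and density arguments, by contrast, are routine given Note \ref{topology on profinite knots} and the continuity of $\widehat{\mathbb Z}\to\widehat B_3$.
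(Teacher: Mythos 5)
Your overall strategy --- induction over $c\in\mathbb N$ followed by extension to $\widehat{\mathbb Z}$ via continuity of $c\mapsto\sigma_1^{-c}\in\widehat B_3$ --- is exactly the paper's, and your base cases and the continuity/density argument are fine (the paper dismisses the latter as immediate). The genuine problem is the justification of your inductive step. You decompose the newly appended $\varphi^\uparrow$ into $a_{0,1}^{\opannihilation\uparrow}\cdot(\sigma_1^{-1})^{\uparrow\downarrow\uparrow}\cdot c_{1,0}^{\uparrow\creation}$ and then claim the bare crossing $(\sigma_1^{-1})^{\uparrow\downarrow\uparrow}$ can be slid up through the cap--cup junction ``using Lemma \ref{CCA with framed T} and (FT4)''. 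Neither result does this: Lemma \ref{CCA with framed T} moves a \emph{one-strand tangle block} $T$ past creations and annihilations, and (FT4) commutes a braid past such blocks. A crossing is not a one-strand block, and --- more to the point --- both of its strands terminate in the cap $a_{0,1}^{\opannihilation\uparrow}$ sitting immediately above it, so it cannot be transported \emph{as a crossing} past that cap onto the pair of strands carrying $(\sigma_1^{-c})^{\uparrow\downarrow\uparrow}$; no single move among (FT1)--(FT6) accomplishes this. (A further slip: the pair that (FT5) eventually straightens is the cup $c_{1,0}^{\uparrow\creation}$ of the lower kink together with the cap $a_{0,1}$ above it; the cap-below-cup junction that you call ``this snake'' involves two disjoint pairs of strands and satisfies no cancellation relation at all.)

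The repair is precisely what the paper does: do not decompose the appended kink. Keep $\varphi^\uparrow$ as a one-strand block and slide it upward --- past the cup $c_{1,0}^{\uparrow\creation}$ and then through $(\sigma_1^{-c})^{\uparrow\downarrow\uparrow}$ --- using Lemma \ref{various twists}.(2) together with the block--braid commutation of (FT4) (this is exactly what Lemma \ref{CCA with framed T} is designed for), so that it comes to rest on the middle strand directly beneath the top annihilation:
$a_{0,1}^{\opannihilation\uparrow}\cdot(e_1^{\downarrow}\otimes\varphi^\uparrow\otimes e_1^{\uparrow})\cdot(\sigma_1^{-c})^{\uparrow\downarrow\uparrow}\cdot c_{1,0}^{\uparrow\creation}$.
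Only then convert the twist back into a crossing via Lemma \ref{various twists}.(1): a cap with a twist on one leg equals the cap with flipped orientation symbol composed with $\sigma_1^{-1}$, which merges with $\sigma_1^{-c}$ by (FT2) into $\sigma_1^{-(c+1)}$ and accounts for the parity flip $\opannihilation\leftrightarrow\annihilation$ that you correctly predicted. Your crossing-slide can only be made rigorous by first performing this same crossing-to-twist conversion (Lemma \ref{various twists}.(1) plus the snake relation (FT5)), so the detour through decomposing the kink buys nothing and is where your argument, as written, breaks.
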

\begin{figure}[h]
\begin{center}
       \begin{tikzpicture}
                     \draw[->] (1.1,1.5)--(1.1,1.8);
                      \draw[-] (1.1,1.8)--(1.1,2)  (1.1,2.5)--(1.1,2.7) ;
                      \draw[-] (1.6,1.8)--(1.6,2.) (1.6,2.5)--(1.6,2.7) ;
 \draw (1,2.) rectangle (1.7,2.5) node[right]{$c$};
                   \draw[-] (1.1,2.) --(1.6, 2.5);
                    \draw[draw=white,double=black, very thick] (1.1,2.5) --(1.6, 2.);
 \draw[-] (1.6,2.7)  arc (0:180:0.25);
 \draw[->] (1.6,1.8)  arc (180:360:0.25);
                     \draw[->] (2.1,1.8)--(2.1,3);
\draw[color=white,  thick] (1,1.8)--(2.2,1.8) (1,2.7)--(2.2,2.7);
        \end{tikzpicture}
\caption{$(\varphi^\uparrow)^c$}
\label{infinite twist figure}
\end{center}
\end{figure}
\begin{proof}
The equation \eqref{infinite twist formula} for $c\in{\mathbb N}$ 
can be shown by induction:
Assume the validity for $c$ with $c\equiv 1\pmod 2$.
Then 
\begin{align*}
(\varphi^\uparrow)^{c+1}
&=(\varphi^\uparrow)^c\cdot \varphi^\uparrow
=a_{0,1}^{\opannihilation\uparrow}\cdot (\sigma_1^{-c})^{\uparrow\downarrow\uparrow}\cdot
c_{1,0}^{\uparrow\creation}\cdot \varphi^\uparrow, \\
\intertext{by Lemma \ref{various twists}.(2)}
&=a_{0,1}^{\opannihilation\uparrow}\cdot 
(e_{\downarrow}\otimes\varphi^\uparrow\otimes e_{\uparrow})\cdot
(\sigma_1^{-c})^{\uparrow\downarrow\uparrow}\cdot
c_{1,0}^{\uparrow\creation}, \\
\intertext{by Lemma \ref{various twists}.(1)}
&=a_{0,1}^{\annihilation\uparrow}\cdot 
(\sigma_1^{-c-1})^{\uparrow\downarrow\uparrow}\cdot
c_{1,0}^{\uparrow\creation}. 
\end{align*}
The case for $c$ with $c\equiv 0\pmod 2$ can be done in the same way.
By the expression of \eqref{infinite twist formula},
it is immediate to see  that  $(\varphi^\uparrow)^c$ consistently extends to
the case for
$c\in\widehat{\mathbb Z}$.
\end{proof}

As an analogue of Lemma \ref{various twists} (2), we have

\begin{lem}\label{ITC}
We have the following equalities
$$
((\varphi^\downarrow)^c\otimes \uparrow)\cdot\creation=
(\downarrow\otimes(\varphi^\uparrow)^c)\cdot \creation,\qquad
\annihilation\cdot ((\varphi^\uparrow)^c\otimes \downarrow)
=\annihilation\cdot(\uparrow\otimes(\varphi^\downarrow)^c)
$$
for $c\in\widehat{\mathbb Z}$.
We also have  the same equalities  obtained by reversing all arrows.
\end{lem}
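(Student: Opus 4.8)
The plan is to establish the first identity for $c\in{\mathbb N}$ by induction and then to extend it to $c\in\widehat{\mathbb Z}$ by continuity; the second identity and the arrow-reversed versions then follow by the identical argument. The base cases are immediate: for $c=0$ we have $(\varphi^\uparrow)^0=e_1^\uparrow$ and $(\varphi^\downarrow)^0=e_1^\downarrow$, so both sides of each equation collapse to $\creation$, while $c=1$ is precisely Lemma \ref{various twists}.(2).

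For the inductive step I would use bifunctoriality of $\otimes$ to write $(\varphi^\downarrow)^{c+1}\otimes\uparrow=((\varphi^\downarrow)^c\otimes\uparrow)\cdot(\varphi^\downarrow\otimes\uparrow)$ and then compute
\begin{align*}
((\varphi^\downarrow)^{c+1}\otimes\uparrow)\cdot\creation
&=((\varphi^\downarrow)^c\otimes\uparrow)\cdot(\varphi^\downarrow\otimes\uparrow)\cdot\creation \\
&=((\varphi^\downarrow)^c\otimes\uparrow)\cdot(\downarrow\otimes\varphi^\uparrow)\cdot\creation \\
&=(\downarrow\otimes\varphi^\uparrow)\cdot((\varphi^\downarrow)^c\otimes\uparrow)\cdot\creation \\
&=(\downarrow\otimes\varphi^\uparrow)\cdot(\downarrow\otimes(\varphi^\uparrow)^c)\cdot\creation
=(\downarrow\otimes(\varphi^\uparrow)^{c+1})\cdot\creation .
\end{align*}
Here the second equality is the $c=1$ case; the third equality is the key point: the two blocks $(\varphi^\downarrow)^c\otimes\uparrow=(\varphi^\downarrow)^c\otimes e_1^\uparrow$ and $\downarrow\otimes\varphi^\uparrow=e_1^\downarrow\otimes\varphi^\uparrow$ are supported on disjoint strands with matching source/target orientations, so the independent-tangles relation (FT3) lets me exchange their order; the fourth equality is the induction hypothesis; and the last equality is again bifunctoriality, $(\downarrow\otimes\varphi^\uparrow)\cdot(\downarrow\otimes(\varphi^\uparrow)^c)=\downarrow\otimes(\varphi^\uparrow)^{c+1}$. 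This closes the induction and yields the identity for all $c\in{\mathbb N}$.

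To pass to $c\in\widehat{\mathbb Z}$ I would invoke Lemma \ref{infinite twists}: the formula \eqref{infinite twist formula} exhibits $(\varphi^\uparrow)^c$ and $(\varphi^\downarrow)^c$ as continuous functions of $c\in\widehat{\mathbb Z}$, through the continuity of $c\mapsto\sigma_1^{-c}$ in the profinite topology of $\widehat{B}_3$. Since the operations $(-)\otimes\uparrow$, $\downarrow\otimes(-)$ and $(-)\cdot\creation$ are continuous, both sides of each asserted equation define continuous maps $\widehat{\mathbb Z}\to\widehat{\mathcal{FT}}$. These maps agree on the dense subset ${\mathbb N}\subset\widehat{\mathbb Z}$, hence they agree for all $c\in\widehat{\mathbb Z}$, which gives the lemma; the arrow-reversed equalities are proved verbatim with all orientations reversed.

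The argument is short, so there is no serious obstacle, only two points requiring care. First, in the exchange step one must genuinely verify that the strand supports of the two tensor blocks are disjoint and that their boundary orientations line up, so that (FT3) applies to these (generally composite, not fundamental) tangles rather than only to a single move on fundamental tangles. Second, the continuity/density step relies on the maps into $\widehat{\mathcal{FT}}$ being continuous for the profinite topology on the braid factors; I would phrase this exactly in the style already adopted in the proof of Lemma \ref{infinite twists}, where the continuous extension of powers of $\varphi^\uparrow$ from ${\mathbb N}$ to $\widehat{\mathbb Z}$ is carried out.
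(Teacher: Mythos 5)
Your induction over $c\in\mathbb{N}$ is fine: the base cases are as you say, and the exchange step is a legitimate application of (FT3), since $(\varphi^\downarrow)^c\otimes e_1^\uparrow$ and $e_1^\downarrow\otimes\varphi^\uparrow$ are independent tangles in the sense of that move. The gap is in the passage from $\mathbb{N}$ to $\widehat{\mathbb Z}$. The principle you invoke --- two continuous maps into $\widehat{\mathcal{FT}}$ that agree on a dense subset of $\widehat{\mathbb Z}$ agree everywhere --- requires the target $\widehat{\mathcal{FT}}$ to be Hausdorff (the coincidence set of two continuous maps is closed only under that hypothesis). But $\widehat{\mathcal{FT}}$ carries the quotient topology of the sequence space modulo isotopy, where isotopy means a \emph{finite} chain of the moves (FT1)--(FT6); a limit of isotopic pairs need not be isotopic, so the equivalence relation is not known to be closed and the quotient is not known to be Hausdorff. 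The paper itself is explicit that it cannot verify Hausdorffness for spaces of this kind (see the discussion following Definition \ref{definition of GK}, where even for $G\widehat{\mathcal K}$ the author states he is not aware whether compactness, Hausdorffness or total disconnectedness hold, and notes $\widehat{\mathcal T}$ is not profinite). So your final step is unjustified within the framework of the paper, and it is exactly the step carrying all the content for general $c\in\widehat{\mathbb Z}$.

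The paper's proof avoids any limiting argument: it is a single diagrammatic computation valid uniformly in $c\in\widehat{\mathbb Z}$, using the closed formula of Lemma \ref{infinite twists} for $(\varphi^\uparrow)^c$ (which makes sense for profinite $c$ because $\sigma_1^{-c}$ does in $\widehat{B}_3$), together with Lemma \ref{various twists}.(1) and the move (FT6) --- and (FT6) is formulated from the outset for arbitrary $c\in\widehat{\mathbb Z}$. If you want to repair your argument, replace the continuity step by such a direct manipulation: express both sides through \eqref{infinite twist formula} and cancel the resulting $\sigma^{\pm c}$-boxes using (FT6), which is precisely what the chain of equalities in Figure \ref{Proof of Lemma ITC} does.
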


\begin{proof}
A proof is depicted in Figure \ref{Proof of Lemma ITC}.
We note that we use Lemma \ref{various twists}.(1) and
(FT6) to deduce the second
and the third equality respectively.
\begin{figure}[h]
\begin{center}
       \begin{tikzpicture}
      \draw[->] (.1,1.5)  arc (180:360:0.7);
                     \draw[<-] (.1,1.5)--(.1,2);
                      \draw[-]  (.1,2.5)--(.1,2.7) ;
      \draw[-] (.1,2.7)  arc (180:0:0.25);
                      \draw[-] (.6,1.8)--(.6,2.) (.6,2.5)--(.6,2.7) ;
      \draw (0,2.) rectangle (.7,2.5) node[right]{$c$};
                   \draw[-] (.1,2) --(.6, 2.5);
                    \draw[draw=white,double=black, very thick] (.1,2.5) --(.6, 2.);
      \draw[<-] (.6,1.8)  arc (180:360:0.25);
                     \draw[<-] (1.1,1.8)--(1.1,3.1);
                     \draw[->] (1.5,1.5)--(1.5,3.1);
      \draw[color=white,  thick] (0,1.5)--(1.6,1.5) (0,1.8)--(1.6,1.8) (0,2.7)--(1.6,2.7) ;
\draw  (1.9, 1.8)node{$=$};
                   \draw[->] (2.3,1.3) --(2.8, .8);
                    \draw[draw=white,double=black, very thick] (2.3,0.8) --(2.8, 1.3);
      \draw[<-] (2.3,0.8)  arc (180:360:0.25);
                   \draw[->] (2.8,1.3) --(3.3, 1.8);
                    \draw[draw=white,double=black, very thick] (2.8,1.8) --(3.3, 1.3);
                    \draw[->] (2.3,3.1)--(2.3,1.3) ;
                      \draw[-] (2.8,1.8)--(2.8,2)  (2.8,2.5)--(2.8,2.7) ;
      \draw[-] (2.8,2.7)  arc (180:0:0.25);
                      \draw[-] (3.3,1.8)--(3.3,2.) (3.3,2.5)--(3.3,2.7) ;
      \draw (2.7,2.) rectangle (3.4,2.5) node[right]{$c$};
                   \draw[-] (2.8,2) --(3.3, 2.5);
                    \draw[draw=white,double=black, very thick] (2.8,2.5) --(3.3, 2.);
                   \draw[<-] (3.3,.5) --(3.3, 1.3);
      \draw[->] (3.3,0.5)  arc (180:360:0.25);
                   \draw[->] (3.8,0.5) --(3.8, 3.1);
      \draw[color=white,  thick] (2.2,.5)--(3.9,.5) (2.2,.8)--(3.9,.8) (2.2,1.3)--(3.9,1.3) (2.2,1.8)--(3.9,1.8)
(2.2,2.7)--(3.9,2.7);
\draw  (4.2, 1.8)node{$=$};
                   \draw[<-] (4.6,0.2) --(4.6, 3.1);
      \draw[->] (4.6,.2)  arc (180:360:0.25);
                   \draw[->] (5.1,.2) --(5.1, .5);
                   \draw[->] (5.1,1) --(5.6, .5);
                    \draw[draw=white,double=black, very thick] (5.6,1) --(5.1, .5);
      \draw[<-] (5.1,1)  arc (180:0:0.25);
      \draw[->] (5.6,0.5)  arc (180:360:0.25);
                      \draw[-] (6.1,0.5)--(6.1,1.5)  ;
                      \draw[-]  (6.1,2.)--(6.1,2.2) (6.6,2.)--(6.6,2.2);

                   \draw[-] (6.1,1.5) --(6.6, 2.);
                    \draw[draw=white,double=black, very thick] (6.6,1.5) --(6.1, 2.);
      \draw (6,1.5) rectangle (6.7,2.) node[right]{$c$};
                   \draw[-] (6.1,2.2) --(6.6, 2.7);
                    \draw[draw=white,double=black, very thick] (6.6,2.2) --(6.1, 2.7);
      \draw[-] (6.1,2.7)  arc (180:0:0.25);
                   \draw[->] (6.6,1.5) --(6.6, -0.1);
      \draw[->] (6.6,-0.1)  arc (180:360:0.3);
                   \draw[->] (7.2,-0.1) --(7.2, 3.1);
      \draw[color=white,  thick] (4.5,-0.1)--(7.3,-0.1)  (4.5,.2)--(7.3,.2)   (4.5,.5)--(7.3,.5)  (4.5,1)--(7.3,1)  (4.5,1.3)--(7.3,1.3)  (4.5,2.2)--(7.3,2.2)  (4.5,2.7)--(7.3,2.7)  ;
\draw  (7.6, 1.8)node{$=$};
                    \draw[->] (8,3.1)--(8,1.3) ;
      \draw[->] (8,1.3)  arc (180:360:0.25);
                      \draw[-] (8.5,1.3)--(8.5,2)  (8.5,2.5)--(8.5,2.7) ;
      \draw[-] (8.5,2.7)  arc (180:0:0.25);
                      \draw[-] (9,1.8)--(9,2.) (9,2.5)--(9,2.7) ;
      \draw (8.4,2.) rectangle (9.1,2.5) node[right]{$c$};
                   \draw[-] (8.5,2) --(9, 2.5);
                    \draw[draw=white,double=black, very thick] (8.5,2.5) --(9, 2.);
      \draw[->] (9,1.8)  arc (180:360:0.25);
                   \draw[->] (9.5,1.8) --(9.5, 3.1);
      \draw[color=white,  thick] (7.9,1.3)--(9.6,1.3)  (7.9,1.8)--(9.6,1.8)  (7.9,2.7)--(9.6, 2.7);
        \end{tikzpicture}
\caption{Proof of Lemma \ref{ITC}}
\label{Proof of Lemma ITC}
\end{center}
\end{figure}
\end{proof}
We note that  the $G_{\mathbb Q}$-module structure on 
the group $\mathrm{Frac}\widehat{\mathcal{K}} $ of profinite knots 
given in Theorem B in \S \ref{introduction} is
induced from  that on
the group $\mathrm{Frac}\widehat{\mathcal{FK}} $ of profinite framed knots
by \eqref{forget frame}.


\thanks{
{\it Acknowledgements}.
The author would like to express his profound gratitude to
Masanori Morishita who persistently encouraged this research.
His thanks are also directed to
Hiroshi Fujiwara who patiently helped him to draw many graphics in this paper,
Hitoshi Murakami, Toshitake Kohno and Tetsuya Ito
who generously explained him lots of valuable issues in low dimensional topology
related to this paper.
Part of the paper was written at Max Planck Institute for Mathematics.
He also thanks the institute for its hospitality.
This work was supported by 
Grant-in-Aid for Young Scientists (A) 24684001.
The referees efforts to make this paper better is gratefully acknowledged.
}


\end{document}